\theoremstyle{plain}
\newtheorem{theorem}{\bf Theorem}[section]
\newtheorem{lemma}[theorem]{\bf Lemma}
\newtheorem{proposition}[theorem]{\bf Proposition}
\newtheorem{corollary}[theorem]{\bf Corollary}
\theoremstyle{definition}
\newtheorem{definition}[theorem]{\bf Definition}
\newtheorem{example}[theorem]{\bf Example}
\newtheorem{remark}[theorem]{\bf Remark}
\newtheorem{question}[theorem]{\bf {Question}}
\newcommand{\eqa}[1]{
\begin{align*}
#1
\end{align*}}
\newcommand{\nai}[2]{\langle #1,#2\rangle}
\newcommand{\dom}[1]{{{\rm{dom}}{(#1)}}}
\newcommand{\mattwo}[4]{\begin{pmatrix}#1 & #2\\ #3 & #4\end{pmatrix}}
\title{Ultraproducts of von Neumann Algebras}
\author{Hiroshi ANDO\\
\and Uffe HAAGERUP}
\begin{document}
\maketitle

\begin{abstract}
We study several notions of ultraproducts of von Neumann algebras from a unifying viewpoint. In particular, we show that for a $\sigma$-finite von Neumann algebra $M$, the ultraproduct $M^{\omega}$ introduced by Ocneanu is a corner of the ultraproduct $\prod^{\omega}M$ introduced by Groh and Raynaud. Using this connection, we show that the ultraproduct action of the modular automorphism group of $\varphi$ on the Ocneanu ultraproduct is the modular automorphism group of the ultrapower state $\varphi^{\omega}$, i.e., $(\sigma_t^{\varphi})^{\omega}=\sigma_t^{\varphi^{\omega}}\ (t\in \mathbb{R})$ holds on $M^{\omega}$. Moreover, we show that Golodets' asymptotic algebra $C_M^{\omega}$ is isomorphic to $M'\cap M^{\omega}$, and his auxiliary algebra $\mathscr{R}$ is isomorphic to $M^{\omega}$. 
From these results, we show the following consequences. 
(1) Ueda's question asking whether a full factor $M$ has trivial central sequence algebra, i.e, whether $M_{\omega}=\mathbb{C}$ implies $M'\cap M^{\omega}=\mathbb{C}$, has an affirmative answer when $M$ has separable predual. Here, $M_{\omega}$ denotes Connes' asymptotic centralizer.
(2) Golodets' Theorem (rephrased in terms of the Ocneanu ultrapower): in an analogy to McDuff property, for a $\sigma$-finite type III factor $M$, the restriction $\dot{\varphi}$ of the ultrapower state $\varphi^{\omega}$ onto the central sequence algebra $M'\cap M^{\omega}$ is independent of the choice of a normal faithful state $\varphi$, and for $0<\lambda<1$, $M$ absorbs Powers factor $R_{\lambda}$ tensorially $M\overline{\otimes}R_{\lambda}\cong M$, if and only if $\lambda$ is an eigenvalue of $\Delta_{\dot{\varphi}}$.
(3) The Ocneanu ultrapower $M^{\omega}$ of type III$_{\lambda}$ factor $(0<\lambda\le 1)$ is again a type III$_{\lambda}$ factor. 
The same conclusion holds for the Groh-Raynaud ultrapower $\prod^{\omega}M$ too. However, $M^{\omega}$ can not be a factor if $M$ is a type III$_0$ factor. Moreover, $\prod^{\omega}M$ has a semifinite type direct summand in that case. 
We also show that although the ultrapower $R^{\omega}$ of the hyperfinite type II$_1$ factor $R$ is a type II$_1$ factor, $\prod^{\omega}R$ is not semifinite (and is not a factor).
(4) For a $\sigma$-finite type III$_1$ factor $M$, the Ocneanu ultrapower $M^{\omega}$ has strictly homogeneous state space, i.e.,  any two normal faithful states $\varphi, \psi$ on $M^{\omega}$ are unitarily equivalent. 
\end{abstract}

\noindent
{\bf Keywords}. Ultraproducts, Type III factors. 

\medskip
\tableofcontents
\section{Introduction}
The purpose of this paper is to study several notions of ultraproducts and central sequence algebras of von Neumann algebras which are not necessarily of finite type. Since it does not seem to be well-known that there are various notions of ultraproducts, let us start from an overview of the history. 
   
The notions of central sequences and ultraproducts play a central role in the study of operator algebras and their automorphisms. The importance of central sequences was already recognized as early as in Murray-von Neumann's work \cite[$\S$6]{MurrayVonNeumann} on rings of operators. After establishing the uniqueness of the hyperfinite type II$_1$ factor $R$, they tried to prove the existence of non-isomorphic type II$_1$ factors. In $(R,\tau)=\bigotimes_{\mathbb{N}} (M_2(\mathbb{C}),\frac{1}{2}\text{Tr})$ (Tr denotes the usual trace on $M_2(\mathbb{C})$), consider a sequence 
\[u_n=1^{\otimes n}\otimes \mattwo{0}{1}{1}{0}\otimes 1\otimes \cdots ,\ n\in \mathbb{N}.\]
$\{u_n\}_{n
=1}^{\infty}$ satisfies 
\begin{itemize}
\item[(1)] $\sup_n\|u_n\|<\infty$. 
\item[(2)] $u_na-au_n\to 0$ strongly for any $a\in R$. 
\item[(3)] $\tau(u_n)=0$ for all $n\in \mathbb{N}$ and $\|u_n\|_2\not\to 0$.
\end{itemize}
A sequence of operators $\{x_n\}_{n=1}^{\infty}$ in a finite von Neumann algebra $M$ is called a {\it central sequence} if it satisfies (1) and (2), and it is called {\it nontrivial} if in addition it satisfies (3). A type II$_1$ factor with non-trivial central sequence is said to have {\it property Gamma}. Using the so-called $14\varepsilon$ argument, they showed  that the group von Neumann algebra $L(\mathbb{F}_2)$ of the free group $\mathbb{F}_2$ on two generators does not have property Gamma while $R$ does, whence $R\not\cong L(\mathbb{F}_2)$. Central sequences were then used to show the existence of uncountably many type II$_1$ (type II$_{\infty}$) factors \cite{McDuff,Sakai2}. Variants of the property Gamma, such as property L of Pukanszky \cite{Pukanszky} were also studied to provide examples of type III factors without non-trivial central sequences. 
On the other hand, the study of the quotient of a finite (A)W$^*$-algebra by its maximal ideals gave rise to the concept of tracial ultraproducts. The study of such quotient algebras was carried out by Wright. He showed \cite[Theorems 4.1 and 5.1]{Wright} that the quotient of a AW$^*$-algebra of type II with a trace by its maximal ideal is a AW$^*$-factor of type II, and quotient of finite AW$^*$-algebra of type I by its maximal ideals are generically AW$^*$-factors of type II$_1$. Sakai \cite[Theorem 7.1]{Sakai} showed that the quotient of a finite W$^*$-algebra $M$ by a maximal ideal $\mathcal{I}_{\omega}=\{x\in M; (x^*x)^{\natural}(\omega)=0\}$ is a finite W$^*$-factor. Here, $\natural\colon M\to \mathcal{Z}(M)\cong C(\Omega)$ is the center valued trace and $\omega$ is a point in the Gelfand spectrum $\Omega$ of the center $\mathcal{Z}(M)$. When $M=\bigoplus_{\mathbb{N}}M_n(\mathbb{C})$, we have $\Omega=\beta \mathbb{N}$ and $M/\mathcal{I}_{\omega}$ is what is now called the tracial ultraproduct of $\{M_n(\mathbb{C})\}_{n=1}^{\infty}$.   
More generally, the {\it tracial ultraproduct} $(M_n,\tau_n)^{\omega}$ of a sequence of finite von Neumann algebras with faithful tracial states $\{M_n,\tau_n\}_{n=1}^{\infty}$ along a free ultrafilter $\omega \in \beta \mathbb{N}\setminus \mathbb{N}$ is defined as the quotient algebra $(M_n,\tau_n)^{\omega}:=\ell^{\infty}(\mathbb{N},M_n)/\mathcal{I}_{\omega}(\mathbb{N},M_n)$, where $\ell^{\infty}(\mathbb{N},M_n)$ is the C$^*$-algebra of all bounded sequences of $\prod_{\mathbb{N}}M_n$, and $\mathcal{I}_{\omega}(\mathbb{N},M_n)$ is the ideal of $\ell^{\infty}(\mathbb{N},M_n)$ consisting of those sequences $(x_n)_n$ which satisfies $\tau_n(x_n^*x_n)\to 0$ along $\omega$. For the case of constant sequence $M_n=M, \tau_n=\tau$, $(M_n,\tau_n)^{\omega}$ is written as $M^{\omega}$ and called the {\it ultrapower} of $M$. Few years later after Sakai's work, McDuff \cite{McDuff2} revealed the importance of the tracial ultrapower and central sequences. Viewing $M$ as a subalgebra of $M^{\omega}$ by diagonal embedding, central sequences form a von Neumann subalgebra $M_{\omega}=M'\cap M^{\omega}$. Among other things, she proved that for a type II$_1$ factor $M$, $M_{\omega}$ is either abelian or type II$_1$, and the latter case occurs if and only if $M$ absorbs $R$ tensorially: $M\cong M\overline{\otimes}R$ (such a factor $M$ is now called {\it McDuff}).

The analysis on the tracial ultrapower was the crucial step in Connes' celebrated work on the classification of injective factors \cite{Connes2}. Since then, ultraproduct techniques has been an important ingredient on the classification of group actions on factors. Furthermore, the conjecture he casually posed in \cite{Connes2} that every II$_1$ factor with separable predual embeds into $R^{\omega}$ has drawn much attention today, and many interesting equivalent conditions are found \cite{Kirchberg}.  Nowadays tracial ultraproducts are also studied from model theoretical viewpoint (see e.g., \cite{GeHa},[FHS1-3]). For the case of type II$_1$ factors, a related construction using nonstandard analysis is given by \cite{OzawaHinokuma}.

The definition of the central sequence algebra $M_{\omega}$ is generalized for arbitrary von Neumann algebras by Connes \cite{Connes3}. It is defined as $M_{\omega}:=\mathcal{M}_{\omega}(\mathbb{N},M)/\mathcal{I}_{\omega}(\mathbb{N},M)$, where $\mathcal{M}_{\omega}(\mathbb{N},M)$ is the set of all $(x_n)_n\in \ell^{\infty}(\mathbb{N},M)$ satisfying $\|x_n\psi-\psi x_n\|\to 0$ along $\omega$ for all $\psi \in M_*$ (here in $\mathcal{I}_{\omega}(\mathbb{N},M)$, convergence is with respect to strong* topology). $M_{\omega}$ is called the {\it asymptotic centralizer} of $M$. On the other hand, the generalization of $M^{\omega}$ is more involved. If $M$ is not of finite type, then $\mathcal{I}_{\omega}(\mathbb{N},M)$ is not an ideal of $\ell^{\infty}(\mathbb{N},M)$. Therefore one has to modify the definition of $M^{\omega}$ for infinite type von Neumann algebras. The right definition of $M^{\omega}$ was given by Ocneanu \cite{Ocneanu} in order to generalize Connes' automorphism analysis approach for general injective von Neumann algebras. 
It is defined as $M^{\omega}:=\mathcal{M}^{\omega}(\mathbb{N},M)/\mathcal{I}_{\omega}(\mathbb{N},M)$, where $\mathcal{M}^{\omega}(\mathbb{N},M)$ is the two-sided normalizer of $\mathcal{I}_{\omega}(\mathbb{N},M)$. That is, $\mathcal{M}^{\omega}(\mathbb{N},M)$ consists of those $(x_n)_n\in \ell^{\infty}(\mathbb{N},M)$ which satisfies 
$(x_n)_n\mathcal{I}_{\omega}(\mathbb{N},M)\subset \mathcal{I}_{\omega}(\mathbb{N},M)$ and $\mathcal{I}_{\omega}(\mathbb{N},M)(x_n)_n\subset \mathcal{I}_{\omega}(\mathbb{N},M)$. 
We call $M^{\omega}$ the {\it Ocneanu ultrapower} of $M$. As same as tracial ultraproducts, any projection $p$ (resp. unitary $u$) in $M^{\omega}$ is represented by a sequence of projections $(p_n)_n$ (resp. unitaries $(u_n)_n$) of $M$. 
A decade before Ocneanu's definition of $M^{\omega}$, another generalization of $M'\cap M^{\omega}$ for a general factor $M$ with separable predual was proposed by Golodets \cite{Golodets}. It is defined as follows:  let $\varphi$ be a normal faithful state on $M$. Consider the GNS representation of $M$ associated with $\varphi$, so that 
$\varphi=\nai{\ \cdot\ \xi_{\varphi}}{\xi_{\varphi}}$ with a cyclic and separating vector $\xi_{\varphi}$ on a Hilbert space $H$. 
Consider the following (non-normal) state $\overline{\varphi}$ on $\ell^{\infty}=\ell^{\infty}(\mathbb{N},M)$:
\[\overline{\varphi}((x_n)_n):=\lim_{n\to \omega}\varphi(x_n),\ \ \ \ (x_n)_n\in \ell^{\infty}(\mathbb{N},M).\]   
Let $\pi_{\rm{Gol}}\colon \ell^{\infty}\to \mathbb{B}(H_{\rm{Gol}})$ be the GNS representation of $\overline{\varphi}$ with a cyclic vector $\overline{\xi}$ satisfying $\overline{\varphi}=\nai{\ \cdot\ \overline{\xi}}{\overline{\xi}}$. Let $e_{\omega}$ be the projection of $H_{\rm{Gol}}$ onto $\overline{\pi_{\rm{Gol}}(\ell^{\infty})'\overline{\xi}}$. Define 
\[\mathscr{R}:=e_{\omega}\pi_{\rm{Gol}}(\ell^{\infty})''e_{\omega}\subset \mathbb{B}(e_{\omega}H_{\rm{Gol}}).\]
Let $\overline{M}_d$ be the subspace of $\ell^{\infty}(\mathbb{N},M)$ consisting of constant sequences $(x,x,\cdots )_n,\ \ x\in M$. Then the {\it asymptotic algebra} $C_M^{\omega}$ of $M$ is defined by 
\[C_M^{\omega}:=\mathscr{R}\cap \pi_{\rm{Gol}}(\overline{M}_d)'\subset \mathbb{B}(e_{\omega}H_{\rm{Gol}}).\]
Moreover, $\varphi$ induces a normal faithful state $\tilde{\varphi}$ on $\mathscr{R}$, whence a state  $\dot{\varphi}:=\tilde{\varphi}|_{C_M^{\omega}}$ on $C_M^{\omega}$.  He then proved the following interesting property: let 
\[\overline{N}:=\{\overline{x}\in \ell^{\infty}(\mathbb{N},M); \pi_{\rm{Gol}}(\overline{x})e_{\omega}, \pi_{\rm{Gol}}(\overline{x}^*)e_{\omega}\in \mathscr{R}.\}\] 
%\[I:=\{\overline{x}\in \overline{N}; \lim_{n\to \omega}\varphi(x_n^*x_n)=0\}.\]
Then \cite[Lemma 2.3.3]{Golodets} $\mathscr{R}=\pi_{\rm{Gol}}(\overline{N})e_{\omega}$, and we have \cite[Lemma 2.3.5]{Golodets}
\[\sigma_t^{\tilde{\varphi}}(\pi_{\rm{Gol}}(\overline{x})e_{\omega})=\pi_{\rm{Gol}}((\sigma_t^{\varphi}(x_n)))e_{\omega},\ \ \ \ \overline{x}=(x_n)_n\in \overline{N},\ \ t\in \mathbb{R}.\]
Based on the above, he proved that both (the isomorphism class of) $C_M^{\omega}$ and $\dot{\varphi}$  were independent of the choice of $\varphi$, and its point spectra characterized Araki's \cite{Araki2} property $L_{\lambda}'\colon$ $M\overline{\otimes}R_{\lambda}\cong M$ \cite[Theorem 2.5.2]{Golodets}.
Moreover, Golodets and Nessonov proved \cite[Lemma 1.1]{GoNe} that its centralizer $(C_M^{\omega})_{\dot{\varphi}}$ is isomorphic to $M_{\omega}$.
  It seems that these works have not been widely recognized, possibly because most of their works were written in Russian. 
It is not clear from his definition if $\mathscr{R}$ or $C_M^{\omega}$ is related to Ocneanu's constructions. We show in $\S$\ref{subsec: Golodets' asymptotic algebra} that Golodets' construction is equivalent to Ocneanu's one.

On the other hand, the development of non-commutative integration theory for von Neumann algebras suggests to seek for a notion of ``ultarproduct $\widetilde{M}^{\omega}$" of $M$ so that the Banach space ultraproduct $(L^p(M))_{\omega}$ of non-commutative $L^p$-space (in the sense of \cite{Haagerup4}) for $M$ is isometrically isomorphic to $L^p(\widetilde{M}^{\omega})\  (1\le p<\infty)$. In that viewpoint, it is not the Ocneanu ultraproduct $M^{\omega}$ that plays the role. For example, if one uses the Ocneanu ultraproduct, $\mathbb{B}(H)^{\omega}=\mathbb{B}(H)$ holds (e.g., \cite{MasudaTomatsu}), while $L^1(\mathbb{B}(H))_{\omega}=(\mathbb{B}(H)_*)_{\omega}$ is much larger than $\mathbb{B}(H)_*$ if $\dim(H)=\infty$. 
The right definition of the ultraproduct $\widetilde{M}^{\omega}$ in this context was given by Groh and Raynaud. More precisely, Groh \cite{Groh} showed that the ultraproduct of the predual $M_*$ of a von Neumann algebra $M$ can be regarded as the predual of some huge von Neumann algebra $\widetilde{M}^{\omega}$: consider the Banach space ultrapower $(M_*)_{\omega}$ (resp. $(M)_{\omega}$) of the predual $M_*$ (resp. $M$), and define a map $J_G\colon (M_*)_{\omega}\to ((M)_{\omega})^*$ by
\[\nai{x}{J_G(\hat{\psi})}:=\lim_{n\to \omega}\psi_n(x_n),\ \ \ \ x=(x_n)_{\omega}\in (M)_{\omega},\ \ \hat{\psi}=(\psi_n)_{\omega}\in (M_*)_{\omega}.\]
Then it holds that  \cite[Proposition 2.2 (b)]{Groh} $J_G$ is an isometric embedding and its range $J_G((M_*)_{\omega})$ is a translation-invariant subspace of $((M)_{\omega})^*$, whence there exists a central projection $z\in ((M)_{\omega})^{**}$ such that $J_G((M_*)_{\omega})=((M)_{\omega})^*z$. Therefore $J_G((M_*)_{\omega})$ can be regarded as the predual of the W$^*$-algebra $((M)_{\omega})^{**}z$.  
Then almost two decades later, a more handy construction was given by Raynaud \cite{Raynaud}: fix a representation $\pi$ of $M$ on a Hilbert space $H$ so that each $\varphi \in M_*^+$ is represented as a vector functional. Consider the Banach space ultrapower $(M)_{\omega}$ and regard it as a C$^*$-subalgebra of $\mathbb{B}(H)_{\omega}$. Define $J_R\colon \mathbb{B}(H)_{\omega}\to \mathbb{B}(H_{\omega})$ ($H_{\omega}$ is the ultrapower Hilbert space of $H$) by 
\[J_R(x)\xi:=(x_n\xi_n)_{\omega}, \ \ \ x=(x_n)_{\omega}\in \mathbb{B}(H)_{\omega},\ \ \xi=(\xi_n)_{\omega}\in H_{\omega}.\]
Then it holds \cite[Theorem 1.1]{Raynaud} that $(M_*)_{\omega}$ is isometrically isomorphic to the predual of the von Neumann algebra $\widetilde{M}^{\omega}$ generated by $J_R((M)_{\omega})$.    
We write $\widetilde{M}^{\omega}$ as $\prod^{\omega}M$ (where we choose the standard representation) and call it the {\it Groh-Raynaud ultrapower} of $M$.
Raynaud also showed that $\prod^{\omega}M$ has such nice behaviors as $L^p(M)_{\omega}\cong L^p(\prod^{\omega}M)$  completely isometrically, and $\prod^{\omega}M'=(\prod^{\omega}M)'$. The Groh-Raynaud ultrapower was effectively used in e.g., Junge's work on Fubini Theorem \cite{Junge}. On the other hand the Groh-Raynaud ultrapower has drawbacks too. In general, even if $M$ has separable predual, $\prod^{\omega}M$ is not even $\sigma$-finite (there is no faithful normal state), while $M^{\omega}$ is always $\sigma$-finite when $M$ is. Moreover, the center of $\prod^{\omega}M$ can be much larger than $M^{\omega}$: for example, Raynaud \cite{Raynaud} showed that $\prod^{\mathcal{U}}\mathbb{B}(H)\ (\text{dim}(H)=\infty)$, is not semifinite for a free ultrafilter $\mathcal{U}$ on a suitable index set $I$. It seems that there has been no attempts to consider the relationships among the Ocneanu ultraproducts, the Groh-Raynaud ultraproducts and Golodets' asymptotic algebras. 

We show that all these ultraproducts are closely related, and the study of one helps that of the other in an essential way. Using the connection, we show some interesting phenomena of the Ocneanu ultraproducts of type III factors which do not appear in the tracial case.\\

The outline of the paper is as follows.

In $\S$\ref{sec: General Backgrounds and Notations} we fix notations and recall basic facts about Tomita-Takesaki theory and Arveson-Connes' spectral theory for automorphism groups.  

In $\S$\ref{sec: Ultraproduct of von Neumann Algebras}, we define the Ocneanu and the Groh-Raynaud ultraproduct. Here we use a more general definitions than the original ones. 
Namely, we use a sequence of von Neumann algebras $\{M_n\}_{n=1}^{\infty}$ (and a sequence $\{\varphi_n\}_{n=1}^{\infty}$ of normal faithful states for the Ocneanu case) to construct $\prod^{\omega}M_n$ (and $(M_n,\varphi_n)^{\omega}$). We will see in later sections that this is not a formal generalization, but has actual implications. 
We show that each $(M_n,\varphi_n)^{\omega}$ corresponds to a corner of $\prod^{\omega}M_n$ (Theorem \ref{UW2.1.3}, Proposition \ref{prop: normalizer iff commutes with p}). 
Then generalizing the result of Raynaud, we prove in Theorem \ref{UW2.3.2} that the Groh-Raynaud ultraproduct preserves the standard form of von Neumann algebras. As part of the proof, we show (Lemma \ref{UW2.3.1}) that the condition $JxJ=x^*, x\in \mathcal{Z}(M)$, which is one of four conditions in the axioms of standard form, automatically follows from the other three conditions. This will be important for the proof of Theorem \ref{UW2.3.2}, for $\mathcal{Z}(\prod^{\omega}M_n)$ can be strictly larger than $\prod^{\omega}\mathcal{Z}(M_n)$\ (here, $\mathcal{Z}(M)$ is the center of $M$). In $\S$\ref{subsec: Golodets' asymptotic algebra}, we show that Golodets' asymptotic algebra is equivalent to Ocneanu's central sequence algebra. More precisely, we have $\mathscr{R}\cong M^{\omega}$ and under this *-isomorphism, $C_M^{\omega}$ is mapped to $M'\cap M^{\omega}$ (Theorem \ref{thm: identification of Golodets algebra}). Thus we have clarified the relationships among all the existing notions of ultraproducts of von Neumann algebras. 

In $\S$\ref{sec: Tomita-Takesaki Theory for Ultraproducts} we show how modular structure of the original algebra $M$ reflects the structure of the Ocneanu ultrapower $M^{\omega}$. We first show (Theorem\ref{thm: ultrapower of modular automorphism}) that the ultraproduct action of the sequence of modular automorphism groups $\{\sigma_t^{\varphi_n}\}_{n=1}^{\infty}$ on the Ocneanu ultraproduct $(M_n,\varphi_n)^{\omega}$ is equal to the modular automorphism group of $\varphi^{\omega}=(\varphi_n)^{\omega}\colon \ (\sigma_t^{\varphi_n})^{\omega}=\sigma_t^{\varphi^{\omega}}\ (t\in \mathbb{R})$. This in particular means that $t\mapsto (\sigma_t^{\varphi_n})^{\omega}$ is a continuous flow. This is somewhat surprising, because the ultrapower of continuous flows tend to be highly discontinuous on $M^{\omega}$ (see Masuda-Tomatsu's recent work on Rokhlin flows \cite{MasudaTomatsu}). The corresponding result in the case of constant sequence of algebras was obtained by Golodets \cite{Golodets} for his auxiliary algebra $\mathscr{R}$,  by Raynaud for the corner of $\prod^{\omega}M$ which corresponds to the Ocneanu ultrapower. 
Theorem \ref{thm: ultrapower of modular automorphism} will be the key tool for our analysis that follows. For instance, it follows that a sequence $(x_n)_n\in \ell^{\infty}(\mathbb{N},M_n)$ belongs to $\mathcal{M}^{\omega}(M_n,\varphi_n)$ if and only if $(x_n)_n$ can be approximated by sequences $(y_n)_n$ which belongs to a compact spectral subspace for $\sigma^{\varphi}$ (Proposition \ref{prop: characterization of the normalizer}).  
$\Delta_{\varphi^{\omega}}$ behaves like the ``ultrapower of $\Delta_{\varphi}$", although $(\Delta_{\varphi})^{\omega}$ is not a well-defined object. For example, it has such properties as $\sigma(\Delta_{\varphi^{\omega}})=\sigma(\Delta_{\varphi})$ and $\Delta_{\varphi^{\omega}}^{\frac{
1}{2}}(x_n\xi_{\varphi})_{\omega}=(\Delta_{\varphi}^{\frac{1}{2}}x_n\xi_{\xi_{\varphi}})_{\omega}$ for $(x_n)^{\omega}\in M^{\omega}$ (Corollary \ref{cor: some useful conseuences of main theorem}. Similar result to the latter is obtained by Golodets \cite{Golodets}). On the other hand, unlike the case of the ultrapower of bounded operators, the point spectra  $\sigma_p(\Delta_{\varphi^{\omega}})$ can be strictly smaller than $\sigma(\Delta_{\varphi})\setminus \{0\}$ (Proposition \ref{prop: point spectrum is smaller than the spectrum for UP state}).  In $\S$\ref{subsec: Strict Homogeneity of State Spaces}, 
we show that the Ocneanu ultrapower $M^{\omega}$ of $\sigma$-finite type III$_1$ factor $M$ has {\it strictly homogeneous state space}. That is, any two normal faithful states are unitarily equivalent (Theorem \ref{UW3.1.4}).  
We also show that no type III$_1$ factor with separable predual has strictly homogeneous state space (Proposition \ref{prop: no separable M with strictly homogeneous state space}), and if a $\sigma$-finite type III$_1$ factor $M$ has this property, the centralizer of any normal faithful state on $M$ is a factor of type II$_1$ (Proposition \ref{prop: centralizer of a factor with strictly homogeneous state space}). 
In $\S$\ref{subsec: Ultarpower of a Normal Faithful Semifinite Weight}, we extend Theorem \ref{thm: ultrapower of modular automorphism} to the ultrapower $\varphi^{\omega}$ of a normal faithful semifinite weight on a $\sigma$-finite von Neumann algebra (Lemma \ref{lem: modular automorphism of the ultrapower of weight}), and prove that if $\varphi$ is lacunary (that is, 1 is isolated in $\sigma(\Delta_{\varphi})$), then $(M^{\omega})_{\varphi^{\omega}}\cong (M_{\varphi})^{\omega}$ holds (Proposition \ref{prop: centralizer of lacunary weight}). In $\S$\ref{subsec: Golodets state}, we reinterpret the main results of Golodets work \cite{Golodets} mentioned above from our viewpoint. In particular, we introduce the {\it Golodets state} $\dot{\varphi}_{\omega}=\varphi^{\omega}|_{M'\cap M^{\omega}}$ and show that $\dot{\varphi}_{\omega}$ does not depend on the choice of $\varphi$ if $M$ is a factor (Proposition \ref{prop: phi_dot does not depend on phi}), and in that case $M$ has Araki's property $L_{\lambda}' (0<\lambda<1)$ if and only if $\lambda$ is the eigenvalue of $\Delta_{\dot{\varphi}_{\omega}}$ (Theorem \ref{thm: point spectrum and property L'}). Moreover, we show that the centralizer $(M'\cap M^{\omega})_{\dot{\varphi}_{\omega}}$ of the Golodets state $\dot{\varphi}_{\omega}$ is precisely Connes' asymptotic centralizer $M_{\omega}$ (Proposition \ref{prop: centralizer of ultrapower state}), which will play a key role in the next section.  
 
In $\S$\ref{sec: Ueda's Question}, we consider the subtle difference between $M_{\omega}$ and $M'\cap M^{\omega}$ for $\sigma$-finite type III factors. In general, $M_{\omega}\subset M'\cap M^{\omega}$ holds. However, while $M_{\omega}$ is a finite von Neumann algebra, $M'\cap M^{\omega}$ can be of type III (Example \ref{ex: Takesaki's example?}). However, based on the analysis of free product type III factors, Ueda \cite[$\S$5.2]{Ueda1} asked the following:
\begin{question}
Does $M_{\omega}=\mathbb{C}$ imply $M'\cap M^{\omega}=\mathbb{C}$?
\end{question}
We give (Theorem \ref{thm: solution to Ueda problem}) an affirmative answer to the question for separable predual case.   
Moreover, we show that for a $\sigma$-finite type III$_0$ factor $M$, $M_{\omega}=M'\cap M^{\omega}$ holds (Proposition \ref{prop: no difference between M_omega and M'cap Momega}).
   
In $\S$\ref{sec: Factoriality and Type of Ultraproducts}, we consider the following questions:
\begin{question} Let $(\varphi_n)_n$ be a sequence of normal faithful states on a $\sigma$-finite factor $M$.\\
(1) Are $M^{\omega}$ and $\prod^{\omega}M$ factor too?\ If so, what are their types?\\
(2) Does $(M,\varphi_n)^{\omega}$ depend on the choice of $(\varphi_n)_n$?\\
(3) Is $(M,\varphi_n)^{\omega}$ (semi-) finite if $M$ is (semi-) finite?\\
(4) Is $(M,\varphi_n)^{\omega}$ of type III if $M$ is of type III? 
\end{question}
For (1), if $M$ is of finite type, it is well-known that $M^{\omega}$ is also a finite type factor. Also, it is known that $M^{\omega}$ is a type I$_{\infty}$ (resp. type II$_{\infty}$) factor if so is $M$ (Proposition \ref{prop: Masuda-Tomatsu for semifinite factor}). However, the situation for the Groh-Raynaud ultrapower is different: we show that  $\prod^{\omega}R$ is not semifinite (and not a factor), where $R$ is the hyperfinite type II$_1$ factor (Theorem \ref{thm: prod^{omega}R is not a factor}). Type III case is more interesting: we show that if $M$ is a $\sigma$-finite type III$_{\lambda}\ (0<\lambda\le 1)$ factor, then both $M^{\omega}$ and $\prod^{\omega}M$ are type III$_{\lambda}$ factors (Theorem \ref{thm: UP of type III nonzero factor is a factor}). On the other hand, if $M$ is of type III$_0$, then $M^{\omega}$ is not a factor (Theorem \ref{thm: non-factoriality of UP(III_0)}). Moreover, $\prod^{\omega}M$ has a semifinite component and is not a factor (Remark \ref{rem: nonfactoriality of Raynaud UP of III_0}).  
As for (2), we show that if $M$ is of type III$_{\lambda}\ (0<\lambda \le 1)$, then $(M,\varphi_n)^{\omega}\cong M^{\omega}$ and therefore $(M,\varphi_n)^{\omega}$ does not depend on $(\varphi_n)_n$ (Theorem \ref{thm: UP of type III nonzero factor is a factor}). However, regarding (3)(4), there exists $(\varphi_n)_n$ such that $(R,\varphi_n)^{\omega}$ is not semifinite (Proposition \ref{prop: (R,varphi_n)^{omega} not semifinite}). Also, if $M$ is of type III$_0$, then there exists $(\varphi_n)_n$ such that $(M,\varphi_n)^{\omega}\cong (M_{\varphi_n})^{\omega}$ is of finite type (Theorem \ref{thm: UP of III_0 can be of finite type}). Finally, let us remark that our ultraproduct analysis has been used for the recent study of QWEP von Neumann algebras and Effros-Mar\'echal topology on the space of von Neumann algebras by the authors and Winsl\o w \cite{AndoHaagerupWinslow}. 

\section{General Backgrounds and Notations}\label{sec: General Backgrounds and Notations}
First we fix a notation and recall basics facts about ultraproducts. Throughout the paper, $\omega$ denotes the fixed free ultrafilter on $\mathbb{N}$ (in fact, many of the results and proofs in this paper are the same for free ultrafilters on any set). For a von Neumann algebra $M$ on a Hilbert space $H$, $\mathcal{Z}(M)$ denotes the center of $M$, and $S_\text{n}(M)$ (resp. $S_{\text{nf}}(M)$) denotes the space of normal (resp. normal faithful) states on $M$. As usual, we define two seminorms $\|\cdot \|_{\varphi},\ \|\cdot \|_{\varphi}^{\sharp}$ for $\varphi \in S_{\text{n}}(M)$ by
\[\|x\|_{\varphi}:=\varphi(x^*x)^{\frac{1}
{2}},\ \|x\|_{\varphi}^{\sharp}:=\varphi \left (x^*x+xx^*\right )^{\frac{1}{2}},\ x\in M.\]
If $M$ is $\sigma$-finite and $\varphi$ is faithful, $\|\cdot \|_{\varphi}$ (resp. $\|\cdot\|_{\varphi}^{\sharp}$) defines the strong (resp. strong*) topology on the unit ball of $M$. The support projection of a normal state $\varphi$ is written as $\text{supp}(\varphi)$. For a projection $p\in M$, $z_M(p)$ denotes the central support of $p$ in $M$. $\mathcal{U}(M)$ is the group of unitaries in $M$. vN$(H)$ denotes the space of all von Neumann algebras acting on $H$. Ball$(M)$ is the closed unit ball of $M$. For a self-adjoint operator $A$ on $H$, $\dom{A}$ is the domain of definition of $A$, $\sigma(A)$ (resp. $\sigma_p(A)$) denotes the spectra (resp. point spectra) of $A$. The range (resp. the domain) of $A$ is written as ${\rm{ran}}(A)$ (resp. ${\rm{dom}}(A)$). $G(A)=\{(\xi,A\xi);\xi \in \dom{A}\}$ is the graph of $A$. We denote the sequence of elements of a set like $\{a_n\}_{n=1}^{\infty}$. However, we also use the notation $(a_n)_n$ when we think of the sequence as an element in an algebra such as $\ell^{\infty}(\mathbb{N}, \mathbb{B}(H))$. For a unit vector $\xi\in H$, the corresponding vector state is denoted as $\omega_{\xi}$. 
\subsection{Ultraproduct of Banach Spaces}
Let $(E_n)_n$ be a sequence of Banach spaces, and let $\ell^{\infty}(\mathbb{N},E_n)$ be the Banach space of all sequences $(x_n)_n\in \prod_{n=1}^{\infty}E_n$ with $\sup_{n\ge 1}\|x_n\|<\infty$ with the norm $\|(a_n)_n\|=\sup_{n\ge 1}\|a_n\|,\ \ (a_n)_n\in \ell^{\infty}(\mathbb{N},E_n)$. 
%In \cite{Heinrich}, $\S$2.1, 
The {\it Banach space ultraproduct} $(E_n)_{\omega}$ is defined as the quotient $\ell^{\infty}(\mathbb{N},E_n)_n/\mathcal{J}_{\omega}$, where $\mathcal{J}_{\omega}$ is the closed subspace of all $(x_n)_n\in \ell^{\infty}(\mathbb{N},E_n)$ which satisfies $\lim_{n\to \omega}\|x_n\|=0$. An element of $(E_n)_{\omega}$ represented by $(x_n)_n\in \ell^{\infty}(\mathbb{N},E)$ is written as $(x_n)_{\omega}$. One has $\|(x_n)_{\omega}\|=\lim_{n\to \omega}\|x_n\|,\ (x_n)_{\omega}\in (E_n)_{\omega}$. If $(H_n)_n$ is a sequence of Hilbert spaces, then $(H_n)_{\omega}$ is again a Hilbert space with the inner product given by 
\[\nai{(\xi_n)_{\omega}}{(\eta_n)_{\omega}}=\lim_{n\to \omega}\nai{\xi_n}{\eta_n},\ \ \ \ (\xi_n)_{\omega},\ (\eta_n)_{\omega}\in (H_n)_{\omega}.\]
For a sequence $(A_n)_n$ of C$^*$-algebras, $(A_n)_{\omega}$ is again a C$^*$-algebra when equipped with the pointwise multiplication and involution of sequences \cite[$\S$3.1]{Heinrich}. However, the Banach space ultraproduct of von Neumann algebras is not a von Neumann algebra in general (cf. Remark \ref{rem: UP of B(H) is not W*}). 
\subsection{Modular Theory}
We make a brief summary of modular theory needed for our purpose. We refer to vol. II of \cite{TakBook} for details. In particular we omit the modular theory for weights/Hilbert algebras, which will be used only for Proposition \ref{prop: centralizer of lacunary weight} and Theorem \ref{thm: non-factoriality of UP(III_0)}. Let $M$ be a $\sigma$-finite von Neumann algebra, and let $\varphi \in S_{\rm{nf}}(M)$. Using GNS representation $(M,\pi_{\varphi},H,\xi_{\varphi})$, $\varphi$ is represented as a vector state $\omega_{\xi_{\varphi}}$ and $\xi_{\varphi}\in H$ is a cyclic and separating vector for $M$ (we identify $x\in M$ with $\pi_{\varphi}(x)$). Then the following operator $S_{\varphi}^0$,
\[\dom{S_{\varphi}^0}:=M\xi_{\varphi},\ \ \ \ S_{\varphi}^0x\xi_{\varphi}:=x^*\xi_{\varphi},\  x\in M,\]
is a densely defined anti-linear operator on $H$. Since $S_{\varphi}^0$ is closable, we may consider the polar decomposition $S_{\varphi}=J_{\varphi}\Delta_{\varphi}^{\frac{1}{2}}$ of its closure. It can be shown that $J_{\varphi}$ is an anti-linear involution and $\Delta_{\varphi}$ is a positive, invertible self-adjoint operator on $H$. Furthermore, $J_{\varphi}\xi_{\varphi}=\Delta_{\varphi}\xi_{\varphi}=\xi_{\varphi}$ and $J_{\varphi}\Delta_{\varphi}J_{\varphi}=\Delta_{\varphi}^{-1}$ hold. 
$J_{\varphi}$ (resp. $\Delta_{\varphi}$) is called the {\it modular conjugation operator} (resp. {\it modular operator}) of $\varphi$. 
Tomita's fundamental Theorem \cite{Takesaki1} states that 
\[J_{\varphi}MJ_{\varphi}=M',\ \Delta_{\varphi}^{it}M\Delta_{\varphi}^{-it}=M \text{ \ \ for all }t\in \mathbb{R}.\]   
Therefore 
\[\sigma_t^{\varphi}(x):=\Delta_{\varphi}^{it}x\Delta_{\varphi}^{-it},\ \ \ \ x\in M,\ t\in \mathbb{R},\]
defines a one parameter automorphism group of $M$, called the {\it modular automorphism group} of $\varphi$.

Next we recall Arveson-Connes' spectral theory for automorphism groups. Since we apply the theory only to modular automorphism group, we present the case of one-parameter automorphism group only. In the sequel we identify the dual group $\widehat{\mathbb{R}}$ of the additive group $\mathbb{R}$ with itself. For $f\in L^1(\mathbb{R})$, we define the Fourier transform $\hat{f}$  by 
\[\hat{f}(\lambda):=\int_{\mathbb{R}}e^{it\lambda}f(t)dt,\ \ \ \ \ \lambda \in \widehat{\mathbb{R}}=\mathbb{R}.\]
We also define $\sigma_f^{\varphi}(x):=\int_{\mathbb{R}}f(t)\sigma_t^{\varphi}(x)dt\ (x\in M)$.
%Let $M$ be a $\sigma$-finite von Neumann algebra with $\varphi\in S_{{\rm{nf}}}(M)$. 
\begin{itemize}
\item[(1)] 
For $x\in M$, $\text{Sp}_{\sigma^{\varphi}}(x)$ is defined by
\[\left \{\lambda \in \widehat{\mathbb{R}};\ \hat{f}(\lambda)=0 \text{  for all } f\in L^1(\mathbb{R}) \text{  with  }\sigma^{\varphi}_f(x)=0\right \}.\] 
\item[(2)] The {\it Arveson spectrum} of $\sigma^{\varphi}$, denoted by $\text{Sp}(\sigma^{\varphi})$ is the set
\[\left \{\lambda \in \widehat{\mathbb{R}};\ \hat{f}(\lambda)=0 \text{  for all } f\in L^1(\mathbb{R}) \text{  with  }\sigma^{\varphi}_f=0\right \}.\]
It is shown that ${\rm{Sp}}(\sigma^{\varphi})=\log (\sigma(\Delta_{\varphi})\setminus \{0\})$. 
\item[(3)] For a subset $E$ of $\widehat{\mathbb{R}}$, the {\it spectral subspace} of $\sigma^{\varphi}$ corresponding to $E$ is given by 
\[M(\sigma^{\varphi},E):=\{x\in M;\ \text{Sp}_{\sigma^{\varphi}}(x)\subset E\}.\]
The fixed point subalgebra $M(\sigma^{\varphi},\{0\})$ is called the {\it centralizer} of $\varphi$, and is written as $M_{\varphi}$. It is known that $M_{\varphi}=\{x\in M;\varphi (xy)=\varphi(yx),\ y\in M\}$, and it is always a finite von Neumann algebra with a normal faithful trace $\varphi|_{M_{\varphi}}$. The spectral subspaces have the following properties:
\begin{list}{}{}
\item[(i)] $M(\sigma^{\varphi},E)^*=M(\sigma^{\varphi},-E)$.
\item[(ii)] $M(\sigma^{\varphi},E)M(\sigma^{\varphi},F)\subset M(\sigma^{\varphi},\overline{E+F})$.
\item[(iii)] $\lambda \in \text{Sp}(\sigma^{\varphi})$ if and only if $M(\sigma^{\varphi},E)\neq \{0\}$ for any closed neighborhood $E$ of $\lambda$.
\end{list}
\item[(4)] The {\it Connes spectrum} of $\sigma^\varphi$, denoted by $\Gamma (\sigma^{\varphi})$, is given by
\[\Gamma(\sigma^{\varphi})=\bigcap_{e\in \text{Proj}(M_{\varphi})}\text{Sp}(\sigma^{\varphi_e}).\]
Here, for $e\in \text{Proj}(M_{\varphi})$, $\sigma^{\varphi_e}$ is the restricted action of $\sigma^{\varphi}$ to the reduced algebra $M_e$, which coincides with the modular automorphism group of $\varphi|_{M_e}$. It holds that \[\Gamma(\sigma^{\varphi})=\bigcap_{0\neq e\in \text{Proj}(\mathcal{Z}(M_{\varphi}))}\text{Sp}(\sigma^{\varphi_e}),\]
 whence $\Gamma(\sigma^{\varphi})=\text{Sp}(\sigma^{\varphi})$ if $M_{\varphi}$ is a factor.
\item[(5)] Let $M$ be a $\sigma$-finite factor. The Connes' {\it S-invariant} is defined by 
\[S(M)=\bigcap_{\varphi \in S_{\rm{nf}}(M)}\sigma(\Delta_{\varphi}),\]
It is shown that $S(M)\setminus \{0\}$ is a closed multiplicative subgroup of $\mathbb{R}_+^*=(0,\infty)$, and $\Gamma(\sigma^{\varphi})=\log (S(M)\setminus \{0\})$. A $\sigma$-finite type III factor $M$ is called of
\begin{list}{}{}
\item[(i)] type III$_0$ if $S(M)=\{0,1\}$.
\item[(ii)] type III$_{\lambda}$ if $S(M)=\{\lambda^n; n\in \mathbb{Z}\}\cup \{0\}\ \ (0<\lambda<1)$.
\item[(iii)] type III$_1$ if $S(M)=[0,\infty)$.
\end{list}
For general factors, one needs to use normal faithful semifinite weights to define the $S$-invariant. However, the above classification of type III factors will not be affected by this change.  
\end{itemize}

\section{Ultraproduct of von Neumann Algebras}\label{sec: Ultraproduct of von Neumann Algebras}
\subsection{The Ocneanu Ultraproduct}\label{subsec: The Ocneanu Ultraproduct}
We first introduce the Ocneanu ultraproduct of a family of von Neumann algebras along $\omega$, with respect to a sequence of their states. This is a slight generalization of the construction of Ocneanu \cite[$\S$5]{Ocneanu} for a single algebra with a single state, and of the construction in \cite[$\S$5]{HW1} for tracial states; both generalize classical notions studied by Sakai \cite{Sakai} and McDuff \cite{McDuff}.

Specifically, let $(M_n)_n$ be a sequence of $\sigma$-finite von Neumann algebras, and let $\varphi_n$ be a normal faithful state on $M_n$ for each $n\in \mathbb{N}$. With a slight abuse of notation, put
\[\ell^{\infty}(\mathbb{N}, M_n):=\left \{(x_n)_n\in \prod_{n\in \mathbb{N}}M_n;\ \sup_{n\in \mathbb{N}}\|x_n\|<\infty\right \},\]
\[\mathcal{I}_{\omega}(M_n,\varphi_n):=\left \{(x_n)_n\in \ell^{\infty}(\mathbb{N},M_n); \ \lim_{n\to \omega}\|x_n\|_{\varphi_n}^{\sharp}=0\right \},\]
and also, with the abbreviated notation $\mathcal{I}_{\omega}$ for $\mathcal{I}_{\omega}(M_n,\varphi_n)$, let
\[\mathcal{M}^{\omega}(M_n,\varphi_n):=\left \{(x_n)_n\in \ell^{\infty}(\mathbb{N},M_n);\ (x_n)_n\mathcal{I}_{\omega}\subset \mathcal{I}_{\omega}, \text{ and }\mathcal{I}_{\omega}(x_n)_n\subset \mathcal{I}_{\omega}\right \}.\] 
It is then apparent that $\mathcal{M}^{\omega}(M_n,\varphi_n)$ is a C$^*$-algebra (with pointwise operations and supermum norm) in which $\mathcal{I}_{\omega}(M_n,\varphi_n)$ is a closed ideal. We then define 
\[(M_n,\varphi_n)^{\omega}:=\mathcal{M}^{\omega}(M_n,\varphi_n)/\mathcal{I}_{\omega}(M_n,\varphi_n)\]
(the quotient C$^*$-algebra). By the same proof as in \cite[$\S$5.1]{Ocneanu}, one gets:
\begin{proposition}\label{UW1.2.1}With the above notations, $(M_n,\varphi_n)^{\omega}$ is a {\rm{W}}$^*$-algebra. 
\end{proposition}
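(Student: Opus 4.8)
The plan is to exhibit $(M_n,\varphi_n)^{\omega}$ as a concretely represented $*$-algebra on a Hilbert space and then to prove that this represented algebra is weakly closed; by von Neumann's bicommutant theorem this makes it a W$^*$-algebra. First I would build the representation. For each $n$ let $(H_n,\xi_n)$ be the GNS data of $(M_n,\varphi_n)$, so that $\xi_n=\xi_{\varphi_n}$ is cyclic and separating and $\varphi_n=\langle\,\cdot\,\xi_n,\xi_n\rangle$, and form the Hilbert space ultraproduct $H_{\omega}=(H_n)_{\omega}$ with distinguished unit vector $\xi_{\omega}=(\xi_n)_{\omega}$. Any bounded sequence acts on $H_{\omega}$ by $\pi((x_n)_n)(\eta_n)_{\omega}:=(x_n\eta_n)_{\omega}$, giving a unital $*$-homomorphism $\pi\colon\ell^{\infty}(\mathbb{N},M_n)\to\mathbb{B}(H_{\omega})$. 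Let $K:=\overline{\pi(\mathcal{M}^{\omega}(M_n,\varphi_n))\xi_{\omega}}$ and let $e_{\omega}$ be the projection onto $K$. Since $\mathcal{M}^{\omega}(M_n,\varphi_n)$ is a unital $*$-subalgebra of $\ell^{\infty}(\mathbb{N},M_n)$, the subspace $K$ is invariant under $\pi(\mathcal{M}^{\omega})$ and under its adjoints, hence reducing; thus $e_{\omega}$ commutes with $\pi(\mathcal{M}^{\omega})$ and the restriction $\Phi(x):=\pi(x)|_K$ is a unital $*$-homomorphism of $\mathcal{M}^{\omega}(M_n,\varphi_n)$ into $\mathbb{B}(K)$.

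Next I would check that $\Phi$ has kernel exactly $\mathcal{I}_{\omega}(M_n,\varphi_n)$, so that it descends to a \emph{faithful} representation $\overline{\Phi}$ of $(M_n,\varphi_n)^{\omega}$. The key identity is $\bigl(\|y_n\|_{\varphi_n}^{\sharp}\bigr)^2=\|y_n\xi_n\|^2+\|y_n^*\xi_n\|^2$ for each $n$, so $(y_n)_n\in\mathcal{I}_{\omega}$ precisely when $\lim_{n\to\omega}\|y_n\xi_n\|=0$ and $\lim_{n\to\omega}\|y_n^*\xi_n\|=0$, i.e.\ when $\pi((y_n)_n)\xi_{\omega}=0$ and $\pi((y_n)_n)^*\xi_{\omega}=0$. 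If $\Phi(x)=0$ for $x=(x_n)_n$, then $\pi(x)\xi_{\omega}=0$ and, because $K$ reduces $\pi(x)$, also $\pi(x)^*|_K=(\Phi(x))^*=0$, whence $\pi(x)^*\xi_{\omega}=0$; together these give $(x_n)_n\in\mathcal{I}_{\omega}$. Conversely, if $(x_n)_n\in\mathcal{I}_{\omega}$ then, since $\mathcal{I}_{\omega}$ is an ideal of $\mathcal{M}^{\omega}$, $\pi(x)$ annihilates every vector $\pi((y_n)_n)\xi_{\omega}$ with $(y_n)_n\in\mathcal{M}^{\omega}$ and hence all of $K$. Thus $\overline{\Phi}\colon(M_n,\varphi_n)^{\omega}\to\mathbb{B}(K)$ is a faithful $*$-representation, and it remains only to show its image $\pi(\mathcal{M}^{\omega})|_K$ is a von Neumann algebra.

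The hard part will be this last step: showing that $\pi(\mathcal{M}^{\omega}(M_n,\varphi_n))|_K$ is weakly closed, equivalently that its unit ball is strong$^*$-closed. By Kaplansky density it suffices to show that a strong$^*$ limit $T$ of a bounded net in $\pi(\mathcal{M}^{\omega})|_K$ is again of the form $\pi((y_n)_n)|_K$ with $(y_n)_n\in\mathcal{M}^{\omega}(M_n,\varphi_n)$; equivalently, one shows $(M_n,\varphi_n)^{\omega}$ is monotone complete and admits a separating family of normal states (such as $x\mapsto\lim_{n\to\omega}\varphi_n(a_n^*x_na_n)$) and invokes the characterization of W$^*$-algebras among monotone complete C$^*$-algebras. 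The essential mechanism is a diagonalization over the index $n$ against the ultrafilter $\omega$: testing strong$^*$ convergence against the generating vectors $(a_n\xi_n)_{\omega}$ of $K$ reduces the problem to matching countably many $\lim_{n\to\omega}$-conditions, which can be met simultaneously using the countable incompleteness (saturation) of $\omega$ to select a single representing sequence. The two places where care is genuinely needed---and where the definition of $\mathcal{M}^{\omega}$ as the two-sided normalizer of $\mathcal{I}_{\omega}$ is indispensable---are in verifying that the diagonal sequence actually lies in $\mathcal{M}^{\omega}$ (rather than merely in $\ell^{\infty}(\mathbb{N},M_n)$, where $\mathcal{I}_{\omega}$ fails to be an ideal and no quotient algebra exists), and in correcting the candidate supremum of an increasing net---produced using monotone completeness of the ambient von Neumann algebra $\ell^{\infty}(\mathbb{N},M_n)$---by an element of $\mathcal{I}_{\omega}$ so that it lands in $\mathcal{M}^{\omega}$ and represents the least upper bound in the quotient. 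This lifting and normalizer bookkeeping is the crux; once it is in place, weak closedness follows and $(M_n,\varphi_n)^{\omega}$ is a W$^*$-algebra.
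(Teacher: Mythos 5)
Your construction up to the faithful representation is correct: the reducing subspace $K=\overline{\pi(\mathcal{M}^{\omega})\xi_{\omega}}$, the identification $\ker\Phi=\mathcal{I}_{\omega}$ (using that $\mathcal{I}_{\omega}$ is an ideal of $\mathcal{M}^{\omega}$ and that $(\|y\|_{\varphi}^{\sharp})^{2}=\|y\xi\|^{2}+\|y^{*}\xi\|^{2}$), and hence the isometric embedding of $(M_n,\varphi_n)^{\omega}$ into $\mathbb{B}(K)$; this is exactly the isometry $w$ of Theorem \ref{UW2.1.3}, and your $K$ is the subspace $qH_{\omega}$, $q=pJ_{\omega}pJ_{\omega}$. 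The gap is the last step, which is the entire content of the proposition, and your sketch of it does not go through as stated. First, the reduction of strong$^{*}$ convergence of a bounded \emph{net} to ``countably many $\lim_{n\to\omega}$-conditions'' is unjustified: $K$ is in general a nonseparable Hilbert space and the generating vectors $\pi(a)\xi_{\omega}$, $a\in\mathcal{M}^{\omega}$, are uncountable in number, so no countable diagonalization against $\omega$ can test convergence on all of them. Second, and more fundamentally: even if you produce $(y_n)_n\in\mathcal{M}^{\omega}$ with $\pi(y)\xi_{\omega}=T\xi_{\omega}$ and $\pi(y)^{*}\xi_{\omega}=T^{*}\xi_{\omega}$ (which only requires matching conditions at the single vector $\xi_{\omega}$ and can indeed be achieved by a diagonal argument), you cannot conclude $T=\Phi(y)$ on all of $K$: from $\|\pi(x^{\alpha}-y)\xi_{\omega}\|\to 0$ nothing forces $\|\pi((x^{\alpha}-y)a)\xi_{\omega}\|\to 0$. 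What is needed is that $\xi_{\omega}$ be cyclic for the commutant of $\Phi(\mathcal{M}^{\omega})$ in $\mathbb{B}(K)$, equivalently separating for the weak closure, and your setup gives no access to that commutant; establishing this is essentially equivalent to the proposition itself, so the argument as sketched is circular at its crucial point.

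The paper avoids both obstacles by working on all of $H_{\omega}$ with the ambient algebra $N=\prod^{\omega}M_n$, which is a von Neumann algebra by fiat (it is defined as a strong closure), so that its commutant $N'$ and the support projection $p$ of $(\varphi_n)_{\omega}$ are available from the start. In Lemma \ref{UW2.1.5}, the Kaplansky net is used only to place the pair $(x\xi_{\omega},x^{*}\xi_{\omega})$ in the closure of the set $E$ of pairs $((x_n)_{\omega}\xi_{\omega},(x_n^{*})_{\omega}\xi_{\omega})$, a subset of the metric space $H_{\omega}\oplus H_{\omega}$ whose closedness is proven by a sequential diagonal argument; and agreement at $\xi_{\omega}$ is propagated to all of $pH_{\omega}$ through the \emph{commutant}, via $xy'\xi_{\omega}=y'x\xi_{\omega}$ for $y'\in N'$, not through the algebra itself. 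The bookkeeping of Proposition \ref{UW2.1.6} and Proposition \ref{prop: normalizer iff commutes with p} (membership in $\mathcal{M}^{\omega}$ is equivalent to commuting with $p$) then identifies $\mathcal{M}^{\omega}/\mathcal{I}_{\omega}$ with the corner $pNp$, which is automatically a von Neumann algebra; this criterion is also what certifies that lifted sequences lie in $\mathcal{M}^{\omega}$, the other difficulty you acknowledge but do not resolve. Finally, note that the paper's official proof is a citation of Ocneanu's argument, which rests on the criterion that a C$^{*}$-algebra with a faithful state whose unit ball is complete in the $\|\cdot\|^{\sharp}$-metric is a W$^{*}$-algebra; a self-contained proof must either establish that criterion or set up the ambient-algebra machinery above. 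Your fallback via monotone completeness and Kadison's characterization is likewise only named, not executed, and faces the same net-lifting problem.
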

We remark that  Proposition \ref{prop: normalizer iff commutes with p} below gives an alternative proof of Proposition \ref{UW1.2.1}. 

We denote the image of $(x_n)_n\in \mathcal{M}^{\omega}(M_n,\varphi_n)$ in $(M_n,\varphi_n)^{\omega}$ as $(x_n)^{\omega}$. Then we have the following straightforward generalization of \cite[$S$5.1]{Ocneanu}:
\begin{proposition}\label{UW1.2.2} The following defines a normal faithful state $(\varphi_n)^{\omega}$ on $(M_n,\varphi_n)^{\omega}$:
\[(\varphi_n)^{\omega}((x_n)^{\omega}):=\lim_{n\to \omega}\varphi_n(x_n),\ \ \ (x_n)^{\omega}\in (M_n,\varphi_n)^{\omega}.\]
\end{proposition}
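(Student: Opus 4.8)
The plan is to check, in turn, that the formula is well defined on the quotient, that it is a state, that it is normal, and finally that it is faithful; only the last point carries real content.

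First I would dispose of well-definedness and the algebraic properties. Representing $\varphi_n$ as the vector state $\langle\, \cdot\, \xi_{\varphi_n},\xi_{\varphi_n}\rangle$ on its GNS space, one has $|\varphi_n(x_n)| \le \|x_n\xi_{\varphi_n}\| = \|x_n\|_{\varphi_n} \le \|x_n\|_{\varphi_n}^{\sharp}$. Hence if $(x_n)_n \in \mathcal{I}_{\omega}(M_n,\varphi_n)$ then $\lim_{n\to\omega}\varphi_n(x_n) = 0$, so the value $\lim_{n\to\omega}\varphi_n(x_n)$ depends only on the class $(x_n)^{\omega}$. Linearity is immediate from linearity of each $\varphi_n$ and of the limit along $\omega$, and $(\varphi_n)^{\omega}(1)=1$. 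For positivity, given $(x_n)^{\omega}\ge 0$ in the $C^*$-algebra $(M_n,\varphi_n)^{\omega}$, lift its square root to some $(y_n)_n\in\mathcal{M}^{\omega}(M_n,\varphi_n)$, so that $(y_n)^{\omega}{}^{*}(y_n)^{\omega}=(x_n)^{\omega}$; by the well-definedness just proved, $(\varphi_n)^{\omega}((x_n)^{\omega})=\lim_{n\to\omega}\varphi_n(y_n^*y_n)\ge 0$. Together with $\|(\varphi_n)^{\omega}\|=(\varphi_n)^{\omega}(1)=1$ this shows $(\varphi_n)^{\omega}$ is a state.

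Next I would establish normality by exhibiting $(\varphi_n)^{\omega}$ as a vector functional in the representation underlying Proposition~\ref{UW1.2.1}. Let $H_{\omega}:=(H_{\varphi_n})_{\omega}$ be the ultraproduct of the GNS spaces and $\xi:=(\xi_{\varphi_n})_{\omega}$ the canonical unit vector. The left action $(x_n)_n\cdot(\eta_n)_{\omega}:=(x_n\eta_n)_{\omega}$ descends to the quotient on the invariant subspace $\overline{(M_n,\varphi_n)^{\omega}\,\xi}$: indeed $\mathcal{I}_{\omega}$ annihilates $\mathcal{M}^{\omega}\xi$, since $\mathcal{I}_{\omega}\mathcal{M}^{\omega}\subset\mathcal{I}_{\omega}$ forces the relevant $\|\cdot\|_{\varphi_n}$-norms to $0$ along $\omega$. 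Under this representation $\langle (x_n)^{\omega}\,\xi,\xi\rangle=\lim_{n\to\omega}\varphi_n(x_n)=(\varphi_n)^{\omega}((x_n)^{\omega})$. As this is the very representation by which Proposition~\ref{UW1.2.1} realizes the quotient as a W$^*$-algebra (a normal representation), $(\varphi_n)^{\omega}$ is a vector functional of a von Neumann algebra in a normal representation, hence automatically normal; no new input is needed.

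The main obstacle is faithfulness. Suppose $(\varphi_n)^{\omega}\big((x_n)^{\omega}{}^{*}(x_n)^{\omega}\big)=0$, i.e. $\lim_{n\to\omega}\varphi_n(x_n^*x_n)=\lim_{n\to\omega}\|x_n\|_{\varphi_n}^2=0$; I must deduce $(x_n)_n\in\mathcal{I}_{\omega}$, which, since $\big(\|x_n\|_{\varphi_n}^{\sharp}\big)^2=\varphi_n(x_n^*x_n)+\varphi_n(x_nx_n^*)$, amounts to proving in addition that $\lim_{n\to\omega}\|x_n^*\|_{\varphi_n}=0$. This is exactly the step that fails for a general bounded sequence — the seminorms $\|\cdot\|_{\varphi_n}$ and $\|\cdot\|_{\varphi_n}^{\sharp}$ are linked only through the unbounded operators $\Delta_{\varphi_n}^{1/2}$ — and it is precisely here that the hypothesis $(x_n)_n\in\mathcal{M}^{\omega}(M_n,\varphi_n)$, that $(x_n)_n$ two-sidedly normalizes $\mathcal{I}_{\omega}$, must be used. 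The key lemma I would prove (or extract from the proof of Proposition~\ref{UW1.2.1}) is that on the normalizer the two conditions $\lim_{n\to\omega}\|x_n\|_{\varphi_n}=0$ and $\lim_{n\to\omega}\|x_n\|_{\varphi_n}^{\sharp}=0$ coincide. The mechanism is that a normalizing sequence can be approximated in $\sharp$-norm by sequences $(y_n)_n$ with $y_n$ lying in a fixed compact spectral subspace $M_n(\sigma^{\varphi_n},[-K,K])$, on which $\Delta_{\varphi_n}^{1/2}$ acts with norm at most $e^{K/2}$, so that $\|y_n^*\|_{\varphi_n}\le e^{K/2}\|y_n\|_{\varphi_n}$ and the left and right $L^2$-seminorms become comparable; this comparability then propagates the vanishing of $\|x_n\|_{\varphi_n}$ to that of $\|x_n^*\|_{\varphi_n}$. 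This equivalence is the genuinely non-tracial phenomenon here (it is the same fact later isolated in Proposition~\ref{prop: characterization of the normalizer}), and is where I expect the real work to lie, while every other part of the statement is routine.
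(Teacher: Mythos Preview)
Your reduction of faithfulness to the equality $\mathcal{M}^{\omega}\cap\mathcal{L}_{\omega}=\mathcal{I}_{\omega}$ is exactly right, and the routine parts (well-definedness, positivity, normality via the GNS vector state) are fine. The issue is the mechanism you propose for that equality.

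As written, the spectral argument has a quantitative gap. The approximation in Proposition~\ref{prop: characterization of the normalizer} gives, for each $\varepsilon>0$, a bound $K=K(\varepsilon)$ and $(y_n)_n$ with $\lim_{n\to\omega}\|x_n-y_n\|_{\varphi_n}^{\sharp}<\varepsilon$; your chain of inequalities then yields only $\lim_{n\to\omega}\|x_n^*\|_{\varphi_n}\le (e^{K(\varepsilon)/2}+1)\varepsilon$, which need not tend to $0$. A repair is possible (for each \emph{fixed} $K$ the Fej\'er truncation $(\sigma_{F_K}^{\varphi_n}(x_n))_n$ lies in $\mathcal{I}_{\omega}$ by your comparability, hence vanishes in the quotient, and then one lets $K\to\infty$), but that last limit needs the continuity of $t\mapsto\sigma_t^{\varphi^{\omega}}$ from Theorem~\ref{thm: ultrapower of modular automorphism}, placing the argument logically downstream of machinery the paper develops only later.

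The paper (following Ocneanu) takes a far shorter, purely C$^*$-algebraic route, with no modular theory at all. The statement $\mathcal{M}^{\omega}\cap\mathcal{L}_{\omega}=\mathcal{I}_{\omega}$ is the special case $A=\ell^{\infty}$, $L=\mathcal{L}_{\omega}$, $B=\mathcal{I}_{\omega}$, $M=\mathcal{M}^{\omega}$ of the general fact proved in Step~1 of Lemma~\ref{lem: Mcap (L+L^*)=B}: for any closed left ideal $L$ in a C$^*$-algebra, with hereditary subalgebra $B=L\cap L^*$ and two-sided multiplier $M$ of $B$, one has $M\cap L=B$. Indeed, if $x\in M\cap L$ then $x^*x\in L^*L\subset B$; since $x\in M$ this gives $(xx^*)^2=x(x^*x)x^*\in B$, hence $xx^*\in B$; the bijection $L=\{a:a^*a\in B\}$ of Theorem~\ref{thm: hereditary subalgebras} then forces $x^*\in L$, so $x\in L\cap L^*=B$. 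That three-line argument is why the proposition is called a ``straightforward generalization'', and it avoids both the quantitative issue and any dependence on the later spectral characterization.
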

The special case considered by Ocneanu is the following:  all $M_n$ are equal to a fixed von Neumann algebra $M$, and all $\varphi_n$ are equal to a fixed normal faithful state $\varphi$ on $M$. In this case, we denote $(M_n,\varphi_n)^{\omega}$ by $M^{\omega}$, since the latter algebra does not depend on $\varphi$ (in fact, $\mathcal{I}_{\omega}(M_n,\varphi_n)$ determines the same set of bounded sequences for different state $\varphi$); we also denote $(\varphi_n)^{\omega}$ by $\varphi^{\omega}$. 

\subsection{The Groh-Raynaud Ultraproduct}\label{sec:The Groh-Raynaud Ultraproduct} 
In this section, we define Groh \cite{Groh}-Raynaud \cite{Raynaud}'s ultraproduct of a sequence of von Neumann algebras, which is in a rather direct way related to the ultraproduct of C$^*$-algebras and Hilbert spaces. 

Let $(H_n)_n$ be a sequence of Hilbert spaces, and let $H_{\omega}:=(H_n)_{\omega}$. Let $(\mathbb{B}(H_n))_{\omega}$ be the Banach space ultraproduct of $(\mathbb{B}(H_n))_n$. 
\begin{definition}
Define $\pi_{\omega}\colon (\mathbb{B}(H_n))_{\omega}\to \mathbb{B}(H_{\omega})$ by
\[\pi_{\omega}((a_n)_{\omega})(\xi_n)_{\omega}:=(a_n\xi_n)_{\omega},\ \ \ (a_n)_n\in \ell^{\infty}(\mathbb{N},\mathbb{B}(H_n)),\ \ (\xi_n)_{\omega}\in H_{\omega}.\]
\end{definition}
It is easy to check that $\pi_{\omega}((a_n)_{\omega})$ is a well-defined *-homomorphism, and since 
\[\|\pi_{\omega}((a_n)_{\omega})\|=\lim_{n\to \omega}\|a_n\|=\|(a_n)_{\omega}\|,\ \ \ \ (a_n)_{\omega}\in (\mathbb{B}(H_n))_{\omega},\]
$\pi_{\omega}$ is injective.

\begin{lemma}\label{UW2.1.1} $\pi_{\omega}((\mathbb{B}(H_n))_{\omega})$ is strongly dense in $\mathbb{B}(H_{\omega})$.
\end{lemma}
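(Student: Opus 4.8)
The plan is to prove the much stronger statement that every finite-rank operator on $H_{\omega}$ already lies in the image $\pi_{\omega}((\mathbb{B}(H_n))_{\omega})$, and then to invoke the elementary fact that the finite-rank operators are strongly dense in $\mathbb{B}(H_{\omega})$. This avoids any Kaplansky-type density argument and reduces the whole lemma to a direct computation.

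First I would show that every rank-one operator on $H_{\omega}$ is in the image. Fix $\xi=(\xi_n)_{\omega}$ and $\eta=(\eta_n)_{\omega}$ in $H_{\omega}$, choosing bounded representatives $(\xi_n)_n,(\eta_n)_n\in \ell^{\infty}(\mathbb{N},H_n)$ (possible since $\|\xi\|=\lim_{n\to\omega}\|\xi_n\|$ and likewise for $\eta$). For each $n$ let $a_n\in \mathbb{B}(H_n)$ be the rank-one operator $\zeta\mapsto \langle \zeta,\eta_n\rangle\xi_n$. Then $\|a_n\|=\|\xi_n\|\,\|\eta_n\|$ is bounded in $n$, so $(a_n)_{\omega}\in (\mathbb{B}(H_n))_{\omega}$. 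For any $\zeta=(\zeta_n)_{\omega}\in H_{\omega}$ the scalars $\langle \zeta_n,\eta_n\rangle$ form a bounded sequence converging along $\omega$ to $\langle \zeta,\eta\rangle$ (by the definition of the inner product on $H_{\omega}$), and hence
\[
\pi_{\omega}((a_n)_{\omega})\zeta=(a_n\zeta_n)_{\omega}=\bigl(\langle \zeta_n,\eta_n\rangle\xi_n\bigr)_{\omega}=\langle \zeta,\eta\rangle\,(\xi_n)_{\omega}=\langle \zeta,\eta\rangle\,\xi .
\]
Thus the rank-one operator $\zeta\mapsto \langle \zeta,\eta\rangle\xi$ on $H_{\omega}$ equals $\pi_{\omega}((a_n)_{\omega})$ and so lies in the image.

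Since $\pi_{\omega}$ is a $*$-homomorphism and in particular linear, the image is a linear subspace, so it contains all finite linear combinations of such rank-one operators, i.e. all finite-rank operators on $H_{\omega}$. Finally, given $T\in \mathbb{B}(H_{\omega})$ together with finitely many vectors $\zeta^{(1)},\dots,\zeta^{(k)}\in H_{\omega}$, let $P$ be the orthogonal projection onto their linear span; then $TP$ is finite rank and satisfies $TP\zeta^{(j)}=T\zeta^{(j)}$ for every $j$, which shows that the finite-rank operators, hence a fortiori $\pi_{\omega}((\mathbb{B}(H_n))_{\omega})$, are strongly dense in $\mathbb{B}(H_{\omega})$. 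The only points requiring care are the choice of bounded representatives and the passage to the ultralimit of the scalar sequence $\langle \zeta_n,\eta_n\rangle$; neither presents a genuine obstacle, so I expect the proof to be short once the rank-one identity is isolated. (An alternative route would be to compute the commutant of $\pi_{\omega}((\mathbb{B}(H_n))_{\omega})$ and show it is trivial, but the direct approach above is cleaner and makes the finite-rank operators explicit.)
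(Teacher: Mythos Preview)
Your proof is correct and follows essentially the same approach as the paper: both show that rank-one operators on $H_{\omega}$ lie in the image of $\pi_{\omega}$ by constructing them coordinatewise, and then conclude strong density. The only cosmetic difference is that the paper verifies this for rank-one \emph{projections} (taking $p_n$ to be the projection onto $\mathrm{span}(\xi_n)$ and checking $\pi_{\omega}((p_n)_{\omega})$ is the projection onto $\mathrm{span}(\xi)$) and then invokes that these generate $\mathbb{B}(H_{\omega})$ as a von Neumann algebra, whereas you handle arbitrary rank-one operators $\zeta\mapsto\langle\zeta,\eta\rangle\xi$ directly and pass through finite-rank operators; the underlying computation is the same.
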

\begin{proof}
Let $\xi=(\xi_n)_{\omega}\in H_{\omega}$ and let $p_n\in \mathbb{B}(H_n)$ be the projection onto span$(\xi_n)\subset H_n$. Then $p:=\pi_{\omega}((p_n)_{\omega})$ is the projection onto span$(\xi)\subset H_{\omega}$, as for any $\eta=(\eta_n)_{\omega}\in H_{\omega}$ and $\zeta=(\zeta_n)_{\omega}\in H_{\omega}$, we have:
\eqa{
\nai{p\eta}{\zeta}&=\lim_{n\to \omega}\nai{(p\eta)_n}{\zeta_n}=\lim_{n\to \omega}\nai{p_n\eta_n}{\zeta_n}\\
&=\lim_{n\to \omega}\nai{\eta_n}{\xi_n}\nai{\xi_n}{\zeta_n}=\nai{\eta}{\xi}\nai{\xi}{\zeta}\\
&=\nai{\nai{\eta}{\xi}\xi}{\zeta}.
}
This shows that any rank one projection in $\mathbb{B}(H_{\omega})$ is contained in the subalgebra $\pi_{\omega}(\mathbb{B}(H_n)_{\omega})$. Therefore $\pi_{\omega}((\mathbb{B}(H_n))_{\omega})$ generates $\mathbb{B}(H_{\omega})$ as a von Neumann algebra. 
\end{proof}
\begin{definition}\label{UW2.1.2} Let $(M_n)_n$ be a sequence of W$^*$-algebras.  
\begin{list}{}{}
\item[(1)] Let $M_n\subset \mathbb{B}(H_n)$ be a fixed faithful representation of $M_n$ on a Hilbert space $H_n$. The {\it abstract ultraproduct} of the sequence $(M_n,H_n)_n$ is defined 
as the strong operator closure of $\pi_{\omega}((M_n)_{\omega})$ in $\mathbb{B}(H_{\omega})$, and is denoted as $\prod^{\omega}(M_n,H_n)$. 
\item[(2)] The {\it Groh-Raynaud ultraproduct} of $(M_n)_n$, denoted simply as $\prod^{\omega}M_n$ is defined as $\prod^{\omega}M_n:=\prod^{\omega}(M_n,H_n)$, where we choose the standard representation of $M_n$.  
\end{list}
\end{definition}
From Lemma \ref{UW2.1.1}, it follows that 
\[\prod^{\omega}(\mathbb{B}(H_n),H_n)=\mathbb{B}(H_{\omega}).\]
However, note that the Groh-Raynaud ultraproduct $\prod^{\omega}\mathbb{B}(H_n)$ is not equal to $\mathbb{B}(H_{\omega})$.
\begin{remark}\label{rem: UP of B(H) is not W*}
Let $H$ be a separable infinite-dimensional Hilbert space. We remark that although  $\pi_{\omega} (\mathbb{B}(H)_{\omega})$ is srongly dense in $\mathbb{B}(H_{\omega})$, $\pi_{\omega}$ is not surjective.\\
To see this, using the weak compactness of the unit ball of $H_{\omega}$, define $P\in \mathbb{B}(H_{\omega})$ by 
\[P(\xi_n)_{\omega}:=(\xi)_{\omega},\ \ \ \xi:=\text{weak-}\lim_{n\to \omega}\xi_n,\ \ \ \ (\xi_n)_{\omega}\in H_{\omega}.\]
$P$ is well-defined and is bounded, because for each $n\in \mathbb{N}$ we have
\eqa{
|\nai{P(\xi_n)_{\omega}}{P(\xi_n)_{\omega}}|&=|\nai{\xi}{\xi}|=\lim_{k\to \omega}\lim_{n\to \omega}|\nai{\xi_k}{\xi_n}|\\
&\le \|(\xi_n)_{\omega}\|^2.}
Therefore $P\in \mathbb{B}(H_{\omega})$. It is easy to see that $P^2=P$ holds. We show that $P\notin \pi (\mathbb{B}(H)_{\omega})$. Assume by contradiction that there is a bounded sequence $(p_n)_n\in \ell^{\infty}(\mathbb{N},\mathbb{B}(H))$ such that $\pi_{\omega}((p_n)_{\omega})=P$ holds. This means that if a bounded sequence $(\xi_n)_n$ in $H$ converges weakly to $\xi\in H$, then $\|p_n\xi_n-\xi\|\to 0 \ (n\to \omega)$. In particular, $p_n\to 1\ (n\to \omega)$ strongly.  
\\ \\
\textbf{Step 1}. We first show that $P=P^*$, hence $P$ is a projection (onto the closed subspace $H$ of $H_{\omega}$). 
Let $(\xi_n)_n,\ (\eta_n)_n\in \ell^{\infty}(\mathbb{N},H)$ and let $\xi=\text{weak-}\lim_{n\to \omega}\xi_n, \eta=\text{weak-}\lim_{n\to \omega}\eta_n$. We have
\eqa{
\nai{P(\xi_n)_{\omega}}{(\eta_n)_{\omega}}&=\lim_{n\to \omega}\nai{\xi}{\eta_n}=\nai{\xi}{\eta}\\
&=\lim_{n\to \omega}\nai{\xi_n}{\eta}=\nai{(\xi_n)_{\omega}}{P(\eta_n)_{\omega}},
}
whence $P=P^*$ holds.\\ \\
\textbf{Step 2}. %If a sequence of projections $(p_n)_n$ converges strongly to 1 along $\omega$, 
There exists a sequence $(\eta_n)_n$ of unit vectors in $H$ such that $\{n\in \mathbb{N};\ \eta_n\in \text{ran}(p_n)\}\in \omega$ and $\eta_n\to 0\ (n\to \omega)$ weakly.\\
To see this, fix an orthonormal base $(e_n)_n$ of $H$. Since $\lim_{n\to \omega}\|p_ne_1-e_1\|=0$, we have
\[\left \{n\in \mathbb{N};\ \|p_ne_1-e_1\|<\frac{1}{3}\right \}\subset \left \{n\in \mathbb{N};\ \|p_ne_1\|\ge \frac{1}{2}\right \}=:I_1\in \omega.\]
Define $l_n:=\max \{1\le j\le n;\ \|p_ne_j\|\ge \frac{1}{2}\}$ for each $n\in I_1$. 
We then define $(\eta_n)_n$ by 
\[\eta_n:=\begin{cases}
\dfrac{p_ne_{l_n}}{\|p_ne_{l_n}\|} & (n\in I_1)\\
\ \ \ e_1 & (n\notin I_1).
\end{cases}\]
Next, suppose $i\ge 1$ and $\varepsilon>0$ are given. Let 
\[I_2:=\{n\in \mathbb{N};\ \|p_ne_i-e_i\|<\varepsilon/2\}\in \omega.\]
Since 
%$P$ is identity on the subspace $H$ consisting of constant vectors, we have that 
$\lim_{n\to \omega}p_n=1$ strongly,  the set $I_3$ defined by 
\[I_3:=\{n\in \mathbb{N};\ l_n>i\}\]
 belongs to $\omega$ as well. Then for each $n\in I:=I_1\cap I_2\cap I_3\in \omega$, we have 
\eqa{
|\nai{\eta_n}{e_i}|&\le \frac{1}{\|p_ne_{l_n}\|}\{|\nai{e_{l_n}}{p_ne_i-e_i}|+|\nai{e_{l_n}}{e_i}|\}\\
&\le 2\|p_ne_i-e_i\|<\varepsilon. 
}
This shows that $\lim_{n\to \omega}\nai{\eta_n}{e_i}=0$. Since $i\in \mathbb{N}$ is arbitrary, we obtain the claim.\\ \\
\textbf{Step 3}. We get a contradiction.\\
Since $P=\pi((p_n)_{\omega})$ is a projection, we may choose $p_n$ to be a projection for all $n\in \mathbb{N}$ (see e.g., \cite[Proposition 2.1 (4)]{GeHa}).  
%Since sot-$\lim_{n\to \omega}p_n=1$, 
By Step 2, there  exists a sequence of unit vectors $(\eta_n)_n$ such that $J:=\{n\in \mathbb{N}; \eta_n\in \text{ran}(p_n)\}\in \omega$ and $\text{weak-}\lim_{n\to \omega}\eta_n=0$. Then, by definition, we have $P(\eta_n)_{\omega}=0$. However, for $n\in J$, we have $\|p_n\eta_n\|=\|\eta_n\|=1$, hence $p_n\eta_n$ does not tend to 0 along $\omega$. This is a contradiction. Hence $P$ is not in the range of $\pi$.
\end{remark} 
\subsection{Relationship between $(M_n,\varphi_n)^{\omega}$ and $\prod^{\omega}M_n$}\label{subsec: Relation ship between Ocneanu and Rayanud}
As we have seen, there are two notions of ultraproducts for von Neumann algebras. The following theorem explains the relation between the Ocneanu ultraproduct and the Groh-Raynaud ultraproduct:
\begin{theorem}\label{UW2.1.3} Let $(M_n)_n$ be a sequence of $\sigma$-finite von Neumann algebras and let a normal faithful state $\varphi_n$ on $M_n$ be given for each $n\in \mathbb{N}$. 
%With each $M_n$ acting standardly on $H_n=L^2(M_n,\varphi_n)$, and $\prod^{\omega}M_n\subset \mathbb{B}((H_n)_{\omega})$. 
Assume that each $M_n$ acts standardly on $H_n=L^2(M_n,\varphi_n)$, so that $\prod^{\omega}M_n\subset \mathbb{B}((H_n)_{\omega})$. Also let $M^{\omega}=(M_n,\varphi_n)^{\omega}, \varphi^{\omega}=(\varphi_n)^{\omega}$, and define $w\colon L^2(M^{\omega},\varphi^{\omega})\to (H_n)_{\omega}$ by 
\[w(x_n)^{\omega}\xi_{\varphi^{\omega}}:=(x_n\xi_{\varphi_n})_{\omega},\ \ \ (x_n)^{\omega}\in M^{\omega}.\]
Then $w$ is an isometry, and $w^*(\prod^{\omega}M_n)w=M^{\omega}$.
\end{theorem}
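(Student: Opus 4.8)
The plan is to prove the two claims in turn, deducing the algebra identity from a Tomita--Takesaki analysis of how the range of $w$ sits inside $(H_n)_\omega$. Throughout write $N:=\prod^{\omega}M_n\subset\mathbb{B}((H_n)_\omega)$, $\xi^{\omega}:=(\xi_{\varphi_n})_\omega$, and recall that $(x_n\xi_{\varphi_n})_\omega=\pi_\omega((x_n)_\omega)\xi^{\omega}$.

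\emph{Isometry.} First I would check that $w$ is well defined and isometric by the single computation
\[
\|(x_n\xi_{\varphi_n})_\omega\|^2=\lim_{n\to\omega}\varphi_n(x_n^*x_n)=\varphi^{\omega}\big((x_n^*x_n)^{\omega}\big)=\|(x_n)^{\omega}\xi_{\varphi^{\omega}}\|^2,\qquad (x_n)_n\in\mathcal{M}^{\omega}(M_n,\varphi_n).
\]
If $(x_n)_n\in\mathcal{I}_\omega$ the left-hand side vanishes, so the assignment is well defined on the dense subspace $M^{\omega}\xi_{\varphi^{\omega}}\subset L^2(M^{\omega},\varphi^{\omega})$, and the displayed equality shows it extends to an isometry $w$.

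\emph{The inclusion $M^{\omega}\subseteq w^*Nw$.} Using $w^*w=1$ together with the fact that $\mathcal{M}^{\omega}(M_n,\varphi_n)$ is an algebra, for $(a_n)_n\in\mathcal{M}^{\omega}$ I would compute on the dense domain that $\pi_\omega((a_n)_\omega)\,w(x_n)^{\omega}\xi_{\varphi^{\omega}}=w\,(a_nx_n)^{\omega}\xi_{\varphi^{\omega}}$, whence $w^*\pi_\omega((a_n)_\omega)w=(a_n)^{\omega}$. Since $\pi_\omega((a_n)_\omega)\in N$ and $\mathcal{M}^{\omega}\to M^{\omega}$ is surjective, this already gives $M^{\omega}\subseteq w^*Nw$.

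\emph{Locating the range projection (the main obstacle).} The crux is to show that $p:=ww^*$, the projection onto $\overline{\mathrm{ran}(w)}$, belongs to $N$. Here I would invoke that $N$ is in standard form on $(H_n)_\omega$ with modular conjugation $(J_{\varphi_n})_\omega$ and commutant $N'=\prod^{\omega}M_n'$ (Theorem~\ref{UW2.3.2}). For $c_n\in M_n$ with uniformly bounded $\sigma^{\varphi_n}$-spectral support, both $(c_n)_n$ and $(\sigma^{\varphi_n}_{-i/2}(c_n^*))_n$ lie in $\mathcal{M}^{\omega}$, and from $J_{\varphi_n}c_n\xi_{\varphi_n}=\sigma^{\varphi_n}_{-i/2}(c_n^*)\xi_{\varphi_n}$ one gets, for $b_n:=J_{\varphi_n}c_nJ_{\varphi_n}\in M_n'$,
\[
\pi_\omega((b_n)_\omega)(x_n\xi_{\varphi_n})_\omega=(x_n\,\sigma^{\varphi_n}_{-i/2}(c_n^*)\,\xi_{\varphi_n})_\omega\in\mathrm{ran}(w).
\]
Since such $b_n$ are total in $N'$, the subspace $\mathrm{ran}(w)$ is $N'$-invariant, i.e. $p\in(N')'=N$. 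The two least automatic points are exactly here: that uniformly bounded, bounded-spectral-support sequences normalize $\mathcal{I}_\omega$ (hence lie in $\mathcal{M}^{\omega}$), which I would verify by a direct KMS estimate, and the use of the standard-form identity $N'=\prod^{\omega}M_n'$.

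\emph{Closing via the commutant.} Once $p\in N$, conjugation by $w$ identifies the corners $pNp$ and $pN'p$ (von Neumann algebras on $\mathrm{ran}(w)=p(H_n)_\omega$) with $w^*Nw$ and $w^*N'w$; in particular both are von Neumann algebras, and the compression identity $p(npb'-b'pn)p=0$ for $n\in N$, $b'\in N'$ shows that they commute, so $w^*N'w\subseteq(w^*Nw)'$. Repeating the computation above gives $w^*\pi_\omega((J_{\varphi_n}c_nJ_{\varphi_n})_\omega)w=R_d$, right multiplication by the analytic element $d=(\sigma^{\varphi_n}_{-i/2}(c_n^*))^{\omega}\in M^{\omega}$; as the operators $R_d$ generate $(M^{\omega})'$, I obtain $(M^{\omega})'\subseteq w^*N'w\subseteq(w^*Nw)'$. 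Taking commutants yields $w^*Nw\subseteq M^{\omega}$, and together with the second paragraph this gives $w^*\big(\prod^{\omega}M_n\big)w=M^{\omega}$.
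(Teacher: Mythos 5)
Your first two steps (the isometry and the inclusion $M^{\omega}\subseteq w^*Nw$) are correct and coincide with the paper's proof. The argument breaks down at what you call the crux: the statement you set out to prove there, namely that $ww^*\in N$ (equivalently, that $\mathrm{ran}(w)$ is $N'$-invariant), is \emph{false} in general, so no repair of the totality claim can save it. Indeed, the paper shows in Corollary \ref{prop: wM^{omega}w^*=qNq and ww^*=q} that $ww^*=pJ_{\omega}pJ_{\omega}$, where $p$ is the support projection of $\varphi_{\omega}$ from Definition \ref{def: support projection p}; this is a product of a projection in $N$ with one in $N'$, and it does not lie in $N$ in general. A concrete counterexample: take the constant sequence $M_n=\mathbb{B}(\ell^2)$, $\varphi_n=\varphi=\mathrm{Tr}(\rho\,\cdot)$ with $\rho=\mathrm{diag}(\lambda_1,\lambda_2,\dots)$, $\lambda_k>0$, $\sum_k\lambda_k=1$, matrix units $e_{jk}$ and basis $(f_k)$. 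The operator $b':=\pi_{\omega}\bigl((J_{\varphi}e_{n1}J_{\varphi})_{\omega}\bigr)$ lies in $N'$ (it commutes with $\pi_{\omega}((M_n)_{\omega})$, hence with its strong closure $N$), and it maps $\xi_{\omega}=w\xi_{\varphi^{\omega}}\in\mathrm{ran}(w)$ to $b'\xi_{\omega}=(J_{\varphi}e_{n1}\xi_{\varphi})_{\omega}$, a vector of norm $\lambda_1^{1/2}>0$ that is orthogonal to all of $\overline{N\xi_{\omega}}\supseteq\mathrm{ran}(w)$: identifying $L^2(\mathbb{B}(\ell^2),\varphi)$ with Hilbert--Schmidt operators (so $x\xi_{\varphi}=x\rho^{1/2}$ and $J_{\varphi}T=T^*$), one has $J_{\varphi}e_{n1}\xi_{\varphi}=\rho^{1/2}e_{1n}=\lambda_1^{1/2}e_{1n}$, while every vector in $N\xi_{\omega}$ has the form $(y_n\rho^{1/2})_{\omega}$ with $\sup_n\|y_n\|<\infty$, and $\langle\lambda_1^{1/2}e_{1n},\,y_n\rho^{1/2}\rangle=\lambda_1^{1/2}\lambda_n^{1/2}\langle y_n^*f_1,f_n\rangle\to 0$ along $\omega$ because $\lambda_n\to 0$. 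So $\mathrm{ran}(w)$ is not $N'$-invariant, i.e. $ww^*\notin N$. (The theorem itself survives this because $T\mapsto w^*Tw$ is not multiplicative on $N$; its image can be, and is, an algebra even though $ww^*\notin N$.)

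Since your invariance computation for the special elements $J_{\varphi_n}c_nJ_{\varphi_n}$ (uniformly bounded, uniformly bounded $\sigma^{\varphi_n}$-spectral support) is correct, the counterexample pinpoints the false step: such elements generate a von Neumann algebra strictly smaller than $N'$, so they are not total. The obstruction is exactly the phenomenon the paper isolates in the example following Theorem \ref{thm: ultrapower of modular automorphism}: the smoothing $c_n\mapsto\sigma^{\varphi_n}_{F_a}(c_n)$ converges only for each fixed $n$, and one cannot interchange $\lim_{a\to\infty}$ with $\lim_{n\to\omega}$, because the ultrapower of the modular flow is discontinuous outside the Ocneanu corner; in the example above, $(e_{n1})_n$ has spectral support $\log(\lambda_n/\lambda_1)\to-\infty$ and is invisible to uniformly smoothed sequences. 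The same issue makes your closing step circular even if $ww^*\in N$ were granted: the claim that the operators $R_d$ generate $(M^{\omega})'$ amounts to strong$^*$-density in $M^{\omega}$ of elements of uniformly bounded spectral support, which is Proposition \ref{prop: characterization of the normalizer} --- proved in the paper from Theorem \ref{thm: ultrapower of modular automorphism}, which in turn rests on Theorem \ref{UW2.1.3}. The paper's actual route avoids modular theory altogether: the real work is Proposition \ref{UW2.1.6} (via Lemma \ref{UW2.1.5} and the hereditary-subalgebra Corollary \ref{cor: M^{omega} cap (L+L^*)=I}), giving the decomposition $\ell^{\infty}=\mathcal{M}^{\omega}+\mathcal{L}_{\omega}+\mathcal{L}_{\omega}^*$, after which $w^*\pi_{\omega}((x_n)_{\omega})w=0$ for $(x_n)_n\in\mathcal{L}_{\omega}$ because $\mathcal{L}_{\omega}\mathcal{M}^{\omega}\subseteq\mathcal{L}_{\omega}$; that is precisely the part of $N$ your projection argument cannot reach.
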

To ease notation, let $N=\prod^{\omega}M_n$ in the sequel. That $w$ is indeed an isometry is seen by direct calculation. To show the identity $w^*Nw=M^{\omega}$, we need to study the following subsets of $\prod_{n\in \mathbb{N}}M_n$ (for which we use the indicated short notations):
\eqa{
\ell^{\infty}&:=\ell^{\infty}(\mathbb{N},M_n)\\
\mathcal{L}_{\omega}&:=\left \{(x_n)_n\in \ell^{\infty};\ \lim_{n\to \omega}\varphi_n(x_n^*x_n)=0\right \},\ \mathcal{L}_{\omega}^*:=\{(x_n^*)_n;\ (x_n)_n\in \mathcal{L}_{\omega}\}\\
\mathcal{M}^{\omega}&:=\mathcal{M}^{\omega}(M_n,\varphi_n),\ \ \mathcal{I}_{\omega}:=\mathcal{I}_{\omega}(M_n,\varphi_n)\\
%\mathcal{J}_{\omega}&:=\left \{(x_n)_n\in \ell^{\infty};\ \lim_{n\to \omega}\|x_n\|=0\right \}.
}
\begin{lemma}\label{lem: easy lemma}
$\mathcal{L}_{\omega}$ is a closed left ideal of $\ell^{\infty}$, and $\mathcal{I}_{\omega}=\mathcal{L}_{\omega}\cap \mathcal{L}_{\omega}^*$.
\end{lemma}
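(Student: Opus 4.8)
The plan is to check the three defining properties of a closed left ideal for $\mathcal{L}_{\omega}$ one at a time, and then to read off the identity $\mathcal{I}_{\omega}=\mathcal{L}_{\omega}\cap\mathcal{L}_{\omega}^*$ directly from the definition of $\|\cdot\|_{\varphi_n}^{\sharp}$. Throughout I will rely on only two elementary facts. First, for a bounded sequence of reals the ultralimit $\lim_{n\to\omega}$ exists and is additive, positively homogeneous, and monotone; all the quantities $\varphi_n(x_n^*x_n)$ below are bounded by $\sup_n\|x_n\|^2<\infty$, so the ultralimits in question always exist. Second, since each $\varphi_n$ is a state, $\|y\|_{\varphi_n}=\varphi_n(y^*y)^{1/2}\le\|y\|$ for every $y\in M_n$, so each coordinate seminorm is dominated by the operator norm.

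That $\mathcal{L}_{\omega}$ is a linear subspace follows from the seminorm triangle inequality $\|x_n+y_n\|_{\varphi_n}\le\|x_n\|_{\varphi_n}+\|y_n\|_{\varphi_n}$ together with additivity and monotonicity of $\lim_{n\to\omega}$. For the left ideal property I take $(a_n)_n\in\ell^{\infty}$ and $(x_n)_n\in\mathcal{L}_{\omega}$ and use the operator inequality $a_n^*a_n\le\|a_n\|^2\,1$, which upon conjugation gives $x_n^*a_n^*a_nx_n\le\|a_n\|^2\,x_n^*x_n$; applying the positive functional $\varphi_n$ and passing to $\lim_{n\to\omega}$ yields $\lim_{n\to\omega}\varphi_n\big((a_nx_n)^*(a_nx_n)\big)\le(\sup_k\|a_k\|)^2\lim_{n\to\omega}\varphi_n(x_n^*x_n)=0$, so $(a_nx_n)_n\in\mathcal{L}_{\omega}$. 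The point worth emphasizing — and the reason $\mathcal{L}_{\omega}$ is only a \emph{left} ideal — is that this conjugation estimate is one-sided: multiplication on the right would produce $\varphi_n(a_n^*x_n^*x_na_n)$, which cannot be dominated by $\varphi_n(x_n^*x_n)$ unless $\varphi_n$ is tracial. This asymmetry is precisely the obstruction (noted in the introduction) to $\mathcal{I}_{\omega}$ being a two-sided ideal of $\ell^{\infty}$ in the non-finite setting.

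For norm-closedness I take a sequence $(x^{(k)})_k$ in $\mathcal{L}_{\omega}$ converging in $\ell^{\infty}$ to $x=(x_n)_n$ and estimate $\|x_n\|_{\varphi_n}\le\|x_n-x_n^{(k)}\|_{\varphi_n}+\|x_n^{(k)}\|_{\varphi_n}\le\|x^{(k)}-x\|_{\infty}+\|x_n^{(k)}\|_{\varphi_n}$, bounding the first term by $\|\cdot\|_{\varphi_n}\le\|\cdot\|$. Taking $\lim_{n\to\omega}$ kills the second term and leaves $\lim_{n\to\omega}\|x_n\|_{\varphi_n}\le\|x^{(k)}-x\|_{\infty}$; letting $k\to\infty$ gives $x\in\mathcal{L}_{\omega}$.

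Finally, the identity $\mathcal{I}_{\omega}=\mathcal{L}_{\omega}\cap\mathcal{L}_{\omega}^*$ is a reformulation. Since $(\|x_n\|_{\varphi_n}^{\sharp})^2=\varphi_n(x_n^*x_n)+\varphi_n(x_nx_n^*)$ is a sum of two nonnegative terms, its ultralimit vanishes if and only if both $\lim_{n\to\omega}\varphi_n(x_n^*x_n)=0$ and $\lim_{n\to\omega}\varphi_n(x_nx_n^*)=0$. The first says $(x_n)_n\in\mathcal{L}_{\omega}$; the second, which upon rewriting $\varphi_n(x_nx_n^*)=\|x_n^*\|_{\varphi_n}^2$ states that $(x_n^*)_n\in\mathcal{L}_{\omega}$, is by definition of $\mathcal{L}_{\omega}^*$ exactly the statement $(x_n)_n\in\mathcal{L}_{\omega}^*$. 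I do not anticipate a genuine obstacle; the only care needed is the routine verification at each step that the ultralimit exists and respects the order, additive, and scalar structure being used.
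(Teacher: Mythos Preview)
Your proof is correct and follows the same approach as the paper's: the left ideal property via the operator inequality $a_n^*a_n\le\|a_n\|^2\cdot 1$, and the identity $\mathcal{I}_{\omega}=\mathcal{L}_{\omega}\cap\mathcal{L}_{\omega}^*$ read off from the decomposition $(\|x_n\|_{\varphi_n}^{\sharp})^2=\|x_n\|_{\varphi_n}^2+\|x_n^*\|_{\varphi_n}^2$. The paper's proof is terser (it simply declares the subspace and closedness properties ``easy to see'' and the final identity ``obvious''), whereas you have written out the routine verifications explicitly, including the closedness argument via $\|\cdot\|_{\varphi_n}\le\|\cdot\|$.
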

\begin{proof}
It is easy to see that $\mathcal{L}_{\omega}$ is a closed subspace of $\ell^{\infty}$. 
Let $(x_n)_n\in \mathcal{L}_{\omega}$ and $(a_n)_n\in \ell^{\infty}$. Then we have $\varphi(x_n^*a_n^*a_nx_n)\le \|a_n\|^2\varphi(x_n^*x_n)\stackrel{n\to \omega}{\to}0$. Therefore $(a_nx_n)_n\in \mathcal{L}_{\omega}$ and $\mathcal{L}_{\omega}$ is a closed left ideal of $\ell^{\infty}$. The last claim is obvious. 
\end{proof}
Before going further, we prove a result about hereditary C$^*$-subalgebras. Recall the following
\begin{theorem}{\rm{\cite[Theorem 3.2.1]{Murphy}}}\label{thm: hereditary subalgebras}
Let $A$ be a {\rm{C}}$^*$-algebra. If $L$ is a closed left ideal in $A$, then $L\cap L^*$ is a hereditary {\rm{C}}$^*$-subalgebra of $A$. The map $L\mapsto B(L):=L\cap L^*$ is a bijection from the set of closed left ideals of $A$ onto the set of hereditary {\rm{C}}$^*$-subalgebras of $A$. The inverse of the map is given by $B\mapsto L(B)$, where $B$ is a hereditary {\rm{C}}$^*$-subalgebra of $A$ and 
\[L(B):=\{a\in A;\ a^*a\in B\}.\]
\end{theorem}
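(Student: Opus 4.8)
The plan is to prove the theorem in two stages: first that $L\mapsto B(L):=L\cap L^*$ really lands among hereditary C$^*$-subalgebras, and then that $B\mapsto L(B)$ is its two-sided inverse. The only genuinely non-trivial analytic ingredient is a functional-calculus square-root lemma, which I would isolate at the outset: \emph{if $0\le a\le b$ in a C$^*$-algebra $A$, then $a^{1/2}$ lies in the closed left ideal $\overline{Ab}$ generated by $b$.} Passing to the unitization if necessary, I would set $u_n:=a^{1/2}(1/n+b)^{-1}$ and compute, using that $b$ commutes with $(1/n+b)^{-1}$, the identity $a^{1/2}-u_nb=\tfrac1n a^{1/2}(1/n+b)^{-1}$. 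The estimate $\|a^{1/2}(1/n+b)^{-1}\|^2=\|(1/n+b)^{-1}a(1/n+b)^{-1}\|\le\|b(1/n+b)^{-2}\|\le n/4$ (the middle step from $a\le b$ and conjugation by the self-adjoint $(1/n+b)^{-1}$, the last from the scalar bound $\sup_{t\ge0}t/(1/n+t)^2=n/4$) then yields $\|a^{1/2}-u_nb\|\le 1/(2\sqrt n)\to0$, so $a^{1/2}\in\overline{Ab}$.

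Granting this, the first stage is routine. That $B:=L\cap L^*$ is a norm-closed, $*$-closed subalgebra follows since $L$ is a closed left ideal and $L^*$ a closed right ideal: for $x,y\in B$ one has $xy\in AL\subseteq L$ and $xy\in L^*A\subseteq L^*$. For hereditarity, take $a\in A$ with $0\le a\le b$ and $b\in B$; the square-root lemma gives $a^{1/2}\in\overline{Ab}\subseteq L$ (as $b\in L$), hence $a=a^{1/2}a^{1/2}\in AL\subseteq L$, and since $a=a^*$ also $a\in L^*$, so $a\in B$.

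For the inverse map I would first check that, for hereditary $B$, the set $L(B)=\{a\in A;\ a^*a\in B\}$ is a closed left ideal. Closedness is immediate from continuity of $a\mapsto a^*a$ and closedness of $B$; the subspace property uses the bound $(a+a')^*(a+a')\le 2(a^*a+a'^*a')$ together with hereditarity; and for $x\in A$, $a\in L(B)$ one has $0\le(xa)^*(xa)\le\|x\|^2a^*a\in B$, so $(xa)^*(xa)\in B$ by hereditarity, i.e. $xa\in L(B)$. The two compositions are then handled with an increasing positive approximate unit $(u_\lambda)$ of the C$^*$-algebra $B$. To see $L(L\cap L^*)=L$: any $a\in L$ has $a^*a\in L\cap L^*=B$ (as $a^*a\in AL\subseteq L$ is self-adjoint), so $L\subseteq L(B)$; conversely if $a^*a\in B$ then $\|a-au_\lambda\|^2=\|(1-u_\lambda)a^*a(1-u_\lambda)\|\to0$ by the approximate-unit property, and $au_\lambda\in AB\subseteq L$, whence $a\in L$ by closedness.

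Finally, for hereditary $B$ I would show $L(B)\cap L(B)^*=B$. The inclusion $B\subseteq L(B)\cap L(B)^*$ is clear. For the reverse, if $a\in L(B)\cap L(B)^*$ then $a^*a,aa^*\in B$, so the same computation (applied also to $a^*$) gives both $au_\lambda\to a$ and $u_\lambda a\to a$, hence $\|a-u_\lambda a u_\lambda\|\le\|a-u_\lambda a\|+\|u_\lambda\|\,\|a-au_\lambda\|\to0$. Since a hereditary subalgebra satisfies $bAb\subseteq B$ for $b\in B_+$ (decompose $x\in A$ into four positive parts and dominate $bx_{\pm}b\le\|x_\pm\|b^2\in B$), each $u_\lambda a u_\lambda\in B$, and closedness of $B$ yields $a\in B$. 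These four points establish the bijection. I expect the square-root lemma to be the main obstacle; everything else is bookkeeping with approximate units, and that lemma is precisely where the order structure of the C$^*$-algebra, rather than mere algebraic structure, enters.
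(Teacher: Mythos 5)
Your proof is correct and complete. Note, however, that the paper itself does not prove this statement at all: it is quoted verbatim from Murphy's book (Theorem 3.2.1 there) as background, so there is no in-paper argument to compare against; the relevant comparison is with the standard textbook proof. Your argument is essentially that proof, with one mild variation: for the hereditarity of $L\cap L^*$ you use the resolvent-type square-root lemma ($0\le a\le b$ implies $a^{1/2}\in\overline{Ab}$, via $u_n=a^{1/2}(1/n+b)^{-1}$ and the scalar bound $\sup_{t\ge 0}t/(1/n+t)^2=n/4$), whereas Murphy runs the analogous estimate $\|a^{1/2}-a^{1/2}u_\lambda\|^2=\|(1-u_\lambda)a(1-u_\lambda)\|\le\|(1-u_\lambda)b(1-u_\lambda)\|$ against an increasing approximate unit $(u_\lambda)$ of the ideal, which lets the same trick serve both the hereditarity step and the two composition identities. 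Your version buys a self-contained, quantitative lemma at the cost of a small amount of bookkeeping (e.g.\ the tacit fact that $b\in\overline{Ab}$, so that the approximants $u_nb\in\tilde{A}b$ indeed land in the closed left ideal generated by $b$ --- harmless here, since in the application one only needs $\tilde{A}b\subseteq L$). The remaining steps --- $L(B)$ being a closed left ideal via $(a+a')^*(a+a')\le 2(a^*a+a'^*a')$ and $a^*x^*xa\le\|x\|^2a^*a$, the identity $L(L\cap L^*)=L$ via $\|a-au_\lambda\|^2=\|(1-u_\lambda)a^*a(1-u_\lambda)\|$, and $L(B)\cap L(B)^*=B$ via $u_\lambda au_\lambda\to a$ together with $bAb\subseteq B$ for $b\in B_+$ --- are exactly the standard ones and are carried out correctly.
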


\begin{lemma}\label{lem: Mcap (L+L^*)=B}
Let $A$ be a {\rm{C}}$^*$-algebra, and let $L$ be a closed left ideal of $A$. Let $B=L\cap L^*$ be the corresponding hereditary {\rm{C}}$^*$-subalgebra of $A$, and let $M$ be the two-sided multiplier of $B$:
\[M:=\{a\in A; aB\subset B,\ Ba\subset B\}.\]
Then we have 
\begin{list}{}{}
\item[{\rm{(1)}}] $LM\subset L, ML^*\subset L^*$.
\item[{\rm{(2)}}]
$M\cap (L+L^*)=B$.
\end{list}
\end{lemma}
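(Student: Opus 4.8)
The plan is to handle the two assertions separately, relying throughout on the identification $L = L(B) = \{a \in A : a^*a \in B\}$ furnished by Theorem \ref{thm: hereditary subalgebras}, and on an increasing, positive, contractive approximate unit $(e_\lambda)$ for the hereditary subalgebra $B$. The one preliminary fact I would record first is that $M$ is self-adjoint: since $B = B^*$, the two defining conditions $aB \subseteq B$ and $Ba \subseteq B$ are interchanged under taking adjoints, so $a \in M$ implies $a^* \in M$. For (1), I would argue directly from the formula $L = \{a : a^*a \in B\}$. Given $x \in L$ and $m \in M$, to see $xm \in L$ it suffices to check $(xm)^*(xm) = m^*(x^*x)m \in B$; but $x^*x \in B$, so $m^*(x^*x) \in B$ (as $m^* \in M$), and then $(m^*x^*x)m \in B$ (as $m \in M$). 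Thus $LM \subseteq L$, and $ML^* \subseteq L^*$ follows by adjoining: for $m \in M$, $y \in L^*$ one has $(my)^* = y^*m^* \in LM \subseteq L$, whence $my \in L^*$.

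For (2), the inclusion $B \subseteq M \cap (L+L^*)$ is immediate: $B$ is a subalgebra, so $bB, Bb \subseteq B$ gives $B \subseteq M$, while $B = L \cap L^* \subseteq L \subseteq L+L^*$. The content lies in the reverse inclusion. Here I would first note two elementary containments coming purely from the ideal structure together with $B \subseteq L \cap L^*$: namely $L^*B \subseteq B$ and $BL \subseteq B$. For instance, if $z \in L^*$ and $b \in B$, then $zb \in L^*$ because $L^*$ is a right ideal, and $zb \in L$ because $b \in B \subseteq L$ and $L$ is a left ideal, so $zb \in L \cap L^* = B$; the other containment is symmetric.

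Now take $a \in M \cap (L+L^*)$ and write $a = x + z$ with $x \in L$, $z \in L^*$. The key step, which I expect to be the main obstacle, is that a naive term-by-term analysis of $ae_\lambda$ fails: one has $xe_\lambda \to x$ for $x \in L$ but in general $e_\lambda x \not\to x$, and dually $e_\lambda z \to z$ for $z \in L^*$ but not $ze_\lambda \to z$, so the summands pull in opposite directions. The device to overcome this is to exploit the multiplier property of the \emph{whole} element $a$ in order to put the approximate unit on the favourable side of each summand. Concretely, for every $b \in B$ one has $ab \in B$ (since $a \in M$) and $zb \in B$ (by $L^*B \subseteq B$), hence $xb = ab - zb \in B$, i.e. $xB \subseteq B$. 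Taking $b = e_\lambda$ gives $xe_\lambda \in B$, while $x \in L$ forces $xe_\lambda \to x$ via $\|x - xe_\lambda\|^2 = \|(1-e_\lambda)x^*x(1-e_\lambda)\| \to 0$ (using $x^*x \in B$); as $B$ is norm-closed, $x \in B$. Symmetrically, from $ba \in B$ and $bx \in B$ (by $BL \subseteq B$) one gets $bz \in B$, so $Bz \subseteq B$, and $e_\lambda z \to z$ (the adjoint of $z^*e_\lambda \to z^*$, valid since $z^* \in L$) yields $z \in B$. Therefore $a = x+z \in B$, which closes the argument.
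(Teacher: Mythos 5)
Your proof is correct, and while part (1) coincides with the paper's argument (both reduce to $m^*(x^*x)m\in B$ and the identification $L=L(B)$), your treatment of part (2) takes a genuinely different route at the decisive step. The paper first proves a separate Step 1, namely $M\cap L=M\cap L^*=B$, using the square-root trick: for $x\in M\cap L$ one has $x^*x\in B$ and $xx^*=\{x(x^*x)x^*\}^{\frac{1}{2}}\in B$, whence $x\in L\cap L^*$ by the bijection of Theorem \ref{thm: hereditary subalgebras}. It then writes $a=y+z$ with $y\in L$, $z\in L^*$, verifies that each summand is a \emph{two-sided} multiplier of $B$ (i.e. $yB\subset B$ and $By\subset B$, via the same elementary containments you record), and concludes $y\in M\cap L=B$, $z\in M\cap L^*=B$ from Step 1. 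You instead establish only the \emph{one-sided} multiplier properties $xB\subset B$ and $Bz\subset B$, and close the argument with an approximate unit $(e_\lambda)$ of $B$ together with the standard convergence $\|x-xe_\lambda\|^2=\|(1-e_\lambda)x^*x(1-e_\lambda)\|\to 0$ for $x\in L$, so that $x=\lim_\lambda xe_\lambda\in B$ by norm-closedness. What each approach buys: the paper's route stays purely algebraic (modulo the continuous functional calculus for the square root) and keeps everything inside the framework of Theorem \ref{thm: hereditary subalgebras}, at the cost of needing the full two-sided multiplier property of each summand and an extra preliminary step; your route is analytically softer, needs less from the decomposition (one-sided suffices), and leans on the approximate-unit technique that underlies the proof of Theorem \ref{thm: hereditary subalgebras} itself, so in effect you re-import a small piece of that proof rather than quoting its statement twice. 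Both arguments are complete and correct.
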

\begin{proof} It is easy to see that $M$ is a C$^*$-subalgebra of $A$.\\
(1) Let $a\in L$ and $x\in M$. Then $a^*a\in L^*L\subset L\cap L^*=B$. Therefore $x^*a^*ax\in B$. By Theorem \ref{thm: hereditary subalgebras}, $L=L(B)$ implies that $ax\in L$. Therefore $LM\subset L$. Taking the adjoint, we obtain $ML^*\subset L^*$.\\   
(2) We show the claim in two steps. \\ \\
\textbf{Step 1}. $M\cap L=M\cap L^*=B$.\\
Since $M$ and $B$ are self-adjoint, it suffices to show that $M\cap L=B$. Since $B$ is a C$^*$-algebra, it is clear that $B\subset M\cap L$. Conversely, suppose $x\in M\cap L$. Then $x^*\in L^*$ holds, and hence $x^*x\in L^*L\subset L\cap L^*=B$. On the other hand, as $x\in M$, we have $x(x^*x)x^*\in B$, which implies that $xx^*=\{x(x^*x)x^*\}^{\frac{1}{2}}\in B$. Then by Theorem \ref{thm: hereditary subalgebras}, again, $x^*\in L=L(B)\Leftrightarrow x\in L^*$ holds. Hence $x\in L\cap L^*=B$.\\ \\
\textbf{Step 2}. $M\cap (L+L^*)=B$.\\
By Step 1, it suffices to show that $M\cap (L+L^*)=(M\cap L)+(M\cap L^*)$. It is clear that $(M\cap L)+(M\cap L^*)\subset M\cap (L+L^*)$. Conversely, suppose $x\in M\cap (L+L^*)$. Then there is $y\in L, z\in L^*$ such that $x=y+z$ holds. We show that $y,z\in M$. Let $b\in B$. Then $yb\in L$. Furthermore, $yb=xb-zb$ is in $L^*$, because $b\in B,x\in M$ implies that $xb\in B=L\cap L^*$ and $z\in L^*$. Therefore $yB\subset B$. On the other hand, $by\in L\cap L^*$ (since $y\in L$, $b\in L^*$) holds. Therefore $By\subset B$. This shows that $y\in M$. Similarly, we have $z\in M$. Therefore $M\cap (L+L^*)=(M\cap L)+(M\cap L^*)$ holds. This finishes the proof. 
\end{proof}

\begin{corollary}\label{cor: M^{omega} cap (L+L^*)=I} We have 
\begin{list}{}{}
\item[{\rm{(1)}}] $\mathcal{L}_{\omega}\mathcal{M}^{\omega}\subset \mathcal{L}_{\omega}$, $\mathcal{M}^{\omega}\mathcal{L}_{\omega}^*\subset \mathcal{L}_{\omega}^*$.
\item[{\rm{(2)}}] $\mathcal{M}^{\omega}\cap (\mathcal{L}_{\omega}+ \mathcal{L}_{\omega}^*)=\mathcal{I}_{\omega}$. 
\end{list}
\end{corollary}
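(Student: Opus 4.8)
The plan is to obtain this as a direct instance of Lemma~\ref{lem: Mcap (L+L^*)=B}, applied to the C$^*$-algebra $A=\ell^{\infty}$ together with the closed left ideal $L=\mathcal{L}_{\omega}$. First I would recall from Lemma~\ref{lem: easy lemma} that $\mathcal{L}_{\omega}$ is a closed left ideal of $\ell^{\infty}$ and that $\mathcal{I}_{\omega}=\mathcal{L}_{\omega}\cap\mathcal{L}_{\omega}^*$. Thus, in the notation of Lemma~\ref{lem: Mcap (L+L^*)=B}, the hereditary C$^*$-subalgebra associated to $L=\mathcal{L}_{\omega}$ is exactly $B=L\cap L^*=\mathcal{I}_{\omega}$.

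The only point requiring a moment's verification is that the two-sided multiplier $M=\{a\in A;\ aB\subset B,\ Ba\subset B\}$ of $B=\mathcal{I}_{\omega}$ coincides with $\mathcal{M}^{\omega}$. This is immediate on comparing definitions: $\mathcal{M}^{\omega}$ is, by its very definition, the set of $(x_n)_n\in\ell^{\infty}$ with $(x_n)_n\mathcal{I}_{\omega}\subset\mathcal{I}_{\omega}$ and $\mathcal{I}_{\omega}(x_n)_n\subset\mathcal{I}_{\omega}$, which is precisely the pair of conditions $aB\subset B$ and $Ba\subset B$ for $B=\mathcal{I}_{\omega}$. Hence $M=\mathcal{M}^{\omega}$.

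With these identifications in place, both assertions are mere transcriptions of the abstract statement. Claim~(1) is Lemma~\ref{lem: Mcap (L+L^*)=B}(1): the inclusion $LM\subset L$ reads $\mathcal{L}_{\omega}\mathcal{M}^{\omega}\subset\mathcal{L}_{\omega}$, and $ML^*\subset L^*$ reads $\mathcal{M}^{\omega}\mathcal{L}_{\omega}^*\subset\mathcal{L}_{\omega}^*$. Claim~(2) is Lemma~\ref{lem: Mcap (L+L^*)=B}(2): the equality $M\cap(L+L^*)=B$ reads $\mathcal{M}^{\omega}\cap(\mathcal{L}_{\omega}+\mathcal{L}_{\omega}^*)=\mathcal{I}_{\omega}$. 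Since the abstract hypotheses match the concrete objects term by term, there is no genuine obstacle here; the substance of the argument resides entirely in the two preceding lemmas, and this final step is purely a matter of bookkeeping.
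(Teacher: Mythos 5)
Your proposal is correct and is essentially identical to the paper's own proof, which likewise invokes Lemma \ref{lem: easy lemma} and then applies Lemma \ref{lem: Mcap (L+L^*)=B} with $A=\ell^{\infty}$, $L=\mathcal{L}_{\omega}$, $M=\mathcal{M}^{\omega}$, $B=\mathcal{I}_{\omega}$. Your explicit check that the two-sided multiplier of $\mathcal{I}_{\omega}$ coincides with $\mathcal{M}^{\omega}$ by definition is a detail the paper leaves implicit, but it is the same argument.
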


\begin{proof}
By Lemma \ref{lem: easy lemma}, we can apply Lemma \ref{lem: Mcap (L+L^*)=B} to $A=\ell^{\infty}, L=\mathcal{L}_{\omega}, M=\mathcal{M}^{\omega}, B=\mathcal{I}_{\omega}$. 
\end{proof} 

Now, let $\xi_{\omega}:=(\xi_{\varphi_n})_{\omega}\in (H_n)_{\omega}$ and let
\[\varphi_{\omega}(x):=\nai{x\xi_{\omega}}{\xi_{\omega}},\ \ \ x\in N.\]
Then $\varphi_{\omega}$ is a normal state on $N$.
\begin{definition}\label{def: support projection p}
We denote by $p$ the support projection of $\varphi_{\omega}$, which is the projection onto $\overline{(\prod^{\omega}M_n)'\xi_{\omega}}$. 
\end{definition}
For simplicity, we shall mostly write $\pi_{\omega}(x)$ as just $x$ in the following (for $x\in (M_n)_{\omega}$).\\ \\
Next lemma is similar to \cite[Lemma 2.2.2]{Golodets} and \cite[Proposition 2.1]{RaynaudXu}.
\begin{lemma}\label{UW2.1.5} For all $x\in N$, there is $(x_n)_n\in \ell^{\infty}$ such that
\begin{itemize}
\item[{\rm{(1)}}] $x\xi_{\omega}=(x_n)_{\omega}\xi_{\omega}$ and $x^*\xi=(x_n^*)_{\omega}\xi_{\omega}$.
\item[{\rm{(2)}}] $x-p^{\perp}xp^{\perp}=(x_n)_{\omega}-p^{\perp}(x_n)_{\omega}p^{\perp}$.
\end{itemize}
\end{lemma}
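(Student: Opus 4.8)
The plan is to reduce the statement to producing a single bounded sequence that reproduces $x$ on the cyclic vector $\xi_\omega$, and then to build that sequence by Kaplansky density followed by a diagonalization along $\omega$.

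First I would observe that it is enough to find $(x_n)_n\in\ell^\infty$ satisfying only part (1); part (2) then comes for free. Indeed, put $y:=(x_n)_{\omega}\in N$ (still writing $\pi_\omega$ as the identity, as in the text). By Definition \ref{def: support projection p}, $p$ is the projection onto $\overline{N'\xi_\omega}$, so $p\xi_\omega=\xi_\omega$. For every $b\in N'$ one computes $y(b\xi_\omega)=b(y\xi_\omega)=b(x\xi_\omega)=x(b\xi_\omega)$, using $y\xi_\omega=x\xi_\omega$, and since such vectors span a dense subspace of $p(H_n)_\omega$ this forces $yp=xp$; the same computation applied to $x^*,y^*$ with $y^*\xi_\omega=x^*\xi_\omega$ gives $py=px$. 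Decomposing $x$ and $y$ into their four corners relative to $p$, these two identities make all corners except $p^\perp(\cdot)p^\perp$ agree, which is precisely $x-p^\perp x p^\perp=y-p^\perp y p^\perp$. Thus the whole problem reduces to realizing the two vectors $x\xi_\omega$ and $x^*\xi_\omega$ as $(x_n\xi_{\varphi_n})_\omega$ and $(x_n^*\xi_{\varphi_n})_\omega$ for one bounded sequence.

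To produce that sequence I would use that $N$ is the strong closure of the $*$-algebra $\pi_\omega((M_n)_\omega)$. By the Kaplansky density theorem the ball of radius $\|x\|$ of this algebra is strong$^*$-dense in the corresponding ball of $N$, so I can choose $w^{(k)}=(w^{(k)}_n)_{\omega}$ with $\sup_n\|w^{(k)}_n\|\le\|x\|+1$ and $w^{(k)}\xi_\omega\to x\xi_\omega$, $(w^{(k)})^*\xi_\omega\to x^*\xi_\omega$ in norm. Translated into the algebras $M_n$ this says $\lim_{n\to\omega}\|w^{(k)}_n-w^{(j)}_n\|_{\varphi_n}^{\sharp}\to 0$ as $j,k\to\infty$ (the $\sharp$-seminorm is what couples the two GNS identities), and after passing to a subsequence I may assume $\lim_{n\to\omega}\|w^{(k+1)}_n-w^{(k)}_n\|_{\varphi_n}^{\sharp}<2^{-k}$.

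The heart of the argument is then the diagonalization. I would set $S_k:=\{n:\ \|w^{(j)}_n-w^{(j-1)}_n\|_{\varphi_n}^{\sharp}<2^{-j+1}\ \text{for}\ 2\le j\le k\}\in\omega$, arranged to be decreasing, define $k(n):=\max\{k\le n:\ n\in S_k\}$, and put $x_n:=w^{(k(n))}_n$, so that $(x_n)_n\in\ell^\infty$ with $\sup_n\|x_n\|\le\|x\|+1$. Since each $S_k\in\omega$ one has $k(n)\to\infty$ along $\omega$, and telescoping the $\sharp$-estimates gives $\lim_{n\to\omega}\|x_n-w^{(k)}_n\|_{\varphi_n}^{\sharp}\le 2^{-k+1}$ for every $k$; letting $k\to\infty$ yields $(x_n\xi_{\varphi_n})_\omega=x\xi_\omega$ and $(x_n^*\xi_{\varphi_n})_\omega=x^*\xi_\omega$, which is (1). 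I expect this diagonalization to be the main obstacle: a strong$^*$-limit in $N$ need not be a single bounded sequence, and what must be verified is exactly that Kaplansky's \emph{bounded} approximants can be amalgamated into one sequence controlling $x\xi_\omega$ and $x^*\xi_\omega$ simultaneously. The uniform norm bound is precisely why bounded (Kaplansky) rather than merely strong density is required in order for the diagonal sequence to land in $\ell^\infty$.
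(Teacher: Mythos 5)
Your proposal is correct and follows essentially the same route as the paper: the paper packages your $\omega$-diagonalization as the statement that the set $E=\{((x_n)_{\omega}\xi_{\omega},(x_n^*)_{\omega}\xi_{\omega}):\ (x_n)_n\in\ell^{\infty},\ \sup_n\|x_n\|\le 1\}$ is norm-closed, then applies Kaplansky density to place $(x\xi_{\omega},x^*\xi_{\omega})$ in $E$, and derives (2) from (1) by exactly your corner argument ($xp=(x_n)_{\omega}p$ and $px=p(x_n)_{\omega}$ via $N'$-commutation). The only differences are cosmetic: you prove the reduction (1)$\Rightarrow$(2) first and couple the two GNS conditions with the $\sharp$-seminorm, while the paper tracks the two vector estimates separately through its index sets $F_k$, $G_k$.
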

\begin{proof}
Consider the following subset of $(H_n)_{\omega}\oplus (H_n)_{\omega}$:
\[E:=\left \{((x_n)_{\omega}\xi_{\omega}, (x_n^*)_{\omega}\xi_{\omega});\ (x_n)_n\in \ell^{\infty},\ \sup_{n\ge 1}\|x_n\|\le 1\right \}.\]
We claim that $E$ is a closed subset of $(H_n)_{\omega}\oplus (H_n)_{\omega}$. Indeed, let $(\eta,\zeta)$ be in the closure of $E$, and choose a sequence $\{(x_n^k)_n\}_{k=1}^{\infty}\subset \ell^{\infty}$ such that $\sup_{n\ge 1}\|x_n^k\|\le 1$ for all $k\in \mathbb{N}$, and such that
\[\|(x_n^k)_{\omega}\xi_{\omega}-\eta \|\le 2^{-k-1} \text{ and } \|(x_n^k)^*_{\omega}\xi_{\omega}-\zeta\|\le 2^{-k-1},\]
for all $k\in \mathbb{N}$. Then in particular we have, for all $k\in \mathbb{N}$:
\[\|(x_n^{k+1})_{\omega}\xi_{\omega}-(x_n^k)_{\omega}\xi_{\omega}\|\le 2^{-k} \text{ and } \|(x_n^{k+1})^*_{\omega}\xi_{\omega}-(x_n^k)^*_{\omega}\xi_{\omega}\|\le 2^{-k},\]
so that if we define
\[F_k:=\{n\in \mathbb{N};\ \|x_n^{k+1}\xi_{\varphi_n}-x_n^k\xi_{\varphi_n}\|\le 2^{-k} \text{ and } \|(x_n^{k+1})^*\xi_{\varphi_n}-(x_n^k)^*\xi_{\varphi_n}\|\le 2^{-k}\},\]
then we have $F_k\in \omega$ for all $k\in \mathbb{N}$. Hence with
\[G_k:=\{k,k+1,\cdots \}\cap \bigcap_{j=1}^kF_j,\]
we have $G_k\in \omega$ for all $k\in \mathbb{N}$ because $\omega$ is free, and $(G_k)_k$ is a decreasing sequence with empty intersection. In particular, 
\[\mathbb{N}=(\mathbb{N}\setminus G_1)\sqcup \bigsqcup_{j=1}^{\infty} (G_j\setminus G_{j+1})\]
(disjoint union). Now, define a sequence $(x_n)_n\in \ell^{\infty}$ by
\[
x_n:=\begin{cases}
x_n^1 & (n\in \mathbb{N}\setminus G_1),\\
x_n^j & (n\in G_j\setminus G_{j+1}).
\end{cases}
\]
Then $\sup_{n\ge 1}\|x_n\|\le 1$. Fix $k\in \mathbb{N}$. If $n\in G_k$, then as $G_k=\bigsqcup_{j=k}^{\infty}(G_j\setminus G_{j+1})$, we may choose $j\ge k$ such that $n\in G_j\setminus G_{j+1}$, so that $x_n=x_n^j$ and as $n\in G_j\subset G_m\subset F_m$ for every $m\le j$, we therefore have 
\eqa{
\|x_n\xi_{\varphi_n}-x_n^k\xi_{\varphi_n}\|
&=\|x_n^j\xi_{\varphi_n}-x_n^k\xi_{\varphi_n}\|\\
&\le \sum_{m=k}^{j-1}\|x_n^{m+1}\xi_{\varphi_n}-x_n^m\xi_{\varphi_n}\|\\
&\le \sum_{m=k}^{j-1}2^{-m}\le 2^{-k+1},
}
for every $n\in G_k$. It follows that $\|(x_n)_{\omega}\xi_{\omega}-(x_n^k)_{\omega}\xi_{\omega}\|\le 2^{-k+1}$, so that 
\[\|(x_n)_{\omega}\xi_{\omega}-\eta\|\le \|(x_n)_{\omega}\xi_{\omega}-(x_n^k)_{\omega}\xi\|+\|(x_n^k)_{\omega}\xi_{\omega}-\eta\|\le 2^{-k+1}+2^{-k+1}.\]
As $k\in \mathbb{N}$ may be chosen to be arbitrarily big, we conclude that $(x_n)_{\omega}\xi_{\omega}=\eta$. The proof that $(x_n^*)_{\omega}\xi_{\omega}=\zeta$ is similar. Hence $E$ is closed, as claimed.

We are now ready to prove (1). It clearly suffices to consider $x\in N$ with $\|x\|\le 1$. By the definition of the Groh-Raynaud ultraproduct, and Kaplansky's Theorem, we may choose a net $\{(x_n^{\alpha})_n\}_{\alpha}\subset \ell^{\infty}$ such that $\sup_{n\ge 1}\|x_n^{\alpha}\|\le 1$ for every $\alpha$ and such that $\lim_{\alpha}(x_n^{\alpha})_{\omega}=x$ in the strong$^*$-topology on $N$. But then $(x\xi_{\omega},x^*\xi_{\omega})$ is in the closure of $E$, hence in $E$ by the previous paragraph, and (1) follows. 

Finally, (2) follows from (1): with $x$ and $(x_n)_n$ from there, we have for all $y\in N'$:
\[xy\xi_{\omega}=yx\xi_{\omega}=y(x_n)_{\omega}\xi_{\omega}=(x_n)_{\omega}y\xi_{\omega},\]
so $xp=(x_n)_{\omega}p$, and similarly $x^*p=(x_n^*)_{\omega}p$. Conjugating the latter identity, $px=p(x_n)_{\omega}$ holds. Now (2) follows easily.
\end{proof}

\begin{proposition}\label{UW2.1.6}
There is a vector space isomorphism 
\[\rho\colon \ell^{\infty}/\mathcal{I}_{\omega}\to pNp\oplus pNp^{\perp}\oplus p^{\perp}Np\]
such that $\rho^{-1}(pNp)=\mathcal{M}^{\omega}/\mathcal{I}_{\omega}$,\ $\rho^{-1}(pNp^{\perp})=\mathcal{L}_{\omega}/\mathcal{I}_{\omega}$,\ and $\rho^{-1}(p^{\perp}Np)=\mathcal{L}^*_{\omega}/\mathcal{I}_{\omega}$. In particular, we have
\[\ell^{\infty}=\mathcal{M}^{\omega}+\mathcal{L}_{\omega}+\mathcal{L}^*_{\omega}.\]
\end{proposition}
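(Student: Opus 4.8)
The plan is to build $\rho$ explicitly from the corner decomposition of $N=\prod^{\omega}M_n$ relative to $p$, translate each of the four sequence spaces $\mathcal{I}_{\omega},\mathcal{L}_{\omega},\mathcal{L}_{\omega}^*,\mathcal{M}^{\omega}$ into a condition on that decomposition, and read off the correspondences. Writing $x=\pi_{\omega}((x_n)_{\omega})\in N$ for $(x_n)_n\in\ell^{\infty}$, and using that $p$ is the projection onto $\overline{N'\xi_{\omega}}$ (so $p\xi_{\omega}=\xi_{\omega}$), I would first record the \emph{dictionary}
\[(x_n)_n\in\mathcal{L}_{\omega}\iff xp=0,\qquad (x_n)_n\in\mathcal{L}_{\omega}^*\iff px=0,\qquad (x_n)_n\in\mathcal{I}_{\omega}\iff xp=px=0.\]
The first follows because $\lim_{n\to\omega}\varphi_n(x_n^*x_n)=\|x\xi_{\omega}\|^2$, and $x\xi_{\omega}=0\iff xp=0$ (forward via $xy\xi_{\omega}=yx\xi_{\omega}$ for $y\in N'$, backward since $x\xi_{\omega}=xp\xi_{\omega}$); the second is the first applied to $(x_n^*)_n$, and the third is $\mathcal{I}_{\omega}=\mathcal{L}_{\omega}\cap\mathcal{L}_{\omega}^*$ from Lemma \ref{lem: easy lemma}.

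Next I would define $\tilde{\rho}\colon\ell^{\infty}\to pNp\oplus pNp^{\perp}\oplus p^{\perp}Np$ by $\tilde{\rho}((x_n)_n)=(pxp,\,pxp^{\perp},\,p^{\perp}xp)$. Since all three components vanish exactly when $xp=px=0$, the dictionary gives $\ker\tilde{\rho}=\mathcal{I}_{\omega}$, so $\tilde{\rho}$ descends to an injective $\rho$ on $\ell^{\infty}/\mathcal{I}_{\omega}$. Surjectivity is precisely Lemma \ref{UW2.1.5}(2): for $x\in N$ choose $(x_n)_n$ with $x-p^{\perp}xp^{\perp}=(x_n)_{\omega}-p^{\perp}(x_n)_{\omega}p^{\perp}$, which equates the three relevant corners of $x$ with those of $(x_n)_{\omega}$, so $\tilde{\rho}((x_n)_n)=(pxp,pxp^{\perp},p^{\perp}xp)$ attains every element as $x$ ranges over $N$. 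The same dictionary at once yields $\rho^{-1}(pNp^{\perp})=\mathcal{L}_{\omega}/\mathcal{I}_{\omega}$ and $\rho^{-1}(p^{\perp}Np)=\mathcal{L}_{\omega}^*/\mathcal{I}_{\omega}$, because $\tilde{\rho}((x_n)_n)\in pNp^{\perp}$ says $xp=0$ and $\tilde{\rho}((x_n)_n)\in p^{\perp}Np$ says $px=0$.

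It remains to show $\rho^{-1}(pNp)=\mathcal{M}^{\omega}/\mathcal{I}_{\omega}$, equivalently that $(x_n)_n\in\mathcal{M}^{\omega}\iff x$ commutes with $p$. For ``commutes $\Rightarrow$ normalizes'': if $px=xp$ and $(a_n)_n\in\mathcal{I}_{\omega}$ (so $ap=pa=0$), then $(xa)p=x(ap)=0$ and $p(xa)=(px)a=(xp)a=x(pa)=0$, so $(x_na_n)_n\in\mathcal{I}_{\omega}$, and symmetrically $(a_nx_n)_n\in\mathcal{I}_{\omega}$. For the converse I would invoke Corollary \ref{cor: M^{omega} cap (L+L^*)=I}(1). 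Put $d=p^{\perp}xp$; then $d^*=px^*p^{\perp}\in pNp^{\perp}$, and by the previous paragraph I can realize $d^*=pap^{\perp}$ (hence $pa=d^*$) for some $(a_n)_n\in\mathcal{L}_{\omega}$. Since $\mathcal{L}_{\omega}\mathcal{M}^{\omega}\subset\mathcal{L}_{\omega}$, $(a_nx_n)_n\in\mathcal{L}_{\omega}$, i.e. $axp=0$; multiplying on the left by $p$ and using $d^*p=0$ and $xp=pxp+d$ gives $0=paxp=d^*xp=d^*(pxp+d)=d^*d$, so $d=0$. Symmetrically, from $\mathcal{M}^{\omega}\mathcal{L}_{\omega}^*\subset\mathcal{L}_{\omega}^*$ and a realization of $(pxp^{\perp})^*$ inside $p^{\perp}Np$, one obtains $(pxp^{\perp})(pxp^{\perp})^*=0$, so $pxp^{\perp}=0$; hence $x$ commutes with $p$. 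As $\rho$ is then a bijection respecting the three summands, lifting $\ell^{\infty}/\mathcal{I}_{\omega}=\mathcal{M}^{\omega}/\mathcal{I}_{\omega}+\mathcal{L}_{\omega}/\mathcal{I}_{\omega}+\mathcal{L}_{\omega}^*/\mathcal{I}_{\omega}$ and using $\mathcal{I}_{\omega}\subset\mathcal{M}^{\omega}$ gives $\ell^{\infty}=\mathcal{M}^{\omega}+\mathcal{L}_{\omega}+\mathcal{L}_{\omega}^*$.

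The main obstacle is the converse in the last paragraph. One cannot simply test the normalizer condition against $a=p^{\perp}$, since $p^{\perp}$ need not belong to $\pi_{\omega}((M_n)_{\omega})$ but only to the strong closure $N$. What rescues the argument is that, by the identifications already established, $\mathcal{L}_{\omega}$ (resp. $\mathcal{L}_{\omega}^*$) surjects under $\rho$ onto the \emph{entire} off-diagonal corner $pNp^{\perp}$ (resp. $p^{\perp}Np$); hence the specific element $d^*$ (resp. $(pxp^{\perp})^*$) needed to form the square $d^*d$ is realizable by a genuine bounded sequence, and positivity then forces the off-diagonal corners of $x$ to vanish.
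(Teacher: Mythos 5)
Your proposal is correct, and its scaffolding coincides with the paper's: the same dictionary ($\mathcal{L}_{\omega}\leftrightarrow\{xp=0\}$, $\mathcal{L}_{\omega}^*\leftrightarrow\{px=0\}$, $\mathcal{I}_{\omega}\leftrightarrow\{xp=px=0\}$), the same map $\rho$ (your corner triple $(pxp,pxp^{\perp},p^{\perp}xp)$ is exactly the paper's $x-p^{\perp}xp^{\perp}$), the same appeal to Lemma \ref{UW2.1.5} for surjectivity, and the same direct computation for the inclusion $\rho^{-1}(pNp)\subset\mathcal{M}^{\omega}/\mathcal{I}_{\omega}$. Where you genuinely diverge is the reverse inclusion $\mathcal{M}^{\omega}/\mathcal{I}_{\omega}\subset\rho^{-1}(pNp)$. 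The paper disposes of it in one line by citing Corollary \ref{cor: M^{omega} cap (L+L^*)=I} (2), i.e.\ the algebraic identity $\mathcal{M}^{\omega}\cap(\mathcal{L}_{\omega}+\mathcal{L}_{\omega}^*)=\mathcal{I}_{\omega}$, whose burden is carried by the two-step multiplier computation in Lemma \ref{lem: Mcap (L+L^*)=B} (2); you instead use only the easier part (1) of that corollary ($\mathcal{L}_{\omega}\mathcal{M}^{\omega}\subset\mathcal{L}_{\omega}$, $\mathcal{M}^{\omega}\mathcal{L}_{\omega}^*\subset\mathcal{L}_{\omega}^*$), combined with the already-proved surjectivity of $\rho$ onto the off-diagonal corners to manufacture test sequences $(a_n)_n\in\mathcal{L}_{\omega}$, $(b_n)_n\in\mathcal{L}_{\omega}^*$ with $pa=d^*$, $bp=e^*$, and then a C$^*$-positivity argument ($d^*d=0\Rightarrow d=0$). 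Both arguments are valid and non-circular. The paper's route is purely algebraic once the hereditary-subalgebra machinery is in place, and the intersection fact it uses is of independent interest (it reappears, e.g., in the proof of Theorem \ref{thm: identification of Golodets algebra}); your route buys independence from the harder half of Lemma \ref{lem: Mcap (L+L^*)=B}, at the cost of leaning more heavily on the spatial picture --- in particular on the surjectivity statement, which in your proof is needed not merely to show $\rho$ is onto but to produce the specific elements against which the normalizer property is tested (your closing remark correctly identifies why testing against $p^{\perp}$ itself is not available).
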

\begin{proof}
Observe first that for $(x_n)_n\in \ell^{\infty}$, we have
\[(x_n)_n\in \mathcal{L}_{\omega}\Leftrightarrow \|(x_n)_{\omega}\xi_{\omega}\|=0\Leftrightarrow (x_n)_{\omega}\in Np^{\perp}.\]
Hence $(x_n)_n\in \mathcal{L}_{\omega}^*\Leftrightarrow (x_n)_{\omega}\in p^{\perp}N$, so by Lemma \ref{lem: easy lemma}, 
\[(x_n)_n\in \mathcal{I}_{\omega}=\mathcal{L}_{\omega}\cap \mathcal{L}_{\omega}^*\Leftrightarrow (x_n)_{\omega}\in p^{\perp}Np^{\perp}.\]
Hence by letting
\[\rho((x_n)_n/\mathcal{I}_{\omega}):=(x_n)_{\omega}-p^{\perp}(x_n)_{\omega}p^{\perp},\]
we obtain a well-defined injective linear map from $\ell^{\infty}/\mathcal{I}_{\omega}$ into $pNp\oplus pNp^{\perp}\oplus p^{\perp}Np$, and it is in fact surjective by Lemma \ref{UW2.1.5}.

By definition and the above, we have for all $(x_n)_n\in \ell^{\infty}$:
\eqa{
\rho((x_n)_n/\mathcal{I}_{\omega})\in pNp^{\perp}&\Leftrightarrow (x_n)_{\omega}-p^{\perp}(x_n)_{\omega}p^{\perp}=p(x_n)_{\omega}p^{\perp}\\
&\Leftrightarrow (x_n)_{\omega}\in Np^{\perp}\\
&\Leftrightarrow (x_n)_n\in \mathcal{L}_{\omega},
}
and from this, 
\[\rho((x_n)_n/\mathcal{I}_{\omega})\in p^{\perp}Np\Leftrightarrow (x_n^*)_n\in \mathcal{L}_{\omega}\Leftrightarrow (x_n)_n\in \mathcal{L}_{\omega}^*.\]
Therefore we have
\begin{equation}
\rho^{-1}(pNp^{\perp})=\mathcal{L}_{\omega}/\mathcal{I}_{\omega},\ \ \ \ \ \rho^{-1}(p^{\perp}Np)=\mathcal{L}_{\omega}^*/\mathcal{I}_{\omega}.\label{eq: rho^{-1}(pNp^perp)}
\end{equation}
Finally, if $\rho((x_n)_n/\mathcal{I}_{\omega})\in pNp$, and $(y_n)_n\in \mathcal{I}_{\omega}$, we have $(y_n)_{\omega}\in p^{\perp}Np^{\perp}$, and so 
\eqa{
\rho((x_ny_n)_n/\mathcal{I}_{\omega})&=(x_n)_{\omega}(y_n)_{\omega}-p^{\perp}(x_n)_{\omega}(y_n)_{\omega}p^{\perp}\\
&=((x_n)_{\omega}-p^{\perp}(x_n)_{\omega}p^{\perp})(y_n)_{\omega}\\
&=\rho((x_n)_n/\mathcal{I}_{\omega})(y_n)_{\omega}=0,
}
and therefore $(x_ny_n)_n\in \mathcal{I}_{\omega}$. Similarly $(y_nx_n)_n\in \mathcal{I}_{\omega}$, so $(x_n)_n\in \mathcal{M}^{\omega}$. This shows $\rho^{-1}(pNp)\subset \mathcal{M}^{\omega}/\mathcal{I}_{\omega}$.
On the other hand, by Eq. (\ref{eq: rho^{-1}(pNp^perp)}) and by Corollary \ref{cor: M^{omega} cap (L+L^*)=I} (2), we have
\eqa{
\mathcal{M}^{\omega}/\mathcal{I}_{\omega}\cap \rho^{-1}(pNp^{\perp}\oplus p^{\perp}Np)&=[\mathcal{M}^{\omega}\cap (\mathcal{L}_{\omega}+\mathcal{L}_{\omega}^*)]/\mathcal{I}_{\omega}\\
&=\{0\},
}
whence we have $\rho^{-1}(pNp)=\mathcal{M}^{\omega}/\mathcal{I}_{\omega}$. 

In particular,  $\ell^{\infty}/\mathcal{I}_{\omega}=\mathcal{M}^{\omega}/\mathcal{I}_{\omega}+\mathcal{L}_{\omega}/\mathcal{I}_{\omega}+\mathcal{L}_{\omega}^*/\mathcal{I}_{\omega}$, and the last claim is then obvious.
\end{proof}
\begin{proposition}\label{prop: normalizer iff commutes with p}
Let $(x_n)_n\in \ell^{\infty}$. Then $(x_n)_n\in \mathcal{M}^{\omega}$ if and only if $p(x_n)_{\omega}=(x_n)_{\omega}p$ holds. Moreover, $\rho|_{M^{\omega}}\colon M^{\omega}\to pNp, (x_n)_n/\mathcal{I}_{\omega}\mapsto (x_n)_{\omega}p$ is a *-isomorphism. Therefore, the Ocneanu ultraproduct is isomorphic to a reduction of the Groh-Raynaud ultraproduct by the support projection $p$ of $\varphi_{\omega}\in N_*$.  
\end{proposition}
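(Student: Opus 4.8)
The plan is to deduce both assertions directly from the vector-space isomorphism $\rho$ of Proposition \ref{UW2.1.6}, reading everything off the $2\times 2$ corner decomposition of $N$ determined by $p$. For the equivalence, I would begin from the defining formula $\rho((x_n)_n/\mathcal{I}_{\omega}) = (x_n)_{\omega} - p^{\perp}(x_n)_{\omega}p^{\perp}$. Writing $(x_n)_{\omega}$ as the sum of its four corners $p(x_n)_{\omega}p$, $p(x_n)_{\omega}p^{\perp}$, $p^{\perp}(x_n)_{\omega}p$, $p^{\perp}(x_n)_{\omega}p^{\perp}$ and subtracting the last one, what remains are exactly the other three corners. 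Hence $\rho((x_n)_n/\mathcal{I}_{\omega})$ lies in $pNp$ if and only if the two off-diagonal corners $p(x_n)_{\omega}p^{\perp}$ and $p^{\perp}(x_n)_{\omega}p$ both vanish, which is precisely the relation $p(x_n)_{\omega} = (x_n)_{\omega}p$. Since Proposition \ref{UW2.1.6} identifies $\rho^{-1}(pNp)$ with $\mathcal{M}^{\omega}/\mathcal{I}_{\omega}$, and $\mathcal{I}_{\omega}\subset \mathcal{M}^{\omega}$, this yields $(x_n)_n\in \mathcal{M}^{\omega} \iff p(x_n)_{\omega}=(x_n)_{\omega}p$.

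For the ``Moreover'' part, I would first note that on $\mathcal{M}^{\omega}$ the map $\rho$ already takes the stated simplified shape: once $p$ commutes with $(x_n)_{\omega}$ the off-diagonal corners disappear, so $\rho((x_n)_n/\mathcal{I}_{\omega}) = p(x_n)_{\omega}p = (x_n)_{\omega}p$. In particular well-definedness (vanishing on $\mathcal{I}_{\omega}$) and the fact that $\rho|_{M^{\omega}}$ is a linear bijection of $M^{\omega}=\mathcal{M}^{\omega}/\mathcal{I}_{\omega}$ onto $pNp$ are inherited for free from Proposition \ref{UW2.1.6}; it only remains to verify multiplicativity and $*$-preservation, both of which reduce to the commutation relation just established. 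For products, given $a=(x_n)_n/\mathcal{I}_{\omega}$ and $b=(y_n)_n/\mathcal{I}_{\omega}$ in $M^{\omega}$, I would use that $\pi_{\omega}$ is a homomorphism to get $\rho(ab)=(x_n)_{\omega}(y_n)_{\omega}p$, and then compute $\rho(a)\rho(b)=(x_n)_{\omega}p(y_n)_{\omega}p=(x_n)_{\omega}(y_n)_{\omega}p$, where the middle step uses $p(y_n)_{\omega}=(y_n)_{\omega}p$ (valid since $b\in M^{\omega}$) together with $p^2=p$. For adjoints, since $\mathcal{M}^{\omega}$ is a $*$-algebra one also has $(x_n^*)_n\in\mathcal{M}^{\omega}$, so $\rho(a^*)=(x_n)_{\omega}^*p=p(x_n)_{\omega}^*=((x_n)_{\omega}p)^*=\rho(a)^*$. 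This shows $\rho|_{M^{\omega}}$ is a $*$-isomorphism of the Ocneanu ultraproduct $M^{\omega}$ onto the corner $pNp$ of the Groh-Raynaud ultraproduct, the desired identification; since $pNp$ is a W$^*$-algebra, this also gives the alternative proof of Proposition \ref{UW1.2.1} promised in the text.

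I do not expect a genuine obstacle at this stage: the substantive analysis—the closedness/approximation argument of Lemma \ref{UW2.1.5} and the hereditary-subalgebra bookkeeping of Corollary \ref{cor: M^{omega} cap (L+L^*)=I} feeding into Proposition \ref{UW2.1.6}—has already been done, and the present statement is essentially an algebraic repackaging of those results. The only point demanding a little care is the corner bookkeeping itself, namely making sure that the simplified formula $(x_n)_{\omega}p$ is literally the \emph{restriction} of $\rho$ to $M^{\omega}$ rather than an independently defined map, so that well-definedness and bijectivity need not be re-proved; once this is observed, multiplicativity and $*$-compatibility are routine consequences of the commutation $p(x_n)_{\omega}=(x_n)_{\omega}p$.
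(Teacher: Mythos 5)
Your proposal is correct and follows essentially the same route as the paper's proof: both read off the equivalence from the corner decomposition $\rho((x_n)_n/\mathcal{I}_{\omega})=p(x_n)_{\omega}p+p(x_n)_{\omega}p^{\perp}+p^{\perp}(x_n)_{\omega}p$ of Proposition \ref{UW2.1.6}, noting that membership in $pNp$ forces the off-diagonal corners to vanish, i.e.\ $p(x_n)_{\omega}=(x_n)_{\omega}p$, and then verify multiplicativity and $*$-preservation of $\rho|_{M^{\omega}}$ by exactly the same commutation computations. Nothing is missing.
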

\begin{proof}
By Proposition \ref{UW2.1.6}, $(x_n)_n/\mathcal{I}_{\omega}\in \mathcal{M}^{\omega}/\mathcal{I}_{\omega}$ holds if and only if  $\rho((x_n)_n/\mathcal{I}_{\omega})=p(x_n)_{\omega}p+p(x_n)_{\omega}p^{\perp}+p^{\perp}(x_n)_{\omega}p\in pNp$, if and only if $p(x_n)_{\omega}p^{\perp}=p^{\perp}(x_n)_{\omega}p=0$. The last condition is equivalent to $(x_n)_{\omega}p=p(x_n)_{\omega}$. Since $\rho|_{M^{\omega}}\colon M^{\omega}\to pNp$ is linear and bijective, to prove the last assertion it is enough to show that $\rho|_{M^{\omega}}$ is a *-homomorphism. Let $(a_n)_n,(b_n)_n\in \mathcal{M}^{\omega}$. Then as $(a_n)_{\omega}, (b_n)_{\omega}$ commute with $p$, we have
\eqa{
\rho((a_nb_n)_n/\mathcal{I}_{\omega})&=(a_nb_n)_{\omega}p=(a_n)_{\omega}p(b_n)_{\omega}p\\
&=\rho((a_n)_n/\mathcal{I}_{\omega})\rho((b_n)_n/\mathcal{I}_{\omega}),\\
\rho((a_n^*)_n/\mathcal{I}_{\omega})&=(a_n^*)_{\omega}p=(p(a_n)_{\omega})^*=((a_n)_{\omega}p)^*\\
&=\rho((a_n)_n/\mathcal{I}_{\omega})^*,
}
whence $\rho|_{M^{\omega}}$ is a *-isomorphism. 
\end{proof}

\begin{proof}[Proof of Theorem \ref{UW2.1.3}]
First, observe that for $(x_n)_n\in \mathcal{M}^{\omega}$ and $(y_n)^{\omega}\in M^{\omega}$, we have 
\[\pi_{\omega}((x_n)_{\omega})w(y_n)^{\omega}\xi_{\varphi^{\omega}}=(x_ny_n\xi_{\varphi_n})_{\omega}=w(x_ny_n)^{\omega}\xi_{\varphi^{\omega}}=w(x_n)^{\omega}(y_n)^{\omega}\xi_{\varphi^{\omega}},\]
so $\pi_{\omega}((x_n)_{\omega})w=w(x_n)^{\omega}$. Hence $M^{\omega}\subset w^*Nw$. To prove $w^*Nw\subset M^{\omega}$, it is enough to show that $w^*\pi_{\omega}((x_n)_{\omega})w\in M^{\omega}$ for $(x_n)_n\in \ell^{\infty}$. Let $(x_n)_n\in \ell^{\infty}$. By Proposition \ref{UW2.1.6}, we have that $(x_n)_n\in \mathcal{M}^{\omega}+\mathcal{L}_{\omega}+\mathcal{L}_{\omega}^*$. Furthermore, by the above, $w^*\pi_{\omega}((x_n)_{\omega})w\in M^{\omega}$ if $(x_n)_n\in \mathcal{M}^{\omega}$. Therefore it suffices to show that $w^*\pi_{\omega}((x_n)_{\omega})w\in M^{\omega}$ when $(x_n)_n\in \mathcal{L}_{\omega}$. But if $(x_n)_n\in \mathcal{L}_{\omega}$ and $(y_n)_n\in \mathcal{M}^{\omega}$, we have $(x_ny_n)_n\in \mathcal{L}_{\omega}$ by Lemma \ref{cor: M^{omega} cap (L+L^*)=I} (1), and so 
\[\pi_{\omega}((x_n)_{\omega})w(y_n)^{\omega}\xi_{\varphi^{\omega}}=(x_ny_n\xi_{\varphi_n})_{\omega}=0,\]
so $w^*\pi_{\omega}((x_n)_{\omega})w=w^*\cdot 0=0\in M^{\omega}$.
\end{proof}

In the next section, we will show (Proposition \ref{prop: wM^{omega}w^*=qNq and ww^*=q}) that $ww^*=q$, where $q=pJ_{\omega}pJ_{\omega}$, and $wM^{\omega}w^*=q(\prod^{\omega}M_n)q$. Here, $J_{\omega}$ is the ultraproduct of $(J_{\varphi_n})_n$.
The following result will be used in $\S$\ref{subsec: Golodets state}, Lemma \ref{lem: characterization of the centralizer1}.
\begin{corollary}\label{cor: decomposition of ell^infty}
For any $(a_n)_n\in \ell^{\infty}$, there exists $(b_n)_n\in \mathcal{M}^{\omega}$, $(c_n)_n\in \mathcal{L}_{\omega}$, and $(d_n)_n\in \mathcal{L}_{\omega}^*$ such that 
\begin{list}{}{}
\item[{\rm{(1)}}] $a_n=b_n+c_n+d_n$ for $n\in \mathbb{N}$.
\item[{\rm{(2)}}] $\displaystyle \|(b_n)^{\omega}\|\le \lim_{n\to \omega}\|a_n\|$.
\end{list}
\end{corollary}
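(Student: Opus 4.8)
The plan is to read statement (1) off Proposition \ref{UW2.1.6} for free, and to derive the norm estimate (2) from the isometry property of the *-isomorphism $\rho|_{M^{\omega}}$ together with the observation that compressing by the support projection $p$ on both sides annihilates the $\mathcal{L}_{\omega}$- and $\mathcal{L}_{\omega}^*$-parts of any decomposition.

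For (1): Proposition \ref{UW2.1.6} gives the literal set equality $\ell^{\infty}=\mathcal{M}^{\omega}+\mathcal{L}_{\omega}+\mathcal{L}_{\omega}^*$. Hence, given $(a_n)_n\in \ell^{\infty}$, there exist $(b_n)_n\in \mathcal{M}^{\omega}$, $(c_n)_n\in \mathcal{L}_{\omega}$, and $(d_n)_n\in \mathcal{L}_{\omega}^*$ with $(a_n)_n=(b_n)_n+(c_n)_n+(d_n)_n$, i.e. $a_n=b_n+c_n+d_n$ for every $n$. So (1) needs no further work; the content lies in (2), and I will show that (2) in fact holds automatically for \emph{any} such decomposition.

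For (2): First recall that by Proposition \ref{prop: normalizer iff commutes with p} the map $\rho|_{M^{\omega}}\colon M^{\omega}\to pNp$, $(b_n)^{\omega}\mapsto (b_n)_{\omega}p$, is a *-isomorphism of C$^*$-algebras, hence isometric, and that $(b_n)_{\omega}$ commutes with $p$. Thus $\|(b_n)^{\omega}\|=\|(b_n)_{\omega}p\|=\|p(b_n)_{\omega}p\|$. Next I would invoke the characterizations established inside the proof of Proposition \ref{UW2.1.6}: $(c_n)_n\in \mathcal{L}_{\omega}$ is equivalent to $(c_n)_{\omega}\in Np^{\perp}$, i.e. $(c_n)_{\omega}p=0$, and dually $(d_n)_n\in \mathcal{L}_{\omega}^*$ gives $p(d_n)_{\omega}=0$. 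Compressing the identity $(a_n)_{\omega}=(b_n)_{\omega}+(c_n)_{\omega}+(d_n)_{\omega}$ by $p$ on both sides then kills the last two summands:
\[p(a_n)_{\omega}p=p(b_n)_{\omega}p+p(c_n)_{\omega}p+p(d_n)_{\omega}p=p(b_n)_{\omega}p.\]
Combined with the isometry identity above, this yields $\|(b_n)^{\omega}\|=\|p(a_n)_{\omega}p\|$.

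Finally, since $p$ is a projection, $\|p(a_n)_{\omega}p\|\le \|(a_n)_{\omega}\|$, and because each $M_n$ acts standardly (hence isometrically) on $H_n$, we have $\|(a_n)_{\omega}\|=\|\pi_{\omega}((a_n)_{\omega})\|=\lim_{n\to \omega}\|a_n\|$, which delivers exactly the bound in (2). I do not expect a serious obstacle: the whole argument collapses to the single observation that the two ``off-diagonal'' pieces $\mathcal{L}_{\omega}$ and $\mathcal{L}_{\omega}^*$ die under two-sided compression by $p$. The only point requiring minor care is bookkeeping — remembering that $(c_n)_{\omega}$ vanishes when multiplied by $p$ on the \emph{right} while $(d_n)_{\omega}$ vanishes when multiplied by $p$ on the \emph{left} — so that both cross terms are indeed removed by the sandwich $p(\cdot)p$.
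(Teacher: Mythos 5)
Your proof is correct and takes essentially the same route as the paper's: both obtain (1) directly from Proposition \ref{UW2.1.6}, and both derive (2) by observing that the two-sided compression by $p$ kills the $\mathcal{L}_{\omega}$- and $\mathcal{L}_{\omega}^*$-parts, so that $p(a_n)_{\omega}p=(b_n)_{\omega}p$, and then invoking the isometric *-isomorphism $\rho|_{M^{\omega}}$ of Proposition \ref{prop: normalizer iff commutes with p} together with $\|p(a_n)_{\omega}p\|\le \|(a_n)_{\omega}\|=\lim_{n\to \omega}\|a_n\|$. The paper phrases this more tersely (via uniqueness of $(b_n)_n$ modulo $\mathcal{I}_{\omega}$), but the argument is the same.
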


\begin{proof}
Since $(a_n)_n\in \ell^{\infty}$, by Proposition \ref{UW2.1.6}, there exists $(b_n)_n\in \mathcal{M}^{\omega}$, $(c_n)_n\in \mathcal{L}_{\omega}$, and $(d_n)_n\in \mathcal{L}_{\omega}^*$ such that $a_n=b_n+c_n+d_n$. $(b_n)_n$ is unique modulo $\mathcal{I}_{\omega}$, and since $\rho|_{M^{\omega}}\colon M^{\omega}\to pNp$ is a *-isomorphism (Proposition \ref{prop: normalizer iff commutes with p}), we have
\[\|(b_n)^{\omega}\|=\|\rho^{-1}(p(a_n)_{\omega}p)\|
=\|p(a_n)_{\omega}p\|\le \lim_{n\to \omega}\|a_n\|.\]
\end{proof}

%\subsection{A Commutant Theorem}
\subsection{Standard Forms}\label{subsec: standard form} Our next step is to show (Theorem \ref{UW2.3.2} below) that the Groh-Raynaud ultraproduct of a sequence of standard von Neumann algebras is again standard, in such a way that the standard form of the ultraproduct algebra is obtained as an ultraproduct of the standard forms of the sequence. This result was first obtained by Raynaud \cite[Corollary 3.7]{Raynaud} in the case of constant sequence of algebras.  
We give a different proof, since this plays a crucial role in the proof of Theorem \ref{thm: ultrapower of modular automorphism}. 
For the convenience of the reader, recall the definition of a standard form \cite{Haagerup1}. 
\begin{definition}\label{def: standard form}
Let $(M,H,J,P)$ be a quadruple, where $M$ is a von Neumann algebra, $H$ is a Hilbert space on which $M$ acts, $J$ is an antilinear isometry on $H$ with $J^2=1$, and $P\subset H$ is a closed convex cone which is self-dual, i.e., $P=P^0$, where
\[P^0:=\{\xi \in H;\ \nai{\xi}{\eta}\ge 0,\ \eta\in P\}.\]
Then $(M,H,J,P)$ is called a {\it standard form} if the following conditions are satisfied:
\begin{itemize}
\item[1.] $JMJ=M'$.
\item[2.] $J\xi=\xi,\ \xi\in P$.
\item[3.] $xJxJ(P)\subset P,\ x\in M$.
\item[4.] $JxJ=x^*,\ x\in \mathcal{Z}(M)$.
\end{itemize}
\end{definition}

\begin{theorem}\label{UW2.3.2}Let $(M_n,H_n,J_n,P_n)_n$ be a sequence of standard forms. Let $H_{\omega}:=(H_n)_{\omega}$, let $J_{\omega}$ be defined on $H_{\omega}$ by
\[J_{\omega}(\xi_n)_{\omega}:=(J_n\xi_n)_{\omega},\ \ \ (\xi_n)_{\omega}\in H_{\omega},\]
and let
\[P_{\omega}:=\{(\xi_n)_{\omega}\in H_{\omega};\ \xi_n\in P_n \text{ for all }n\in \mathbb{N}\}.\]
Then the quadruple 
\[\left (\prod^{\omega}M_n,H_{\omega},J_{\omega},P_{\omega}\right )\]
is again a standard form.
\end{theorem}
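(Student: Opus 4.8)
The plan is to verify, one by one, the structural requirements of Definition \ref{def: standard form} for the quadruple $(N,H_{\omega},J_{\omega},P_{\omega})$ with $N:=\prod^{\omega}M_n$: that $J_{\omega}$ is a conjugation, that $P_{\omega}$ is a closed self-dual convex cone, and that axioms 1--3 hold. Axiom 4 will then come for free from Lemma \ref{UW2.3.1}. Invoking that lemma is essential precisely because $\mathcal{Z}(N)$ is typically much larger than $\prod^{\omega}\mathcal{Z}(M_n)$, so a representative-by-representative check of $J_{\omega}cJ_{\omega}=c^*$ for central $c$ is not available.

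That $J_{\omega}$ is a well-defined antilinear isometric involution is routine: $\|J_n\xi_n\|=\|\xi_n\|$ shows $(\xi_n)_n\mapsto (J_n\xi_n)_n$ preserves the null sequences and is isometric, and $J_n^2=1$ gives $J_{\omega}^2=1$. For the cone, the only nontrivial points are closedness and self-duality. For closedness I would introduce the $\omega$-ultraproduct $\Pi$ of the metric projections $\pi_{P_n}\colon H_n\to P_n$; since each $\pi_{P_n}$ is $1$-Lipschitz and fixes $0$, the map $\Pi((\xi_n)_{\omega}):=(\pi_{P_n}(\xi_n))_{\omega}$ is a well-defined $1$-Lipschitz idempotent on $H_{\omega}$ whose fixed-point set is exactly $P_{\omega}$, so $P_{\omega}$ is closed. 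For self-duality, the inclusion $P_{\omega}\subseteq P_{\omega}^0$ is immediate from taking $\omega$-limits of the nonnegative numbers $\nai{\xi_n}{\eta_n}$. For the reverse inclusion I would decompose a representative of $\xi\in P_{\omega}^0$ componentwise by Moreau's theorem, $\xi_n=a_n-b_n$ with $a_n,b_n\in P_n$ and $\nai{a_n}{b_n}=0$; setting $a=(a_n)_{\omega}, b=(b_n)_{\omega}\in P_{\omega}$ gives $\xi=a-b$ with $\nai{a}{b}=0$, and then $0\le \nai{\xi}{b}=-\|b\|^2$ forces $b=0$, i.e. $\xi=a\in P_{\omega}$.

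Axiom 2 is immediate, since $J_n\xi_n=\xi_n$ on $P_n$. For axiom 3 I would first check $xJ_{\omega}xJ_{\omega}P_{\omega}\subseteq P_{\omega}$ on the strongly dense $*$-subalgebra $\pi_{\omega}((M_n)_{\omega})$, where it reduces to the componentwise relation $x_nJ_nx_nJ_nP_n\subseteq P_n$; then, for a general $x\in N$, I would approximate $x$ in the strong$^*$ topology by a bounded net from $\pi_{\omega}((M_n)_{\omega})$ via Kaplansky's density theorem and use that $y\mapsto yJ_{\omega}yJ_{\omega}\eta$ is strong-continuous on bounded sets, together with the closedness of $P_{\omega}$, to pass to the limit.

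The heart of the proof is axiom 1, $J_{\omega}NJ_{\omega}=N'$. The map $\Phi(\cdot):=J_{\omega}(\cdot)J_{\omega}$ is a conjugate-linear $*$-automorphism of $\mathbb{B}(H_{\omega})$ that is a strong homeomorphism on bounded sets and carries $\pi_{\omega}((M_n)_{\omega})$ onto $\pi_{\omega}((M_n')_{\omega})$ (using $J_nM_nJ_n=M_n'$); hence $\Phi(N)=\prod^{\omega}M_n'$, and axiom 1 becomes the commutation theorem $\prod^{\omega}M_n'=N'$. The inclusion $\prod^{\omega}M_n'\subseteq N'$ is easy, since $\pi_{\omega}((M_n')_{\omega})$ commutes entrywise with $\pi_{\omega}((M_n)_{\omega})$ and $N'$ is strongly closed. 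The main obstacle is the reverse inclusion $N'\subseteq \prod^{\omega}M_n'$, equivalently the bicommutant identity $(\prod^{\omega}M_n')'\subseteq \prod^{\omega}M_n$; this is where the varying-algebra generalization of Raynaud's constant-case commutation theorem must be established. I expect to obtain it by a closed-graph/vector-representation argument in the spirit of Lemma \ref{UW2.1.5}: realizing the action of a given $y\in N'$ on the dense set of vectors $(\eta_n)_{\omega}$ by a bounded sequence $y_n\in M_n'$ produced level-by-level from the standard structure of $(M_n',H_n,J_n,P_n)$, and then checking $y=\pi_{\omega}((y_n)_{\omega})$. Once axioms 1--3 are in hand, Lemma \ref{UW2.3.1} supplies axiom 4 automatically, completing the proof.
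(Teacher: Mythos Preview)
Your overall architecture is exactly the paper's: verify that $P_{\omega}$ is a closed self-dual cone, check axioms 1--3 directly, and invoke Lemma \ref{UW2.3.1} for axiom 4. Two points deserve attention.

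\emph{Self-duality.} Your Moreau decomposition $\xi_n=a_n-b_n$ with $a_n,b_n\in P_n$ orthogonal is only available for vectors in the real part $H_n^{J_n}$; for a general complex $\xi_n$ one needs the four-part decomposition $\xi_n=\eta_n^+-\eta_n^-+i(\zeta_n^+-\zeta_n^-)$ with $\eta_n^{\pm},\zeta_n^{\pm}\in P_n$ and $\eta_n^+\perp\eta_n^-$, $\zeta_n^+\perp\zeta_n^-$. The paper then tests $\xi\in P_{\omega}^0$ against $(\eta_n^-)_{\omega}$ and $(\zeta_n^{\pm})_{\omega}$ separately to kill these three pieces. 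Your argument as written handles only the real component; the fix is routine, but it is a gap.

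\emph{Axiom 1.} You correctly isolate the commutation theorem $(\prod^{\omega}M_n)'=\prod^{\omega}M_n'$ as the crux, but the sketch you give is too vague, and the invocation of Lemma \ref{UW2.1.5} is misleading: that lemma produces representatives in $M_n$, not in $M_n'$, and its diagonalization technique does not by itself manufacture commutant elements. The paper's route is a two-step argument of a different flavour. First (Lemma \ref{UW2.2.1}): given $a'\in(\prod^{\omega}M_n)'$ and a single vector $\xi=(\xi_n)_{\omega}$, one compares the vector functionals $\omega_{a'\xi}|_{M_n}$ and $\omega_{\xi}|_{M_n}$, controls the defect $\varepsilon_n=\sup\{\nai{x\eta_n}{\eta_n}-\nai{x\xi_n}{\xi_n}:0\le x\le 1,\ x\in M_n\}$, and then uses a Radon--Nikodym-type lemma from \cite{HW1} to obtain $a_n'\in M_n'$ with $\|a_n'\|\le 1$ and $a_n'\xi_n$ close to the $n$-th component of $a'\xi$; the resulting $a=(a_n')_{\omega}\in\prod^{\omega}M_n'$ satisfies $a\xi=a'\xi$. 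Second (Theorem \ref{UW2.2.2}): one amplifies by $M_m(\mathbb{C})$ to pass from one vector to any finite tuple $\xi_1,\dots,\xi_m$, which proves $a'$ lies in the strong closure of $\prod^{\omega}M_n'$. The state-comparison step is the substantive idea your proposal is missing.
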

Conditions 2. and 3. can be easily verified. For 1., we have to show the Raynaud's Theorem that $(\prod^{\omega}M_n)'=\prod^{\omega}M_n'$ (Theorem \ref{UW2.2.2} below). It might look obvious that 4. holds. However, we will see that $\mathcal{Z}(\prod^{\omega}M_n)$ is different from $\prod^{\omega}\mathcal{Z}(M_n)$ in general. Therefore it is not obvious that the equality $J_{\omega}xJ_{\omega}=x^*$ holds for $x\in \mathcal{Z}(\prod^{\omega}M_n)$. However, this can be fixed by showing that condition 4. is redundant (this has general and independent interest):
\begin{lemma}\label{UW2.3.1} Let $(M,H,J,P)$ be a quadruple satisfying, conditions 1.-3. in Definition \ref{def: standard form}. Then $(M,H,J,P)$ satisfies condition 4, whence it is a standard form.
\end{lemma}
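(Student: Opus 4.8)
The plan is to recognize that condition~4 is equivalent to the triviality of a natural $*$-automorphism of the center, and then to exclude any nontrivial part using the self-dual cone $P$ together with condition~3.

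First I would record that $J\mathcal{Z}(M)J=\mathcal{Z}(M)$. Indeed, for $z\in\mathcal{Z}(M)=M\cap M'$, membership $z\in M$ gives $JzJ\in JMJ=M'$ by condition~1, while $z\in M'=JMJ$ gives $JzJ\in M$ (using $J^{2}=1$); hence $JzJ\in\mathcal{Z}(M)$. Consequently $\sigma(z):=Jz^{*}J$ is a well-defined map $\mathcal{Z}(M)\to\mathcal{Z}(M)$, and since $J$ is an antilinear involution a routine check shows $\sigma$ is a (normal) $*$-automorphism with $\sigma^{2}=\mathrm{id}$. Because condition~4 asserts precisely $JzJ=z^{*}$, i.e. $\sigma=\mathrm{id}$, the whole lemma reduces to proving $\sigma=\mathrm{id}$.

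I would argue by contradiction. If $\sigma\neq\mathrm{id}$, choose a projection $q\in\mathcal{Z}(M)$ with $\sigma(q)\neq q$ and set $e:=q\,\sigma(q)^{\perp}$. Then $e$ is a \emph{nonzero} central projection (if $e=0$ then $q\le\sigma(q)$, and applying the order-preserving $\sigma$ gives $\sigma(q)\le q$, forcing $\sigma(q)=q$), and one checks immediately that $e\,\sigma(e)=0$. Writing $f:=\sigma(e)=JeJ$, I now have orthogonal central projections $e,f$ with $ef=0$, $JeJ=f$ and $JfJ=e$. The heart of the argument is then to feed into condition~3 the four central elements $x=e\pm f$ and $x=e\pm if$; using $J(if)J=-\,ie$ and the swap $JeJ=f$, these yield $xJxJ=\pm(e+f)$ and $xJxJ=\pm\, i(e-f)$ respectively. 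Since $xJxJ(P)\subset P$ and $P=P^{0}$ is self-dual, each quantity $\nai{xJxJ\,\xi}{\eta}$ is $\ge 0$ for $\xi,\eta\in P$; with $a:=\nai{e\xi}{\eta}$ and $b:=\nai{f\xi}{\eta}$ the real combinations $e\pm f$ give $a+b=0$, while the combinations carrying the phases $\pm i$ give $a=b$, whence $a=\nai{e\xi}{\eta}=0$ for all $\xi,\eta\in P$.

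To finish I would use that self-duality makes $P$ total in $H$: if a $J$-fixed vector $\zeta$ is orthogonal to $P$, then $\pm\zeta\in P^{0}=P$, so $\nai{\zeta}{-\zeta}\ge 0$ and $\zeta=0$; thus the real span of $P$ is dense in the fixed-point space $H^{J}$ and $H=H^{J}+iH^{J}$. Expanding the sesquilinear form $(\xi,\eta)\mapsto\nai{e\xi}{\eta}$ accordingly, its vanishing on $P\times P$ propagates to all of $H$, giving $e=0$, a contradiction. Hence $\sigma=\mathrm{id}$ and condition~4 holds. I expect the main obstacle to be the third step: the choice of the four test elements is the only place where the geometry of the cone is genuinely used, and the complex phases $\pm i$ are indispensable, since the real elements $e\pm f$ alone pin down only $a+b$, not $a$ itself. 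It is worth stressing that this intrinsic, cone-based argument is needed precisely because $\mathcal{Z}(\prod^{\omega}M_n)$ may be strictly larger than $\prod^{\omega}\mathcal{Z}(M_n)$, so one cannot reduce condition~4 to a coordinatewise statement.
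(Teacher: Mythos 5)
Your proof is correct, and it takes a genuinely different route from the paper's. The paper argues in three steps: first the case where $M$ has a cyclic and separating vector $\xi\in P$, where Araki's characterization of the modular conjugation (Theorem \ref{thm: characterization of J}) identifies $J$ with the modular involution of $\omega_{\xi}$ and condition 4 then follows from Tomita--Takesaki theory; then the case $\overline{MM'\xi}=H$, handled by compressing to $f(H)$ with $f=ee'$ and using that $c\mapsto fcf$ is injective on the center; finally the general case, via a maximal family $(\xi_{\alpha})_{\alpha}\subset P$ whose central subspaces $\overline{MM'\xi_{\alpha}}$ are shown to sum to $H$. You instead reduce condition 4 to the triviality of the involutive $*$-automorphism $\sigma(z)=Jz^{*}J$ of $\mathcal{Z}(M)$ (well defined since $JMJ=M'$ and $JM'J=M$) and exclude a nontrivial part by a purely cone-theoretic computation. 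All your steps check out: nontriviality of $\sigma$ yields a projection $q$ with $\sigma(q)\neq q$ because a von Neumann algebra is the norm-closed span of its projections; $e=q\,\sigma(q)^{\perp}\neq 0$ by your order argument; the algebra of the four test elements is right ($e\pm f$ give $xJxJ=\pm(e+f)$ and $e\pm if$ give $xJxJ=\mp i(e-f)$, so self-duality forces $\nai{e\xi}{\eta}=0$ on $P\times P$); and the totality of $P$ follows from self-duality alone (in fact the complex span of $P$ is already dense, so the detour through $H^{J}$ is not even needed). What each approach buys: yours is shorter and entirely elementary --- no Araki theorem, no modular theory, no maximality argument, no case distinctions --- while the paper's route stays inside the modular-theoretic toolkit used throughout and records along the way the independently useful fact that any $J$ satisfying conditions 1.--3. with a cyclic and separating vector in $P$ is automatically the modular conjugation associated with that vector.
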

We use the following Araki's characterization of the modular conjugation operator.
\begin{theorem}{\rm{\cite[Theorem 1]{Araki}}}\label{thm: characterization of J}
Let $\xi$ be a cyclic and separating vector for a von Neumann algebra $M$ on a Hilbert space $H$. Then a conjugate-linear involution $J$ is the modular conjugation operator associated with the state $\omega_{\xi}=\nai{ \cdot\ \xi}{\xi}$ if and only if $J$ satisfies the following conditions.
\begin{itemize}
\item[{\rm{(1)}}] $JMJ=M'$.
\item[{\rm{(2)}}] $J\xi=\xi$.
\item[{\rm{(3)}}] $\nai{\xi}{aJaJ\xi}\ge 0$ for all $a\in M$, and equality holds if and only if $a=0$.
\end{itemize}
\end{theorem}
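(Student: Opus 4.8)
The plan is to deduce condition~4. from conditions~1.--3. by pinning $J$ down as a modular conjugation through Araki's Theorem~\ref{thm: characterization of J}, and then invoking the standard fact that a modular conjugation implements the adjoint on the center. First I would record that, by condition~1., the map $x\mapsto JxJ$ is a conjugate-linear, multiplicative, $*$-preserving bijection of $M$ onto $M'$; restricted to $\mathcal{Z}(M)=\mathcal{Z}(M')$ it is a conjugate-linear automorphism $\beta$ of the abelian algebra $\mathcal{Z}(M)$. Since both $z\mapsto JzJ$ and $z\mapsto z^*$ are conjugate-linear and $\sigma$-weakly continuous, it suffices to check they agree on projections, i.e.\ to prove $JeJ=e$ for every projection $e\in\mathcal{Z}(M)$ (note $f:=JeJ$ is again a central projection).

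The heart of the matter is the $\sigma$-finite case, where the self-dual cone $P$ contains a cyclic and separating vector $\xi$ (a quasi-interior point of $P$). For such a $\xi$ I would verify the three hypotheses of Theorem~\ref{thm: characterization of J} for our $J$ and the state $\omega_{\xi}$: hypothesis~(1) is exactly condition~1.; hypothesis~(2) is condition~2. applied to $\xi\in P$; and for hypothesis~(3), positivity $\langle\xi,aJaJ\xi\rangle\ge 0$ is immediate since condition~3. gives $aJaJ\xi\in P$ while $\xi\in P=P^{0}$, and the definiteness clause (vanishing forces $a=0$) follows from $\xi$ being a strictly positive cyclic-separating cone vector together with the separating property. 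Araki's Theorem then identifies $J$ with the modular conjugation $J_{\xi}:=J_{\omega_{\xi}}$. Once $J=J_{\xi}$, condition~4. is the standard modular fact: for central $z$ one has $\omega_{\xi}(zy)=\omega_{\xi}(yz)$, so $\mathcal{Z}(M)\subseteq M_{\omega_{\xi}}$ and $\sigma_{t}^{\omega_{\xi}}$ fixes $z$; hence $z$ commutes strongly with $\Delta_{\xi}$, and since $z\xi\in\dom{S_{\xi}}$ with $\Delta_{\xi}^{1/2}\xi=\xi$ we get $\Delta_{\xi}^{1/2}z\xi=z\xi$, whence $z^{*}\xi=S_{\xi}z\xi=J_{\xi}\Delta_{\xi}^{1/2}z\xi=J_{\xi}z\xi=J_{\xi}zJ_{\xi}\xi$. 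As $J_{\xi}zJ_{\xi}\in\mathcal{Z}(M)\subseteq M$ and $\xi$ is separating for $M$, this agreement on $\xi$ upgrades to $JzJ=z^{*}$.

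The main obstacle is the passage from the $\sigma$-finite case to a general, possibly non-$\sigma$-finite, $M$, where $P$ may contain no cyclic-separating vector at all. This step is genuinely unavoidable and cannot be replaced by testing states on the cone: a short computation using $J\xi=\xi$ gives $\langle f\xi,\xi\rangle=\langle e\xi,\xi\rangle$ for every $\xi\in P$ (equivalently $\|e\xi\|=\|f\xi\|$), and since $J(e-f)J=-(e-f)$ the operator $e-f$ maps the real part $\{J\zeta=\zeta\}\supseteq P$ into its orthogonal imaginary part, so $\langle(e-f)\xi,\eta\rangle\in i\mathbb{R}$ for all $\xi,\eta\in P$. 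Thus a putative swap $e\leftrightarrow f$ is completely invisible to the inner-product and order structure of $P$, and only the finer modular (KMS) information encoded in Araki's definiteness clause can detect it.

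To carry out the reduction I would replace $e$ by $g:=e\wedge f^{\perp}=ef^{\perp}$ and set $h:=JgJ=e^{\perp}\wedge f$; then $k:=g+h$ is a central projection with $JkJ=k$ and, by condition~3. applied to $x=k$, $kP\subseteq P$, so $(Mk,kH,J|_{kH},kP)$ is again a standard form in which $J$ swaps the complementary central projections $g$ and $h=g^{\perp}$. It remains to localize this reduced standard form further to a $\sigma$-finite, $J$-invariant corner that still supports a cyclic-separating cone vector and retains the swap; applying the $\sigma$-finite case there then forces $g=0$, i.e.\ $e=f$. I expect the two delicate technical points to be precisely this final localization (producing the cyclic-separating vector while keeping the swap alive) and the verification of Araki's definiteness clause directly from the self-duality of $P$ and condition~3.
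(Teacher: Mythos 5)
Your proposal does not prove the statement at hand. The statement is Araki's characterization of the modular conjugation (Theorem \ref{thm: characterization of J}): a conjugate-linear involution $J$ equals $J_{\omega_{\xi}}$ \emph{if and only if} conditions (1)--(3) hold. What you have written is instead a proof sketch of a different result, namely Lemma \ref{UW2.3.1} of the paper (redundancy of condition 4.\ in Definition \ref{def: standard form}), and --- fatally for the present purpose --- your argument invokes Theorem \ref{thm: characterization of J} itself as its central tool (``Araki's Theorem then identifies $J$ with the modular conjugation $J_{\xi}$''). As a proof of Theorem \ref{thm: characterization of J} this is circular, and neither implication of the equivalence is ever addressed. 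The ``only if'' direction would require checking that the Tomita--Takesaki $J_{\omega_{\xi}}$ satisfies (1)--(3): here (1) is Tomita's theorem, (2) follows from $J_{\xi}\xi=\Delta_{\xi}\xi=\xi$, and (3) from the identity $\langle \xi, aJ_{\xi}aJ_{\xi}\xi\rangle=\|\Delta_{\xi}^{1/4}a\xi\|^{2}$ together with $\xi$ separating. The substantive ``if'' direction --- that \emph{any} conjugate-linear involution satisfying (1)--(3) must coincide with $J_{\omega_{\xi}}$ --- is the actual content of Araki's theorem and appears nowhere in your text. Note that the paper does not reprove this statement either: it quotes it from \cite[Theorem 1]{Araki} and then \emph{uses} it, exactly as you do, to prove Lemma \ref{UW2.3.1}.

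For completeness: read as an attempt at Lemma \ref{UW2.3.1}, your sketch is close in spirit to the paper's proof ($\sigma$-finite case via Theorem \ref{thm: characterization of J}, then a reduction), but it is incomplete at precisely the step where the paper's work lies. The reduction you leave open (``producing the cyclic-separating vector while keeping the swap alive'') is carried out in the paper in two moves: first a maximal family $(\xi_{\alpha})\subset P$ with $(\overline{MM'\xi_{\alpha}})_{\alpha}$ orthogonal yields central $J$-commuting projections $q_{\alpha}$ with $\sum_{\alpha}q_{\alpha}=1$ (using $3.$ to produce cone vectors in any nonzero $J$-invariant central corner); second, within a corner where $\overline{MM'\xi}=H$, one cuts by $f=ee'$ (with $e,e'$ the projections onto $\overline{M'\xi}$, $\overline{M\xi}$, so $JeJ=e'$ and $JfJ=f$), invokes \cite[Lemma 2.6]{Haagerup1} to see that $(fMf,f(H),J|_{f(H)},f(P))$ still satisfies 1.--3.\ with $\xi$ now cyclic and separating, and then uses that $c\mapsto fcf$ is injective on $\mathcal{Z}(M)$ (central supports of $e,e'$ being $1$) to upgrade the corner identity $JcJ=c^{*}$ to all of $H$. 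Your bookkeeping with $g=e\wedge f^{\perp}$, $h=JgJ$, $k=g+h$ is sound as far as it goes, but it does not substitute for this localization, and your observation that the swap is ``invisible'' to the cone's inner-product structure, while correct, only explains why the modular input is needed --- it does not supply it.
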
 
\begin{proof}[Proof of Lemma \ref{UW2.3.1}]The proof is in three steps. Throughout, conditions 1.-3. in Definition \ref{def: standard form} are assumed to hold.

\textbf{Step 1}: Assume first that $M$ has a cyclic and separating vector $\xi \in P$. Then by Theorem \ref{thm: characterization of J}, $J$ is the modular involution associated with $\xi$. But then 4. is immediate from Tomita-Takesaki theory \cite{Takesaki1}.

\textbf{Step 2}: Assume now the slightly more general situation where we have $\xi \in P$ such that $\overline{MM'\xi}=H$. Let $e$ and $e'$ be the projections onto $\overline{M'\xi}$ and $\overline{M\xi}$, respectively. If $f$ is a central projection in $M$, and $f\ge e$, then we have, for all $x\in M$ and $x'\in M'$:
\[fxx'\xi=xfx'\xi=xx'\xi,\]
and so $f=1$; it follows that the central support of $e$ is 1, and similarly, it follows that the central support of $e'$ is 1. Moreover, as
\[JM'\xi=JM'J\xi=M\xi,\]
we have that $JeJ=e'$.

Now, let $f:=ee'$. Then $JfJ=JeJeJ^2=e'e=ee'=f$. By the proof of \cite[Lemma 2.6]{Haagerup1}, it follows that $(fMf,f(H),J|_{f(H)},f(P))$ does also satisfy the conditions 1.-3. But as 
\[\overline{fMf\xi}=\overline{ee'M\xi}=ee'(H)=f(H),\]
and similarly $\overline{fM'f\xi}=f(H)$, we see that $\xi$ is a separating and cyclic vector for $fMf$, acting on $f(H)$. Hence by Step 1, we have
\[J|_{f(H)}dJ|_{f(H)}=d^*\]
for all central element $d$ of $fMf$. But as $e$ and $e'$ have central support 1, the map $c\mapsto fcf$ is a *-isomorphism from the center of $M$ onto the center of $fMf$. We now prove that 4. holds in the case under consideration: let $c\in M\cap M'$, then $JcJ-c^*\in M\cap M'$, so as $JfJ=f$, we get from the above:
\eqa{
0&=J|_{f(H)}fcfJ|_{f(H)}-(fcf)^*\\
&=(JfcfJ-fc^*f)|_{f(H)}\\
&=f(JcJ-c^*)f|_{f(H)},
}
hence $JcJ=c^*$ holds by the injectivity of $c\mapsto fcf$.

\textbf{Step 3}: We now consider the general case. Let $(\xi_{\alpha})_{\alpha}\subset P\setminus \{0\}$ be a maximal family with respect to the property that $(\overline{MM'\xi_{\alpha}})_{\alpha}$ forms an orthogonal family of subspaces of $H$. Let $q_{\alpha}$ be a projection onto $\overline{MM'\xi_{\alpha}}$. The projections  $(q_{\alpha})_{\alpha}$ are clearly central, and as 
\[JMM'\xi_{\alpha}=(JMJ)(JM'J)J\xi_{\alpha}=M'M\xi_{\alpha}=MM'\xi_{\alpha},\]
one has also $Jq_{\alpha}=q_{\alpha}J$ for all $\alpha$. Hence with $p:=1-\sum_{\alpha}q_{\alpha}$, we have $JpJ=p$. Now, assume that $p\neq 0$. As $P$ spans $H$, we may then choose $\eta\in P$ such that $p\eta\neq 0$. Let $\xi=p\eta$. Then 
\[\xi=p\eta=p^2\eta=pJpJ\eta\in P,\]
and as $\xi\perp \overline{MM'\xi_{\alpha}}$ for all $\alpha$, it is easy to see that $\overline{MM'\xi}\perp \overline{MM'\xi_{\alpha}}$ for all $\alpha$. But this contradicts the maximality of $(\xi_{\alpha})_{\alpha}$, so that $p=0$ and hence $\sum_{\alpha}q_{\alpha}=1$. Now, each of the quadruples
\[(q_{\alpha}M,q_{\alpha}(H),J|_{q_{\alpha}(H)},q_{\alpha}(P))\]
satisfy 1.-3. and the condition considered in Step 2, since $q_{\alpha}(H)=\overline{MM'\xi_{\alpha}}$; hence 4. holds for the above quadruple, i.e.,
\[J|_{q_{\alpha}(H)}c_{\alpha}J|_{q_{\alpha}(H)}=c_{\alpha}^*|_{q_{\alpha}(H)}\]
whenever $c_{\alpha}$ is a central element of $q_{\alpha}M$.

Now, let $c\in M\cap M'$. Then $q_{\alpha}c$ is a central element of $q_{\alpha}M$, and so 
\eqa{
JcJ&=J\left (\sum_{\alpha}q_{\alpha}cq_{\alpha}\right )J=\sum_{\alpha}Jq_{\alpha}cq_{\alpha}Jq_{\alpha}\\
&=\sum_{\alpha}J|_{q_{\alpha}(H)}cq_{\alpha}J|_{q_{\alpha}(H)}q_{\alpha}=\sum_{\alpha}c^*q_{\alpha}\\
&=c^*.
}
\end{proof}
Next we show that the Groh-Raynaud ultraproduct preserves commutant. This result was obtained by Raynaud \cite[Theorem 1.8]{Raynaud} in the case of a constant sequence of algebras. 
\begin{lemma}\label{UW2.2.1} Let $(H_n)_n$ be a sequence of Hilbert spaces, and let $M_n\in {\rm{vN}}(H_n)$ for each $n\in \mathbb{N}$. Let $H_{\omega}=(H_n)_{\omega}$, and $M=\prod^{\omega}(M_n,H_n)$ and $N=\prod^{\omega}(M_n',H_n)$. For any $\xi \in H_{\omega}$ and $a'\in M'$, there exists $a\in N$ such that $a\xi=a'\xi$ and $\|a\|\le \|a'\|$.
\end{lemma}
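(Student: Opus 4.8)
My plan is to build $a$ fibrewise, as $a=\pi_{\omega}((a_n)_{\omega})$ with $a_n\in M_n'$ a contraction, and to obtain the exact identity $a\xi=a'\xi$ as a limit of approximate ones. After rescaling I assume $\|a'\|\le 1$; write $\xi=(\xi_n)_{\omega}$ and $a'\xi=(\eta_n)_{\omega}$ with $(\eta_n)_n\in\ell^{\infty}(\mathbb{N},H_n)$. The two facts from $a'\in M'$ that drive the argument come first. Since $a'$ commutes with every $\pi_{\omega}((x_n)_{\omega})\in M$ ($x_n\in M_n$) and $\|a'\|\le 1$, one has
\[\lim_{n\to\omega}\|x_n\eta_n\|\ \le\ \lim_{n\to\omega}\|x_n\xi_n\|,\qquad (x_n)_n\in\ell^{\infty}(\mathbb{N},M_n)\qquad(\ast)\]
(because $(x_n\eta_n)_{\omega}=\pi_{\omega}((x_n)_{\omega})a'\xi=a'\pi_{\omega}((x_n)_{\omega})\xi$). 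Secondly, letting $e_n\in M_n$ be the projection onto $\overline{M_n'\xi_n}$ (it lies in $(M_n')'=M_n$) and $e=\pi_{\omega}((e_n)_{\omega})\in M$, we get $e\xi=\xi$, so $a'\xi=a'e\xi=ea'\xi$; replacing $\eta_n$ by $e_n\eta_n$ (a change vanishing along $\omega$) I may assume $\eta_n\in\overline{M_n'\xi_n}$ for all $n$.

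For the local construction I would use the natural, generally unbounded, operator $T_n$ affiliated with $M_n'$ that implements $\xi_n\mapsto\eta_n$, namely the closure of $x_n\xi_n\mapsto x_n\eta_n$ ($x_n\in M_n$) extended by $0$ off $\overline{M_n\xi_n}$; here $\eta_n\in\overline{M_n'\xi_n}$ is what makes this affiliated with $M_n'$. Taking the polar decomposition $T_n=v_n|T_n|$ with $v_n\in M_n'$, I set $a_n:=v_n\,\min(|T_n|,1)\in M_n'$, a contraction, and $a:=\pi_{\omega}((a_n)_{\omega})\in N$, so that $\|a\|=\lim_{n\to\omega}\|a_n\|\le 1$ and $a\xi=(a_n\xi_n)_{\omega}$. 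Since $a_n\xi_n-\eta_n=-v_n(|T_n|-1)_{+}\xi_n$, the whole statement reduces to the single estimate
\[\lim_{n\to\omega}\bigl\|(|T_n|-1)_{+}\,\xi_n\bigr\|=0.\qquad(\dagger)\]
Granting $(\dagger)$ we get $a\xi=(\eta_n)_{\omega}=a'\xi$ exactly, with $\|a\|\le\|a'\|$. (That the contraction $a$ produced in the limit really belongs to $N$ is ensured by noting that $\{b\xi:b\in N,\ \|b\|\le 1\}$ is the image of the WOT‑compact unit ball of $N$ under the weakly continuous orbit map $b\mapsto b\xi$, hence weakly compact and norm‑closed.)

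The hard part is $(\dagger)$: the spectral mass of $|T_n|$ above $1$ at $\xi_n$ must disappear along $\omega$. The difficulty is structural — the relevant spectral projections $q_n=E_{|T_n|}((1+\varepsilon,\infty))$ live in $M_n'$, while the only estimate available, $(\ast)$, is tested by elements of $M_n$. The key leverage is that $M_n$ and $M_n'$ commute, so $M_n$ preserves the reducing subspaces $q_nH_n$ and $(1-q_n)H_n$. Hence for $x_n\in M_n$ the splitting $x_n\xi_n=x_nq_n\xi_n+x_n(1-q_n)\xi_n$ is orthogonal, and, using $q_n\eta_n=T_nq_n\xi_n$ and $|T_n|\ge 1+\varepsilon$ on $q_nH_n$, one finds $\|x_n\eta_n\|^2\ge(1+\varepsilon)^2\|x_nq_n\xi_n\|^2$. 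Substituting into $(\ast)$ yields the $M_n$‑module domination $\bigl[(1+\varepsilon)^2-1\bigr]\lim_{n\to\omega}\|x_nq_n\xi_n\|^2\le\lim_{n\to\omega}\|x_n(1-q_n)\xi_n\|^2$ for every bounded $(x_n)_n$, $x_n\in M_n$.

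The plan to finish is to exploit this domination with a \emph{bounded} witness in $M_n$: I would test it against the left‑support projection $s_n\in M_n$ of the bad part $q_n\xi_n$ (the projection onto $\overline{M_n'q_n\xi_n}$, which lies in $M_n$). This $s_n$ fixes $q_n\xi_n$, and because it commutes with $q_n$ the good part $(1-q_n)\xi_n$ stays in the orthogonal reducing subspace; the aim is to conclude that such a witness forces $\lim_{n\to\omega}\|q_n\xi_n\|=0$ for each $\varepsilon>0$, whence $(\dagger)$ follows after a routine tail estimate based on the bound $\||T_n|\xi_n\|=\|\eta_n\|$. I expect this last step — verifying that some uniformly bounded $M_n$‑operator genuinely isolates the commutant‑side bad mass, i.e.\ ruling out persistent spectral mass of $|T_n|$ above $1$ at $\xi_n$ — to be the main obstacle of the whole proof; by contrast the rescaling, the support reduction through $e_n$, and the derivation of $(\ast)$ are comparatively routine.
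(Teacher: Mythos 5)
There are two genuine gaps here, and the second one you name yourself. First, the fibrewise operator $T_n$ need not exist. Your reduction to $\eta_n\in\overline{M_n'\xi_n}$ does make the assignment $x\xi_n\mapsto x\eta_n$ ($x\in M_n$) well defined on $M_n\xi_n$, but it does not make it closable, and closability is exactly what the polar decomposition $T_n=v_n|T_n|$, the spectral projections $q_n$ and the cut-off $\min(|T_n|,1)$ require. Concretely, let $M=\mathbb{B}(K)$ act by left multiplication on the Hilbert space $H$ of Hilbert--Schmidt operators on $K$, let $\xi$ be an injective positive Hilbert--Schmidt operator with dense range (then $\overline{M'\xi}=H$), and let $\eta$ be the rank-one operator $c\mapsto \nai{c}{v}u$ with $u\notin\mathrm{ran}(\xi)$. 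Since $u\notin\mathrm{ran}(\xi)$, Douglas' range criterion gives vectors $g_k$ with $\nai{u}{g_k}=1$ and $\|\xi g_k\|\to 0$; the rank-one operators $x_k:=(c\mapsto\nai{c}{g_k}w)\in M$ then satisfy $x_k\xi\to 0$ in $H$, while $x_k\eta=(c\mapsto\nai{c}{v}w)$ is a fixed nonzero vector for every $k$. So $x\xi\mapsto x\eta$ is not closable, and nothing prevents your representatives $(\xi_n)_n,(\eta_n)_n$ from being of this form: for a fixed $n$ there is no relation whatsoever between $\xi_n$ and $\eta_n$, since the hypothesis $a'\xi=\eta$ only constrains ultralimits. (You could restore the existence of $T_n$ by re-choosing $\eta_n\in M_n'\xi_n$ exactly, using density of $M_n'\xi_n$ in $\overline{M_n'\xi_n}$, at the price of losing all norm control; but then the entire burden falls on the truncation step.)

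Second, and more fundamentally, the estimate $(\dagger)$ is not established, and the proposed witness does not establish it. Testing your domination inequality with $x_n=s_n$, the projection onto $\overline{M_n'q_n\xi_n}$, gives, using that $s_n\in M_n$ commutes with $q_n\in M_n'$ and fixes $q_n\xi_n$, the inequality $\bigl[(1+\varepsilon)^2-1\bigr]\lim_{n\to\omega}\|q_n\xi_n\|^2\le\lim_{n\to\omega}\|s_n(1-q_n)\xi_n\|^2$; but nothing makes the right-hand side small. Indeed $\overline{M_n'q_n\xi_n}$ is \emph{not} contained in $q_nH_n$ (the vectors $b'q_n\xi_n$, $b'\in M_n'$, leave $q_nH_n$ because $b'$ need not commute with $q_n$), so $s_n$ does not isolate the bad mass from $(1-q_n)\xi_n$. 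As you concede, this is the main obstacle, and without it the lemma is unproved. The paper supplies exactly the idea you are missing, and it perturbs the vectors rather than truncating operators: it sets $\varepsilon_n:=\sup\{\nai{x\eta_n}{\eta_n}-\nai{x\xi_n}{\xi_n};\ x\in M_n,\ 0\le x\le 1\}$, shows $\lim_{n\to\omega}\varepsilon_n=0$ (this is your $(\ast)$ in state form), then applies the perturbation result \cite[Lemma 3.2]{HW1} to replace $\eta_n$ by $\eta_n'$ with $\|\eta_n-\eta_n'\|\le\varepsilon_n^{\frac{1}{2}}$ and the \emph{exact} domination $\omega_{\eta_n'}\le\omega_{\xi_n}$ on $M_n$, and finally applies the exact factorization \cite[Lemma 3.1]{HW1} fibrewise to produce contractions $a_n'\in M_n'$ with $a_n'\xi_n=\eta_n'$; then $a=\pi_{\omega}((a_n')_{\omega})$ works. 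Once the domination is made exact for each $n$, no unbounded operators, spectral truncations, or choices of witnesses are needed.
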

\begin{proof} Let $\xi=(\xi_n)_{\omega}\in H_{\omega}$ and let $a'\in M'$; to prove the lemma, we may and do assume that $\|a'\|=1$. Let $\eta=a'\xi=(\eta_n)_{\omega}$ and put
\[\varepsilon_n:=\sup \{\nai{x\eta_n}{\eta_n}-\nai{x\xi_n}{\xi_n};\ x\in M_n,\ 0\le x\le 1\},\ \ n\in \mathbb{N}.\]
Then $\varepsilon_n\ge 0\ (n\in \mathbb{N})$, and by weak-compactness of Ball$(M_n)\cap M_n^+$, there is $(x_n)_n\in \prod_{n\in \mathbb{N}}M_n$ such that $0\le x_n\le 1$ and 
\[\varepsilon_n=\nai{x_n\eta_n}{\eta_n}-\nai{x_n\xi_n}{\xi_n},\]
for all $n\in \mathbb{N}$. In particular, $\pi_{\omega}(x)\in M$, where $x=(x_n)_{\omega}$. Also, e.g., by \cite[Lemma 3.1]{HW1}, we have 
\[\lim_{n\to \omega}\varepsilon_n=\nai{x\eta}{\eta}-\nai{x\xi}{\xi}=\nai{xa'\xi}{a'\xi}-\nai{x\xi}{\xi}\le 0,\]
and hence $\lim_{n\to \omega}\varepsilon_n=0$. Moreover, by definition of $(\varepsilon_n)_n$, we have 
\[\omega_{\eta_n}(x)\le \omega_{\xi_n}(x)+\varepsilon_n,\ \ \ x\in M_n,\ \ \ 0\le x\le 1,\ \ \ n\in \mathbb{N},\]
so 
\[\omega_{\eta_n}\left (\frac{x}{\|x\|}\right )\le \omega_{\xi_n}\left (\frac{x}{\|x\|}\right )+\varepsilon_n,\ \ \ x\in M_n^+\setminus \{0\},\ \ \ n\in \mathbb{N},\]
and hence by \cite[Lemma 3.2]{HW1}, there exists $(\eta_n')_n\in \prod_{n\in \mathbb{N}}H_n$ such that $\|\eta_n-\eta_n'\|\le \varepsilon_n^{\frac{1}{2}}$ and $\omega_{\eta_n'}\le \omega_{\xi_n}$ for each $n\in \mathbb{N}$. In particular, $(\eta_n)_{\omega}=(\eta_n')_{\omega}$, since $\lim_{n\to \omega}\varepsilon_n=0$. By \cite[Lemma 3.1]{HW1}, we then get $(a_n')_n\in \prod_{n\in \mathbb{N}}M_n'$ such that $\|a_n'\|\le 1$ and $a_n'\xi_n=\eta_n'\ (n\in \mathbb{N})$. Let $a:=\pi_{\omega}((a_n')_{\omega})$. Then $a\in N$, and
\[a\xi=(a_n'\xi_n)_{\omega}=(\eta_n')_{\omega}=(\eta_n)_{\omega}=\eta=a'\xi.\]
Also
\[\|a\|=\lim_{n\to \omega}\|a_n'\|\le 1=\|a'\|.\]
\end{proof} 

\begin{theorem}{\rm{\cite[Theorem 1.8]{Raynaud}}}\label{UW2.2.2}
Let $(M_n,H_n,J_n,P_n)_n$ be as in Theorem \ref{UW2.3.2}. Then one has
\[\left (\prod^{\omega}M_n\right )'=\prod^{\omega}M_n'.\]
\end{theorem}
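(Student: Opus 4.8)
The plan is to establish the two inclusions of $(\prod^{\omega}M_n)'=\prod^{\omega}M_n'$ separately, writing $M=\prod^{\omega}M_n$ and $N=\prod^{\omega}M_n'$ for the two von Neumann subalgebras of $\mathbb{B}(H_{\omega})$. The inclusion $N\subset M'$ is elementary and carries no content beyond the definitions, whereas the reverse inclusion $M'\subset N$ is the substance of the theorem and is exactly where Lemma \ref{UW2.2.1} enters.

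First I would dispatch $N\subset M'$. If $x=(x_n)_{\omega}\in (M_n)_{\omega}$ and $y=(y_n)_{\omega}\in (M_n')_{\omega}$, then $x_ny_n=y_nx_n$ for every $n$, so $\pi_{\omega}(x)$ and $\pi_{\omega}(y)$ commute; hence $\pi_{\omega}((M_n')_{\omega})\subset \pi_{\omega}((M_n)_{\omega})'=M'$. Since $M'$ is strongly closed and $N$ is by definition the strong closure of $\pi_{\omega}((M_n')_{\omega})$, this gives $N\subset M'$.

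For the reverse inclusion $M'\subset N$, the idea is to convert membership in $N$ into a commutation statement that can be checked one vector at a time. Because $N$ is a von Neumann algebra we have $N=N''$, so it suffices to show that an arbitrary $a'\in M'$ commutes with every projection $q\in N'$; as $N'$ is generated by its projections, this already forces $a'\in N''=N$. To verify the commutation, fix a projection $q\in N'$ and an arbitrary vector $\xi\in H_{\omega}$, and apply Lemma \ref{UW2.2.1} to the vector $q\xi$ and to $a'\in M'$: this yields $a\in N$ with $a(q\xi)=a'(q\xi)$. Since $a\in N$ and $q\in N'$ commute,
\[q^{\perp}a'q\xi=q^{\perp}a(q\xi)=a\,q^{\perp}q\,\xi=0,\]
and as $\xi$ is arbitrary, $q^{\perp}a'q=0$. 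Running the same argument with $a'^{*}\in M'$ gives $q^{\perp}a'^{*}q=0$, whose adjoint is $qa'q^{\perp}=0$; together these yield $a'q=qa'q=qa'$, as desired.

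The hard part is entirely packaged into Lemma \ref{UW2.2.1}, namely producing, for a prescribed vector, an honest element of the ultraproduct commutant $N$ that reproduces the action of $a'$ on that vector. Granting that single-vector statement, the deduction above is the standard mechanism for identifying commutants, and the one design choice worth stressing is to test $a'$ against projections $q\in N'$ rather than to approximate $a'$ strongly by a bounded net drawn from $N$. The latter route would require handling several vectors simultaneously (via an amplification $H_n\mapsto H_n^{\oplus m}$ together with Kaplansky's density theorem), whereas the projection test invokes Lemma \ref{UW2.2.1} only for the single vector $q\xi$: the commutation $aq=qa$ for $a\in N$ is precisely what annihilates the corner $q^{\perp}a'q$, so no amplification is needed.
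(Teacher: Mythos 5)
Your proof is correct, and the key inclusion $M'\subset N$ (where $M=\prod^{\omega}M_n$, $N=\prod^{\omega}M_n'$) is handled by a genuinely different mechanism than the paper's. The paper amplifies: given $a'\in M'$ and finitely many vectors $\xi_1,\dots,\xi_m\in H_{\omega}$, it tensors each $M_n$ with the type I$_m$ factor $F$ acting on $K=\mathbb{C}^m$, identifies $((M_n\otimes F)_{\omega})'=A'\otimes\mathbb{C}1$ and $\prod^{\omega}\left ((M_n\otimes F)',H_n\otimes K\right )=\prod^{\omega}M_n'\otimes\mathbb{C}1$, and then applies Lemma \ref{UW2.2.1} once in the amplified picture to produce $a\in N$ with $\|a\|\le\|a'\|$ and $a\xi_j=a'\xi_j$ for all $j$; hence $a'$ lies in the strong closure of $\pi_{\omega}((M_n')_{\omega})$, which is $N$ by the double commutant theorem. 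You instead test $a'$ against projections $q\in N'$, invoking Lemma \ref{UW2.2.1} only at the single vector $q\xi$: the commutation $aq=qa$ annihilates the corner $q^{\perp}a'q$, the same argument for $a'^*$ followed by taking adjoints annihilates $qa'q^{\perp}$, and since a von Neumann algebra is generated by its projections (so that the WOT-closed algebra $\{a'\}'$, containing every projection of $N'$, must contain all of $N'$), you get $a'\in N''=N$. Your route avoids the tensor-product identifications that the paper must verify for the matrix amplification, at the cost of routing through the commutant $N'$; the paper's route yields slightly more explicit information, namely that every element of $M'=N$ is a strong limit of elements of the algebraic ultraproduct $\pi_{\omega}((M_n')_{\omega})$ with the norm control $\|a\|\le\|a'\|$, a Kaplansky-type density statement. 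One small remark: your closing aside mischaracterizes the approximation route as requiring Kaplansky's density theorem --- the paper needs only the double commutant theorem at that point (Kaplansky enters elsewhere, e.g.\ in Lemma \ref{UW2.1.5}) --- but this does not affect the validity of your argument.
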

\begin{proof}
Let $A=(M_n)_{\omega}$ and $B=(M_n')_{\omega}$ and identity these with their images under $\pi_{\omega}$. Then $B\subset A'$ is clear, so it suffices to prove that $A'\subset B''$. Let $a'\in A'$ and $\xi_1,...,\xi_m\in H_{\omega}\ (m\in \mathbb{N})$. Let $F$ be the type I$_m$-factor, acting on $K=\mathbb{C}^m$. Using the matrix picture of $M_n\otimes F$, it is clear that 
\[(M_n\otimes F)_{\omega}=A\otimes F \text{ on } H_{\omega}\otimes K,\]
(as *-algebras) and hence
\[((M_n\otimes F)_{\omega})'=A'\otimes \mathbb{C}1,\]
as von Neumann algebras. Thus $a'\otimes 1\in (\prod^{\omega}(M_n\otimes F, H_n\otimes K))'$, and so by Lemma \ref{UW2.2.1}, there is 
\[a\otimes 1\in \prod^{\omega}((M_n\otimes F)', H_n\otimes K)=\prod^{\omega}M_n'\otimes \mathbb{C}1\]
with $\|a\|\le \|a'\|$ and 
\[(a\otimes 1)(\xi_1,...,\xi_m)=(a'\otimes 1)(\xi_1,...,\xi_m),\]
hence $a\xi_j=a'\xi_j\ (j=1,\cdots ,m)$. This means that $B$ meets any so-neighborhood of $a'$. As $a'\in A'$ was arbitrary, we conclude that $A'\subset B''$, as desired.
\end{proof}
\begin{remark}
It does not follow from the above theorem that the ultraproduct of a sequence of factors is again a factor, because for a standard von Neumann algebra $(M,H)$, the center $\mathcal{Z}(M)$ is not always standardly represented in $H$. We shall in fact prove that this is typically not the case (see Proposition \ref{thm: prod^{omega}R is not a factor}, and Remark \ref{rem: nonfactoriality of Raynaud UP of III_0} below. See also \cite{Raynaud}, Proposition 1.14). 
\end{remark}
Now we prove the main result of this subsection. 
\begin{proof}[Proof of Theorem \ref{UW2.3.2}]
It is clear that $P_{\omega}$ is a closed convex cone. We prove self-duality as follows: assume that $\xi=(\xi_n)_{\omega}\in P_{\omega}^0$. For each $n\in \mathbb{N}$, there is $\eta_n^+,\eta_n^-,\zeta_n^+,\zeta_n^-\in P_n$ such that $\eta_n^+\perp \eta_n^-$, $\zeta_n^+\perp \zeta_n^-$ and $\xi_n=\eta_n^+-\eta_n^-+i(\zeta_n^+-\zeta_n^-)$. 
Then by $\xi\in P_{\omega}^0$, we have 
\eqa{
\lim_{n\to \omega}\nai{\xi_n}{\eta_n^-}&=-\lim_{n\to \omega}\|\eta_n^-\|^2+i\lim_{n\to \omega}\nai{\zeta_n^+-\zeta_n^-}{\eta_n^-}\\
&\ge 0.
}
Therefore $(\eta_n^-)_{\omega}=0$. We also have
\eqa{
\lim_{n\to \omega}\nai{\xi_n}{\zeta_n^{\pm}}&=\lim_{n\to \omega}\left (\nai{\eta_n^+}{\zeta_n^{\pm}}\pm i\|\zeta_n^{\pm}\|^2\right )\\
&\ge 0.
} 
Therefore $(\zeta_n^{\pm})_{\omega}=0$, and $(\xi_n)_{\omega}=(\eta_n^+)_{\omega}\in P_{\omega}$, so $P_{\omega}^0=P_{\omega}$.

By Theorem \ref{UW2.2.2}, it follows that 1. in Definition \ref{def: standard form} holds for the quadruple $(\prod^{\omega}M_n,H_{\omega},J_{\omega},P_{\omega})$, and the properties 2.-3. in Definition \ref{def: standard form} are easily checked. By Lemma \ref{UW2.3.1}, the claim follows.
\end{proof}

\begin{theorem}{\rm{\cite[Theorem 1.1]{Raynaud}}}\label{UW2.3.3} Let $(M_n)_n$ be a sequence of standard von Neumann algebras. Then $(\prod^{\omega}M_n)_*$ is Banach space isomorphic to the Banach space ultraproduct $((M_n)_*)_{\omega}$, in such a way that a normal functional on $\prod^{\omega}M_n$ is implemented by the ultraproduct vectors corresponding to the isomorphic image in $((M_n)_*)_{\omega}$.
\end{theorem}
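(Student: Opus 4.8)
The plan is to exhibit an explicit isometric isomorphism
$\Phi \colon ((M_n)_*)_{\omega} \to N_*$, where $N = \prod^{\omega}M_n$, and to extract all of its properties from the standard-form structure of $N$ provided by Theorem \ref{UW2.3.2}. For $\hat{\psi}=(\psi_n)_{\omega}\in ((M_n)_*)_{\omega}$ I would first define a functional on the strongly dense C$^*$-subalgebra $\pi_{\omega}((M_n)_{\omega})\subset N$ by $\Phi(\hat{\psi})(\pi_{\omega}((x_n)_{\omega})):=\lim_{n\to\omega}\psi_n(x_n)$. This is well defined there, since if $(x_n)_{\omega}=(y_n)_{\omega}$ then $\lim_{n\to\omega}\|x_n-y_n\|=0$, and as $\sup_n\|\psi_n\|<\infty$ we get $\lim_{n\to\omega}\psi_n(x_n-y_n)=0$. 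The real content is then threefold: that $\Phi(\hat{\psi})$ extends to a \emph{normal} functional on the strong closure $N$, that $\Phi$ is surjective onto $N_*$, and that it is isometric.

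The crucial input is the basic property of standard forms \cite{Haagerup1}: in the standard form $(N,H_{\omega},J_{\omega},P_{\omega})$ of Theorem \ref{UW2.3.2} the normal positive functionals on $N$ are exactly the vector functionals $\omega_{\xi}=\nai{\,\cdot\,\xi}{\xi}$ with $\xi\in P_{\omega}$, and $\|\omega_{\xi}\|=\|\xi\|^2$. I would treat the positive case first. If each $\psi_n\ge 0$, write $\psi_n=\omega_{\xi_n}$ for the unique $\xi_n\in P_n$ in the component standard form; then $\|\xi_n\|^2=\|\psi_n\|$, so $(\xi_n)_n$ is bounded and $\xi:=(\xi_n)_{\omega}\in P_{\omega}$. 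For $x=\pi_{\omega}((x_n)_{\omega})$ one computes $\omega_{\xi}(x)=\nai{(x_n\xi_n)_{\omega}}{(\xi_n)_{\omega}}=\lim_{n\to\omega}\nai{x_n\xi_n}{\xi_n}=\lim_{n\to\omega}\psi_n(x_n)=\Phi(\hat{\psi})(x)$, so $\Phi(\hat{\psi})=\omega_{\xi}$ is normal, with $\|\Phi(\hat{\psi})\|=\|\xi\|^2=\lim_{n\to\omega}\|\psi_n\|=\|\hat{\psi}\|$. This already exhibits the ``implementation by ultraproduct vectors'' asserted in the statement, via the correspondence $\omega_{\xi}\leftrightarrow(\omega_{\xi_n})_{\omega}$.

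To pass to general functionals I would use the four-term decomposition: split each $\psi_n$ into hermitian real and imaginary parts and then apply the Jordan decomposition, obtaining four positive functionals of norm $\le\|\psi_n\|$; their ultraproducts are positive bounded elements, so by linearity and the positive case $\Phi(\hat{\psi})$ is a linear combination of normal functionals, hence lies in $N_*$. Surjectivity follows from the same cone description: a normal positive $\omega$ on $N$ equals $\omega_{\xi}$ for some $\xi=(\xi_n)_{\omega}\in P_{\omega}$, and $\hat{\psi}=(\omega_{\xi_n})_{\omega}$ satisfies $\Phi(\hat{\psi})=\omega$; decomposing an arbitrary normal functional into four positive ones then gives surjectivity in general. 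For the isometry, the bound $\|\Phi(\hat{\psi})\|\le\|\hat{\psi}\|$ is immediate from $|\Phi(\hat{\psi})(x)|\le\lim_{n\to\omega}\|\psi_n\|\,\|x_n\|=\|\hat{\psi}\|\,\|x\|$; for the reverse, given $\varepsilon>0$ pick $x_n\in\mathrm{Ball}(M_n)$ with $|\psi_n(x_n)|\ge\|\psi_n\|-\varepsilon$ and, after multiplying by a phase, $\psi_n(x_n)\ge\|\psi_n\|-\varepsilon$, so that $x=\pi_{\omega}((x_n)_{\omega})$ has $\|x\|\le 1$ and $\Phi(\hat{\psi})(x)=\lim_{n\to\omega}\psi_n(x_n)\ge\|\hat{\psi}\|-\varepsilon$.

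I expect the genuine obstacle to be exactly the step that cannot be carried out inside the C$^*$-ultraproduct alone: extending a functional given only on the dense subalgebra $\pi_{\omega}((M_n)_{\omega})$ to a normal functional on the much larger algebra $N$, together with surjectivity. Both rest entirely on the identification of $P_{\omega}=\{(\xi_n)_{\omega};\ \xi_n\in P_n\}$ with the self-dual positive cone of the standard form of $N$, which is precisely the nontrivial content of Theorem \ref{UW2.3.2} (and, through it, of the commutant Theorem \ref{UW2.2.2} and the redundancy Lemma \ref{UW2.3.1}). Absent the knowledge that every normal positive functional on $N$ arises from a cone vector of this special ultraproduct form, neither the normality of $\Phi(\hat{\psi})$ nor the surjectivity of $\Phi$ would be accessible; everything else is routine.
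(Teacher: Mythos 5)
Your proof is correct, and its overall strategy coincides with the paper's: implement functionals by ultraproduct vectors coming from the componentwise standard forms, and use Theorem \ref{UW2.3.2} both to get normality of the extension from the dense subalgebra and to get surjectivity. The genuine difference is the decomposition you use. You reduce to positive functionals via the cone correspondence $\psi_n=\omega_{\xi_n}$, $\xi_n\in P_n$, and then treat general functionals by the hermitian/Jordan splitting into four positive parts. The paper never splits: since each $M_n$ is standard, an arbitrary $\varphi_n\in (M_n)_*$ is already implemented by a \emph{pair} of vectors, $\varphi_n=\nai{\,\cdot\,\xi_n}{\eta_n}$ with $\|\varphi_n\|=\|\xi_n\|^2=\|\eta_n\|^2$, so one sets $\Phi((\varphi_n)_{\omega}):=\nai{\,\cdot\,\xi_{\omega}}{\eta_{\omega}}$ for all functionals at once, with independence of representatives and linearity checked via strong density of $\pi_{\omega}(\ell^{\infty}(\mathbb{N},M_n))$; surjectivity is then the same reversal through Theorem \ref{UW2.3.2} that you perform. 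What each buys: the paper's pair-of-vectors route avoids the four-term bookkeeping and yields the ``implemented by ultraproduct vectors'' clause verbatim for every normal functional, not only positive ones; your route makes the positive-cone picture explicit, which is exactly the form reused afterwards (Corollary \ref{UW2.3.4} chooses cone representatives of normal states). Two routine points you should still make explicit: your bound $|\Phi(\hat{\psi})(x)|\le \|\hat{\psi}\|\,\|x\|$ is proved only on the dense C$^*$-subalgebra, so concluding $\|\Phi(\hat{\psi})\|\le\|\hat{\psi}\|$ in $(\prod^{\omega}M_n)_*$ requires Kaplansky density together with strong continuity of normal functionals on bounded sets; and the linearity of $\Phi$ as a map into $(\prod^{\omega}M_n)_*$ (needed to assemble the four positive pieces, and for surjectivity in the general case) follows for the same reason, since a normal functional is determined by its restriction to a strongly dense C$^*$-subalgebra.
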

\begin{proof}
Let $(\varphi_n)_{\omega}\in ((M_n)_*)_{\omega}$. 
As each $M_n$ is standard, we have sequences $(\xi_n)_n,\ (\eta_n)_n\in \prod_{n\in \mathbb{N}}H_n$ such that 
\[\varphi_n(x)=\nai{x\xi_n}{\eta_n},\ \ \ x\in M_n,\ n\in \mathbb{N}.\]
and $\|\varphi_n\|=\|\xi_n\|^2=\|\eta_n\|^2\ (n\in \mathbb{N})$. In particular, both $(\xi_n)_n$ and $(\eta_n)$ are bounded. Define $\xi_{\omega}:=(\xi_n)_{\omega}$ and $\eta_{\omega}:=(\eta_n)_{\omega}$ in $(H_n)_{\omega}$. Then define $\varphi_{\omega}\in (\prod^{\omega}M_n)_*$ by
\[\varphi_{\omega}(x)=\nai{x\xi_{\omega}}{\eta_{\omega}},\ \ \ x\in \prod^{\omega}M_n,\]
and $\|\varphi_{\omega}\|=\lim_{n\to \omega}\|\varphi_n\|$. Hence $\Phi\colon ((M_n)_*)_{\omega}\to (\prod^{\omega}M_n)_*$ defined by $\Phi((\varphi_n)_{\omega}):=\varphi_{\omega}$ is isometric. Note also that for $(x_n)_n\in \ell^{\infty}(\mathbb{N},M_n)$, we have 
\begin{align}
\Phi((\varphi_n)_{\omega})(\pi_{\omega}((x_n)_{\omega}))&=\lim_{n\to \omega}\nai{x_n\xi_n}{\eta_n}\notag \\
&=\lim_{n\to \omega}\varphi_n(x_n).\ \label{eq: Phi is linear}
\end{align}
Since $\pi_{\omega}(\ell^{\infty}(\mathbb{N},M_n))$ is strongly dense in $\prod^{\omega}M_n$, $\Phi((\varphi_n)_{\omega})$ is uniquely determined by Eq. (\ref{eq: Phi is linear}) and is independent of the choice of $(\xi_n)_n,\ (\eta_n)_n$. 
It is clear that $\Phi(\lambda (\varphi_n)_{\omega})=\lambda \Phi((\varphi_n)_{\omega})$ for $\lambda \in \mathbb{C}$ and $(\varphi_n)_{\omega}\in ((M_n)_*)_{\omega}$.
Note also that if $(\varphi_n)_{\omega}, (\psi_n)_{\omega}\in ((M_n)_*)_{\omega}$, 
then by Eq. (\ref{eq: Phi is linear}), for $(x_n)_n\in \ell^{\infty}(\mathbb{N},M_n)$ we have 
\eqa{
\Phi((\varphi_n+\psi_n)_{\omega})(\pi_{\omega}((x_n)_{\omega})&=\lim_{n\to \omega}(\varphi_n+\psi_n)(x_n)\\
&=\lim_{n\to \omega}\varphi_n(x_n)+\lim_{n\to \omega}\psi_n(x_n)\\
&=[\Phi((\varphi_n)_{\omega})+\Phi((\psi_n)_{\omega})](\pi_{\omega}((x_n)_{\omega}).
}
Therefore by the strong density of $\pi_{\omega}(\ell^{\infty}(\mathbb{N},M_n))$, $\Phi((\varphi_n+\psi_n)_{\omega})=\Phi((\varphi_n)_{\omega}+\Phi((\psi_n)_{\omega})$ holds.  
Hence $\Phi$ is linear. 
Surjectivity of $\Phi$ follows from Theorem \ref{UW2.3.2} by reversing the above argument.
Therefore $\Phi$ is an isometric isomorphism. 
\end{proof}
In the following, $(M_n)_n$ is a sequence of standard von Neumann algebras, and we identify $(\varphi_n)_{\omega}\in ((M_n)_*)_{\omega}$ with its image $\varphi_{\omega}$ in $(\prod^{\omega}M_n)_*$. 
\begin{corollary}\label{UW2.3.4}Let $\varphi$ be a normal state on $\prod^{\omega}M_n$. Then there are normal states $\varphi_n\in (M_n)_*$ such that $\varphi=(\varphi_n)_{\omega}$. If all $M_n$ are $\sigma$-finite, then we may choose the states $\varphi_n$ such that they are also faithful.
\end{corollary}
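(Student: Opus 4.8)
The plan is to combine the standard-form structure of $N:=\prod^{\omega}M_n$ from Theorem \ref{UW2.3.2} with the predual identification of Theorem \ref{UW2.3.3}. The subtle point is that Theorem \ref{UW2.3.3} alone yields only a representing sequence of \emph{functionals}, whereas we need genuine \emph{states}; positivity will be furnished by the self-dual cone $P_{\omega}$.

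First I would invoke that $(N,H_{\omega},J_{\omega},P_{\omega})$ is a standard form (Theorem \ref{UW2.3.2}), together with the basic property of standard forms that every normal positive functional on $N$ is of the form $\omega_{\xi}=\nai{\,\cdot\,\xi}{\xi}$ for some $\xi$ in the self-dual cone (see \cite{Haagerup1}). Applying this to the given normal state $\varphi$ produces $\xi\in P_{\omega}$ with $\varphi=\omega_{\xi}$; by the very definition of $P_{\omega}$ we may choose a representative $\xi=(\xi_n)_{\omega}$ with $\xi_n\in P_n$ for every $n$. Put $\varphi_n:=\omega_{\xi_n}\in (M_n)_*^{+}$. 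Unwinding the identification of Theorem \ref{UW2.3.3}, for every $(x_n)_n\in\ell^{\infty}(\mathbb{N},M_n)$ one has $\varphi(\pi_{\omega}((x_n)_{\omega}))=\nai{(x_n)_{\omega}\xi}{\xi}=\lim_{n\to\omega}\nai{x_n\xi_n}{\xi_n}=\lim_{n\to\omega}\varphi_n(x_n)$, so by the strong density of $\pi_{\omega}(\ell^{\infty}(\mathbb{N},M_n))$ and Eq.~(\ref{eq: Phi is linear}) we conclude $\varphi=(\varphi_n)_{\omega}$.

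Next I would normalize the $\varphi_n$ into states. Since $\varphi$ is a state, $\lim_{n\to\omega}\|\xi_n\|^2=\|\xi\|^2=1$, so $\|\xi_n\|\ge \tfrac12$ on a set belonging to $\omega$; replacing $\xi_n$ by $\xi_n/\|\xi_n\|$ on that set (and by an arbitrary fixed unit vector off it) alters $\xi_n$ by a vector of norm $\big|\,1-\|\xi_n\|\,\big|$, which tends to $0$ along $\omega$ and therefore leaves the ultraproduct vector $\xi$ unchanged. Hence we may assume each $\varphi_n$ is a normal state while still $\varphi=(\varphi_n)_{\omega}$.

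Finally, in the $\sigma$-finite case I would secure faithfulness by a vanishing perturbation. For each $n$ fix a normal faithful state $\chi_n$ on $M_n$ (which exists by $\sigma$-finiteness) and set $\varphi_n':=(1-\tfrac1n)\varphi_n+\tfrac1n\chi_n$. Each $\varphi_n'$ is a normal state whose support dominates $\mathrm{supp}(\chi_n)=1$, hence is faithful, and $\|\varphi_n'-\varphi_n\|\le \tfrac2n\to 0$, so that $\lim_{n\to\omega}\|\varphi_n'-\varphi_n\|=0$ and $(\varphi_n')_{\omega}=(\varphi_n)_{\omega}=\varphi$. The only genuinely delicate step is the first one: passing from functionals to positive representatives, which is precisely where the positive cone of the standard form (and not merely the Banach-space identification of Theorem \ref{UW2.3.3}) is indispensable; the subsequent normalization and faithfulness adjustments are routine, as each leaves the class in the ultraproduct untouched.
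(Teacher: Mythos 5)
Your proposal is correct and follows essentially the same route as the paper: both obtain $\xi=(\xi_n)_{\omega}\in P_{\omega}$ implementing $\varphi$ via the standard form of Theorem \ref{UW2.3.2}, normalize the representatives $\xi_n$ to unit vectors (a perturbation vanishing along $\omega$), and handle the $\sigma$-finite case by the same convex combination $(1-\tfrac1n)\varphi_n+\tfrac1n\chi_n$ with faithful states. Your version merely spells out the verification $\varphi=(\varphi_n)_{\omega}$ through Theorem \ref{UW2.3.3} and the norm estimates, which the paper leaves implicit.
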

\begin{proof}
Since $\prod^{\omega}M_n$ is standard (Theorem \ref{UW2.3.2}), there exists $\xi_{\varphi}\in P_{\omega}$ such that $\varphi=\omega_{\xi_{\varphi}}$. 
By definition, $\xi_{\varphi}$ has a representative $(\xi_n)_n$ where $\xi_n\in P_n$ for all $n\in \mathbb{N}$. 
As $1=\|\varphi\|=\|\xi_{\varphi}\|^2=\lim_{n\to \omega}\|\xi_n\|^2$, we may choose each $\xi_n$ to be a unit vector. Then $\varphi_n:=\omega_{\xi_n}\in S_{{\rm{n}}}(M_n)$, and $\varphi=(\varphi_n)_{\omega}$. Now suppose each 
$M_n\ (n\in \mathbb{N})$ is $\sigma$-finite and take $\psi_n\in S_{{\rm{nf}}}(M_n)\ (n\in \mathbb{N})$.  
Let
\[\varphi_n':=\left (1-\frac{1}{n}\right )\varphi_n+\frac{1}{n}\psi_n, \ \ \ \ n\in \mathbb{N}.\]
Then $\varphi_n'$ is a normal faithful state on $M_n$ for each $n\in \mathbb{N}$, and $(\varphi_n')_{\omega}=(\varphi_n)_{\omega}=\varphi$.
\end{proof}

Recall that 
\begin{lemma}{\rm{\cite[Corollary 2.5 and Lemma 2.6]{Haagerup1}}}\label{lem: reduced standard form}
Let $(M,H,J,P)$ be a standard form, $p$ a projection in $M$, and $q=pJpJ$. Then $(qMq,q(H),qJq,q(P))$ is standard, and $pMp\ni pxp\mapsto qxq\in qMq$ is an isomorphism.
\end{lemma}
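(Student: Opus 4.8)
The plan is to reduce everything to two observations about the operator $JpJ$. Since $(M,H,J,P)$ is standard, condition~1 of Definition~\ref{def: standard form} gives $JpJ\in M'$, so $p$ and $JpJ$ are commuting projections and $q=pJpJ=(JpJ)p$ is a projection; moreover $JqJ=JpJ\,p=q$, so $J$ commutes with $q$ and $qJq$ is simply the restriction $J|_{qH}$ to the $J$-invariant subspace $qH$. The decisive point is that $JpJ$ acts as the identity on $qH$: for $\xi=q\eta$ one has $(JpJ)\xi=(JpJ)p(JpJ)\eta=p(JpJ)\eta=q\eta=\xi$. Consequently, for $x\in M$ the operator $qxq=pxp\,(JpJ)$ restricts on $qH$ to $pxp$, and $qH$ is invariant under $pMp$ (because each $pxp$ commutes with $JpJ\in M'$). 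Thus $qMq|_{qH}$ is exactly the restriction of $pMp$ to the invariant subspace $qH\subseteq pH$, and I define $\Phi\colon pMp\to qMq$ by $\Phi(pxp)=qxq$; the computation above shows at once that $\Phi$ is a $*$-homomorphism.

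To see that $\Phi$ is a $*$-isomorphism, surjectivity is immediate from $qxq=\Phi(pxp)$, and the only real content is injectivity. Here I would argue spatially: if $pxp\,(JpJ)=0$ then $pxp$ annihilates $\operatorname{ran}(JpJ)$, hence, since $pxp\in M$ commutes with $M'$, it annihilates $\overline{M'(JpJ)H}=z_{M'}(JpJ)H$. Using condition~4 of the ambient standard form, $z_{M'}(JpJ)=Jz_M(p)J=z_M(p)$, so $pxp\,z_M(p)=0$; as $p\le z_M(p)$ this forces $pxp=0$. (This is the step where the standard-form axioms are genuinely used, and together with condition~1 below it is where I expect the main work to lie.)

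Next I would verify that $(qMq,qH,qJq,qP)$ satisfies the axioms of a standard form. Applying condition~3 of the ambient form with $x=p$ gives $qP=pJpJ(P)\subseteq P$, and since $q$ fixes every vector of $qP$ one gets $qP\subseteq P\cap qH$ and conversely $P\cap qH\subseteq qP$, so $P\cap qH=qP$. Self-duality of $qP$ in $qH$ then follows cleanly: for $\eta\in qH$ the condition $\nai{\eta}{q\zeta}\ge 0$ for all $\zeta\in P$ reads $\nai{\eta}{\zeta}\ge 0$, i.e.\ $\eta\in P^0=P$, whence $(qP)^0_{qH}=P\cap qH=qP$. Condition~2 is then immediate from $J_q\xi=qJ\xi=q\xi=\xi$ for $\xi\in qP\subseteq P$. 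For condition~3 of the reduced form, a short computation identifies, on $qH$, the map $J_q(qxq)J_q$ with the restriction of $J(pxp)J\in M'$, and hence $y\,J_q y\,J_q$ with the restriction of $(pxp)J(pxp)J$; applying condition~3 of the ambient form with $a=pxp\in M$ and noting that $qH$ is preserved throughout yields $y\,J_q y\,J_q(qP)\subseteq P\cap qH=qP$.

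Finally, condition~1 is the one genuinely requiring von Neumann reduction theory. I would obtain it by reducing in two stages: first by $p\in M$, giving $pMp$ on $pH$ with commutant $M'p$, and then by the projection $JpJ|_{pH}\in (pMp)'$ (which has range $qH$), giving $qMq$ on $qH$ with commutant $q(M'p)q=qM'q$. On the other hand $J_q(qMq)J_q=q(JMJ)q=qM'q$ by condition~1 of the ambient form together with $JqJ=q$, so $J_q(qMq)J_q=(qMq)'$, which is condition~1. With conditions~1--3 and self-duality of $qP$ in hand, condition~4 follows either by a direct central-support argument as in the injectivity step or, more economically, by invoking Lemma~\ref{UW2.3.1}. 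This completes the proof that $(qMq,qH,qJq,qP)$ is a standard form and that $pMp\cong qMq$ via $\Phi$.
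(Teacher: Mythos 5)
Your proof is correct, but there is an important contextual point: the paper itself does not prove this lemma at all --- it is recalled with a citation to Haagerup's standard-form paper (\cite[Corollary 2.5 and Lemma 2.6]{Haagerup1}), so there is no in-paper argument to compare against, and your verification serves as a legitimate self-contained substitute. Your route is the natural one, and the substantive steps all check out. The organizing observations --- that $JpJ\in M'$ commutes with $p$, that $qxq=pxp\,JpJ$, and that $JpJ$ acts as the identity on $qH$ --- correctly reduce well-definedness, the homomorphism property, condition 2, and condition 3 to short computations, and your identification $qP=P\cap qH$ makes the self-duality argument clean. The two steps carrying real content are also sound: injectivity of $pxp\mapsto qxq$ via central supports (using $z_{M'}(JpJ)=Jz_M(p)J=z_M(p)$, which is where condition 4 of the ambient form enters), and condition 1 for the reduced quadruple via the two textbook commutation theorems, namely $(pMp)'=M'p$ on $pH$ and $(Ne')'=e'N'e'$ on $e'K$ for a projection $e'\in N'$. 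Your closing appeal to Lemma \ref{UW2.3.1} to obtain condition 4 of the reduced form is particularly apt: that redundancy result (conditions 1--3 imply 4) is exactly what the present paper proves and exploits for the same purpose in the proof of Theorem \ref{UW2.3.2}, so your argument is consonant with the paper's own toolkit; your alternative central-support argument would also work. One minor remark: the map in the statement is moreover normal, but this is automatic for any *-isomorphism between von Neumann algebras, so nothing is missing from your proof.
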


Therefore by Proposition \ref{prop: normalizer iff commutes with p}, Theorem \ref{UW2.3.2} and Lemma \ref{lem: reduced standard form}, we have
\begin{corollary}\label{cor: Ocneanu UP is qNq} Let $M^{\omega}=(M_n,\varphi_n)^{\omega}$, $\varphi^{\omega}=(\varphi_n)^{\omega}$, $N=\prod^{\omega}M_n$ $q=pJ_{\omega}pJ_{\omega}$ and $H_{\varphi^{\omega}}=L^2(M^{\omega},\varphi^{\omega})$. Then 
$(M^{\omega}, H_{\varphi^{\omega}},J_{\varphi^{\omega}},P_{\varphi^{\omega}})$ is isomorphic to $(qNq, qH_{\omega}, qJ_{\omega}q, qP_{\omega})$ as a standard form. 
%Under this isomorphism, $\varphi^{\omega}$ is mapped to $\varphi_{\omega}|_{qNq}$.
\end{corollary}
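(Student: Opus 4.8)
The plan is to exhibit $(M^{\omega},H_{\varphi^{\omega}},J_{\varphi^{\omega}},P_{\varphi^{\omega}})$ and $(qNq,qH_{\omega},qJ_{\omega}q,qP_{\omega})$ as two standard forms of one and the same von Neumann algebra, and then to quote the uniqueness of the standard form. First I would record that the support projection $p$ of the normal state $\varphi_{\omega}$ lies in $N=\prod^{\omega}M_n$, and that $(N,H_{\omega},J_{\omega},P_{\omega})$ is a standard form by Theorem \ref{UW2.3.2}. Applying Lemma \ref{lem: reduced standard form} to this standard form and to the projection $p$, with $q=pJ_{\omega}pJ_{\omega}$, yields at once that $(qNq,qH_{\omega},qJ_{\omega}q,qP_{\omega})$ is again a standard form and that the compression $pNp\ni pxp\mapsto qxq\in qNq$ is a $*$-isomorphism.

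Next I would compose this with the $*$-isomorphism $\rho|_{M^{\omega}}\colon M^{\omega}\to pNp$ supplied by Proposition \ref{prop: normalizer iff commutes with p}, obtaining a $*$-isomorphism $\theta\colon M^{\omega}\to qNq$. Thus $(qNq,qH_{\omega},qJ_{\omega}q,qP_{\omega})$ is a standard form of an algebra $*$-isomorphic to $M^{\omega}$, whereas $(M^{\omega},H_{\varphi^{\omega}},J_{\varphi^{\omega}},P_{\varphi^{\omega}})$ is, by construction, the standard form attached to the GNS representation of $\varphi^{\omega}$ on $H_{\varphi^{\omega}}=L^2(M^{\omega},\varphi^{\omega})$. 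By the uniqueness theorem for standard forms of \cite{Haagerup1}, the isomorphism $\theta$ lifts to a unique unitary $U\colon H_{\varphi^{\omega}}\to qH_{\omega}$ satisfying $UxU^{*}=\theta(x)$ for $x\in M^{\omega}$, together with $UJ_{\varphi^{\omega}}U^{*}=qJ_{\omega}q$ and $UP_{\varphi^{\omega}}=qP_{\omega}$. This $U$ is precisely the asserted isomorphism of standard forms.

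Underlying the cited lemmas are a few routine checks I would verify in passing: that $p$ and $J_{\omega}pJ_{\omega}$ commute, so that $q$ is indeed a projection with $J_{\omega}q=qJ_{\omega}$; this follows from $p\in N$ and $J_{\omega}pJ_{\omega}\in N'=J_{\omega}NJ_{\omega}$ via Theorem \ref{UW2.2.2}. Since $J_{\omega}\xi_{\omega}=\xi_{\omega}$ and $J_{\omega}N'J_{\omega}=N$, the projection $J_{\omega}pJ_{\omega}$ is the projection onto $\overline{N\xi_{\omega}}$, so that $qH_{\omega}=\overline{N'\xi_{\omega}}\cap\overline{N\xi_{\omega}}$. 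The only genuinely delicate point is the spatial implementation: the composite $\theta$ carries no Hilbert-space data a priori, and it is exactly the uniqueness of the standard form that furnishes a unitary intertwining the modular conjugations and the positive cones. Should one prefer an explicit implementing unitary, it is the isometry $w$ of Theorem \ref{UW2.1.3}; the additional labor there is to check $ww^{*}=q$, i.e. that the range of $w$ is precisely $qH_{\omega}$, which is the content of the separately stated Proposition \ref{prop: wM^{omega}w^*=qNq and ww^*=q} and is not needed for the present argument.
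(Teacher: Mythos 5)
Your proposal is correct and takes essentially the same route as the paper, whose proof is exactly the citation of Theorem \ref{UW2.3.2}, Lemma \ref{lem: reduced standard form} and Proposition \ref{prop: normalizer iff commutes with p} that you assemble. Your explicit appeal to the uniqueness theorem for standard forms of \cite{Haagerup1}, and the routine commutation checks for $p$ and $J_{\omega}pJ_{\omega}$, merely spell out what the paper leaves implicit.
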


\begin{corollary}\label{prop: wM^{omega}w^*=qNq and ww^*=q}
Under the same notation as in Theorem \ref{UW2.1.3}, the following holds.
\begin{itemize}
\item[{\rm{(1)}}] $ww^*=q=pJ_{\omega}pJ_{\omega}$.
\item[{\rm{(2)}}] $wM^{\omega}w^*=q(\prod^{\omega}M_n)q$.
\end{itemize}
\end{corollary}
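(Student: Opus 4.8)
The plan is to prove (1) first — identifying the range projection $ww^*$ with $q$ — and then to obtain (2) almost formally from (1) together with the identity $w^*Nw = M^\omega$ of Theorem \ref{UW2.1.3} (writing $N = \prod^\omega M_n$). Since $w$ is an isometry, $ww^*$ is the orthogonal projection onto $\overline{\mathrm{ran}(w)}$, and by the defining formula $w(x_n)^\omega\xi_{\varphi^\omega} = (x_n)_\omega\xi_\omega$ for $(x_n)_n \in \mathcal{M}^\omega$ (where $\xi_\omega = (\xi_{\varphi_n})_\omega$ and I suppress $\pi_\omega$ as in the text), the range of $w$ equals $\overline{\{(x_n)_\omega\xi_\omega : (x_n)_n \in \mathcal{M}^\omega\}}$. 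So everything reduces to showing that this closed subspace equals $qH_\omega$.

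First I would set up the geometry of $q$. The support projection $p$ (Definition \ref{def: support projection p}) lies in $N$ and projects onto $\overline{N'\xi_\omega}$; in particular $p\xi_\omega = \xi_\omega$. Put $p' := J_\omega p J_\omega$. Since $(N, H_\omega, J_\omega, P_\omega)$ is standard (Theorem \ref{UW2.3.2}), one has $J_\omega N J_\omega = N'$, so $p' \in N'$, and $\mathrm{ran}(p') = J_\omega\overline{N'\xi_\omega} = \overline{J_\omega N'\xi_\omega} = \overline{N J_\omega \xi_\omega} = \overline{N\xi_\omega}$, using $J_\omega\xi_\omega = \xi_\omega$ (each $\xi_{\varphi_n}$ lies in $P_n$). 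As $p \in N$ and $p' \in N'$ commute, $q = pp'$ is a projection with $qH_\omega = p\,\mathrm{ran}(p') = p\overline{N\xi_\omega}$, and $q\xi_\omega = pp'\xi_\omega = p\xi_\omega = \xi_\omega$.

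The inclusion $\mathrm{ran}(w) \subseteq qH_\omega$ is then easy: for $(x_n)_n \in \mathcal{M}^\omega$, Proposition \ref{prop: normalizer iff commutes with p} gives that $(x_n)_\omega$ commutes with $p$, and it commutes with $p' \in N'$ automatically, hence with $q$; combined with $q\xi_\omega = \xi_\omega$ this yields $(x_n)_\omega\xi_\omega = q(x_n)_\omega\xi_\omega \in qH_\omega$. The reverse inclusion $qH_\omega \subseteq \mathrm{ran}(w)$ is the crux, and is where I expect the real work to lie. Since $qH_\omega = p\overline{N\xi_\omega}$ and $p$ is continuous, it suffices to show $pa\xi_\omega \in \mathrm{ran}(w)$ for each $a \in N$. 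By Lemma \ref{UW2.1.5} I would write $a\xi_\omega = (x_n)_\omega\xi_\omega$ with $(x_n)_n \in \ell^\infty$, and then decompose $(x_n)_n = (b_n)_n + (c_n)_n + (d_n)_n$ with $(b_n)_n \in \mathcal{M}^\omega$, $(c_n)_n \in \mathcal{L}_\omega$, $(d_n)_n \in \mathcal{L}_\omega^*$ via Proposition \ref{UW2.1.6}. The point is that $p$ annihilates the two off-diagonal pieces: the computations recorded in the proof of Proposition \ref{UW2.1.6} give $(c_n)_\omega\xi_\omega = 0$ (as $\|(c_n)_\omega\xi_\omega\|^2 = \lim_{n\to\omega}\varphi_n(c_n^*c_n) = 0$) and $(d_n)_\omega \in p^\perp N$, so $p(d_n)_\omega = 0$. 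Hence $pa\xi_\omega = p(b_n)_\omega\xi_\omega = (b_n)_\omega\xi_\omega \in \mathrm{ran}(w)$, using once more that $(b_n)_\omega$ commutes with $p$ and that $p\xi_\omega = \xi_\omega$. Taking closures gives $qH_\omega \subseteq \mathrm{ran}(w)$, and therefore $ww^* = q$, proving (1).

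Finally, (2) follows formally: combining $ww^* = q$ with $w^*Nw = M^\omega$ (Theorem \ref{UW2.1.3}) I compute $wM^\omega w^* = w(w^*Nw)w^* = (ww^*)N(ww^*) = qNq$, where the middle equality uses $w(w^*aw)w^* = ww^*\,a\,ww^* = qaq$ for $a \in N$, so that the image is exactly $qNq$. This is consistent with, and refines, the standard-form identification of Corollary \ref{cor: Ocneanu UP is qNq}: the present argument exhibits $w$ as the concrete unitary implementing the isomorphism of $(M^\omega, H_{\varphi^\omega}, J_{\varphi^\omega}, P_{\varphi^\omega})$ onto $(qNq, qH_\omega, qJ_\omega q, qP_\omega)$. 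As indicated, the only nontrivial step is the inclusion $qH_\omega \subseteq \mathrm{ran}(w)$, which is precisely where the three-term decomposition $\ell^\infty = \mathcal{M}^\omega + \mathcal{L}_\omega + \mathcal{L}_\omega^*$ and the behaviour of $p$ on $\mathcal{L}_\omega$ and $\mathcal{L}_\omega^*$ are essential.
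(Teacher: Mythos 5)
Your proof is correct, and it takes a genuinely different route from the paper's. The paper proves (1) by a GNS-uniqueness argument: using the reduced standard form $(qNq,qH_{\omega},qJ_{\omega}q,qP_{\omega})$ of Corollary \ref{cor: Ocneanu UP is qNq} (which rests on Lemma \ref{lem: reduced standard form}), the vector $\xi_{\omega}$ is cyclic for $qNq$ on $qH_{\omega}$, so the representation $\lambda((x_n)^{\omega}):=q(x_n)_{\omega}q$ of $M^{\omega}$ on $qH_{\omega}$, which satisfies $\nai{\lambda(\cdot)\xi_{\omega}}{\xi_{\omega}}=\varphi^{\omega}$, is unitarily equivalent to $\pi_{\varphi^{\omega}}$ via a unitary $\tilde{w}\colon L^2(M^{\omega},\varphi^{\omega})\to qH_{\omega}$; one then checks $\tilde{w}=w$ using Proposition \ref{prop: normalizer iff commutes with p}, and $ww^*=q$ follows because $\tilde{w}$ is onto $qH_{\omega}$. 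You instead compute $\mathrm{ran}(w)$ directly: the inclusion $\mathrm{ran}(w)\subseteq qH_{\omega}$ comes from the commutation of $\mathcal{M}^{\omega}$-elements with $q$, and the reverse inclusion from Lemma \ref{UW2.1.5} combined with the three-term decomposition $\ell^{\infty}=\mathcal{M}^{\omega}+\mathcal{L}_{\omega}+\mathcal{L}_{\omega}^*$ of Proposition \ref{UW2.1.6}, the point being that $\xi_{\omega}$ annihilates the $\mathcal{L}_{\omega}$ piece while $p$ annihilates the $\mathcal{L}_{\omega}^*$ piece. Your route is more elementary in that it bypasses the GNS-uniqueness theorem and the cyclicity of $\xi_{\omega}$ in the reduced standard form, needing from Theorem \ref{UW2.3.2} only $J_{\omega}NJ_{\omega}=N'$ and $J_{\omega}\xi_{\omega}=\xi_{\omega}$ (to identify $\mathrm{ran}(J_{\omega}pJ_{\omega})$ with $\overline{N\xi_{\omega}}$, hence $qH_{\omega}$ with $p\overline{N\xi_{\omega}}$); what the paper's argument buys in exchange is that it exhibits $w$ as the canonical unitary intertwining the two GNS representations, which dovetails with the standard-form identification in Corollary \ref{cor: Ocneanu UP is qNq}. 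Both proofs derive (2) identically from (1) and the identity $w^*Nw=M^{\omega}$ of Theorem \ref{UW2.1.3}.
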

\begin{proof}
Let $\xi_{\omega}=(\xi_{\varphi_n})_{\omega}$. Then consider the GNS representation $\pi_{\varphi^{\omega}}$ of $M^{\omega}$with respect to $\varphi^{\omega}$.  Recall also by Proposition \ref{prop: normalizer iff commutes with p} that 
$\rho_0:=\rho|_{M^{\omega}}\colon M^{\omega}\ni (x_n)^{\omega}\mapsto (x_n)_{\omega}p\in p(\prod^{\omega}M_n)p$ is a *-isomorphism, so we have another representation $\lambda$ of $M^{\omega}$ on 
$q(L^2(M_n,\varphi_n)_{\omega})$ given by $\lambda((x_n)^{\omega}):=q\rho_0((x_n)^{\omega})q=q(x_n)_{\omega}q,\ \ (x_n)^{\omega}\in M^{\omega}$. Since $\xi_{\omega}$ is cyclic for $q(\prod^{\omega}M_n)q$, it is cyclic for $\lambda(M^{\omega})$, and for $(x_n)^{\omega}\in M^{\omega}$, 
\eqa{
\nai{\lambda((x_n)^{\omega})\xi_{\omega}}{\xi_{\omega}}&=\nai{q(x_n)_{\omega}q\xi_{\omega}}{\xi_{\omega}}\\
&=\lim_{n\to \omega}\nai{x_n\xi_{\varphi_n}}{\xi_{\varphi_n}}\\
&=\varphi^{\omega}((x_n)^{\omega}).
}
Therefore by the proof of the uniqueness of GNS representation, there is a unitary $\tilde{w}\colon L^2(M^{\omega},\varphi^{\omega})\to q(L^2(M_n,\varphi_n))_{\omega}$ determined by 
\[\tilde{w}\pi_{\varphi^{\omega}}((x_n)^{\omega})\xi_{\varphi^{\omega}}=\lambda((x_n)^{\omega})\xi_{\omega},\ \ (x_n)^{\omega}\in M^{\omega},\]
which implements the unitary equivalence of $\pi_{\varphi^{\omega}}$ and $\lambda$. But by Proposition \ref{prop: normalizer iff commutes with p}, for $(x_n)^{\omega}\in M^{\omega}$, $(x_n)_{\omega}q=q(x_n)_{\omega}$ holds, whence
\eqa{
\tilde{w}\pi_{\varphi^{\omega}}((x_n)^{\omega})\xi_{\varphi^{\omega}}&=q(x_n)_{\omega}\xi_{\omega}=(x_n)_{\omega}\xi_{\omega}\\
&=w\pi_{\varphi^{\omega}}((x_n)^{\omega}),
}
and $w=\tilde{w}$ holds. Therefore $ww^*=q$. (2) By Theorem \ref{UW2.1.3}, it holds that $wM^{\omega}w^*=ww^*(\prod^{\omega}M_n)ww^*=q(\prod^{\omega}M_n)q$. 
\end{proof}

Next corollary shows that every normal faithful state on the Ocneanu ultraproduct is the ultraproduct state for some sequence of normal faithful states.
\begin{corollary}\label{newUW2.3.5}
Under the same notation as in Theorem \ref{UW2.1.3}, let $\psi$ be a normal faithful state on $M^{\omega}=(M_n,\varphi_n)^{\omega}$. Then there exists $\psi_n\in S_{{\rm{nf}}}(M_n)\ (n\in \mathbb{N})$ such that $(M_n,\psi_n)^{\omega}=M^{\omega}$ and $\psi=(\psi_n)^{\omega}$. 
\end{corollary}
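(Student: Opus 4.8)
The plan is to realize $\psi$ as a compression of a normal state on $N=\prod^{\omega}M_n$ and then invoke Corollary \ref{UW2.3.4}. Recall from Proposition \ref{prop: normalizer iff commutes with p} the $*$-isomorphism $\rho_0\colon M^{\omega}\to pNp$, $(x_n)^{\omega}\mapsto (x_n)_{\omega}p$, where $p$ is the support projection of $\varphi_{\omega}$ (Definition \ref{def: support projection p}). First I would push $\psi$ forward to a normal faithful state $\psi':=\psi\circ\rho_0^{-1}$ on $pNp$, and then extend it to a normal state $\Psi$ on all of $N$ by compression, $\Psi(x):=\psi'(pxp)$ for $x\in N$. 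Since $\psi'$ is faithful on $pNp$, one checks directly that the support projection of $\Psi$ in $N$ is exactly $p$: indeed $\Psi(1-p)=0$ gives $s(\Psi)\le p$, while faithfulness of $\psi'$ forces $s(\Psi)=p$.

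Next I would apply Corollary \ref{UW2.3.4} to the normal state $\Psi$: because each $M_n$ is $\sigma$-finite, there exist $\psi_n\in S_{\mathrm{nf}}(M_n)$ with $\Psi=(\psi_n)_{\omega}$ in $(\prod^{\omega}M_n)_*$. Representing each $\psi_n$ by its canonical vector $\xi_{\psi_n}\in P_n\subset H_n=L^2(M_n,\varphi_n)$, Theorem \ref{UW2.3.3} identifies $\Psi$ with the vector state $\omega_{(\xi_{\psi_n})_{\omega}}$. This is the crucial point: the support projection entering the Ocneanu construction for the sequence $(M_n,\psi_n)$ --- namely the projection onto $\overline{N'(\xi_{\psi_n})_{\omega}}$ --- is precisely the support of $\omega_{(\xi_{\psi_n})_{\omega}}=\Psi$, which we have arranged to equal $p$. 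I expect this step, showing that passing from $(\varphi_n)$ to $(\psi_n)$ does not move the support projection, to be the main obstacle; everything else is bookkeeping with the structural results of $\S$\ref{subsec: Relation ship between Ocneanu and Rayanud}.

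Once $p_{\psi}=p$ is established, the identity $(M_n,\psi_n)^{\omega}=M^{\omega}$ follows at the level of subquotients of $\ell^{\infty}(\mathbb{N},M_n)$. By Proposition \ref{UW2.1.6} (applied to both sequences) one has $(x_n)_n\in\mathcal{I}_{\omega}(M_n,\psi_n)\Leftrightarrow (x_n)_{\omega}\in p^{\perp}Np^{\perp}\Leftrightarrow (x_n)_n\in\mathcal{I}_{\omega}(M_n,\varphi_n)$, and by Proposition \ref{prop: normalizer iff commutes with p} one has $(x_n)_n\in\mathcal{M}^{\omega}(M_n,\psi_n)\Leftrightarrow p(x_n)_{\omega}=(x_n)_{\omega}p\Leftrightarrow (x_n)_n\in\mathcal{M}^{\omega}(M_n,\varphi_n)$; hence both the normalizer and the ideal coincide. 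Finally I would verify $\psi=(\psi_n)^{\omega}$ by the computation, valid for $(x_n)^{\omega}\in M^{\omega}$ with $(x_n)_n\in\mathcal{M}^{\omega}$ (so that $(x_n)_{\omega}$ commutes with $p$),
\[
\psi((x_n)^{\omega})=\psi'((x_n)_{\omega}p)=\Psi((x_n)_{\omega})=\lim_{n\to\omega}\psi_n(x_n)=(\psi_n)^{\omega}((x_n)^{\omega}),
\]
where the third equality is Eq. (\ref{eq: Phi is linear}). This completes the proposed proof.
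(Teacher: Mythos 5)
Your proof is correct and follows essentially the same route as the paper's: transfer $\psi$ to a normal state on $N=\prod^{\omega}M_n$ whose support is exactly $p$, invoke Corollary \ref{UW2.3.4} to write it as $(\psi_n)_{\omega}$, deduce $\mathcal{I}_{\omega}(M_n,\varphi_n)=\mathcal{I}_{\omega}(M_n,\psi_n)$ (hence equality of the normalizers and of the Ocneanu ultraproducts) from Proposition \ref{UW2.1.6}, and verify $\psi=(\psi_n)^{\omega}$ by bookkeeping. The only cosmetic difference is that you compress through $\rho_0$ and the corner $pNp$, whereas the paper defines $\hat{\psi}=\psi(w^*\cdot\, w)$ via the isometry $w$ (so through the corner $qNq$); the two constructions yield the same normal state on $N$, and your direct verification that its support equals $p$ is a welcome detail the paper leaves implicit.
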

\begin{proof}
Let $N:=\prod^{\omega}M_n$. Define the isometry $w\colon L^2(M^{\omega},\varphi^{\omega})\to (L^2(M_n,\varphi_n))_{\omega}$ as in Theorem \ref{UW2.1.3}. Let $\hat{\psi}$ be a normal state on $N$ given by 
\[\hat{\psi}(x):=\psi(w^*xw),\ \ \ x\in N.\]
Note that $w^*(\prod^{\omega}M_n)w=(M_n,\varphi_n)^{\omega}$ by Theorem \ref{UW2.1.3}. Then $\text{supp}(\hat{\psi})$ is $p=\text{supp}((\varphi_n)_{\omega})$. By Corollary \ref{UW2.3.4}, we may choose normal faithful states $\psi_n$ on each $M_n$ such that $\hat{\psi}=(\psi_n)_{\omega}$. Now by (the proof of) Proposition \ref{UW2.1.6}, for $(x_n)_n\in \ell^{\infty}(\mathbb{N},M_n)$ we have 
\eqa{
(x_n)_n\in \mathcal{I}_{\omega}(M_n,\varphi_n)&\Leftrightarrow (x_n)_{\omega}\in p^{\perp}Np^{\perp}\\
&\Leftrightarrow (x_n)_n\in \mathcal{I}_{\omega}(M_n,\psi_n), 
}
so $\mathcal{I}_{\omega}(M_n,\varphi_n)=\mathcal{I}_{\omega}(M_n,\psi_n)$, which implies that $(M_n,\varphi_n)^{\omega}=(M_n,\psi_n)^{\omega}$.
Recall (Corollary \ref{prop: wM^{omega}w^*=qNq and ww^*=q}) also that $\Phi\colon (M_n,\psi_n)^{\omega}\ni x \mapsto wxw^*\in qNq$ gives a *-isomorphism such that $(\psi_n)_{\omega}|_{qNq}\circ \Phi=(\psi_n)^{\omega}$. Therefore for $x\in (M_n,\psi_n)^{\omega}=(M_n,\varphi_n)^{\omega}$, we have
\[(\psi_n)^{\omega}(x)=\hat{\psi}(wxw^*)=\psi(w^*wxw^*w)=\psi(x).\]
\end{proof}
\subsection{Golodets' Asymptotic Algebra $C_M^{\omega}\cong M'\cap M^{\omega}$}\label{subsec: Golodets' asymptotic algebra} 
In this section, we describe Golodets' construction of the asymptotic algebra $C_M^{\omega}$ from our viewpoint. Let $M$ be a $\sigma$-finite von Neumann algebra, and let $\varphi\in S_{\rm{nf}}(M)$. Consider the GNS representation of $M$ associated with $\varphi$, so that $\varphi=\omega_{\xi_{\varphi}}$ with a cyclic and separating vector $\xi_{\varphi}$ on a Hilbert space $H$. Consider the following state $\overline{\varphi}$ on $\ell^{\infty}=\ell^{\infty}(\mathbb{N},M)$:
\[\overline{\varphi}((x_n)_n):=\lim_{n\to \omega}\varphi(x_n),\ \ \ \ (x_n)_n\in \ell^{\infty}(\mathbb{N},M).\]   
Let $\pi_{\rm{Gol}}\colon \ell^{\infty}\to \mathbb{B}(H_{\rm{Gol}})$ be the GNS representation of $\overline{\varphi}$ with cyclic vector $\overline{\xi}$ satisfying $\overline{\varphi}=\omega_{\overline{\xi}}$. Let $e_{\omega}$ be the projection of $H_{\rm{Gol}}$ onto $\overline{\pi_{\rm{Gol}}(\ell^{\infty})'\overline{\xi}}$. Define 
\[\mathscr{R}:=e_{\omega}\pi_{\rm{Gol}}(\ell^{\infty})''e_{\omega}\subset \mathbb{B}(e_{\omega}H_{\rm{Gol}}).\]
Let $\overline{N}$ be the set of all $\overline{x}=(x_n)_n\in \ell^{\infty}$ for which $\pi_{\rm{Gol}}(\overline{x})e_{\omega}\in \mathscr{R}$ and $\pi_{\rm{Gol}}(\overline{x}^*)e_{\omega}\in \mathscr{R}$ hold. Then \cite[Lemma 2.3.3]{Golodets}, $\overline{N}$ is a C$^*$-subalgebra of $\ell^{\infty}$. Moreover, 
\[I:=\left \{\overline{x}=(x_n)_n\in \overline{N};\ \lim_{n\to \omega}\varphi(x_n^*x_n)=0\right \}\]
is a closed two-sided ideal in $\overline{N}$, and $\pi_{\rm{Gol}}(\overline{N})e_{\omega}=\mathscr{R}\cong \overline{N}/I$. 
Let $\overline{M}_d$ be the subspace of $\ell^{\infty}(\mathbb{N},M)$ consisting of constant sequences $(x,x,\cdots )_n,\ \ x\in M$. The the {\it asymptotic algebra} $C_M^{\omega}$ of $M$ is defined by 
\[C_M^{\omega}:=\mathscr{R}\cap \pi_{\rm{Gol}}(\overline{M}_d)'\subset \mathbb{B}(e_{\omega}H_{\rm{Gol}}).\]
We show that Golodets' construction is equivalent to Ocneanu's construction.  
\begin{theorem}\label{thm: identification of Golodets algebra}
$\overline{N}=\mathcal{M}^{\omega},\ \mathscr{R}\cong M^{\omega}$ and $C_M^{\omega}\cong M'\cap M^{\omega}$ hold.
\end{theorem}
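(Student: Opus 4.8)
The plan is to realize Golodets' GNS construction concretely as a subrepresentation of the Groh--Raynaud ultrapower $N=\prod^{\omega}M$, and then read off all three identifications from the previous subsections. Throughout put $\xi_{\omega}=(\xi_{\varphi})_{\omega}$; let $p=\mathrm{supp}(\varphi_{\omega})$ be the projection onto $\overline{N'\xi_{\omega}}$ (Definition \ref{def: support projection p}), and set $p':=J_{\omega}pJ_{\omega}$, which by $J_{\omega}\xi_{\omega}=\xi_{\omega}$ and $J_{\omega}N'J_{\omega}=N$ is the projection onto $\overline{N\xi_{\omega}}$. Since $p\in N$ and $p'\in N'$ commute, $q:=pp'=pJ_{\omega}pJ_{\omega}$ is the projection $q$ of Corollary \ref{prop: wM^{omega}w^*=qNq and ww^*=q}.

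First I would identify the Golodets representation with a subrepresentation of $\pi_{\omega}$. The map $(x_n)_n\mapsto (x_n\xi_{\varphi})_{\omega}$ from $\ell^{\infty}$ into $H_{\omega}$ is isometric for the GNS inner product of $\overline{\varphi}$, because $\nai{(x_n\xi_{\varphi})_{\omega}}{(y_n\xi_{\varphi})_{\omega}}=\lim_{n\to\omega}\varphi(y_n^*x_n)=\overline{\varphi}(\overline{y}^*\overline{x})$. Hence it descends to a unitary $U\colon H_{\rm{Gol}}\to \overline{N\xi_{\omega}}=p'H_{\omega}$ with $U\overline{\xi}=\xi_{\omega}$ and $U\pi_{\rm{Gol}}(\overline{x})=\pi_{\omega}((x_n)_{\omega})U$. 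As $\pi_{\omega}(\ell^{\infty})$ is strongly dense in $N$ and $p'\in N'$, conjugation by $U$ carries $\pi_{\rm{Gol}}(\ell^{\infty})''$ onto the induced algebra $N_{p'}$ and $\pi_{\rm{Gol}}(\ell^{\infty})'$ onto $p'N'p'$. Consequently $e_{\omega}$, the projection onto $\overline{\pi_{\rm{Gol}}(\ell^{\infty})'\overline{\xi}}$, is sent to the projection onto $\overline{p'N'p'\xi_{\omega}}=\overline{p'N'\xi_{\omega}}=p'(pH_{\omega})=qH_{\omega}$, i.e. $Ue_{\omega}U^*=q$. Thus $U$ implements a spatial isomorphism $\mathscr{R}=e_{\omega}\pi_{\rm{Gol}}(\ell^{\infty})''e_{\omega}\cong qNq$, and Corollary \ref{prop: wM^{omega}w^*=qNq and ww^*=q} gives $qNq=wM^{\omega}w^*\cong M^{\omega}$; this proves $\mathscr{R}\cong M^{\omega}$.

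Next I would prove $\overline{N}=\mathcal{M}^{\omega}$. Using $Ue_{\omega}U^*=q\le p'$, for $\overline{x}=(x_n)_n\in\ell^{\infty}$ the operator $\pi_{\rm{Gol}}(\overline{x})e_{\omega}$ lies in $\mathscr{R}$ iff its range is contained in $e_{\omega}H_{\rm{Gol}}=qH_{\omega}$ (the resulting operator being then automatically $q(x_n)_{\omega}q\in qNq$), which, since $p'$ commutes with $(x_n)_{\omega}\in N$, amounts to $p^{\perp}(x_n)_{\omega}p\,p'=0$; the adjoint condition reads $p^{\perp}(x_n^*)_{\omega}p\,p'=0$. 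Writing $y=p^{\perp}(x_n)_{\omega}p\in p^{\perp}Np$, the relation $yp'=0$ says that $y$ kills $\overline{N\xi_{\omega}}$, so $y$ lies in the kernel $N(1-z)$ of the induction homomorphism $N\to N_{p'}$, where $z=z_{N'}(p')=J_{\omega}z_{N}(p)J_{\omega}=z_{N}(p)$ (using $J_{\omega}cJ_{\omega}=c^*$ for central $c$, Lemma \ref{UW2.3.1}). The key point is that $z_{N}(p)\ge p$ forces $pz=p$, whence $yz=p^{\perp}(x_n)_{\omega}(pz)=y$; combined with $yz=0$ this yields $y=0$. Therefore $\overline{N}$ is exactly the set of $(x_n)_n$ with $p^{\perp}(x_n)_{\omega}p=p^{\perp}(x_n^*)_{\omega}p=0$, i.e. $p(x_n)_{\omega}=(x_n)_{\omega}p$, which by Proposition \ref{prop: normalizer iff commutes with p} is precisely $\mathcal{M}^{\omega}$. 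I expect this central-support subtlety to be the main obstacle: a priori Golodets' condition only produces $yz_{N}(p)=0$, and it is the elementary identity $yz_{N}(p)=y$ on $p^{\perp}Np$ that rescues the equality.

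Finally, for $C_M^{\omega}$ I would note that the diagonal $\overline{M}_d$ lies in $\mathcal{M}^{\omega}=\overline{N}$, so by Proposition \ref{prop: normalizer iff commutes with p} each $\pi_{\omega}((x)_{\omega})$ commutes with $p$, and it commutes with $p'\in N'$, hence with $q$; thus $e_{\omega}$ commutes with $\pi_{\rm{Gol}}(\overline{M}_d)$ and $U$ carries $e_{\omega}\pi_{\rm{Gol}}(\overline{M}_d)e_{\omega}$ onto the diagonal copy of $M$ inside $qNq$. Transporting through $\mathscr{R}\cong qNq\cong M^{\omega}$ (the diagonal going to the constant sequences $(x)^{\omega}$), the commutant relation becomes $C_M^{\omega}=\mathscr{R}\cap\pi_{\rm{Gol}}(\overline{M}_d)'\cong M^{\omega}\cap M'=M'\cap M^{\omega}$, completing the proof.
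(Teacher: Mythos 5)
Your proposal is correct, and its skeleton matches the paper's: both realize Golodets' GNS representation as a subrepresentation of $\pi_{\omega}$ acting on $\overline{\pi_{\omega}(\ell^{\infty})\xi_{\omega}}=J_{\omega}pJ_{\omega}H_{\omega}$ (the paper via uniqueness of GNS representations, you via the explicit unitary $U$), both identify $e_{\omega}$ with $q=pJ_{\omega}pJ_{\omega}$ by essentially the same computation of $\overline{\pi_{\rm{Gol}}(\ell^{\infty})'\overline{\xi}}$, and both then quote Corollary \ref{prop: wM^{omega}w^*=qNq and ww^*=q} for $\mathscr{R}\cong qNq\cong M^{\omega}$ and track the diagonal copy of $M$ for the last assertion. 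Where you genuinely diverge is the hard implication in $\overline{N}=\mathcal{M}^{\omega}$. The paper isolates this as Lemma \ref{lem: is in normalizer iff commutes with q}: commutation of $(x_n)_{\omega}$ with $q$ implies $(x_n)_n\in\mathcal{M}^{\omega}$, proved by a direct vector computation showing $(a_n^*x_n^*)_{\omega}\xi_{\omega}=0$ for $(a_n)_n\in\mathcal{I}_{\omega}$; that is, it verifies the two-sided normalizer condition by hand and never passes through commutation with $p$ alone. You instead upgrade Golodets' condition $p^{\perp}(x_n)_{\omega}p\,p'=0$ to $p^{\perp}(x_n)_{\omega}p=0$ by a central-support argument: the kernel of $N\ni a\mapsto ap'$ is $N(1-z_{N'}(p'))$, and $z_{N'}(p')=J_{\omega}z_N(p)J_{\omega}=z_N(p)$, the last equality being the standard-form property $J_{\omega}cJ_{\omega}=c^*$ for central $c$ supplied by Lemma \ref{UW2.3.1} together with Theorem \ref{UW2.3.2}; then $y=p^{\perp}(x_n)_{\omega}p$ satisfies $y=yz_N(p)=0$, and Proposition \ref{prop: normalizer iff commutes with p} concludes. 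Your route is more structural and avoids manipulating representatives, at the cost of invoking precisely the delicate point of the paper's standard-form analysis (property 4 of Definition \ref{def: standard form} is non-trivial exactly because $\mathcal{Z}(\prod^{\omega}M)$ may exceed the ultraproduct of $\mathcal{Z}(M)$); the paper's computation, by contrast, needs nothing beyond the ideal structure of $\mathcal{L}_{\omega}$. Both arguments are complete and correct.
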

Let $H_{\omega}$ be the ultrapower of $H$, and let $\pi_{\omega}\colon \ell^{\infty}\to \mathbb{B}(H_{\omega})$ be the ultrapower map $(a_n)_n\mapsto (a_n)_{\omega}$ (we identify $(a_n)_{\omega}$ with its image in $\mathbb{B}(H_{\omega})$ as before), and let $\xi_{\omega}:=(\xi_{\varphi})_{\omega}\in H_{\omega}$. 
Let $J=J_{\varphi}$ be the modular conjugation and $J_{\omega}$ be its ultrapower. For $\overline{x}\in \ell^{\infty}$, we have
\[\overline{\varphi}(\overline{x})=\lim_{n\to \omega}\nai{x_n\xi_{\varphi}}{\xi_{\varphi}}=\nai{(x_n)_{\omega}\xi_{\omega}}{\xi_{\omega}}.\]
Therefore by the uniqueness of GNS  representation, we may identify
\[H_{\rm{Gol}}=\overline{\pi_{\omega}(\ell^{\infty})\xi_{\omega}},\ \  \overline{\xi}=\xi_{\omega},\ \  \pi_{\rm{Gol}}(\overline{x})=\pi_{\omega}(\overline{x})|_{H_{\rm{Gol}}},\ \ \ (\overline{x}\in \ell^{\infty}).\]
Recall that we defined a projection $p$ of $H_{\omega}$ onto $\overline{(\prod^{\omega}M)'\xi_{\omega}}$ (see Definition \ref{def: support projection p}). Then $J_{\omega}pJ_{\omega}$ is the projection of $H_{\omega}$ onto $\overline{(\prod^{\omega}M)\xi_{\omega}}$. We use such abbreviation as $\mathcal{M}^{\omega}, \mathcal{I}_{\omega}, \mathcal{L}_{\omega}$ given in $\S$\ref{subsec: Relation ship between Ocneanu and Rayanud}.   
\begin{lemma}\label{lem: is in normalizer iff commutes with q}
Let $\overline{x}=(x_n)_n\in \ell^{\infty}(\mathbb{N},M)$, and let $q=pJ_{\omega}pJ_{\omega}$. Then $\overline{x}\in \mathcal{M}^{\omega}$ if and only if $q(x_n)_{\omega}=(x_n)_{\omega}q$.
\end{lemma}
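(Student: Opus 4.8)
The plan is to reduce the statement to Proposition \ref{prop: normalizer iff commutes with p}, which already identifies $\overline{x}\in\mathcal{M}^\omega$ with the commutation $p(x_n)_\omega=(x_n)_\omega p$, and then to compare commutation with $p$ against commutation with $q=pJ_\omega pJ_\omega$. Write $\tilde p:=J_\omega pJ_\omega$ and $X:=(x_n)_\omega$; since $(x_n)_n\in\ell^\infty(\mathbb{N},M)$ we have $X\in N=\prod^\omega M$. Because $N$ is standard with modular conjugation $J_\omega$ (Theorem \ref{UW2.3.2}), $J_\omega NJ_\omega=N'$, so $\tilde p\in N'$ commutes with $X$, the projections $p\in N$ and $\tilde p\in N'$ commute, and $q=p\tilde p=\tilde pp$. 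I also record the three fixed-vector identities $J_\omega\xi_\omega=\xi_\omega$, $p\xi_\omega=\xi_\omega$ (as $p=\text{supp}(\varphi_\omega)$), hence $\tilde p\xi_\omega=J_\omega p\xi_\omega=\xi_\omega$ and therefore $q\xi_\omega=\xi_\omega$.

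The forward implication is then immediate: assuming $pX=Xp$ and using $\tilde pX=X\tilde p$,
\[qX=p\tilde pX=pX\tilde p=Xp\tilde p=Xq.\]
For the converse, suppose $qX=Xq$. Applying this identity and its adjoint $qX^*=X^*q$ to $\xi_\omega$ and using $q\xi_\omega=\xi_\omega$ gives $X\xi_\omega=q(X\xi_\omega)\in\text{ran}(q)\subseteq\text{ran}(p)$ and likewise $X^*\xi_\omega\in\text{ran}(p)$. Hence $pX\xi_\omega=X\xi_\omega$ and $pX^*\xi_\omega=X^*\xi_\omega$, which together with $p\xi_\omega=\xi_\omega$ yields
\[(pX-Xp)\xi_\omega=0,\qquad (pX^*-X^*p)\xi_\omega=0.\]
The decisive step is the defining property of the support projection $p=\text{supp}(\varphi_\omega)$ (Definition \ref{def: support projection p}): for any $Z\in N$ one has $Z\xi_\omega=0$ if and only if $Zp=0$; indeed $Z\xi_\omega=0$ forces $Zb'\xi_\omega=b'Z\xi_\omega=0$ for every $b'\in N'$, so $Z$ vanishes on $\overline{N'\xi_\omega}=\text{ran}(p)$, and the reverse is clear from $p\xi_\omega=\xi_\omega$. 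Applying this to $Z=pX-Xp\in N$ and to $Z=pX^*-X^*p\in N$ gives $pXp=Xp$ and $pX^*p=X^*p$; taking the adjoint of the latter yields $pXp=pX$, whence $Xp=pXp=pX$. By Proposition \ref{prop: normalizer iff commutes with p} this means $\overline{x}\in\mathcal{M}^\omega$.

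The one genuine subtlety is that $q\le p$, so a priori commuting with $q$ is strictly weaker than commuting with $p$; the resolution is that $\xi_\omega$ already lies in the range of the smaller projection $q$, while $p$ is exactly the support of the state $\omega_{\xi_\omega}=\varphi_\omega$, so the support-projection property can upgrade vanishing at $\xi_\omega$ into honest commutation. I expect this support-projection argument, run for both $X$ and $X^*$, to be the main (if brief) content of the proof; everything else is bookkeeping with the standard-form relation $J_\omega NJ_\omega=N'$. Note in particular that one does not need any control of $\mathcal{Z}(N)$ here, which is fortunate since $\mathcal{Z}(\prod^\omega M)$ can be large and $\tilde p$ need not have central support $1$ in $N'$.
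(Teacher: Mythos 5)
Your proof is correct, and its converse direction takes a genuinely different route from the paper's. The forward implication is identical in both: commutation with $p$ comes from Proposition \ref{prop: normalizer iff commutes with p}, and $\tilde p=J_{\omega}pJ_{\omega}\in N'$ (standardness of $N$, Theorem \ref{UW2.3.2}) then gives commutation with $q$. For the converse, the paper works directly with the definition of $\mathcal{M}^{\omega}$: given $(a_n)_n\in\mathcal{I}_{\omega}$, it shows $(a_nx_n)_n,(x_na_n)_n\in\mathcal{I}_{\omega}$ by a vector computation, pushing $J_{\omega}pJ_{\omega}$ and $p^{\perp}$ past $(x_n^*)_{\omega}$ using the hypothesis $q(x_n)_{\omega}=(x_n)_{\omega}q$ and the facts $(a_n^*)_{\omega}=(a_n^*)_{\omega}p^{\perp}$, $p^{\perp}\xi_{\omega}=0$. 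You instead upgrade commutation with $q$ to commutation with $p$: since $q\xi_{\omega}=\xi_{\omega}$ and $q\le p$, the hypothesis and its adjoint give $(pX-Xp)\xi_{\omega}=0=(pX^*-X^*p)\xi_{\omega}$, and the support-projection property of $p$ (for $Z\in N$, $Z\xi_{\omega}=0$ if and only if $Zp=0$, because $p$ projects onto $\overline{N'\xi_{\omega}}$) converts these into $pXp=Xp$ and $pXp=pX$, hence $pX=Xp$; then Proposition \ref{prop: normalizer iff commutes with p} is applied a second time to conclude. Your route is more modular: it reuses the earlier proposition in both directions and isolates the clean general fact that, because $\xi_{\omega}$ lies in the range of the smaller projection $q$ while $p$ is exactly the support of $\omega_{\xi_{\omega}}$, commutation with $q$ already forces commutation with $p$ for any element of $N$. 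The paper's route instead re-runs the ideal-membership computation at the level of the definition of $\mathcal{M}^{\omega}$, which is more self-contained but less conceptual. Both arguments are complete and correct.
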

\begin{proof}
If $\overline{x}\in \mathcal{M}^{\omega}$, then $p(x_n)_{\omega}=(x_n)_{\omega}p$ holds by Proposition \ref{prop: normalizer iff commutes with p}. Since $p, (x_n)_{\omega}(\in \prod^{\omega}M)$ commute with $J_{\omega}pJ_{\omega}$, we have $(x_n)_{\omega}q=q(x_n)_{\omega}$.\\
Conversely, suppose that $(x_n)_{\omega}q=q(x_n)_{\omega}$ holds.  This implies that 
\begin{equation}
(x_n^*)_{\omega}p^{\perp}J_{\omega}pJ_{\omega}=J_{\omega}pJ_{\omega}p^{\perp}(x_n^*)_{\omega}.\label{eq: (x_n)_omega commutes with p^{perp}p'}
\end{equation} Let $(a_n)_{\omega}\in \mathcal{I}_{\omega}$. We have to show that $(x_na_n)_n^*\in \mathcal{L}_{\omega}$ and $(a_nx_n)_n\in \mathcal{L}_{\omega}$. By $(a_nx_n)_n\in \ell^{\infty}$, Eq. (\ref{eq: (x_n)_omega commutes with p^{perp}p'})  and $(a_n^*)_{\omega}=(a_n^*)_{\omega}p^{\perp}$, we have 
\eqa{
(a_n^*x_n^*)_{\omega}\xi_{\omega}&=J_{\omega}pJ_{\omega}(a_n^*x_n^*)_{\omega}\xi_{\omega}=J_{\omega}pJ_{\omega}(a_n^*)_{\omega}p^{\perp}(x_n)_{\omega}\xi_{\omega}\\
&=(a_n^*)_{\omega}J_{\omega}pJ_{\omega}p^{\perp}(x_n^*)_{\omega}\xi_{\omega}\\
&=(a_n^*)_{\omega}(x_n^*)_{\omega}J_{\omega}pJ_{\omega}p^{\perp}\xi_{\omega}=0,
}
hence $(a_n^*x_n^*)_n\in \mathcal{L}_{\omega}$. $(a_nx_n)_n\in \mathcal{L}_{\omega}$ is proved similarly. Therefore $(x_na_n)_n,\ (a_nx_n)_n$ belongs to $\mathcal{I}_{\omega}$ and $(x_n)_n\in \mathcal{M}^{\omega}$ holds. 
\end{proof}
\begin{proof}[Proof of Theorem \ref{thm: identification of Golodets algebra}]
Let $e'_{\rm{Gol}}$ be the projection of $H_{\omega}$ onto $H_{\rm{Gol}}$. 
By Lemma \ref{UW2.1.5} (2), we have 
\[e'_{\rm{Gol}}H_{\omega}=\overline{\pi_{\omega}(\ell^{\infty})\xi_{\omega}}=\overline{\pi_{\omega}(\ell^{\infty})''\xi_{\omega}}=J_{\omega}pJ_{\omega}H_{\omega}.\]
Therefore $e'_{\rm{Gol}}=J_{\omega}pJ_{\omega}$. Furthermore, as $\pi_{\rm{Gol}}=\pi_{\omega}|_{H_{\rm{Gol}}}$, we have 
\eqa{
e_{\omega}H_{\rm{Gol}}&=\overline{\pi_{\rm{Gol}}(\ell^{\infty})'\xi_{\omega}}=\overline{e'_{\rm{Gol}}\pi_{\omega}(\ell^{\infty})'e'_{\rm{Gol}}\xi_{\omega}}\\
&=e'_{\rm{Gol}}\overline{\pi_{\omega}(\ell^{\infty})'\xi_{\omega}}=J_{\omega}pJ_{\omega}pH_{\omega}\\
&=qH_{\omega}.
}
Therefore it holds that (cf. Proposition \ref{prop: wM^{omega}w^*=qNq and ww^*=q})
\eqa{
\mathscr{R}&=e_{\omega}\pi_{\rm{Gol}}(\ell^{\infty})''e_{\omega}=q\pi_{\omega}(\ell^{\infty})''q\\
&=q\left (\prod^{\omega}M\right )q\cong M^{\omega}.
}
We now show that $\overline{N}=\mathcal{M}^{\omega}$.\\
Suppose $\overline{x}\in \mathcal{M}^{\omega}$. Then by Lemma \ref{lem: is in normalizer iff commutes with q}, 
\[\pi_{\rm{Gol}}(\overline{x})e_{\omega}=\pi_{\omega}(\overline{x})q=q\pi_{\omega}(\overline{x})\in \mathscr{R}.\]
Similarly $\pi_{\rm{Gol}}(\overline{x^*})e_{\omega}\in \mathscr{R}$ holds, and $\overline{x}\in \overline{N}$. Conversely, suppose $\overline{x}\in \overline{N}$. Then $\pi_{\rm{Gol}}(\overline{x})e_{\omega}\in \mathscr{R}$, so 
\[\pi_{\rm{Gol}}(\overline{x})e_{\omega}=e_{\omega}\pi_{\rm{Gol}}(\overline{x}),\]
whence $\pi_{\omega}(\overline{x})q=q\pi_{\omega}(\overline{x})$. By Lemma \ref{lem: is in normalizer iff commutes with q}, $\overline{x}\in \mathcal{M}^{\omega}$ holds. Therefore $\overline{N}=\mathcal{M}^{\omega}$. Note that by Corollary \ref{cor: M^{omega} cap (L+L^*)=I}, this also shows that \[I=\overline{N}\cap \mathcal{L}_{\omega}=\mathcal{M}^{\omega}\cap \mathcal{L}_{\omega}=\mathcal{I}_{\omega}.\]
Finally, as the constant sequence $M$ in $M^{\omega}$ is mapped to $\pi_{\rm{Gol}}(\overline{M}_d)$ under the isomorphism $M^{\omega}\cong q(\prod^{\omega}M)q$, we see that $C_M^{\omega}\cong M'\cap M^{\omega}$. This finishes the proof. 
\end{proof}
\begin{remark}
Golodets \cite{Golodets} defined $C_M^{\omega}$ for a factor $M$ with separable predual. As the above proof shows, that isomorphism class of $C_M^{\omega}$ does not depend on $\varphi$, holds for any $\sigma$-finite von Neumann algebra. On the other hand, the state $\dot{\varphi}=\overline{\varphi}|_{C_M^{\omega}}$ does not depend on $\varphi$ if $M$ is a factor. This point will be clarified in $\S$\ref{subsec: Golodets state}.  
\end{remark}
\section{Tomita-Takesaki Theory for Ultraproducts}\label{sec: Tomita-Takesaki Theory for Ultraproducts}
\subsection{Modular Automorphism Group of a Ultraproduct State}
In this section we show that the ultraproduct action of the modular automorphism group on the Ocneanu ultraproduct is still continuous. This is the key result for all the subsequent analysis.  
In the case of constant algebras, similar results were obtained by Golodets \cite{Golodets} for his auxiliary algebra $\mathscr{R}$, and by Raynaud \cite{Raynaud} for the corner $p(\prod^{\omega}M)p$ which corresponds to $M^{\omega}$ (see Proposition \ref{prop: normalizer iff commutes with p}. Here, $p$ is the support projection of $(\varphi_n)_{\omega}$ as in Definition \ref{def: support projection p}). 

We prove next the corresponding result for a general sequence of $\sigma$-finite von Neumann algebras with normal faithful states. 
\begin{theorem}\label{thm: ultrapower of modular automorphism} Let $\{(M_n,\varphi_n)\}_{n=1}^{\infty}$ be a sequence of von Neumann algebras with faithful normal states. Let $M^{\omega}=(M_n,\varphi_n)^{\omega},\ \varphi^{\omega}=(\varphi_n)^{\omega}$. Then 
\[\sigma_t^{\varphi^{\omega}}((x_n)^{\omega})=(\sigma_t^{\varphi_n}(x_n))^{\omega},\ \ \ t\in \mathbb{R},\ \ (x_n)^{\omega}\in M^{\omega}.\]
In particular, $t\mapsto (\sigma_t^{\varphi_n})^{\omega}$ is a continuous flow on $(M_n,\varphi_n)^{\omega}$.
\end{theorem}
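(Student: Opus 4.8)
The plan is to realize $\sigma_t^{\varphi^{\omega}}$ through the genuine modular data of the Groh--Raynaud ultrapower $N=\prod^{\omega}M_n$, which by Theorem \ref{UW2.3.2} is standard on $H_{\omega}=(H_n)_{\omega}$ with conjugation $J_{\omega}=(J_{\varphi_n})_{\omega}$, and in which $M^{\omega}$ sits as the corner $qNq$, $q=pJ_{\omega}pJ_{\omega}$, with $L^2(M^{\omega},\varphi^{\omega})\cong qH_{\omega}$ as a standard form by Corollary \ref{cor: Ocneanu UP is qNq}. First I would check that $\alpha_t((x_n)^{\omega}):=(\sigma_t^{\varphi_n}(x_n))^{\omega}$ is a well-defined one-parameter automorphism group of $M^{\omega}$ leaving $\varphi^{\omega}$ invariant. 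Since each $\varphi_n$ is $\sigma^{\varphi_n}$-invariant, $\|\sigma_t^{\varphi_n}(x_n)\|_{\varphi_n}^{\sharp}=\|x_n\|_{\varphi_n}^{\sharp}$, so $(\sigma_t^{\varphi_n})_n$ preserves $\mathcal{I}_{\omega}$; writing $\sigma_t^{\varphi_n}(x_n)a_n=\sigma_t^{\varphi_n}(x_n\sigma_{-t}^{\varphi_n}(a_n))$ then shows it preserves $\mathcal{M}^{\omega}$, so $\alpha_t$ descends, and $\varphi^{\omega}\circ\alpha_t=\varphi^{\omega}$ is immediate.

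Next I would set $U_t:=\pi_{\omega}((\Delta_{\varphi_n}^{it})_{\omega})\in\mathbb{B}(H_{\omega})$, a one-parameter unitary group that is \emph{not} a priori strongly continuous. Using $\Delta_{\varphi_n}^{it}\xi_{\varphi_n}=\xi_{\varphi_n}$ and $J_{\varphi_n}\Delta_{\varphi_n}^{it}=\Delta_{\varphi_n}^{it}J_{\varphi_n}$, one gets $U_t\xi_{\omega}=\xi_{\omega}$ and $J_{\omega}U_t=U_tJ_{\omega}$; and since $U_t(x_n)_{\omega}U_t^*=(\sigma_t^{\varphi_n}(x_n))_{\omega}$ shows $U_t$ normalizes $N$ and hence $N'=J_{\omega}NJ_{\omega}$, the projection $p$ onto $\overline{N'\xi_{\omega}}$ of Definition \ref{def: support projection p} satisfies $U_tp=pU_t$, whence $U_t$ commutes with $q$. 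Thus $V_t:=U_t|_{qH_{\omega}}$ is a unitary group on $qH_{\omega}\cong L^2(M^{\omega},\varphi^{\omega})$ fixing $\xi_{\omega}$, commuting with $J_{\varphi^{\omega}}=qJ_{\omega}q$, and implementing $\alpha_t$ on $qNq\cong M^{\omega}$ via Proposition \ref{prop: normalizer iff commutes with p}.

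The hard part will be the strong continuity of $t\mapsto V_t$, since ultrapowers of flows are typically discontinuous on $H_{\omega}$; the whole point is that continuity is recovered on the corner $qH_{\omega}$. I would prove it on the total set $\mathcal{D}$ of vectors $(x_n\xi_{\varphi_n})_{\omega}$ with $x_n\in M_n(\sigma^{\varphi_n},[-R,R])$ for a common $R$: there $x_n\xi_{\varphi_n}$ lies in the spectral subspace of $\log\Delta_{\varphi_n}$ for $[-R,R]$, giving the uniform Lipschitz bound $\|V_t\eta-\eta\|\le |t|R\|\eta\|$. Density of $\mathcal{D}$ in $qH_{\omega}$ is the crux: given $(y_n\xi_{\varphi_n})_{\omega}$ I would mollify by a Fej\'er kernel, $y_n^{(R)}=\sigma_{g_R}^{\varphi_n}(y_n)\in M_n(\sigma^{\varphi_n},[-R,R])$, and estimate $\lim_{n\to\omega}\|(y_n-y_n^{(R)})\xi_{\varphi_n}\|^2$ through the spectral measures $\mu_n$ of $\log\Delta_{\varphi_n}$ at $y_n\xi_{\varphi_n}$. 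The essential uniform tightness of $\{\mu_n\}$ comes from the standard form: $\int e^{\lambda}\,d\mu_n=\|\Delta_{\varphi_n}^{1/2}y_n\xi_{\varphi_n}\|^2=\|y_n^*\xi_{\varphi_n}\|^2\le\|y_n\|^2$ controls the $+\infty$ tail, while $J_{\omega}$ (which commutes with the spectral projections of $\log\Delta_{\varphi_n}$) identifies $\mu_n$ with the spectral measure at $(J_{\varphi_n}y_nJ_{\varphi_n})\xi_{\varphi_n}$ and reduces the $-\infty$ tail to $\|\Delta_{\varphi_n}^{-1/2}(J_{\varphi_n}y_nJ_{\varphi_n})\xi_{\varphi_n}\|^2\le\|y_n\|^2$. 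Together these give $\mu_n(\{|\lambda|\ge T\})\le 2e^{-T}\|y_n\|^2$ uniformly in $n$, from which the mollification error tends to $0$ and density follows.

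Finally, once $V_t$ is strongly continuous, Stone's theorem produces a positive self-adjoint $\Lambda$ with $\Lambda^{it}=V_t$, and I would identify $\Lambda=\Delta_{\varphi^{\omega}}$ via the closure $S_{\varphi^{\omega}}$ of $x\xi_{\omega}\mapsto x^*\xi_{\omega}$. On the core $\mathcal{D}$, the map $z\mapsto V_z\eta=(\Delta_{\varphi_n}^{iz}x_n\xi_{\varphi_n})_{\omega}$ is entire with uniform-in-$n$ bounds, so $\Lambda^{1/2}\eta=V_{-i/2}\eta=(\Delta_{\varphi_n}^{1/2}x_n\xi_{\varphi_n})_{\omega}$, whence $J_{\varphi^{\omega}}\Lambda^{1/2}\eta=(J_{\varphi_n}\Delta_{\varphi_n}^{1/2}x_n\xi_{\varphi_n})_{\omega}=(x_n^*\xi_{\varphi_n})_{\omega}=S_{\varphi^{\omega}}\eta$. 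Uniqueness of the polar decomposition of $S_{\varphi^{\omega}}$ then forces $\Delta_{\varphi^{\omega}}^{1/2}=\Lambda^{1/2}$, so $\Delta_{\varphi^{\omega}}^{it}=V_t$ and $\sigma_t^{\varphi^{\omega}}(x)=V_txV_t^*=\alpha_t(x)=(\sigma_t^{\varphi_n}(x_n))^{\omega}$; continuity of the flow is then automatic from that of $t\mapsto\Delta_{\varphi^{\omega}}^{it}$.
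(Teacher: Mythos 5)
Your architecture is genuinely different from the paper's proof and is attractive: you restrict the a priori discontinuous ultraproduct unitary group $U_t=\pi_{\omega}((\Delta_{\varphi_n}^{it})_{\omega})$ to the corner $qH_{\omega}$, aim to recover strong continuity there, and then conclude via Stone's theorem and uniqueness of the polar decomposition of $S_{\varphi^{\omega}}$. Your first three steps are correct ($U_t$ fixes $\xi_{\omega}$, commutes with $J_{\omega}$, normalizes $N$ and $N'$, hence commutes with $p$ and $q$), and the final identification $\Lambda=\Delta_{\varphi^{\omega}}$ would go through \emph{granted} the density claim, since ${\rm span}\,\mathcal{D}$ is $V_t$-invariant and hence a core. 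The gap is precisely at what you call the crux: the uniform tightness bound $\mu_n(\{|\lambda|\ge T\})\le 2e^{-T}\|y_n\|^2$ is false. Only the $+\infty$ tail is controlled by $\int e^{\lambda}\,d\mu_n=\|y_n^*\xi_{\varphi_n}\|^2$. Your treatment of the $-\infty$ tail rests on the assertion that $J_{\varphi_n}$ commutes with the spectral projections of $\log\Delta_{\varphi_n}$; this is wrong: since $J_{\varphi_n}$ is \emph{anti}-linear and $J_{\varphi_n}\Delta_{\varphi_n}J_{\varphi_n}=\Delta_{\varphi_n}^{-1}$, one has $J_{\varphi_n}1_E(\log\Delta_{\varphi_n})J_{\varphi_n}=1_{-E}(\log\Delta_{\varphi_n})$ (the sign flip is invisible for $\Delta_{\varphi_n}^{it}$ but not for indicator functions). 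Hence the spectral measure at $(J_{\varphi_n}y_nJ_{\varphi_n})\xi_{\varphi_n}$ is $\mu_n(-\,\cdot\,)$, and $\|\Delta_{\varphi_n}^{-1/2}(J_{\varphi_n}y_nJ_{\varphi_n})\xi_{\varphi_n}\|^2=\|y_n^*\xi_{\varphi_n}\|^2=\int e^{\lambda}\,d\mu_n$: your second estimate merely reproduces the first, and the lower tail remains uncontrolled. Indeed it cannot be controlled by $\sup_n\|y_n\|$ alone: take $M_n=\mathbb{B}(\ell^2)$, $\varphi_n={\rm Tr}(\rho\,\cdot)$ with $\rho={\rm diag}(2^{-1},2^{-2},\dots)$, and $y_n=e_{i_n 1}$ with $i_n\to\infty$; then $\mu_n=\tfrac{1}{2}\,\delta_{\log(2^{1-i_n})}$ is a mass $\tfrac{1}{2}$ escaping to $-\infty$ while $\|y_n\|=1$.

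This is not a repairable detail but the heart of the theorem. The counterexample sequence is not in $\mathcal{M}^{\omega}$ (with $a_n=e_{i_ni_n}$ one has $(a_n)_n\in\mathcal{I}_{\omega}$ but $a_ny_n=y_n\notin\mathcal{I}_{\omega}$), which shows that any correct proof of your density claim must use the hypothesis $(y_n)_n\in\mathcal{M}^{\omega}$ in an essential way --- your mollification argument never does. But the statement you need (every normalizer sequence is $\sharp$-approximable by sequences with uniformly compact $\sigma^{\varphi_n}$-spectra, equivalently tightness of $(\mu_n)_n$ along $\omega$) is exactly Proposition \ref{prop: characterization of the normalizer}, which the paper deduces \emph{from} Theorem \ref{thm: ultrapower of modular automorphism}; moreover, by L\'evy's continuity theorem, tightness along $\omega$ is \emph{equivalent} to weak continuity at $t=0$ of $t\mapsto\nai{V_t\eta}{\eta}$, so there is no route to the continuity you want that avoids it. This is the difficulty the paper's proof is engineered to bypass: it never touches spectral tails, but encodes $S_{\varphi_n}$ and $F_{\varphi_n}$ through their graphs, takes the ultraproduct of the \emph{bounded} graph projections $E_n$ (whose entries are $(1+\Delta_{\varphi_n})^{-1}$ and $J_{\varphi_n}(\Delta_{\varphi_n}^{1/2}+\Delta_{\varphi_n}^{-1/2})^{-1}$), uses Lemma \ref{lem: e wedge f} ($e\wedge f+e^{\perp}\wedge f=f\Rightarrow ef=fe$) to show $E=(E_n)_{\omega}$ commutes with $q\oplus q$, reads off $(1+\Delta_{\varphi^{\omega}})^{-1}=((1+\Delta_{\varphi_n})^{-1})_{\omega}|_{qH_{\omega}}$, and only then passes to imaginary powers via Lemma \ref{Uffe: modular operator}. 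If you want to keep your Stone's-theorem framework, the honest fix is to replace your tightness step by this resolvent identification --- at which point you will essentially have rewritten the paper's proof.
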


This is not an obvious result as it might look for the first sight. Indeed, it is known that the ultrapower of a continuous action of a topological group on a von Neumann algebra $M$ is often discontinuous on $M^{\omega}$. 

To prove Theorem \ref{thm: ultrapower of modular automorphism}, we need preparations. 
Consider a sequence $(H_n=L^2(M_n,\varphi_n))_n$ of standard Hilbert spaces, let $H_{\omega}=(H_n)_{\omega}$, and as before we identify $(a_n)_{\omega}$ with $\pi_{\omega}((a_n)_{\omega})\in \mathbb{B}(H_{\omega})$ for every $(a_n)_{\omega}\in (\mathbb{B}(H_n))_{\omega}$. 
\begin{lemma}\label{Uffe: g(a_n)_{omega}}
Let $(a_n)_n\in \prod_{n\in \mathbb{N}}\mathbb{B}(H_n)_{{\rm{sa}}}$ with spectra satisfying $\sigma (a_n)\subset [0,1]$ and $0,1\notin \sigma_p(a_n)$, $n\in \mathbb{N}$. Then $a=(a_n)_{\omega}\in \mathbb{B}(H_{\omega})$ satisfies $0\le a\le 1$. Moreover, if $K\subset H_{\omega}$ is a closed subspace invariant under $a$, and with 
\begin{equation}
K\cap {\rm{Ker}}(a)=\{0\}=K\cap {\rm{Ker}}(1-a),\label{eq: first claim}
\end{equation}
then for every bounded continuous function $g$ on $(0,1)$, we have:
\begin{list}{}{}
\item[{\rm{(i)}}] $K$ is invariant under $(g(a_n))_{\omega}$;
\item[{\rm{(ii)}}] $(g(a_n))_{\omega}|_K=g(a|_K)$.
\end{list}
\end{lemma}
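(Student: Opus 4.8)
The plan is to reduce the whole statement to one clean fact---that the continuous functional calculus commutes with the ultraproduct map---and then to cope with the endpoints $0,1$, which need not be isolated, by a multiplier trick. First I would dispatch the easy preliminaries. Each $a_n=a_n^*$ with $\sigma(a_n)\subset[0,1]$ gives $0\le a_n\le1$, and since $\langle a\xi,\xi\rangle=\lim_{n\to\omega}\langle a_n\xi_n,\xi_n\rangle$ we get $a=a^*$ and $0\le a\le1$. Because $a$ is self-adjoint and $aK\subset K$, the subspace $K$ in fact \emph{reduces} $a$: for $\eta\in K^{\perp}$, $\xi\in K$ we have $\langle a\eta,\xi\rangle=\langle\eta,a\xi\rangle=0$. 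Hence $a|_K$ is self-adjoint with $0\le a|_K\le1$, and the projection $P_K$ commutes with every spectral projection of $a$. Writing $E$ for the spectral measure of $a$, the commuting product $E(\{0\})P_K$ is the projection onto ${\rm{Ker}}(a)\cap K=\{0\}$, so $E(\{0\})|_K=0$; likewise $E(\{1\})|_K=0$ from $K\cap{\rm{Ker}}(1-a)=\{0\}$. Thus the spectral measure of $a|_K$ gives no mass to $\{0,1\}$, so $g(a|_K)$ is a well-defined bounded operator for $g\in C_b((0,1))$ via the bounded Borel calculus; the same remark, using $0,1\notin\sigma_p(a_n)$, makes each $g(a_n)$ unambiguous with $\|g(a_n)\|\le\|g\|_\infty$, so that $(g(a_n))_\omega\in\mathbb{B}(H_\omega)$ makes sense.

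Next I would establish the compatibility: for $g\in C([0,1])$ one has $(g(a_n))_\omega=g(a)$. The assignment $g\mapsto(g(a_n))_\omega=\pi_\omega((g(a_n))_n)$ is a unital $*$-homomorphism $C([0,1])\to\mathbb{B}(H_\omega)$ (the ultraproduct map is multiplicative and $g(a_n)h(a_n)=(gh)(a_n)$), and it sends $\mathrm{id}\mapsto a$. Since $g\mapsto g(a)$ is the \emph{unique} such $*$-homomorphism---the two agree on polynomials and both are norm-continuous---they coincide on all of $C([0,1])$.

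The heart of the matter is the multiplier trick. Put $f(\lambda):=\lambda(1-\lambda)\in C([0,1])$; as $g$ is bounded and $f$ vanishes at $0,1$, the product $gf$ extends continuously to $[0,1]$ with value $0$ at the endpoints, so $gf\in C([0,1])$. Using $g(a_n)f(a_n)=(gf)(a_n)$ together with the previous step applied to $f$ and to $gf$, I obtain $(g(a_n))_\omega\,f(a)=(gf)(a)$. Since $gf\in C([0,1])$ and $K$ reduces $a$, the operator $(gf)(a)$ reduces $K$ and on $K$ equals $(gf)(a|_K)=g(a|_K)f(a|_K)$ (the factorization is legitimate because the spectral measure of $a|_K$ avoids $\{0,1\}$, where $g$ and $f$ are both genuine functions). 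Hence for every $\xi\in K$, setting $\eta:=f(a)\xi=f(a|_K)\xi\in K$, I get $(g(a_n))_\omega\,\eta=g(a|_K)\eta$. Finally $f(a|_K)$ is injective, its kernel being ${\rm{Ker}}(f(a|_K))=E(\{0,1\})|_K=0$, so its range is dense in $K$; as $(g(a_n))_\omega$ and $g(a|_K)$ are both bounded and agree on the dense subspace ${\rm{ran}}(f(a|_K))$, they agree on all of $K$. This yields simultaneously $(g(a_n))_\omega K\subset K$, which is claim (i), and $(g(a_n))_\omega|_K=g(a|_K)$, which is claim (ii).

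The main obstacle---and precisely the reason $f$ is introduced---is that $0$ and $1$ may lie in the \emph{continuous} spectrum of $a$ and of the $a_n$, so the clean identity $(g(a_n))_\omega=g(a)$ cannot be applied directly to $g\in C_b((0,1))$. A naive approach of approximating $g$ in norm by functions in $C([0,1])$ fails, because $\|g(a_n)-g_\varepsilon(a_n)\|$ need not be small when $\sigma(a_n)$ accumulates at the endpoints. The trick is instead to neutralize the endpoints by multiplying by $f$, pushing the computation into $C([0,1])$, and then to recover $g$ on the dense range of $f(a|_K)$; the two hypotheses $K\cap{\rm{Ker}}(a)=\{0\}=K\cap{\rm{Ker}}(1-a)$ are exactly what guarantee that $f(a|_K)$ is injective and hence that this recovery is possible.
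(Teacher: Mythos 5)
Your proof is correct, and it takes a genuinely different route from the paper's. The paper proves the same compatibility fact you start from, namely $(h(a_n))_{\omega}=h(a)$ for $h\in C([0,1])$ (via polynomials and Weierstrass, where you invoke uniqueness of the continuous functional calculus --- essentially the same step), but it then handles the endpoint problem by a spectral exhaustion: it writes $K=\bigvee_{0<\varepsilon<1/2}K_{\varepsilon}$ with $K_{\varepsilon}=1_{(\varepsilon,1-\varepsilon)}(a)(H_{\omega})\cap K$, and for each fixed $\varepsilon$ it chooses a cutoff $f_{\varepsilon}$ equal to $1$ on $(\varepsilon,1-\varepsilon]$ and $0$ near $\{0,1\}$, replaces a representative $(\xi_n)_n$ of $\xi\in K_{\varepsilon}$ by $\xi_n'=f_{\varepsilon}(a_n)\xi_n$ (which still represents $\xi$), and compares $g$ with a continuous extension $h$ of $g$ on the support of $f_{\varepsilon}$, so that $g(a_n)\xi_n'=h(a_n)\xi_n'$ and the $C([0,1])$-compatibility applies to $h$. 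Your multiplier trick replaces this whole apparatus: the single function $f(\lambda)=\lambda(1-\lambda)$ makes $gf$ continuous on $[0,1]$ at one stroke, the identity $(g(a_n))_{\omega}f(a)=(gf)(a)$ follows from multiplicativity of $\pi_{\omega}$, and the hypotheses $K\cap\mathrm{Ker}(a)=\{0\}=K\cap\mathrm{Ker}(1-a)$ enter exactly once, to give injectivity (hence dense range) of the self-adjoint operator $f(a|_K)$, after which boundedness closes the argument. What each approach buys: yours is shorter and purely operator-algebraic --- it never touches representative sequences of vectors in $H_{\omega}$, which is where ultraproduct arguments are most error-prone --- while the paper's truncation technique yields the slightly finer picture that vectors of $K$ lying in a spectral window $(\varepsilon,1-\varepsilon)$ of $a$ admit representatives $(\xi_n')_n$ lying in the corresponding spectral windows of the $a_n$, a type of representative-improvement that is in the same spirit as other arguments in the paper (e.g.\ the spectral-subspace characterization of $\mathcal{M}^{\omega}$ in Proposition 4.11). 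Both proofs use the endpoint hypotheses in logically equivalent ways (the paper: $K\perp\mathrm{Ker}(a),\mathrm{Ker}(1-a)$ forces $K\subset 1_{(0,1)}(a)(H_{\omega})$ and density of $\bigcup_{\varepsilon}K_{\varepsilon}$; you: density of $\mathrm{ran}(f(a|_K))$), so the hypotheses are sharp in either formulation.
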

\begin{remark}
From the assumption and Eq. (\ref{eq: first claim}), $\sigma (a|_K)\subset [0,1]$ and $0,1\notin \sigma_p(a|_K)$, so $g(a|_K)$ is well-defined.
\end{remark}
\begin{proof}
From the identification $a=\pi_{\omega}((a_n)_{\omega})$, it is straightforward that $0\le a\le 1$, also that
\begin{equation}
(f(a_n))_{\omega}=f(a)\label{eq: f(a_n)}
\end{equation}
for every polynomial $f$ on $[0,1]$, hence (by Weierstrass' Theorem) for every continuous function on $[0,1]$.

Now, let $p$ denote the projection of $H_{\omega}$ onto $K$. As $a(K)\subset K$, $p$ commutes with $a$, hence with all spectral projections of $a$. In particular, $K$ is perpendicular to both $\text{Ker}(a)$ and $\text{Ker}(1-a)$, so $K\subset 1_{(0,1)}(a)(H_{\omega})$, where $1_{X}$ denotes the indicator function of $X\subset \mathbb{R}$. Hence $K=\bigvee_{0<\varepsilon<\frac{1}{2}}K_{\varepsilon}$, where $K_{\varepsilon}=1_{(\varepsilon,1-\varepsilon)}(a)(H_{\omega})\cap K$.

Fix now $\varepsilon\in (0,\frac{1}{2})$ and fix a continuous function $f_{\varepsilon}$ on $[0,1]$ with 
\[f_{\varepsilon}([0,\textstyle \frac{\varepsilon}{2}]\cup [1-\frac{\varepsilon}{2},1])=\{0\},\ \ f_{\varepsilon}((\varepsilon,1-\varepsilon])=\{1\}.\]
Also, fix $\xi=(\xi_n)_{\omega}\in K_{\varepsilon}$ and a continuous function $g$ on $(0,1)$. Choose a continuous function $h$ on $[0,1]$ such that $g(t)=h(t)$ whenever $\frac{\varepsilon}{2}\le t\le 1-\frac{\varepsilon}{2}$. Also, let $\xi_n':=f_{\varepsilon}(a_n)\xi_n$, $n\in \mathbb{N}$. Then 
\begin{equation}
(\xi_n')_{\omega}=(f_{\varepsilon}(a_n)\xi_n)_{\omega}=f_{\varepsilon}(a)\xi=\xi,\label{eq: xi_n'}
\end{equation}
where we used Eq. (\ref{eq: f(a_n)}) in the second last equality, and in the last equality that $\xi \in 1_{(\varepsilon,1-\varepsilon)}(a)(H_{\omega})$ and $f_{\varepsilon}1_{(\varepsilon,1-\varepsilon)}=1_{(\varepsilon,1-\varepsilon)}$. Because $g=h$ on the support of $f_{\varepsilon}$, we have
\begin{equation}
g(a_n)\xi_n'=h(a_n)\xi_n',\ \ \ \ n\in \mathbb{N}.\label{eq: g(a_n)xi_n'}
\end{equation}
Also, as $p$ commutes with $1_{(\varepsilon,1-\varepsilon)}(a)$, and $g=h$ on $(\varepsilon,1-\varepsilon)$, one has 
\eqa{
g(a|_K)\xi&=g(a|_K)1_{(\varepsilon,1-\varepsilon)}(a)p\xi\\
&=g(a|_K)1_{(\varepsilon,1-\varepsilon)}(a|_K)p\xi\\
&=h(a|_K)\xi,
}
but as $h$ is continuous on $[0,1]$ (hence bounded) and $\xi \in K$, this entails 
\begin{equation}
g(a|_K)\xi=h(a)\xi. \label{eq: h(a)xi}
\end{equation}
Now we get, by Eqs. (\ref{eq: xi_n'}), (\ref{eq: g(a_n)xi_n'}), (\ref{eq: xi_n'}), (\ref{eq: f(a_n)}) and (\ref{eq: h(a)xi}) respectively:
\eqa{
(g(a_n))_{\omega}\xi&=(g(a_n)\xi_n')_{\omega}=(h(a_n)\xi_n')_{\omega}\\
&=(h(a_n))_{\omega}\xi=h(a)\xi=g(a|_K)\xi.
}
As $K=\bigvee_{0<\varepsilon<\frac{1}{2}}K_{\varepsilon}$, we now get (i) and (ii).
\end{proof}
\begin{lemma}\label{Uffe: modular operator}
For each $n\in \mathbb{N}$, let $\Delta_n$ be a positive self-adjoint (possibly unbounded) operator on $H_n$, such that $0\notin \sigma_p(\Delta_n)$. Let $a=((1+\Delta_n)^{-1})_{\omega}\in \mathbb{B}(H_{\omega})$, and let $K\subset H_{\omega}$ be a closed subspace which is invariant under $a$. If $\Delta$ is a positive self-adjoint operator on $K$ with $0\notin \sigma_p(\Delta)$ and with 
\[(1+\Delta)^{-1}=a|_K,\]
then 
\[(\Delta_n^{it})_{\omega}|_K=\Delta^{it},\ \ \ t\in \mathbb{R}.\]
\end{lemma}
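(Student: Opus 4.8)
The plan is to reduce the statement to the preceding Lemma \ref{Uffe: g(a_n)_{omega}} by realizing each imaginary power $\Delta_n^{it}$ as one fixed bounded continuous function of the bounded operators $a_n:=(1+\Delta_n)^{-1}$. First I would record that each $a_n$ is a positive contraction with $\sigma(a_n)\subset(0,1]$, and that the hypothesis $0\notin\sigma_p(\Delta_n)$ translates into $0,1\notin\sigma_p(a_n)$. Indeed $a_n$ is injective (being the inverse of $1+\Delta_n$), so $0\notin\sigma_p(a_n)$; while $a_n\xi=\xi$ forces $\Delta_n\xi=0$, so $1\in\sigma_p(a_n)$ would contradict $0\notin\sigma_p(\Delta_n)$. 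Thus $(a_n)_n$ satisfies the standing hypotheses of Lemma \ref{Uffe: g(a_n)_{omega}}, and by construction $a=(a_n)_{\omega}$ is exactly the operator appearing there.

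Next I would verify hypothesis (\ref{eq: first claim}) for the given subspace $K$. Since $K$ is invariant under the bounded self-adjoint operator $a$, the projection onto $K$ commutes with $a$ and $a|_K$ is self-adjoint; by hypothesis $a|_K=(1+\Delta)^{-1}$. As $(1+\Delta)^{-1}$ is injective, $K\cap\mathrm{Ker}(a)=\mathrm{Ker}(a|_K)=\{0\}$; and $(1-a|_K)\eta=0$ forces $\Delta\eta=0$, so $K\cap\mathrm{Ker}(1-a)=\mathrm{Ker}(\Delta)=\{0\}$ because $0\notin\sigma_p(\Delta)$. Hence Lemma \ref{Uffe: g(a_n)_{omega}} applies to $K$.

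The key step is the choice of test function. For fixed $t\in\mathbb{R}$ I would set
\[g_t(s):=\left(\frac{1-s}{s}\right)^{it},\qquad s\in(0,1),\]
which is continuous with $|g_t|\equiv 1$, hence bounded on $(0,1)$. Writing $\psi(\lambda)=(1+\lambda)^{-1}$, so that $a_n=\psi(\Delta_n)$, the composition rule for the Borel functional calculus gives $g_t(a_n)=(g_t\circ\psi)(\Delta_n)=\Delta_n^{it}$, since $(g_t\circ\psi)(\lambda)=\lambda^{it}$; here the absence of an atom of $\Delta_n$ at $0$ (equivalently of $a_n$ at $1$) is what makes the values of $g_t$ near the endpoints irrelevant, so that $g_t(a_n)$ is the genuine unitary $\Delta_n^{it}$. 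The identical computation applied to $a|_K=(1+\Delta)^{-1}$ yields $g_t(a|_K)=\Delta^{it}$, using $0\notin\sigma_p(\Delta)$.

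Combining these identifications with Lemma \ref{Uffe: g(a_n)_{omega}}(ii) then gives
\[(\Delta_n^{it})_{\omega}|_K=(g_t(a_n))_{\omega}|_K=g_t(a|_K)=\Delta^{it},\]
as desired; note that $(\Delta_n^{it})_n$ is a bounded (in fact unitary) sequence, so its ultraproduct is legitimately an element of $(\mathbb{B}(H_n))_{\omega}\subset\mathbb{B}(H_{\omega})$. I expect the only genuinely delicate point to be the endpoint bookkeeping in the identity $g_t(a_n)=\Delta_n^{it}$ — that is, justifying that composing the unbounded calculus for $\Delta_n$ with $\psi$ and a function defined only on the open interval $(0,1)$ produces the correct unitary. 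This is precisely where the hypotheses $0\notin\sigma_p(\Delta_n)$ and $0\notin\sigma_p(\Delta)$ enter, mirroring the role they play in Lemma \ref{Uffe: g(a_n)_{omega}}.
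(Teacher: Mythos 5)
Your proposal is correct and is essentially the paper's own proof: the paper also reduces to Lemma \ref{Uffe: g(a_n)_{omega}} via the function $g_t(x)=(x^{-1}-1)^{it}$, which is literally your $g_t(s)=\bigl(\frac{1-s}{s}\bigr)^{it}$, and identifies $g_t(a_n)=\Delta_n^{it}$ and $g_t(a|_K)=\Delta^{it}$. Your verification of the endpoint/kernel hypotheses is merely spelled out in more detail than in the paper, where it is compressed into the remark that $0,1\notin\sigma_p(a|_K)$.
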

\begin{proof}
Let $a_n:=(1+\Delta_n)^{-1},\ n\in \mathbb{N}$. Define 
\[g_t(x):=(x^{-1}-1)^{it},\ \ x\in (0,1),\ \ t\in \mathbb{R}.\]
As $0,1\notin \sigma_p(a|_K)=\sigma_p((1+\Delta)^{-1})$, Lemma \ref{Uffe: g(a_n)_{omega}} gives 
\[(g_t(a_n))_{\omega}|_K=g_t(a|_K),\ \ \ t\in \mathbb{R}.\]
This shows the claim, as
\[g_t(a_n)=\Delta_n^{it},\ \ \ n\in \mathbb{N},\ \ \ t\in \mathbb{R},\]
and 
\[g_t(a|_K)=g_t((1+\Delta)^{-1})=\Delta^{it},\ \ \ t\in \mathbb{R}.\]
\end{proof}

\begin{lemma}\label{lem: e wedge f}
Let $e,f$ be projections on a real Hilbert space such that $e\wedge f+e^{\perp}\wedge f=f$. Then $ef=fe$ holds.
\end{lemma}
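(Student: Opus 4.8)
The plan is to avoid manipulating the difference $ef-fe$ directly and instead to compute the product $ef$ outright, exploiting the fact that the hypothesis expresses $f$ as a sum of two pieces on which $e$ acts transparently. I would write $p:=e\wedge f$ and $q:=e^{\perp}\wedge f$, so that the hypothesis reads simply $f=p+q$.

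First I would record the operator identities forced by the lattice order. Since $p=e\wedge f\le e$, one has $ep=pe=p$; and since $q=e^{\perp}\wedge f\le e^{\perp}$, one has $e^{\perp}q=q$, whence $eq=e(e^{\perp}q)=(ee^{\perp})q=0$ and likewise $qe=0$. Over a real Hilbert space these are the usual facts that a subprojection of $e$ is absorbed by $e$ while a subprojection of $e^{\perp}$ is annihilated by $e$; since the adjoint is just the transpose, nothing changes from the complex case.

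Next I would left-multiply the hypothesis $f=p+q$ by $e$ to get $ef=ep+eq=p+0=p$. The crucial observation is that $ef$ has thereby been identified with $p=e\wedge f$, which is a self-adjoint projection. Taking adjoints then yields $fe=f^{*}e^{*}=(ef)^{*}=p^{*}=p=ef$, which is precisely the desired relation $ef=fe$.

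The argument is elementary and I do not anticipate a genuine obstacle; the only point requiring care is the faithful translation of the two lattice inequalities $e\wedge f\le e$ and $e^{\perp}\wedge f\le e^{\perp}$ into the operator identities $ep=p$ and $eq=0$. Once $f$ is replaced by $p+q$ inside $ef$, commutativity is automatic, because the product collapses onto the self-adjoint projection $p$. It is worth noting that the orthogonality $pq=0$ (which would follow from $p\le e$, $q\le e^{\perp}$) is not even needed here, so the hypothesis is used solely through the single operator identity $f=p+q$.
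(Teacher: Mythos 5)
Your proof is correct, and it takes a genuinely different --- and shorter --- route than the paper's. Writing $p=e\wedge f$ and $q=e^{\perp}\wedge f$, you multiply the hypothesis $f=p+q$ on the left by $e$ and use the absorption identities $ep=p$ (from $p\le e$) and $eq=0$ (from $q\le e^{\perp}$), so that $ef$ collapses at once to the self-adjoint projection $p$; taking adjoints then gives $fe=(ef)^{*}=p=ef$. The paper instead compresses by $f$: from $e\wedge f=f(e\wedge f)f\le fef$, $e^{\perp}\wedge f\le fe^{\perp}f$ and $fef+fe^{\perp}f=f$, the hypothesis forces $e\wedge f=fef$ and $e^{\perp}\wedge f=fe^{\perp}f$; it then identifies $efe=e\wedge f$ by a positive-square-root argument (namely $(efe)^{2}=e(fef)e=e\wedge f$, and a positive self-adjoint operator whose square is a projection equals that projection), and finally verifies by direct expansion that $(ef-fe)^{*}(ef-fe)=fef-efef-fefe+efe=0$. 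Your argument bypasses both the square-root step and the closing norm computation, and it delivers the sharper explicit conclusion $ef=fe=e\wedge f$, which is in fact the form invoked downstream in the proof of Theorem \ref{thm: ultrapower of modular automorphism}, where the projection $EF$ must be identified with $E\wedge F$. The only extra output of the paper's longer route is the companion identities $fef=efe=e\wedge f$, which are byproducts rather than anything the paper later needs.
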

\begin{proof}
%\textcolor{red}{
Note that $e\wedge f= f(e\wedge f)f\le fef$ and similarly $e^{\perp}\wedge f\le fe^{\perp}f$. 
Moreover, $fef+fe^{\perp}f=f$. Hence $e\wedge f+e^{\perp}\wedge f=f$ implies that $e\wedge f=fef$ and $e^{\perp}\wedge f=fe^{\perp}f$. 
It follows that $efe$ is a positive self-adjoint operator whose square $(efe)^2=efefe=fef$ is the projection $e\wedge f$, whence $efe$ itself is the projection $e\wedge f$. 
It then holds that
\eqa{
(ef-fe)^*(ef-fe)&=fef-efef-fefe+efe\\
&=e\wedge f-e(e\wedge f)-(e\wedge f)e+e\wedge f\\
&=0,
}
whence $ef=fe$ holds.
%} 
\end{proof}

\begin{proof}[Proof of Theorem \ref{thm: ultrapower of modular automorphism}]
Consider for each $n\in \mathbb{N}$ the standard representation of $M_n$ on $H_n=L^2(M_n,\varphi_n)$, and write $N=\prod^{\omega}M_n$ for simplicity. 
Define $H_{\omega}:=(H_n)_{\omega}$ and $J_{\omega}:=(J_{\varphi_n})_{\omega}$. 
Let $\varphi_{\omega}:=(\varphi_n)_{\omega}\in ((M_n)_*)_{\omega}\cong N_*$ and let $p:=\text{supp}(\varphi_{\omega})\in \prod^{\omega}M_n$ (cf. Theorem \ref{UW2.3.3}). 
By Proposition \ref{prop: wM^{omega}w^*=qNq and ww^*=q}, we have $M^{\omega}\cong qNq=wM^{\omega}w^*$ with $q:=pJ_{\omega}pJ_{\omega}$, and with this identification, we have 
\[L^2(M^{\omega},\varphi^{\omega})\cong qH_{\omega}, J_{\varphi^{\omega}}\cong qJ_{\omega}q=pJ_{\omega}p.\]
 
Let $S_{\varphi_n}$ (resp. $F_{\varphi_n}$) be the closure of the closable (conjugate-linear) operator $S_{\varphi_n}^0$ (resp. $F_{\varphi_n}^0$) on $H_n$ defined by 
\eqa{
\dom{S_{\varphi_n}^0}=M_n\xi_{\varphi_n},\ \ \ \ \  & S_{\varphi_n}^0x\xi_{\varphi_n}:=x^*\xi_{\varphi_n}\ \ (x\in M_n),\\
\dom{F_{\varphi_n}^0}=M_n'\xi_{\varphi_n},\ \ \ \ \  & F_{\varphi_n}^0 y\xi_{\varphi_n}:=y^*\xi_{\varphi_n}\ \ (y\in M_n').
}
Since $F_{\varphi_n}$ is the adjoint of $S_{\varphi_n}$ (cf. \cite{Takesaki5} or \cite{KadisonRingrose}, Corollary 9.2.30), 
they are the adjoint of each other with respect to the real Hilbert space structure of $H_n$. 
Therefore by \cite[Theorem 13.10]{Rudin}, we have the following decomposition as a real Hilbert space:
\[H_n\oplus_{\mathbb{R}}H_n=G(S_{\varphi_n})\oplus_{\mathbb{R}}VG(F_{\varphi_n}),\ \ \ \ \ \ n\in \mathbb{N},\]
where $V=\mattwo{0}{-1}{1}{0}$ and $G(T)$ is the graph of a closed operator $T$.
Taking the ultraproduct (as a real Hilbert space), we obtain
\begin{equation}
H_{\omega}\oplus_{\mathbb{R}} H_{\omega}=(G(S_{\varphi_n}))_{\omega}\oplus_{\mathbb{R}} V_{\omega}(G(F_{\varphi_n}))_{\omega},\label{eq: H_omega plus H_omega decomposition}
\end{equation}
where $V_{\omega}=(V)_{\omega}$. Let $\tilde{\varphi}_{\omega}:=\varphi_{\omega}|_{qNq}\in (qNq)_*$ be the image of $\varphi^{\omega}$ under the isomorphism $M^{\omega}\cong qNq$. 
Let $x\in qNq$. Then by Lemma \ref{UW2.1.5} (1) and Proposition \ref{prop: normalizer iff commutes with p}, there exists $(x_n)_n\in \mathcal{M}^{\omega}$ such that $x=(x_n)_{\omega}q=q(x_n)_{\omega}$. Therefore it holds that 
\eqa{
S_{\tilde{\varphi}_{\omega}}x\xi_{\omega}&=S_{\tilde{\varphi}_{\omega}}(x_n)_{\omega}\xi_{\omega}\\
&=(x_n^*)_{\omega}(\xi_{\varphi_n})_{\omega}\\
&=(S_{\varphi_n}x_n\xi_{\varphi_n})_{\omega},
}
which shows that $(x\xi_{\omega},S_{\tilde{\varphi}_{\omega}}x\xi_{\omega})\in qH_{\omega}\oplus_{\mathbb{R}}qH_{\omega}\cap (G(S_{\varphi_n}))_{\omega}$. 
Doing similar computations for $F_{\tilde{\varphi}_{\omega}}$, we obtain
\begin{align}
G(S_{\tilde{\varphi}_{\omega}})&\subset (G(S_{\varphi_n}))_{\omega}\cap (qH_{\omega}\oplus_{\mathbb{R}} qH_{\omega}), \label{eq: G(S_tilde)}\\
V_{\omega}G(F_{\tilde{\varphi}_{\omega}})&\subset V_{\omega}(G(F_{\varphi_n}))_{\omega}\cap (qH_{\omega}\oplus_{\mathbb{R}} qH_{\omega}).\label{eq: G(F_tilde)}
\end{align}
Similarly, using $F_{\tilde{\varphi}_{\omega}}=(S_{\tilde{\varphi}_{\omega}})^*$, we have 
\begin{equation}
G(S_{\tilde{\varphi}_{\omega}})\oplus_{\mathbb{R}} V_{\omega}G(F_{\tilde{\varphi}_{\omega}})=qH_{\omega}\oplus_{\mathbb{R}} qH_{\omega}.\label{eq: decomposition of qH_omega by S_tilde and F_tilde}
\end{equation}
 Let $E$ be the real orthogonal projection of $H_{\omega}\oplus_{\mathbb{R}} H_{\omega}$ onto $(G(S_{\varphi_n}))_{\omega}$, and let $F:=q\oplus q$. 
By Eq. (\ref{eq: H_omega plus H_omega decomposition}), $E^{\perp}$ is the real orthogonal projection onto $V_{\omega}G(F_{\tilde{\varphi}_{\omega}})$.  
By Eq. (\ref{eq: G(S_tilde)}), (\ref{eq: G(F_tilde)}) and (\ref{eq: decomposition of qH_omega by S_tilde and F_tilde}), we have 
\eqa{
\text{ran}(E\wedge F)&\supset G(S_{\tilde{\varphi}_{\omega}}),\\
\text{ran}(E^{\perp}\wedge F)&\supset V_{\omega}G(F_{\tilde{\varphi}_{\omega}}),\\
\text{ran}(F)&=qH_{\omega}\oplus_{\mathbb{R}} qH_{\omega}.
}
Let $P,Q$ be real orthogonal projections from $H_{\omega}\oplus_{\mathbb{R}}H_{\omega}$ onto $G(S_{\tilde{\varphi}_{\omega}})$ and $V_{\omega}G(F_{\tilde{\varphi}_{\omega}})$, respectively. 
Then $P\le E\wedge F,\ Q\le E^{\perp}\wedge F$. On the other hand, by Eq. (\ref{eq: decomposition of qH_omega by S_tilde and F_tilde}) we have
\[P+Q=F\ge E\wedge F+E^{\perp}F.\]
Therefore it follows that $P=E\wedge F,\ Q=E^{\perp}\wedge F$ and $E\wedge F+E^{\perp}\wedge F=F$.  
Therefore by Lemma \ref{lem: e wedge f}, $E$ and $F$ commute. 
Let $E_n$ be the real orthogonal projection of $H_n\oplus_{\mathbb{R}}H_n$ onto $G(S_{\varphi_n})\ (n\in \mathbb{N})$. 
Then $E$ is the ultraproduct of $(E_n)_n$, and by (the proof of) \cite[Corollary 6.11]{HW1}, we know that 
\[E_n=\mattwo{(1+\Delta_{\varphi_n})^{-1}}{J_{\varphi_n}(\Delta_{\varphi_n}^{\frac{1}{2}}+\Delta_{\varphi_n}^{-\frac{1}{2}})^{-1}}{J_{\varphi_n}(\Delta_{\varphi_n}^{\frac{1}{2}}+\Delta_{\varphi_n}^{-\frac{1}{2}})^{-1}}{(1+\Delta_{\varphi_n})^{-1}},\ \ \ \ \ \ (n\in \mathbb{N}).\]
Let $a_n:=(1+\Delta_{\varphi_n})^{-1},\ b_n:=J_{\varphi_n}(\Delta_{\varphi_n}^{\frac{1}{2}}+\Delta_{\varphi_n}^{-\frac{1}{2}})^{-1} \in \mathbb{B}(H_n)$, and let $a_{\omega}:=(a_n)_{\omega},\ b_n:=(b_n)_{\omega}\in \mathbb{B}(H_{\omega})$ ($b_n$, $b_{\omega}$ are regarded as real linear operators). Then it holds that 
\[E=\mattwo{a_{\omega}}{b_{\omega}}{b_{\omega}}{a_{\omega}}.\]
Since $E=(E_n)_{\omega}$ commutes with $F=\mattwo{q}{0}{0}{q}$, $a_{\omega}$ commutes with $q$ and $qH_{\omega}$ is $a_{\omega}$-invariant. Therefore we see that $EF$ is the projection of $H_{\omega}\oplus_{\mathbb{R}} H_{\omega}$ onto $G(S_{\tilde{\varphi}_{\omega}})$, which is of the following form:
\[EF=E\wedge F=\mattwo{(1+\Delta_{\tilde{\varphi}_{\omega}})^{-1}q}
{J_{\tilde{\varphi}_{\omega}}(\Delta_{\tilde{\varphi}_{\omega}}^{\frac{1}{2}}+\Delta_{\tilde{\varphi}_{\omega}}^{-\frac{1}{2}})^{-1}q}
{J_{\tilde{\varphi}_{\omega}}(\Delta_{\tilde{\varphi}_{\omega}}^{\frac{1}{2}}+\Delta_{\tilde{\varphi}_{\omega}}^{-\frac{1}{2}})^{-1}q}
{(1+\Delta_{\tilde{\varphi}_{\omega}})^{-1}q}.\]
This shows that 
\begin{equation}
a_{\omega}|_{qH_{\omega}}=(1+\Delta_{\tilde{\varphi}_{\omega}})^{-1}\label{eq: UP of resolvent and resolvent of UP}
\end{equation}

Now by Lemma \ref{Uffe: modular operator}, we have
\begin{equation}
(\Delta_{\varphi_n}^{it})^{\omega}|_{qH_{\omega}}=\Delta_{\tilde{\varphi}_{\omega}}^{it},\ \ \ \ \ t\in \mathbb{R}.\label{eq: UP of Delta^it}
\end{equation}
From this equality, we have that $(\sigma_t^{\varphi_n})^{\omega}=\sigma_t^{\varphi^{\omega}}$ for all $t\in \mathbb{R}$ because $\varphi^{\omega}$ corresponds to $\tilde{\varphi}_{\omega}$ under the identification $M^{\omega}\cong qNq$.       
\end{proof}
\begin{remark}
%One might think that the above theorem is strange in view of Example \ref{ultrapower of action not continuous}. 
Note that the ultrapower $t\mapsto (\sigma_t^{\varphi_n})^{\omega}$ can be defined on $\ell^{\infty}/\mathcal{I}_{\omega}$ (quotient as a Banach space), and it is indeed quite discontinuous on it. We show an example. 
\end{remark}

\begin{example}Let $0<\lambda<1$ and $R_{\lambda}=\bigotimes_{n=1}^{\infty}(M_2(\mathbb{C}),\tau_{\lambda})$ be the Powers factor of type III$_{\lambda}$, where $\tau_{\lambda}=\text{Tr}(\rho_{\lambda}\cdot ),\ \rho_{\lambda}=\text{diag}(\frac{\lambda}{1+\lambda},\frac{1}{1+\lambda})$. The modular automorphism group of $\varphi =\bigotimes_{n=1}^{\infty} \tau_{\lambda}$ is given by $\bigotimes_{n=1}^{\infty}\sigma_t^{\tau_{\lambda}}$, where
\[\sigma_t^{\tau_{\lambda}}\left (\begin{pmatrix}a & b\\ c & d\end{pmatrix}\right )=\begin{pmatrix}a & \lambda^{it}b\\ \lambda^{-it}c & d\end{pmatrix},\ \ \ \  a,b,c,d\in \mathbb{C}, t\in \mathbb{R}.\]
Define 
\[x_n:=\begin{pmatrix}0 & 1\\ 1 & 0\end{pmatrix}^{\otimes n}\otimes 1\otimes 1\cdots \in R_{\lambda} ,\ \ \ \ n\in \mathbb{N}.\]
It is clear that $(x_n)\in \ell^{\infty}(\mathbb{N},R_{\lambda})$. We see that 
\eqa{
\|\sigma_t^{\varphi}(x_n)-x_n\|_{\varphi}^2&=\left \|\begin{pmatrix}0 & \lambda^{it}\\ \lambda^{-it} & 0\end{pmatrix}^{\otimes n}-\begin{pmatrix}0 & 1 \\ 1 & 0\end{pmatrix}^{\otimes n}\right \|_{\tau_{\lambda}^{\otimes n}}^2\\
&=2-\tau_{\lambda}^{\otimes n}\left [\begin{pmatrix}\lambda^{it} & 0\\ 0 & \lambda^{-it}\end{pmatrix}^{\otimes n}+\begin{pmatrix}\lambda^{-it} & 0 \\ 0 & \lambda^{it}\end{pmatrix}^{\otimes n}\right ]\\
&=2-\left (\frac{\lambda^{it+1}+\lambda^{-it}}{1+\lambda}\right )^{n}-\left (\frac{\lambda^{-it+1}+\lambda^{it}}{1+\lambda}\right )^{n}.
}
It follows that since 
\eqa{
\left |\frac{\lambda^{it+1}+\lambda^{-it}}{1+\lambda}\right |^2&=\left |\frac{\lambda+\lambda^{-2it}}{1+\lambda}\right |^2\\
&=\frac{\lambda^2+2\lambda \cos (2t\log \lambda)+1}{\lambda^2+2\lambda+1},
}
the second term tends to zero as $n\to \infty$ whenever $|t|$ is small but nonzero, say $0<|t|<\pi/(6|\log \lambda|)$. The same happens for the third term, and we see that $\lim_{n\to \omega}\|\sigma_t^{\varphi}(x_n)-x_n\|_{\varphi}=\sqrt{2}$ for small enough $|t|\neq 0$. This shows that 
\[\lim_{t\to 0}\lim_{n\to \omega}\|\sigma_t^{\varphi}(x_n)-x_n\|_{\varphi}=\sqrt{2}\neq 0.\]
\end{example}

We state few immediate useful consequences. Similar results to (2) in the next Corollary is obtained by Golodets  \cite{Golodets} by a different proof.
\begin{corollary}\label{cor: some useful conseuences of main theorem}Let $(M_n,\varphi_n)_n$ be a sequence of pairs of $\sigma$-finite von Neumann algebras and normal faithful states. 
Let $(x_n)_n\in \mathcal{M}^{\omega}(M_n,\varphi_n)$ and put $\varphi^{\omega}=(\varphi_n)^{\omega}$.  
\begin{list}{}{}
\item[{\rm{(1)}}] $\Delta_{\varphi^{\omega}}^{it}(x_n\xi_{\varphi_n})_{\omega}=(\Delta_{\varphi_n}^{it}x_n\xi_{\varphi_n})_{\omega}$ for all $t\in \mathbb{R}$.
\item[{\rm{(2)}}] $\Delta_{\varphi^{\omega}}^{\frac{1}{2}}(x_n\xi_{\varphi_n})_{\omega}=(\Delta_{\varphi_n}^{\frac{1}
{2}}x_n\xi_{\varphi_n})_{\omega}$.
\item[{\rm{(3)}}] If $M_n=M, \varphi_n=\varphi\ (n\in \mathbb{N})$ for a fixed $M$ and $\varphi$, then 
$\sigma(\Delta_{\varphi^{\omega}})=\sigma(\Delta_{\varphi})$.
\end{list}
\end{corollary}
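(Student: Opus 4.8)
The plan is to read all three statements off the apparatus already assembled in the proof of Theorem~\ref{thm: ultrapower of modular automorphism}, keeping its identifications throughout: $N=\prod^{\omega}M_n$, $q=pJ_{\omega}pJ_{\omega}$, $\xi_{\omega}=(\xi_{\varphi_n})_{\omega}$, and $M^{\omega}\cong qNq$, so that by Corollary~\ref{prop: wM^{omega}w^*=qNq and ww^*=q} one has $L^2(M^{\omega},\varphi^{\omega})\cong qH_{\omega}$, $\Delta_{\varphi^{\omega}}\cong\Delta_{\tilde{\varphi}_{\omega}}$ and $J_{\varphi^{\omega}}\cong qJ_{\omega}q$. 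The one preliminary fact I will reuse is that for $(x_n)_n\in\mathcal{M}^{\omega}$ the vector $(x_n\xi_{\varphi_n})_{\omega}=(x_n)_{\omega}\xi_{\omega}$ lies in $qH_{\omega}$: indeed $(x_n)_{\omega}$ commutes with $q$ by Lemma~\ref{lem: is in normalizer iff commutes with q}, and $q\xi_{\omega}=\xi_{\omega}$ (both $p$ and $J_{\omega}pJ_{\omega}$ fix $\xi_{\omega}$), so $q(x_n)_{\omega}\xi_{\omega}=(x_n)_{\omega}q\xi_{\omega}=(x_n)_{\omega}\xi_{\omega}$. For \textbf{(1)}, since $(x_n\xi_{\varphi_n})_{\omega}\in qH_{\omega}$ I apply Eq.~(\ref{eq: UP of Delta^it}), which reads $\Delta_{\tilde{\varphi}_{\omega}}^{it}=(\Delta_{\varphi_n}^{it})^{\omega}|_{qH_{\omega}}$; as each $\Delta_{\varphi_n}^{it}$ is unitary, $(\Delta_{\varphi_n}^{it})^{\omega}=\pi_{\omega}((\Delta_{\varphi_n}^{it})_{\omega})$ acts coordinatewise, giving $(\Delta_{\varphi_n}^{it})^{\omega}(x_n\xi_{\varphi_n})_{\omega}=(\Delta_{\varphi_n}^{it}x_n\xi_{\varphi_n})_{\omega}$, which is exactly the claim.

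For \textbf{(2)} I use $\Delta^{1/2}=JS$. The vector $(x_n\xi_{\varphi_n})_{\omega}=x\xi_{\omega}$ with $x\in qNq$ lies in $\dom{S_{\tilde{\varphi}_{\omega}}}=\dom{\Delta_{\tilde{\varphi}_{\omega}}^{1/2}}$, and the computation already carried out inside the proof of Theorem~\ref{thm: ultrapower of modular automorphism} gives $S_{\tilde{\varphi}_{\omega}}(x_n\xi_{\varphi_n})_{\omega}=(x_n^*\xi_{\varphi_n})_{\omega}$. Applying $J_{\tilde{\varphi}_{\omega}}=qJ_{\omega}q$, and using that $J_{\omega}$ commutes with $q$ (from $J_{\omega}NJ_{\omega}=N'$ and $[p,J_{\omega}pJ_{\omega}]=0$) together with $(x_n^*)_{\omega}\xi_{\omega}\in qH_{\omega}$, I obtain $\Delta_{\tilde{\varphi}_{\omega}}^{1/2}(x_n\xi_{\varphi_n})_{\omega}=J_{\omega}(x_n^*)_{\omega}\xi_{\omega}=(J_{\varphi_n}x_n^*\xi_{\varphi_n})_{\omega}=(\Delta_{\varphi_n}^{1/2}x_n\xi_{\varphi_n})_{\omega}$, the last equality being $J_{\varphi_n}x_n^*\xi_{\varphi_n}=J_{\varphi_n}S_{\varphi_n}x_n\xi_{\varphi_n}=\Delta_{\varphi_n}^{1/2}x_n\xi_{\varphi_n}$. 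The point requiring care is that the right-hand side is a genuine ultraproduct vector; this is guaranteed by the bound $\|\Delta_{\varphi_n}^{1/2}x_n\xi_{\varphi_n}\|=\|x_n^*\xi_{\varphi_n}\|\le\|x_n\|$.

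For \textbf{(3)}, with $M_n=M$ and $\varphi_n=\varphi$, I prove the two inclusions separately. The inclusion $\sigma(\Delta_{\varphi^{\omega}})\subset\sigma(\Delta_{\varphi})$ comes from Eq.~(\ref{eq: UP of resolvent and resolvent of UP}): with $a=(1+\Delta_{\varphi})^{-1}$ one has $(1+\Delta_{\varphi^{\omega}})^{-1}=a_{\omega}|_{qH_{\omega}}$, where $a_{\omega}=\pi_{\omega}((a)_{\omega})$ satisfies $\sigma(a_{\omega})=\sigma(a)$ because $\pi_{\omega}$ is an injective $*$-homomorphism and the sequence is constant; since $q$ commutes with $a_{\omega}$, the subspace $qH_{\omega}$ reduces $a_{\omega}$, so $\sigma(a_{\omega}|_{qH_{\omega}})\subset\sigma(a_{\omega})$, and the homeomorphism $s\mapsto(1+s)^{-1}$ translates this into the stated inclusion of spectra. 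For the reverse inclusion $\sigma(\Delta_{\varphi})\subset\sigma(\Delta_{\varphi^{\omega}})$ I invoke Part~(1) in the constant case: $\Delta_{\varphi^{\omega}}^{it}$ sends $(x)^{\omega}\xi_{\varphi^{\omega}}$ to $(\sigma_t^{\varphi}(x))^{\omega}\xi_{\varphi^{\omega}}$, so the closed subspace $K:=\overline{\{(x)^{\omega}\xi_{\varphi^{\omega}};\,x\in M\}}\cong L^2(M,\varphi)$ is invariant under the unitary group $\{\Delta_{\varphi^{\omega}}^{it}\}_{t}$, hence reducing for $\Delta_{\varphi^{\omega}}$, and $\Delta_{\varphi^{\omega}}|_K$ is unitarily equivalent to $\Delta_{\varphi}$; therefore $\sigma(\Delta_{\varphi})=\sigma(\Delta_{\varphi^{\omega}}|_K)\subset\sigma(\Delta_{\varphi^{\omega}})$.

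I expect Part~(1) and the reverse inclusion in Part~(3) to go through immediately, the latter handling the endpoint $0$ automatically since it uses a unitary equivalence on a reducing subspace. The two places deserving attention are the domain bookkeeping in Part~(2)—checking that $(x_n\xi_{\varphi_n})_{\omega}$ lies in $\dom{\Delta_{\tilde{\varphi}_{\omega}}^{1/2}}$ and that the target sequence is bounded—and the endpoint $0$ in the \emph{forward} inclusion of Part~(3), where one must track the behaviour of $s\mapsto(1+s)^{-1}$ at the top of the spectrum; here the injectivity of $\Delta_{\varphi}$ (no eigenvalue $0$, hence $0$ never isolated in $\sigma(\Delta_{\varphi})$) keeps the correspondence between the two spectra exact.
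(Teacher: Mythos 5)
Your proof is correct and follows essentially the same route as the paper: all three parts are read off the identifications established in the proof of Theorem \ref{thm: ultrapower of modular automorphism} --- Eq.~(\ref{eq: UP of Delta^it}) for (1), the computation of $S_{\tilde{\varphi}_{\omega}}$ on $qNq\,\xi_{\omega}$ (equivalently the graph identity for $S_{\varphi^{\omega}}$) for (2), and Eq.~(\ref{eq: UP of resolvent and resolvent of UP}) together with the embedding $L^2(M,\varphi)\subset L^2(M^{\omega},\varphi^{\omega})$ for (3). Your more detailed treatment of the reverse inclusion in (3), via part (1) and a reducing-subspace argument, is just an expansion of the paper's one-line assertion that $\Delta_{\varphi^{\omega}}|_{L^2(M,\varphi)}=\Delta_{\varphi}$.
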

\begin{proof}
By Theorem \ref{thm: ultrapower of modular automorphism}, we have 
\[\Delta_{\varphi^{\omega}}^{it}(x_n)^{\omega}\xi_{\varphi^{\omega}}=\sigma_t^{\varphi^{\omega}}((x_n)^{\omega})\xi_{\varphi^{\omega}}=(\sigma_t^{\varphi_n}(x_n))^{\omega}\xi_{\varphi^{\omega}}=(\Delta_{\varphi_n}^{it}x_n\xi_{\varphi_n})_{\omega}.\]
Therefore (1) follows. For (2), by the proof of Theorem \ref{thm: ultrapower of modular automorphism}, we have $G(S_{\varphi^{\omega}})=(G(S_{\varphi_n}))_{\omega}\cap (qH_{\omega}\oplus_{\mathbb{R}} qH_{\omega})$. This implies that
\[\Delta_{\varphi^{\omega}}^{\frac{1}{2}}(x_n\xi_{\varphi})_{\omega}=J_{\varphi^{\omega}}S_{\varphi^{\omega}}(x_n\xi_{\varphi})_{\omega}=J_{\omega}(S_{\varphi_n}x_n\xi_{\varphi})_{\omega}=(\Delta_{\varphi_n}^{\frac{1}{2}}x_n\xi_{\varphi_n})_{\omega}.\]
To prove (3), note that $\Delta_{\varphi^{\omega}}|_{L^2(M,\varphi)}=\Delta_{\varphi}$, so $\sigma(\Delta_{\varphi})\subset \sigma (\Delta_{\varphi^{\omega}})$ and $\sigma((1+\Delta_{\varphi})^{-1})\subset \sigma((1+\Delta_{\varphi^{\omega}})^{-1}))$. On the other hand, by Eq. (\ref{eq: UP of resolvent and resolvent of UP}), we have 
\[\sigma((1+\Delta_{\varphi^{\omega}})^{-1})\subset \sigma(((1+\Delta_{\varphi})^{-1})^{\omega})=\sigma((1+\Delta_{\varphi})^{-1}),\]
because $(1+\Delta_{\varphi})^{-1}$ is bounded and $\sigma(a^{\omega})=\sigma(a)$ holds for a bounded operator $a$. Therefore 
$\sigma((1+\Delta_{\varphi^{\omega}})^{-1})=\sigma((1+\Delta_{\varphi})^{-1})$, whence $\sigma(\Delta_{\varphi^{\omega}})=\sigma(\Delta_{\varphi})$ holds.
\end{proof}

Therefore $\Delta_{\varphi^{\omega}}$ behaves like the ultrapower of $\Delta_{\varphi}$. Let use remark a subtle difference between the ultrapower of bounded operators and $\Delta_{\varphi^{\omega}}$. It is easy to see that for a bounded self-adjoint operator $a$, $\sigma(a^{\omega})=\sigma_p(a^{\omega})$ holds. However, the analogous result for $\Delta_{\varphi^{\omega}}$ does not hold.
\begin{proposition}\label{prop: point spectrum is smaller than the spectrum for UP state}
Let $M$ be a type {\rm{II}}$_1$ factor. There exists $\varphi\in S_{\rm{nf}}(M)$ for which $\sigma_p(\Delta_{\varphi^{\omega}})\subsetneq \sigma(\Delta_{\varphi^{\omega}})\setminus \{0\}$ holds.
\end{proposition}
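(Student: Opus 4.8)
The plan is to realize $\varphi$ as a density state on $M$ and to locate the ``missing'' spectral value at the very top of the spectrum of $\Delta_\varphi$. Since $M$ is a type II$_1$ factor with trace $\tau$, it contains a diffuse self-adjoint element, so I would fix $h\in M$ with $0<a\le h\le b$, $\sigma(h)=[a,b]$, $\tau(h)=1$, and with \emph{non-atomic} spectral distribution $\mu_h=\tau\circ e_h$, and put $\varphi:=\tau(h\,\cdot)\in S_{\rm nf}(M)$. Because $\tau$ is tracial, $\sigma_t^{\varphi}={\rm Ad}(h^{it})$ and, on $L^2(M,\tau)$, $\Delta_{\varphi}=L_hR_h^{-1}$, whose spectrum is $\sigma(h)\cdot\sigma(h)^{-1}=[a/b,b/a]$. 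In particular $\lambda_0:=b/a=\max\sigma(\Delta_{\varphi})$ lies in $\sigma(\Delta_{\varphi})$, hence by Corollary \ref{cor: some useful conseuences of main theorem}(3) also in $\sigma(\Delta_{\varphi^{\omega}})$. The entire content of the proof is then to show that this specific $\lambda_0$ is \emph{not} an eigenvalue of $\Delta_{\varphi^{\omega}}$.

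Next I would set up a convenient model for the modular data of $\varphi^{\omega}$. As $M$ is II$_1$, the factor $M^{\omega}$ is again II$_1$, with trace $\tau^{\omega}((x_n)^{\omega})=\lim_{n\to\omega}\tau(x_n)$, and a direct check shows $\varphi^{\omega}=\tau^{\omega}(D\,\cdot)$ where $D:=(h)^{\omega}$ is the diagonal image of $h$. By Theorem \ref{thm: ultrapower of modular automorphism}, $\sigma_t^{\varphi^{\omega}}={\rm Ad}(D^{it})$, so in the tracial standard form $L^2(M^{\omega},\tau^{\omega})$ one has $\Delta_{\varphi^{\omega}}=L_DR_D^{-1}$. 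The decisive point is that all spectral data of $D$ must be read inside $M^{\omega}$: although $(h)^{\omega}$ does have $a$ as an eigenvalue when regarded as a bounded operator on the ambient ultrapower space $H_{\omega}$ (this is exactly the phenomenon $\sigma(a^{\omega})=\sigma_p(a^{\omega})$ for bounded operators recalled before the statement), as an \emph{element of the finite factor} $M^{\omega}$ its $\tau^{\omega}$-distribution equals $\mu_h$ and is therefore non-atomic; consequently the spectral projection $e_D(\{a\})$, computed in $M^{\omega}$, vanishes, since $\tau^{\omega}(e_D(\{a\}))=\mu_h(\{a\})=0$ and $\tau^{\omega}$ is faithful.

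Granting this, the exclusion of $\lambda_0$ is a short computation that I would carry out as follows. Suppose, for contradiction, that $\Delta_{\varphi^{\omega}}\eta=\lambda_0\eta$ for some nonzero $\eta\in L^2(M^{\omega},\tau^{\omega})$. Then $D\eta=\lambda_0\eta D$, whence $D^n\eta=\lambda_0^n\eta D^n$ for all $n$ and, by bounded Borel functional calculus, $f(D)\eta=\eta\,f(\lambda_0 D)$ for every bounded Borel function $f$. Choosing $f=\mathbf 1_{[a,b]}$ gives $f(D)=1$ (because $\sigma(D)\subseteq[a,b]$), while $f(\lambda_0 D)=e_D([a/\lambda_0,b/\lambda_0])=e_D([a^2/b,a])=e_D(\{a\})=0$, since $[a^2/b,a]\cap[a,b]=\{a\}$ and $D$ has no atom at $a$ in $M^{\omega}$. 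Hence $\eta=f(D)\eta=\eta\,f(\lambda_0 D)=0$, a contradiction. Therefore $\lambda_0\notin\sigma_p(\Delta_{\varphi^{\omega}})$, and as $\lambda_0\in\sigma(\Delta_{\varphi^{\omega}})\setminus\{0\}$ we conclude $\sigma_p(\Delta_{\varphi^{\omega}})\subsetneq\sigma(\Delta_{\varphi^{\omega}})\setminus\{0\}$.

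I expect the main obstacle to be precisely the subtlety flagged above, and essentially nothing else: the naive expectation $\sigma_p(\Delta_{\varphi^{\omega}})=\sigma(\Delta_{\varphi^{\omega}})\setminus\{0\}$, modelled on the bounded case, is wrong exactly because $\Delta_{\varphi^{\omega}}$ is not the ambient ultrapower $(\Delta_{\varphi})^{\omega}$ on $H_{\omega}$ but its compression to $L^2(M^{\omega},\tau^{\omega})\cong qH_{\omega}$, so the spectral calculus for $D$ must be performed inside the finite algebra $M^{\omega}$, where the non-atomicity of $\mu_h$ is preserved. The same argument, applied with $\lambda_0=a/b$, shows that the bottom of the spectrum is likewise excluded; indeed the endpoints $b/a$ and $a/b$ are the natural ``continuous-spectrum'' values witnessing the strict inclusion.
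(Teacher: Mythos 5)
Your argument is correct, but the mechanism by which you exclude an eigenvalue is genuinely different from the paper's. Both proofs share the same skeleton: take $\varphi=\tau(h\,\cdot)$ for a positive invertible $h$ with prescribed spectral distribution, note that the diagonal image $D=(h)^{\omega}$ has the same $\tau^{\omega}$-distribution as $h$ (equal moments, compact support, $\tau^{\omega}$ faithful), and use Corollary \ref{cor: some useful conseuences of main theorem} (3) to get $\sigma(\Delta_{\varphi^{\omega}})=\sigma(\Delta_{\varphi})$. The paper, however, takes $h$ with a \emph{mixed} distribution (absolutely continuous on $[\frac12,1]$, purely atomic on $(1,2]$) and excludes every eigenvalue in $[\frac14,\frac12]\cup[2,4]$: an eigenvalue $\lambda$ yields, via Takesaki's theorem \cite{Takesaki3}, a partial isometry $u\in M^{\omega}$ with $u\varphi^{\omega}=\lambda\varphi^{\omega}u$, hence $u D=\lambda D u$, so that $D$ restricted to $u^*uL^2$ is unitarily equivalent to $\lambda D$ restricted to $uu^*L^2$ --- impossible, since on the overlap one distribution is discrete and the other absolutely continuous. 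You instead take $h$ with atomless distribution and exclude only the spectral endpoints $b/a$ and $a/b$, working directly with the eigenvector: from $L_D\eta=\lambda_0R_D\eta$ and the joint spectral measure of the commuting operators $L_D$, $R_D$ you get $f(D)\eta=\eta\,f(\lambda_0D)$ for all bounded Borel $f$, and with $f=1_{[a,b]}$ the scaled window $[a^2/b,a]$ meets $\sigma(D)$ only in the $\mu_D$-null set $\{a\}$, forcing $\eta=0$. Your route is more elementary --- it needs neither Takesaki's structure theorem nor the mixed-measure construction --- at the price of excluding only the endpoints, which is all the proposition requires; the paper's method rules out whole intervals of potential eigenvalues. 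One point you should not leave implicit: the equality $\sigma(\Delta_{\varphi})=\sigma(h)\cdot\sigma(h)^{-1}=[a/b,b/a]$ is exactly where factoriality of $M$ enters (for abelian $M$ one has $L_hR_h^{-1}=1$, so the claim is false without it); the paper justifies the analogous equality via ${\rm C}^*(h,J_{\tau}hJ_{\tau})\cong{\rm C}^*(h)\otimes{\rm C}^*(J_{\tau}hJ_{\tau})$, and for your purposes it suffices to produce approximate eigenvectors from nonzero elements of $e_h([b-\tfrac1n,b])\,M\,e_h([a,a+\tfrac1n])$, which are nonzero precisely because $M$ is a factor.
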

\begin{proof}
Let $\tau$ be the unique tracial state on $M$ and consider the standard representation of $M$. 
Let $h\in M_+$ be such that $\sigma(h)=[\frac{1}{2},2]$ and that the distribution measure $\mu_h$ corresponding to $h$ with respect to $\tau$ has absolutely continuous spectra in $[\frac{1}{2},1]$ and purely atomic spectra in $(1,2]$. Here, $\mu_h$ is determined by moments
\[\int_{\mathbb{R}}t^pd\mu_h(t)=\tau(h^p),\ \ (p\in \mathbb{N}).\]
Define $\varphi \in S_{\rm{nf}}(M)$ by $\varphi(x):=\tau(hx)/\tau(h), x\in M$. Then $\Delta_{\varphi}=h(JhJ)^{-1}$. 
Since $M$ is a factor, $\sum_{i=1}^nx_iy_i\mapsto \sum_{i=1}^nx_i\otimes y_i\ (x_i\in M, y_i\in M')$ induces a *-isomorphism between the *-algebra generated by $M$ and $M'$ and the algebraic tensor product $M\odot M'$. Therefore C$^*(M,M')\cong M\otimes_{\alpha}M'$ for a C$^*$-tensor norm $\|\cdot \|_{\alpha}$, and since C$^*(h)$, C$^*(JhJ)$ are abelian hence nuclear, we have ${\rm{C}}^*(h,JhJ)\cong {\rm{C}}^*(h)\otimes {\rm{C}}^*(JhJ)$. Consequently, it holds that (cf. Corollary \ref{cor: some useful conseuences of main theorem} (3))
\[\sigma(\Delta_{\varphi^{\omega}})=\sigma(\Delta_{\varphi})=\left \{\frac{s}{t}; s\in \sigma(h), t\in \sigma(JhJ)\right \}=\left [\textstyle \frac{1}{4},4\right ].\]
Let $\widetilde{h}$ be the image of $h$ under the canonical embedding $M\subset M^{\omega}$. Let $\mu_{\widetilde{h}}$ be the distribution measure of $\widetilde{h}$ with respect to $\tau^{\omega}$. Since $\tau^{\omega}(\widetilde{h}^p)=\tau(h^p)$ holds for all $p\in \mathbb{N}$ and both $h, \widetilde{h}$ are bounded, $\mu_h=\mu_{\widetilde{h}}$ holds. Now we show that  
\[\sigma_p(\Delta_{\varphi^{\omega}})\cap \left ([\textstyle \frac{1}{4},\frac{1}{2}]\cup [2,4]\right )=\emptyset.\]
Suppose there were $\lambda \in [2,4]\cap \sigma_p(\Delta_{\varphi^{\omega}})$. Then by Takesaki's result \cite{Takesaki3}, there exists $u\in M^{\omega}$ such that 
$u\varphi^{\omega}=\lambda \varphi^{\omega}u$ holds. By taking the polar decomposition, we may assume that $u$ is a partial isometry. Since $\varphi^{\omega}=\tau^{\omega}(\widetilde{h}\cdot )$, this implies that $u\widetilde{h}=\lambda \widetilde{h}u$. Moreover, as $u^*u$ and $uu^*$ belong to $(M^{\omega})_{\varphi^{\omega}}$, they commute with $\widetilde{h}$. It follows that 
\[u(\widetilde{h}u^*u)u^*=(\lambda \widetilde{h})uu^*.\]
This shows that both $K=u^*uL^2(M^{\omega},\tau^{\omega})$ and $L=uu^*L^2(M^{\omega},\tau^{\omega})$ are $\widetilde{h}$-invariant subspaces, and $u$ induces an isometry of $K$ onto $L$. In particular, $\widetilde{h}|_K$ and $(\lambda \widetilde{h})|_L$ are unitarily equivalent operators, whence $\sigma(\widetilde{h}|_K)=\sigma(\lambda \widetilde{h}|_L)$ holds. On the other hand, we know that 
\[\sigma(\widetilde{h}|_K)\subset \textstyle [\frac{1}{2},2],\ \sigma(\lambda \widetilde{h}|_L)\subset [\frac{\lambda}{2},2\lambda].\]
Since $\lambda \in [2,4]$, this shows that $\sigma(\widetilde{h}|_K)=\sigma(\lambda \widetilde{h}|_L)\subset [\frac{\lambda}{2},2]$
However, $\mu_{\widetilde{h}|_K}$ restricted to $[1,2]$ is discrete, while $\mu_{\lambda \widetilde{h}|_L}$ restricted to $[\frac{\lambda}{2},2]\subset [1,2]$ is absolutely continuous, a contradiction. Therefore $\sigma_p(\Delta_{\varphi^{\omega}})\cap [2,4]=\emptyset$. $\sigma_p(\Delta_{\varphi^{\omega}})\cap [\frac{1}{4},\frac{1}{2}]=\emptyset$ can be shown similarly. This proves that $\sigma_p(\Delta_{\varphi^{\omega}})\subsetneq \sigma(\Delta_{\varphi^{\omega}})\setminus \{0\}$. 
\end{proof}
\begin{remark}
Proposition \ref{prop: point spectrum is smaller than the spectrum for UP state} states in particular that for $0<\lambda\in \sigma(\Delta_{\varphi})\setminus \sigma_p(\Delta_{\varphi^{\omega}})$, there is no bounded sequence $(x_n)_n$ of $M$ with $\|x_n\xi_{\varphi}\|=1\ (n\in \mathbb{N})$ satisfying 
\begin{equation}
\lim_{n\to \infty}\|\Delta_{\varphi}^{\frac{1}{2}}x_n\xi_{\varphi}-\lambda^{\frac{1}{2}}x_n\xi_{\varphi}\|=0.\label{eq: control of norm bound}
\end{equation}
For if there were such sequence, Corollary \ref{cor: some useful conseuences of main theorem} would imply that $(x_n)_n$ defines a nonzero element $(x_n)^{\omega}\in M^{\omega}$ satisfying $\Delta_{\varphi^{\omega}}^{\frac{1}{2}}(x_n)^{\omega}\xi_{\varphi^{\omega}}=\lambda^{\frac{1}{2}}(x_n)^{\omega}\xi_{\varphi^{\omega}}$, whence $\lambda \in \sigma_p(\Delta_{\varphi^{\omega}})$.  
On the other hand, as $M(\sigma^{\varphi},[\log \lambda-\frac{1}{n},\log \lambda+\frac{1}{n}])\neq \{0\}$ for each $n\in \mathbb{N}$, there exists a (necessarily unbounded) sequence $(x_n)_n\subset M$ with $\|x_n\xi_{\varphi}\|=1\ (n\in \mathbb{N})$ satisfying Eq. (\ref{eq: control of norm bound}). 
\end{remark}

Next, we show that elements of $\mathcal{M}^{\omega}$ are characterized by the spectral condition for $(\sigma^{\varphi_n})_n$. 
\begin{proposition}\label{prop: characterization of the normalizer}
Let $(M_n,\varphi_n)_n$ be a sequence of $\sigma$-finite von Neumann algebras and normal faithful states. Then for $(x_n)_n\in \ell^{\infty}(\mathbb{N},M_n)$, the following conditions are equivalent.
\begin{itemize}
\item[{\rm{(1)}}] $(x_n)_n\in \mathcal{M}^{\omega}(M_n,\varphi_n)$. 
\item[{\rm{(2)}}] For every $\varepsilon>0$, there exists $a>0$ and $(y_n)_n\in \mathcal{M}^{\omega}(M_n,\varphi_n)$ such that 
\eqa{
&{\rm{(i)}}\ \lim_{n\to \omega}\|x_n-y_n\|_{\varphi}^{\sharp}<\varepsilon,\\
&{\rm{(ii)}}\ y_n \in M_n(\sigma^{\varphi_n}, [-a,a]),\ \ n\in \mathbb{N}.
}
In this case, $(y_n)_n$ can be chosen to satisfy $\|(y_n)^{\omega}\|\le \|(x_n)^{\omega}\|$.
\end{itemize}
\end{proposition}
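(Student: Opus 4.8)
The plan is to prove the two implications separately, with Theorem \ref{thm: ultrapower of modular automorphism} (continuity of the ultraproduct modular flow) driving the nontrivial direction. For the soft implication (2)$\Rightarrow$(1) it suffices to show that $(x_n)_n$ normalizes $\mathcal{I}_\omega=\mathcal{L}_\omega\cap\mathcal{L}_\omega^*$ (Lemma \ref{lem: easy lemma}). Fix $(z_n)_n\in\mathcal{I}_\omega$ with $\sup_n\|z_n\|\le C$ and, given $\varepsilon>0$, write $x_n=y_n+d_n$ from (2), where $(y_n)_n\in\mathcal{M}^\omega$ and $\lim_{n\to\omega}\|d_n\|_{\varphi_n}^\sharp<\varepsilon$. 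Then $(y_nz_n)_n,(z_ny_n)_n\in\mathcal{I}_\omega$, so it remains to control the tails in $\|\cdot\|_{\varphi_n}^\sharp$. The only delicate point is the ``wrong-side'' term $\varphi_n(d_nz_nz_n^*d_n^*)$ arising from non-traciality of $\varphi_n$; here I use the operator inequality $d_n(z_nz_n^*)d_n^*\le C^2d_nd_n^*$ (valid since $z_nz_n^*\le C^2$) to bound it by $C^2(\|d_n\|_{\varphi_n}^\sharp)^2$, and symmetrically for $z_nd_n$. The remaining terms are dominated by $\|d_n\|^2\varphi_n(z_n^*z_n)$ or $\|d_n\|^2\varphi_n(z_nz_n^*)$, which vanish along $\omega$. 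Letting $\varepsilon\downarrow 0$ gives $\lim_{n\to\omega}\|x_nz_n\|_{\varphi_n}^\sharp=\lim_{n\to\omega}\|z_nx_n\|_{\varphi_n}^\sharp=0$, hence $(x_n)_n\in\mathcal{M}^\omega$.

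For (1)$\Rightarrow$(2) I set $x=(x_n)^\omega\in M^\omega$ and use the identification $L^2(M^\omega,\varphi^\omega)\cong qH_\omega$ with $x\xi_{\varphi^\omega}=(x_n\xi_{\varphi_n})_\omega$ from Theorem \ref{UW2.1.3} and Corollary \ref{prop: wM^{omega}w^*=qNq and ww^*=q}. By Theorem \ref{thm: ultrapower of modular automorphism} the flow $\sigma_t^{\varphi^\omega}$ is $\sigma$-weakly continuous on $M^\omega$, so for $f\in L^1(\mathbb{R})$ the smoothed element $\sigma_f^{\varphi^\omega}(x)=\int_{\mathbb{R}}f(t)\sigma_t^{\varphi^\omega}(x)\,dt$ is a genuine element of $M^\omega$ with $\mathrm{Sp}_{\sigma^{\varphi^\omega}}(\sigma_f^{\varphi^\omega}(x))\subset\mathrm{supp}\,\hat{f}$. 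I take $f=f_a$ with Fej\'er transform $\hat{f}_a(\lambda)=(1-|\lambda|/a)_+$ supported in $[-a,a]$, so that $f_a\ge 0$ and $\|f_a\|_1=\hat{f}_a(0)=1$; then $y:=\sigma_{f_a}^{\varphi^\omega}(x)$ satisfies $\|y\|\le\|x\|$ and $y\in M^\omega(\sigma^{\varphi^\omega},[-a,a])$. Since $\sigma_{f_a}^{\varphi^\omega}(x)\xi_{\varphi^\omega}=\hat{f}_a(\log\Delta_{\varphi^\omega})x\xi_{\varphi^\omega}$ and $\hat{f}_a\to 1$ pointwise and boundedly as $a\to\infty$, both $\|x-y\|_{\varphi^\omega}$ and, using that $f_a$ is real so $y^*=\sigma_{f_a}^{\varphi^\omega}(x^*)$, $\|x^*-y^*\|_{\varphi^\omega}$ tend to $0$; hence $a$ may be chosen with $\|x-y\|_{\varphi^\omega}^\sharp<\varepsilon$.

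The crux is producing the correct fibrewise representative $y_n:=\sigma_{f_a}^{\varphi_n}(x_n)$, which automatically satisfies $y_n\in M_n(\sigma^{\varphi_n},[-a,a])$, giving (ii). To show $(y_n)^\omega=y$ I do not interchange the Bochner integral with $\lim_{n\to\omega}$; instead I invoke Lemma \ref{Uffe: g(a_n)_omega} with the bounded continuous function $g(s):=\hat{f}_a(\log(s^{-1}-1))$ on $(0,1)$. Since $g\bigl((1+\Delta_{\varphi_n})^{-1}\bigr)=\hat{f}_a(\log\Delta_{\varphi_n})$ and, by Eq. (\ref{eq: UP of resolvent and resolvent of UP}), $g(a_\omega|_{qH_\omega})=\hat{f}_a(\log\Delta_{\varphi^\omega})$ for $a_\omega=((1+\Delta_{\varphi_n})^{-1})_\omega$ on the invariant subspace $qH_\omega$ (the kernel hypotheses of the lemma being exactly those verified in the proof of Theorem \ref{thm: ultrapower of modular automorphism}), the lemma gives $(y_n\xi_{\varphi_n})_\omega=\hat{f}_a(\log\Delta_{\varphi^\omega})x\xi_{\varphi^\omega}=y\xi_{\varphi^\omega}$, and likewise $(y_n^*\xi_{\varphi_n})_\omega=y^*\xi_{\varphi^\omega}$. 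Writing $y=(z_n)^\omega$ for a representative $(z_n)_n\in\mathcal{M}^\omega$, these two vector identities say precisely that $(y_n-z_n)_n\in\mathcal{L}_\omega$ and $(y_n-z_n)_n\in\mathcal{L}_\omega^*$, so $(y_n-z_n)_n\in\mathcal{I}_\omega$ and therefore $(y_n)_n\in\mathcal{M}^\omega$ with $(y_n)^\omega=y$. Finally $\lim_{n\to\omega}\|x_n-y_n\|_{\varphi_n}^\sharp=\|x-y\|_{\varphi^\omega}^\sharp<\varepsilon$ and $\|(y_n)^\omega\|=\|y\|\le\|x\|=\|(x_n)^\omega\|$ by Proposition \ref{UW1.2.2}, which yields (2) together with the final norm estimate.

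The main obstacle is exactly the identification $(y_n)^\omega=\sigma_{f_a}^{\varphi^\omega}(x)$. The naive route through Riemann sums fails, because $t\mapsto\Delta_{\varphi_n}^{it}x_n\xi_{\varphi_n}$ need not be equicontinuous in $n$, so the Bochner integral does not obviously commute with the ultrafilter limit. The resolution is to transfer the whole statement to the bounded continuous functional calculus of the single bounded operator $a_\omega=((1+\Delta_{\varphi_n})^{-1})_\omega$ and apply Lemma \ref{Uffe: g(a_n)_omega}, which is tailor-made for commuting a bounded continuous function with the ultraproduct on the invariant corner $qH_\omega$; everything else is then bookkeeping with $\mathcal{L}_\omega$, $\mathcal{L}_\omega^*$, and $\mathcal{I}_\omega$.
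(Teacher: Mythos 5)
Your proof is correct and follows essentially the same route as the paper's: for (1)$\Rightarrow$(2) you smooth with the Fej\'er kernel and identify $(\sigma_{F_a}^{\varphi_n}(x_n))^{\omega}$ with $\sigma_{F_a}^{\varphi^{\omega}}((x_n)^{\omega})$ by transferring bounded continuous functional calculus of the resolvents through Lemma \ref{Uffe: g(a_n)_{omega}}, which is exactly the mechanism behind the paper's Lemma \ref{lem: integration does not change normalizer}, and your (2)$\Rightarrow$(1) is the same perturbation estimate. The only minor deviations are that you prove just the needed case $f=F_a$ inline rather than invoking Lemma \ref{lem: integration does not change normalizer} for general $f\in L^1(\mathbb{R})$, and that you deduce $(y_n)_n\in\mathcal{M}^{\omega}(M_n,\varphi_n)$ from an $\mathcal{I}_{\omega}$-perturbation of a representative of $y$ instead of from Lemma \ref{lem: analiticity implies normalizer}.
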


We need preparations. Recall two summability kernels on $\mathbb{R}$.
\begin{definition} 
The {\it Fej\'er kernel} $F_a\colon \mathbb{R}\to \mathbb{R}\ (a>0)$ is defined by 
\[F_a(t):=\begin{cases}\dfrac{1-\cos (at)}{\pi at^2} & (t\neq 0)\\
\ \ \ a/2\pi & (t=0) \end{cases}.\]
Its Fourier transform is
\[\widehat{F_a}(\lambda)=\begin{cases}1-\dfrac{|\lambda|}{a} & (|\lambda|\le a)\\
 \ \ \ 0 & (|\lambda|>a)\end{cases}.\]
It holds that $0\le F_a$ and $\|F_a\|_1=\widehat{F}_a(0)=1$. Moreover, we have  
\[\lim_{a\to \infty}\int_{\mathbb{R}}F_a(s)\phi (s)ds=\phi (0),\ \ \ \lim_{a\to \infty}\|F_a*f-f\|_1=0,\]
for all continuous bounded function $\phi$ on $\mathbb{R}$ and $f\in L^1(\mathbb{R})$. 
The {\it de la Vall\'ee Poussin Kernel} $D_a\colon \mathbb{R}\to \mathbb{R}$ is given by
\[
D_a(t)=2F_{2a}(t)-F_a(t)=\begin{cases}
\dfrac{\cos (at)-\cos (2at)}{\pi at^2} & (t\neq 0)\\
\ \ \ \ \ 3a/2\pi & (t=0).
\end{cases}
\]
Its Fourier transform is
\[\widehat{D_a}(\lambda)=\begin{cases}
1 & (|\lambda|\le a)\\
2-\frac{|\lambda|}{a}  & (a\le |\lambda|\le 2a)\\
0 & (|\lambda|>2a)
\end{cases}.\]
\end{definition}
For the details of summability kernels, see e.g., \cite{Katznelson}. 
\begin{lemma}\label{lem: analiticity implies normalizer}
Let $(x_n)_n\in \ell^{\infty}(\mathbb{N},M_n)$. If there exists $a>0$ such that $x_n\in M_n(\sigma^{\varphi_n},[-a,a])$ holds for all $n\in \mathbb{N}$, then $(x_n)_n\in \mathcal{M}^{\omega}(M_n,\varphi_n)$.
\end{lemma}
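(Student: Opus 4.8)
The plan is to reduce the normalizer condition $(x_n)_n\in\mathcal{M}^{\omega}(M_n,\varphi_n)$ to a single norm estimate, whose only delicate point is a \emph{uniform} (in $n$) bound on the analytic continuation of the modular flow applied to $y_n=x_n$, and to extract that uniform bound from the de la Vall\'ee Poussin kernel $D_a$ just introduced.

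First I would set up the mechanism feeding $\mathcal{M}^{\omega}$. By Lemma \ref{lem: easy lemma}, $\mathcal{L}_{\omega}$ is a closed left ideal and $\mathcal{I}_{\omega}=\mathcal{L}_{\omega}\cap\mathcal{L}_{\omega}^{*}$. Hence, to show $(x_n)_n\in\mathcal{M}^{\omega}$ it suffices to verify, for every $(a_n)_n\in\mathcal{I}_{\omega}$, that $(x_na_n)_n$ and $(a_nx_n)_n$ lie again in $\mathcal{I}_{\omega}$. Since $\mathcal{L}_{\omega}$ is a left ideal containing the $*$-closed set $\mathcal{I}_{\omega}$, the ``left-ideal half'' of each is automatic: $(x_na_n)_n\in\mathcal{L}_{\omega}$ and $(x_n^{*}a_n^{*})_n\in\mathcal{L}_{\omega}$ (the latter saying $(a_nx_n)_n\in\mathcal{L}_{\omega}^{*}$). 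Everything therefore reduces to the claim: if $(b_n)_n\in\ell^{\infty}$ with $\lim_{n\to\omega}\|b_n\|_{\varphi_n}=0$, and $y_n\in M_n(\sigma^{\varphi_n},[-a,a])$ with $\sup_n\|y_n\|<\infty$, then $\lim_{n\to\omega}\|b_ny_n\|_{\varphi_n}=0$. Applying this with $(b_n,y_n)=(a_n^{*},x_n^{*})$ gives $(a_n^{*}x_n^{*})_n\in\mathcal{L}_{\omega}$, i.e.\ $(x_na_n)_n\in\mathcal{L}_{\omega}^{*}$, so $(x_na_n)_n\in\mathcal{I}_{\omega}$; applying it with $(b_n,y_n)=(a_n,x_n)$ gives $(a_nx_n)_n\in\mathcal{L}_{\omega}$, so $(a_nx_n)_n\in\mathcal{I}_{\omega}$. (Here $x_n^{*}\in M_n(\sigma^{\varphi_n},[-a,a])$ because $M(\sigma^{\varphi},E)^{*}=M(\sigma^{\varphi},-E)$ and $-[-a,a]=[-a,a]$.)

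For the claim I would use the KMS condition and Cauchy--Schwarz. Each $y_n$ is entire analytic for $\sigma^{\varphi_n}$, so the modular condition gives $\varphi_n(zy_n)=\varphi_n(\sigma_{-i}^{\varphi_n}(y_n)z)$ for all $z\in M_n$; taking $z=y_n^{*}b_n^{*}b_n$ yields
\[\|b_ny_n\|_{\varphi_n}^{2}=\varphi_n(y_n^{*}b_n^{*}b_ny_n)=\varphi_n(c_nb_n^{*}b_n),\qquad c_n:=\sigma_{-i}^{\varphi_n}(y_n)\,y_n^{*}.\]
Writing $\varphi_n(c_nb_n^{*}b_n)=\langle b_n\xi_{\varphi_n},\,b_nc_n^{*}\xi_{\varphi_n}\rangle$ and using Cauchy--Schwarz,
\[|\varphi_n(c_nb_n^{*}b_n)|\le\|b_n\xi_{\varphi_n}\|\,\|b_n\|\,\|c_n\|\le\|b_n\|_{\varphi_n}\,\|b_n\|\,\|\sigma_{-i}^{\varphi_n}(y_n)\|\,\|y_n\|.\]
Since $\|b_n\|_{\varphi_n}\to0$ along $\omega$ while $\|b_n\|$ and $\|y_n\|$ stay bounded, the claim follows \emph{provided} $\sup_n\|\sigma_{-i}^{\varphi_n}(y_n)\|<\infty$.

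That uniform bound is the heart of the matter, and is where $D_a$ enters. Because $\widehat{D_a}\equiv1$ on $[-a,a]\supset\mathrm{Sp}_{\sigma^{\varphi_n}}(y_n)$, the spectral calculus gives $\sigma_{D_a}^{\varphi_n}(y_n)=y_n$ for every $n$. I would then use that $D_a$, having Fourier transform supported in $[-2a,2a]$, extends to an entire function of exponential type with $|D_a(u+i\tau)|\le Ce^{2a|\tau|}(1+u^{2})^{-1}$; integrating the norm-continuous orbit $u\mapsto\sigma_u^{\varphi_n}(y_n)$ (isometric for real $u$) against the shifted kernel and continuing analytically in the time parameter yields
\[\sigma_{-i}^{\varphi_n}(y_n)=\int_{\mathbb{R}}D_a(u+i)\,\sigma_u^{\varphi_n}(y_n)\,du,\qquad\text{hence}\qquad\|\sigma_{-i}^{\varphi_n}(y_n)\|\le C_a\,\|y_n\|,\]
with $C_a:=\int_{\mathbb{R}}|D_a(u+i)|\,du<\infty$ depending only on $a$, not on $n$. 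This $n$-independent constant is exactly what the ultrafilter limit requires. The main obstacle is precisely this step: one must ensure that the analytic continuation of the modular automorphism on a spectral-subspace element is controlled by a constant independent of the algebra $M_n$ and the state $\varphi_n$, and the $n$-independence of $D_a$ delivers it. To avoid a boundary-value subtlety (that $\widehat{D_a}=1$ holds on $[-a,a]$ rather than on a neighborhood) I would, if needed, replace $D_a$ by $D_b$ for some $b>a$, which only enlarges the constant $C_a$.
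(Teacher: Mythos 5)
Your proposal is correct, and its engine is the same as the paper's: the reproducing identity $x_n=\sigma_{D_a}^{\varphi_n}(x_n)$ coming from $\widehat{D_a}\equiv 1$ on $[-a,a]$, which converts membership in a fixed compact spectral subspace into an entire extension $\sigma_z^{\varphi_n}(x_n)=\int_{\mathbb{R}}D_a(s-z)\,\sigma_s^{\varphi_n}(x_n)\,ds$ with a bound $\|\sigma_z^{\varphi_n}(x_n)\|\le C_{a,z}\|x_n\|$ depending only on $a$ and $z$ --- exactly the $n$-independence you correctly identify as the crux (the paper gets $C_{a,z}=2e^{2a|\mathrm{Im}(z)|}+e^{a|\mathrm{Im}(z)|}$ from the estimate $\int_{\mathbb{R}}|F_a(s+it)|\,ds\le e^{a|t|}$, rather than your direct decay estimate on $D_a$, but this is cosmetic). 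Where you diverge is the bookkeeping that turns this bound into the normalizer property. The paper stays at imaginary time $-i/2$: it writes $\|(x_ny_n)^*\|_{\varphi_n}=\|y_n^*J_{\varphi_n}\Delta_{\varphi_n}^{1/2}x_n\xi_{\varphi_n}\|$, identifies $\Delta_{\varphi_n}^{1/2}x_n\xi_{\varphi_n}=\sigma_{-i/2}^{\varphi_n}(x_n)\xi_{\varphi_n}$, and uses that $J_{\varphi_n}y_n^*J_{\varphi_n}\in M_n'$ commutes with $\sigma_{-i/2}^{\varphi_n}(x_n)$, so only $C_{a,-i/2}$ enters; you instead go to full imaginary time via the KMS identity and Cauchy--Schwarz, which costs the larger constant $C_{a,\pm i}$ but avoids handling $J_{\varphi_n}$ and the vector $\Delta_{\varphi_n}^{1/2}x_n\xi_{\varphi_n}$ explicitly. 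Both are perfectly adequate, and your explicit reduction through Lemma \ref{lem: easy lemma} (the automatic ``left-ideal halves'') is a cleaner presentation of what the paper leaves implicit. One caveat: with the paper's convention $\sigma_t^{\varphi}=\mathrm{Ad}\,\Delta_{\varphi}^{it}$, the KMS identity for an entire analytic element $y$ reads $\varphi(zy)=\varphi(\sigma_{+i}^{\varphi}(y)z)$ (equivalently $\varphi(yz)=\varphi(z\sigma_{-i}^{\varphi}(y))$), so your $c_n$ should be $\sigma_{+i}^{\varphi_n}(y_n)\,y_n^*$ rather than $\sigma_{-i}^{\varphi_n}(y_n)\,y_n^*$; this is harmless, since your kernel bound controls $\|\sigma_z^{\varphi_n}(y_n)\|$ at $z=+i$ just as well as at $z=-i$, but the sign should be fixed.
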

\begin{proof} The following argument goes back to \cite{Haagerup2}.  
We show that the map $t\mapsto \sigma_t^{\varphi_n}(x_n)$ is extended to an entire analytic $M_n$-valued function satisfying 
\[\|\sigma_z^{\varphi_n}(x_n)\|\le C_{a,z}\|x_n\|,\ \ \ \ \ \ z\in \mathbb{C},\]
where $C_{a,z}$ is a constant depending only on $a,z$.\ 
Since $x_n\in M_n(\sigma^{\varphi_n},[-a,a])$ and the de la Vall\'ee Poussin kernel satisfies $\widehat{D}_a=1$ on $[-a,a]$, we have 
$x_n=\sigma_{D_{a}}^{\varphi_n}(x_n)$. Therefore for $t\in \mathbb{R}$, we have
\eqa{
\sigma_t^{\varphi_n}(x_n)&=\int_{\mathbb{R}}D_{a}(s)\sigma_{t+s}^{\varphi_n}(x_n)ds\\
&=\int_{\mathbb{R}}D_{a}(s-t)\sigma_s^{\varphi_n}(x_n)ds.
}
By the explicit form, $D_{a}=2F_{2a}-F_a$ has an analytic continuation to $\mathbb{C}$. By (the proof of) \cite[Lemma 4.2]{Haagerup2}, we have \[\int_{\mathbb{R}}|F_a(s+it)|ds\le e^{a|t|}\ \ \ \ (t\in \mathbb{R}).\]  
Therefore for $z\in \mathbb{C}$, $s\mapsto  D_a(s-z)$ is in $L^1(\mathbb{R})$, and 
 $t\mapsto \sigma_t^{\varphi_n}(x_n)$ has an $M_n$-valued analytic extension: 
\[\sigma_z^{\varphi_n}(x_n)=\int_{\mathbb{R}}D_{a}(s-z)\sigma_s^{\varphi_n}(x_n)ds,\ \ \ \ \ \ z\in \mathbb{C}.\]
Then we have
\[
\|\sigma_z^{\varphi_n}(x_n)\|\le \int_{\mathbb{R}}|D_a(s-z)\||\sigma_s^{\varphi_n}(x_n)\|ds\le C_{a,z}\|x_n\|,\]
where $C_{a,z}:=2e^{2a|\text{Im}(z)|}+e^{a|\text{Im}z|}$. 
Let $(y_n)_n\in \mathcal{I}_{\omega}$. It follows that
\eqa{
\|(x_ny_n)^*\|_{\varphi_n}&=\|y_n^*J_{\varphi_n}\Delta_{\varphi_n}^{\frac{1}{2}}x_n\xi_{\varphi_n}\|\\
&=\|J_{\varphi_n}y_n^*J_{\varphi_n}\sigma_{-i/2}^{\varphi_n}(x_n)\xi_{\varphi_n}\|\\
&\le \|\sigma_{-i/2}^{\varphi_n}(x_n)\|\cdot \|J_{\varphi_n}y_n^*J_{\varphi_n}\xi_{\varphi_n}\|\\
&\le C_{a,-i/2}\|x_n\|\cdot \|y_n\|_{\varphi_n}\\
&\stackrel{n\to \omega}{\to}0.
}
Similarly, $\|y_nx_n\|_{\varphi}\to 0\ (n\to \omega)$. Hence $(x_n)_n\in \mathcal{M}^{\omega}$. 
\end{proof}
\begin{lemma}\label{lem: integration does not change normalizer}
Let $f\in L^1(\mathbb{R})$, and $(x_n)_n\in \mathcal{M}^{\omega}(M_n,\varphi_n)$. Then $(\sigma_f^{\varphi_n}(x_n))_n\in \mathcal{M}^{\omega}(M_n,\varphi_n)$ and $\sigma_f^{\varphi^{\omega}}((x_n)^{\omega})=(\sigma_f^{\varphi_n}(x_n))^{\omega}$ holds. 
\end{lemma}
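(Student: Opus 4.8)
The plan is to prove the two assertions separately, by different devices: first the membership $(\sigma_f^{\varphi_n}(x_n))_n\in\mathcal{M}^{\omega}(M_n,\varphi_n)$, and then the intertwining identity $\sigma_f^{\varphi^{\omega}}((x_n)^{\omega})=(\sigma_f^{\varphi_n}(x_n))^{\omega}$. For the membership I would first dispose of the band-limited case. If $\widehat f$ is supported in a compact interval $[-a,a]$, then by the standard Arveson--Connes relation $\mathrm{Sp}_{\sigma^{\varphi_n}}(\sigma_f^{\varphi_n}(x_n))\subset\mathrm{supp}\,\widehat f\subset[-a,a]$, so $\sigma_f^{\varphi_n}(x_n)\in M_n(\sigma^{\varphi_n},[-a,a])$ for every $n$ with the \emph{same} $a$; Lemma \ref{lem: analiticity implies normalizer} then gives $(\sigma_f^{\varphi_n}(x_n))_n\in\mathcal{M}^{\omega}$. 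For general $f\in L^1(\mathbb{R})$ I would approximate by $f_a:=F_a*f$ using the Fej\'er kernel, whose transform $\widehat{F_a}$ is supported in $[-a,a]$, so $\widehat{f_a}=\widehat{F_a}\,\widehat f$ is band-limited while $\|f_a-f\|_1\to 0$. Since $\|\sigma_{f_a}^{\varphi_n}(x_n)-\sigma_f^{\varphi_n}(x_n)\|\le\|f_a-f\|_1\,\|x_n\|\le\|f_a-f\|_1\,\|(x_n)_n\|_{\infty}$, the sequences $(\sigma_{f_a}^{\varphi_n}(x_n))_n$ converge to $(\sigma_f^{\varphi_n}(x_n))_n$ in the supremum norm of $\ell^{\infty}$, and as $\mathcal{M}^{\omega}$ is norm-closed (being a C$^*$-algebra) the membership follows.

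For the identity I would work in the standard form, using the isometry $w\colon L^2(M^{\omega},\varphi^{\omega})\to (H_n)_{\omega}$ of Theorem \ref{UW2.1.3} and the fact that $\xi_{\varphi^{\omega}}$ is separating for $M^{\omega}$: it suffices to check that the two operators agree after applying them to $\xi_{\varphi^{\omega}}$ and then $w$. Since $w$ intertwines $M^{\omega}$ with $qNq$, carries $\xi_{\varphi^{\omega}}$ to $\xi_{\omega}=(\xi_{\varphi_n})_{\omega}$, and carries $\Delta_{\varphi^{\omega}}$ to $\Delta_{\tilde{\varphi}_{\omega}}$ on $qH_{\omega}$, and since $\Delta_{\tilde{\varphi}_{\omega}}^{-it}\xi_{\omega}=\xi_{\omega}$, smearing the modular flow against $f$ and applying to the cyclic vector collapses to one-sided functional calculus: with $v:=(x_n\xi_{\varphi_n})_{\omega}\in qH_{\omega}$ one gets $w\,\sigma_f^{\varphi^{\omega}}((x_n)^{\omega})\xi_{\varphi^{\omega}}=\int f(t)\Delta_{\tilde{\varphi}_{\omega}}^{it}v\,dt=\widehat f(\log\Delta_{\tilde{\varphi}_{\omega}})\,v$, using $\int f(t)\Delta^{it}\,dt=\widehat f(\log\Delta)$ (a Fubini computation matching the paper's convention $\widehat f(\lambda)=\int e^{it\lambda}f(t)\,dt$). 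On the other side, using $\Delta_{\varphi_n}^{-it}\xi_{\varphi_n}=\xi_{\varphi_n}$ again, $\sigma_f^{\varphi_n}(x_n)\xi_{\varphi_n}=\widehat f(\log\Delta_{\varphi_n})x_n\xi_{\varphi_n}$, so $w\,(\sigma_f^{\varphi_n}(x_n))^{\omega}\xi_{\varphi^{\omega}}=(\widehat f(\log\Delta_{\varphi_n}))_{\omega}\,v$. Thus the whole identity reduces to the functional-calculus statement $(\widehat f(\log\Delta_{\varphi_n}))_{\omega}|_{qH_{\omega}}=\widehat f(\log\Delta_{\tilde{\varphi}_{\omega}})$.

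This last reduction is exactly where Lemma \ref{Uffe: g(a_n)_{omega}} (the engine behind Lemma \ref{Uffe: modular operator} and Theorem \ref{thm: ultrapower of modular automorphism}) does the work. Writing $a_n:=(1+\Delta_{\varphi_n})^{-1}$ and $G(x):=\widehat f(\log(x^{-1}-1))$, a bounded continuous function on $(0,1)$ (indeed $G\in C_0$, vanishing at both endpoints by Riemann--Lebesgue), one has $\widehat f(\log\Delta_{\varphi_n})=G(a_n)$ because $a_n^{-1}-1=\Delta_{\varphi_n}$. Taking $K=qH_{\omega}$ and $a=(a_n)_{\omega}$, the hypotheses $K\cap\mathrm{Ker}(a)=K\cap\mathrm{Ker}(1-a)=\{0\}$ hold since $a|_{qH_{\omega}}=(1+\Delta_{\tilde{\varphi}_{\omega}})^{-1}$ is injective with $1\notin\sigma_p$, by Eq. (\ref{eq: UP of resolvent and resolvent of UP}) and the invertibility of $\Delta_{\tilde{\varphi}_{\omega}}$. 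Lemma \ref{Uffe: g(a_n)_{omega}} then yields $(G(a_n))_{\omega}|_{qH_{\omega}}=G(a|_{qH_{\omega}})=\widehat f(\log\Delta_{\tilde{\varphi}_{\omega}})$, as required; one only needs to note that the lemma applies to the complex-valued $G$, which is legitimate since its proof rests on Weierstrass approximation, valid for complex continuous functions. Finally, $\xi_{\varphi^{\omega}}$ being separating forces equality of the two operators from equality of their images on $\xi_{\varphi^{\omega}}$.

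I expect the main obstacle to be the naive interchange ``$\lim_{n\to\omega}\int=\int\lim_{n\to\omega}$'' that one is tempted to use to push the $t$-integral through the ultraproduct: the pointwise ultralimit of the integrands need not even be measurable, and such an interchange genuinely fails in general --- this is precisely the discontinuity phenomenon displayed by the Example following Theorem \ref{thm: ultrapower of modular automorphism}. The device that circumvents it is to apply everything to the separating vector, which converts the two-sided smeared flow into the one-sided functional calculus $\widehat f(\log\Delta)$ and thereby restricts attention to the invariant subspace $qH_{\omega}$, on which the modular flow is continuous and on which Lemma \ref{Uffe: g(a_n)_{omega}} controls the ultraproduct limit.
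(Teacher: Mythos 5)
Your proof is correct, and the membership half coincides with the paper's: the spectral inclusion $\mathrm{Sp}_{\sigma^{\varphi_n}}(\sigma_f^{\varphi_n}(x_n))\subset\mathrm{supp}\,\widehat f\subset[-a,a]$ plus Lemma \ref{lem: analiticity implies normalizer} for band-limited $f$, then Fej\'er approximation and norm-closedness of $\mathcal{M}^{\omega}(M_n,\varphi_n)$ in general. Where you genuinely depart from the paper is the identity. The paper proves the identity first only for the band-limited functions $f*F_a$ (its ``Claim''), via the function $Q_a(t)=\widehat{f*F_a}(\log(t^{-1}-1))$ and Lemma \ref{Uffe: modular operator}, and then transfers it to general $f$ by a two-sided $2\varepsilon$-argument using $\|f*F_a-f\|_1\to 0$. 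You observe instead that band-limiting is irrelevant for this half: $\widehat f$ is bounded and continuous for \emph{every} $f\in L^1(\mathbb{R})$, so $G(x)=\widehat f(\log(x^{-1}-1))$ is bounded continuous on $(0,1)$ and Lemma \ref{Uffe: g(a_n)_{omega}} applies to it directly (the paper itself applies that lemma to complex-valued functions, so no extension is needed), with $K=qH_{\omega}$ and the kernel hypotheses supplied by Eq. (\ref{eq: UP of resolvent and resolvent of UP}) and the injectivity and nonsingularity of $\Delta_{\tilde{\varphi}_{\omega}}$; the separating vector $\xi_{\varphi^{\omega}}$ then finishes the argument. Your route thus uses the same engine as the paper --- the functional-calculus Lemma \ref{Uffe: g(a_n)_{omega}}, which is also what underlies the paper's appeal to Lemma \ref{Uffe: modular operator} --- but removes the second approximation step entirely, confining the Fej\'er kernel to the membership question where it is truly needed (note that membership must still come first, since otherwise $(\sigma_f^{\varphi_n}(x_n))^{\omega}$ is not a well-defined element of $M^{\omega}$, and your ordering respects this). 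The paper's organization, by contrast, packages membership and identity together in one self-contained band-limited Claim at the cost of an extra limiting argument. Both are legitimate; yours is the leaner of the two, and your closing remark correctly identifies why no illegal interchange of $\lim_{n\to\omega}$ and $\int$ occurs: the only Fubini step, $\int f(t)\Delta^{it}v\,dt=\widehat f(\log\Delta)v$, takes place against the spectral measure of a single vector inside one Hilbert space, not across the ultraproduct.
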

\begin{remark}
One might think that this is a direct consequence of $\sigma_t^{\varphi^{\omega}}((x_n)^{\omega})=(\sigma_t^{\varphi_n}(x_n))^{\omega}$ (Theorem \ref{thm: ultrapower of modular automorphism}). However, we must show that \[\int_{\mathbb{R}}f(t)(\sigma_t^{\varphi_n}(x_n))^{\omega}dt=\left (\int_{\mathbb{R}}f(t)\sigma_t^{\varphi_n}(x_n)dt\right )^{\omega},\]
 i.e., the order of integration and ultralimit can be changed. 
\end{remark}
\begin{proof}[Proof of Lemma \ref{lem: integration does not change normalizer}]
We first prove\\
\textbf{Claim}. $(\sigma_{f*F_a}^{\varphi_n}(x_n))_n\in \mathcal{M}^{\omega}(M_n,\varphi_n)$ and $\sigma_{f*F_a}^{\varphi^{\omega}}((x_n)^{\omega})=(\sigma_{f*F_a}^{\varphi_n}(x_n))^{\omega}$ holds.\\ 
Since $\text{supp}(\widehat{f*F_a})\subset \text{supp}(\widehat{F_a})=[-a,a]$, we have $\sigma_{f*F_a}^{\varphi_n}(x_n)\in M_n(\sigma^{\varphi_n},[-a,a])$ for all $n\in \mathbb{N}$. Therefore by Lemma \ref{lem: analiticity implies normalizer}, we have $(\sigma_{f*F_a}^{\varphi_n}(x_n))_n\in \mathcal{M}^{\omega}(M_n,\varphi_n)$. Next, consider a bounded continuous function $Q_a\colon (0,1)\to \mathbb{C}$ given by 
\[Q_a(t):=(\widehat{f*F_a})(\log (t^{-1}-1)),\ \ \ \ \ t\in (0,1).\]
Then we have 
\[Q_a((1+t)^{-1})=(\widehat{f*F_a})(\log t),\ \ \ \ t\in \mathbb{R}.\]
By Lemma \ref{Uffe: modular operator} and (the proof of) Theorem \ref{thm: ultrapower of modular automorphism}, we have 
\[Q_a((1+\Delta_{\varphi^{\omega}})^{-1})=(Q_a((1+\Delta_{\varphi_n})^{-1}))_{\omega}|_{qH_{\omega}}\]
It then follows that
\eqa{
\sigma_{f*F_a}^{\varphi^{\omega}}((x_n)^{\omega})\xi_{\varphi^{\omega}}&=\widehat{f*F_a}(\log \Delta_{\varphi^{\omega}})(x_n)^{\omega}\xi_{\varphi^{\omega}}=Q_a((1+\Delta_{\varphi^{\omega}})^{-1})(x_n)^{\omega}\xi_{\varphi^{\omega}}\\
&=(Q_a((1+\Delta_{\varphi_n})^{-1}))_{\omega}(x_n\xi_{\varphi_n})_{\omega}=(\widehat{f*F_a}(\log \Delta_{\varphi_n})x_n\xi_{\varphi_n})_{\omega}\\
&=(\sigma_{f*F_a}^{\varphi_n}(x_n)\xi_{\varphi_n})_{\omega}=(\sigma_{f*F_a}^{\varphi_n}(x_n))^{\omega}\xi_{\varphi^{\omega}}.
}
Since $\xi_{\varphi^{\omega}}$ is separating for $(M_n,\varphi_n)^{\omega}$, we have $\sigma_{f*F_a}^{\varphi^{\omega}}((x_n)^{\omega})=(\sigma_{f*F_a}^{\varphi_n}(x_n))^{\omega}$.\\ \\
Now we prove that $(\sigma_f^{\varphi_n}(x_n))_n\in \mathcal{M}^{\omega}(M_n,\varphi_n)$ and $\sigma_f^{\varphi^{\omega}}((x_n)^{\omega})=(\sigma_f^{\varphi_n}(x_n))^{\omega}$ holds.\\
Since $\|f*F_a-f\|_1\stackrel{a\to \infty}{\to}0$, we have 
\eqa{
\sup_{n\ge 1}\|\sigma_f^{\varphi_n}(x_n)-\sigma_{f*F_a}^{\varphi_n}(x_n)\|
&\le \sup_{n\ge 1}\int_{\mathbb{R}}|f(t)-(f*F_a)(t)|\cdot \|\sigma_t^{\varphi_n}(x_n)\|dt\\
&=\sup_{n\ge 1}\|x_n\|\cdot \|f-f*F_a\|_1\stackrel{a\to \infty}{\rightarrow}0.
}
By the Claim, $(\sigma_{f*F_a}^{\varphi_n}(x_n))_n\in \mathcal{M}^{\omega}(M_n,\varphi_n)$. Therefore as  $\mathcal{M}^{\omega}(M_n,\varphi_n)$ is norm-closed, we have $(\sigma_f^{\varphi_n}(x_n))_n\in \mathcal{M}^{\omega}(M_n,\varphi_n)$. Finally, suppose $\varepsilon>0$ is given. By similar arguments to above, there exists $a>0$ such that  
\[\|\sigma_{f*F_a}^{\varphi^{\omega}}((x_n)^{\omega})-\sigma_f^{\varphi^{\omega}}((x_n)^{\omega})\|<\varepsilon,\ \ \  \left \|\left (\sigma_{f*F_a}^{\varphi_n}(x_n)\right )^{\omega}-\left (\sigma_f^{\varphi_n}(x_n)\right )^{\omega}\right \|<\varepsilon.\]
Then by the Claim, we see that
\eqa{
\left \|\sigma_f^{\varphi^{\omega}}((x_n)^{\omega})-\left (\sigma_f^{\varphi_n}(x_n)\right )^{\omega}\right \|&\le \left \|\sigma_f^{\varphi^{\omega}}((x_n)^{\omega})-\sigma_{f*F_a}^{\varphi^{\omega}}((x_n)^{\omega})\right \|\\
&\hspace{0.5cm}+\left \| \left (\sigma_{f*F_a}^{\varphi_n}(x_n)\right )^{\omega}-\left (\sigma_f^{\varphi_n}(x_n)\right )^{\omega}\right \|\\
&<2\varepsilon.
}
Since $\varepsilon>0$ is arbitrary, the Lemma is proved.    
\end{proof}

\begin{proof}[Proof of Proposition \ref{prop: characterization of the normalizer}]
(1)$\Rightarrow $(2): Let $(x_n)_n\in \mathcal{M}^{\omega}(M_n,\varphi_n)$ and put $x:=(x_n)^{\omega}$. Also, define
\[x_a:=\sigma_{F_a}^{\varphi^{\omega}}(x)\in (M_n,\varphi_n)^{\omega}\ \ (a>0).\]
Then we have $\lim_{a\to \infty}\|x_a-x\|_{\varphi^{\omega}}^{\sharp}=0$. Indeed, since $\Phi\colon  t\mapsto \|x-\sigma_t^{\varphi^{\omega}}(x)\|_{\varphi^{\omega}}^{\sharp}$ is continuous and bounded, we have
\eqa{
\|x_a-x\|_{\varphi^{\omega}}^{\sharp}&=\left \| \int_{\mathbb{R}}F_a(t)(\sigma_t^{\varphi^{\omega}}(x)-x)dt\right \|_{\varphi^{\omega}}^{\sharp}\\
&\le \int_{\mathbb{R}}F_a(t)\|x-\sigma_t^{\varphi^{\omega}}(x)\|_{\varphi^{\omega}}^{\sharp}dt\\
&\stackrel{a\to \infty}{\rightarrow }\Phi (0)=0,
}
whence the claim follows.\ 
Therefore there exists $a>0$ such that 
$y:=\sigma_{F_a}^{\varphi^{\omega}}(x)$ satisfies $\|y-x\|_{\varphi^{\omega}}<\varepsilon$. We have $\|y\|\le \|F_a\|_1\|x\|=\|x\|$, and by Lemma \ref{lem: integration does not change normalizer}, $y=(y_n)^{\omega}$, where $y_n=\sigma_{F_a}^{\varphi_n}(x_n)\ (n\in \mathbb{N})$. Therefore $(y_n)_n$ satisfies all conditions  in (2). Note that we also have $\|y_n\|\le \|x_n\|\ (n\in \mathbb{N})$.\\
(2)$\Rightarrow$(1):  
Suppose $(x_n)_n\in \ell^{\infty}(\mathbb{N},M_n)$ satisfies the conditions in (2). Let $\varepsilon>0$. Then by Lemma \ref{lem: analiticity implies normalizer} and by assumption, there is $(x_n')_n\in \mathcal{M}^{\omega}$ such that $\lim_{n\to \omega}\|x_n-x_n'\|_{\varphi_n}^{\sharp}<\varepsilon$. Let $(y_n)_n\in \mathcal{I}_{\omega}$ with $\sup_{n\ge 1}\|y_n\|\le 1$. Then we see that
\eqa{
\lim_{n\to \omega}\|(x_ny_n)^*\|_{\varphi_n}&\le \lim_{n\to \omega}\left \{\|y_n^*\|\ \|x_n^*-(x_n')^*\|_{\varphi_n}+\|y_n^*(x_n')^*\|_{\varphi_n}\right \}\\
&\le \varepsilon.
}
Since $\varepsilon>0$ is arbitrary, we have $\lim_{n\to \omega}\|(x_ny_n)^*\|_{\varphi_n}=0$. Similarly, we also have $\lim_{n\to \omega}\|y_nx_n\|_{\varphi_n}=0$. This proves that $(x_n)_n\in \mathcal{M}^{\omega}$.
\end{proof}

\subsection{Strict Homogeneity of State Spaces}\label{subsec: Strict Homogeneity of State Spaces}
As an application of the Groh-Raynaud ultraproduct, we prove that it provides examples of von Neumann algebras for which all normal faithful states are unitarily equivalent. We also prove that this property is only possible for von Neumann algebras with non-separable preduals (besides $\mathbb{C}$).

\begin{definition}\label{UW3.1.1}
Let $M$ be a $\sigma$-finite von Neumann algebra. Then $S_{\text{nf}}(M)$ is said to be
\begin{itemize}
\item {\it homogeneous}, if for any $\varphi, \psi \in S_{\text{nf}}(M)$ and any $\varepsilon>0$, there is $u\in \mathcal{U}(M)$ such that $\|u\varphi u^*-\psi\|<\varepsilon;$
\item {\it strictly homogeneous}, if for any $\varphi, \psi \in S_{\text{nf}}(M)$ there is $u\in \mathcal{U}(M)$ such that $u\varphi u^*=\psi$.
\end{itemize}
\end{definition} 
The following is immediate from results by \cite{CS}, \cite{CHS}. 
\begin{theorem}\label{UW3.1.2}
Let $M$ be a $\sigma$-finite von Neumann algebra. The following are equivalent:
\begin{itemize}
\item[{\rm{(1)}}] $M$ is a factor of type {\rm I}$_1$ or type {\rm III}$_1$.
\item[{\rm{(2)}}] $S_{{\rm nf}}(M)$ is homogeneous.
\end{itemize}
\end{theorem}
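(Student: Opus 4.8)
The plan is to prove the two implications separately, drawing the substantive type $\mathrm{III}_1$ input from Connes--Størmer \cite{CS} and handling the converse by a combination of elementary unitary invariants and the flow-of-weights analysis in \cite{CHS}. For $(1)\Rightarrow(2)$ there are two cases. If $M$ is of type $\mathrm{I}_1$, then $M=\mathbb{C}$, the set $S_{\mathrm{nf}}(M)$ consists of a single state, and homogeneity is trivial (take $u=1$). If $M$ is a factor of type $\mathrm{III}_1$, then the homogeneity theorem of \cite{CS} asserts that any two normal states of equal norm can be brought arbitrarily close in norm by a unitary conjugation; applied to $\varphi,\psi\in S_{\mathrm{nf}}(M)$, both of norm one, this is precisely condition $(2)$.

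For $(2)\Rightarrow(1)$ I would argue in stages. First, $M$ must be a factor: if there were a central projection $z$ with $0<z<1$, then $(u\varphi u^{*})(z)=\varphi(u^{*}zu)=\varphi(z)$ for every unitary $u$, so the number $\varphi(z)$ is invariant under unitary conjugation. Starting from a fixed $\varphi_0\in S_{\mathrm{nf}}(M)$ and forming the faithful normal states $\varphi_t=t\,\varphi_0(z\,\cdot)/\varphi_0(z)+(1-t)\,\varphi_0(z^{\perp}\,\cdot)/\varphi_0(z^{\perp})$, which satisfy $\varphi_t(z)=t$, one obtains states with distinct values of $\varphi(z)$, whence $\|u\varphi_s u^{*}-\varphi_t\|\ge|s-t|>0$ for all $u\in\mathcal{U}(M)$, contradicting homogeneity. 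Thus $\mathcal{Z}(M)=\mathbb{C}1$.

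Next I would exclude every factor type except $\mathrm{I}_1$ and $\mathrm{III}_1$ by exhibiting a unitary-conjugation invariant. If $M$ is semifinite with faithful normal semifinite trace $\tau$ and $M\neq\mathbb{C}$, write $\varphi=\tau(h_\varphi\,\cdot)$ for $\varphi\in S_{\mathrm{nf}}(M)$; the trace property gives $u\varphi u^{*}=\tau(uh_\varphi u^{*}\,\cdot)$, so the spectral distribution of $h_\varphi$ relative to $\tau$ is preserved under conjugation, while $\|\varphi-\psi\|=\|h_\varphi-h_\psi\|_1$ dominates the $L^1$-distance of the associated distributions. Since these distributions are not all equal once $M\neq\mathbb{C}$, homogeneity fails (in the finite case this is already visible from the fact that the unique normal tracial state is a fixed point of conjugation but is not the only faithful normal state). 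Hence $M$ must be of type $\mathrm{III}$. It then remains to rule out types $\mathrm{III}_0$ and $\mathrm{III}_\lambda$ $(0<\lambda<1)$: here I invoke the description of approximate unitary equivalence of normal states in terms of the flow of weights \cite{CHS}, by which homogeneity of $S_{\mathrm{nf}}(M)$ forces the flow of weights to be trivial in the relevant sense, i.e. $M$ of type $\mathrm{III}_1$. Concretely, for $\mathrm{III}_\lambda$ the modular period $2\pi/|\log\lambda|$, and for $\mathrm{III}_0$ the nontrivial ergodic flow of weights, obstruct transitivity of the conjugation action on states.

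The main obstacle is exactly this final step. Ruling out non-factors and semifinite algebras is elementary, but separating type $\mathrm{III}_1$ from types $\mathrm{III}_0$ and $\mathrm{III}_\lambda$—and, in the forward direction, establishing transitivity for $\mathrm{III}_1$ in the first place—rests on the nonelementary results of \cite{CS} and \cite{CHS}. A secondary point needing care is that the Connes--Størmer homogeneity theorem was first proved under a separability hypothesis, so one must verify that it and its converse remain valid for a general $\sigma$-finite $M$, which is the role played here by \cite{CHS}.
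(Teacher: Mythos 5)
Your proposal is correct and takes essentially the same route as the paper, whose proof of Theorem \ref{UW3.1.2} consists precisely of citing Connes--St\o rmer \cite{CS} for $(1)\Rightarrow(2)$ and Connes--Haagerup--St\o rmer \cite{CHS} for $(2)\Rightarrow(1)$, together with the elementary observation that homogeneity forces factoriality; your added explicit arguments (the central-projection invariant and the trace-distribution invariant in the semifinite case) are just expansions of what \cite{CHS}'s diameter formula already covers. One small correction: the removal of the separability hypothesis is attributed in the paper not to \cite{CHS} itself but to the martingale arguments of Haagerup--St\o rmer \cite{HS} (see Remark \ref{rem: no separability assumption is necessary}).
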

\begin{proof}
$(1)\Rightarrow (2)$ is the main result of \cite{CS}, while $(2)\Rightarrow (1)$ follows from \cite{CHS} (note that homogeneity of $S_{\rm{nf}}(M)$ implies that $M$ is a factor. Note also that the separability of $M_*$ imposed on \cite{CS} can be removed. See Remark \ref{rem: no separability assumption is necessary}).
\end{proof}
\begin{lemma}\label{lem: strict equality of states}
Let $M$ be a $\sigma$-finite factor not isomorphic to $\mathbb{C}$ with strictly homogeneous state space. Then 
\begin{itemize}
\item[{\rm{(1)}}] $M$ is a type {\rm{III}}$_1$ factor. 
\item[{\rm{(2)}}] For $\varphi, \psi \in S_{\rm{n}}(M)$, there exists a partial isometry $u\in M$ such that 
\[u^*u={\rm{supp}}(\varphi),\ uu^*={\rm{supp}}(\psi),\text{ and }\psi=u\varphi u^*.\]
\end{itemize}
\end{lemma}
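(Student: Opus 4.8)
The plan is to prove (1) quickly from Theorem~\ref{UW3.1.2}, and then to deduce (2) by exploiting the type~${\rm III}$ structure to reduce the (possibly non-faithful) statement to the strict homogeneity hypothesis, which concerns only faithful states.

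For (1), note that strict homogeneity trivially implies homogeneity: if $u\varphi u^\ast=\psi$ then $\|u\varphi u^\ast-\psi\|=0$. Hence by Theorem~\ref{UW3.1.2}, $M$ is a factor of type~${\rm I}_1$ or type~${\rm III}_1$. A type~${\rm I}_1$ factor is just $\mathbb{C}$, which is excluded by hypothesis, so $M$ must be a type~${\rm III}_1$ factor.

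For (2), I would first set $e:=\text{supp}(\varphi)$ and $f:=\text{supp}(\psi)$; these are nonzero projections, and $\varphi|_{eMe}$, $\psi|_{fMf}$ are faithful normal states. Since $M$ is a $\sigma$-finite type~${\rm III}$ factor, every nonzero projection is equivalent to $1$, so I choose partial isometries $s,t\in M$ with $s^\ast s=e$, $ss^\ast=1$, $t^\ast t=f$, $tt^\ast=1$. Transporting $\varphi$ and $\psi$ across the induced isomorphisms $eMe\cong M\cong fMf$, I define $\Phi,\Psi\in S_{\rm{nf}}(M)$ by $\Phi(y):=\varphi(s^\ast ys)$ and $\Psi(y):=\psi(t^\ast yt)$; these are faithful because $y\mapsto s^\ast ys$ and $y\mapsto t^\ast yt$ are isomorphisms of $M$ onto the corners $eMe$, $fMf$, on which $\varphi,\psi$ are faithful.

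Now I apply strict homogeneity to the faithful pair $\Phi,\Psi$: there is $V\in\mathcal{U}(M)$ with $V\Phi V^\ast=\Psi$, that is, $\varphi(s^\ast V^\ast yVs)=\psi(t^\ast yt)$ for all $y\in M$. Setting $u:=t^\ast Vs$, a direct computation using $V^\ast V=VV^\ast=1$, $ss^\ast=1=tt^\ast$, $s^\ast s=e$ and $t^\ast t=f$ gives $u^\ast u=e$ and $uu^\ast=f$. Substituting $y=txt^\ast$ with $x\in fMf$ into the intertwining identity yields $\varphi(u^\ast x u)=\psi(x)$ for all $x\in fMf$. To upgrade this to all of $M$, I use the relations $u^\ast f=u^\ast$ and $fu=u$ (immediate from $u=t^\ast Vs$ together with $ft^\ast=t^\ast$ and $tf=t$) to write $u^\ast x u=u^\ast(fxf)u$ for every $x\in M$; since $\text{supp}(\psi)=f$ gives $\psi(fxf)=\psi(x)$, this produces $(u\varphi u^\ast)(x)=\varphi(u^\ast xu)=\varphi\bigl(u^\ast (fxf)u\bigr)=\psi(fxf)=\psi(x)$, i.e.\ $u\varphi u^\ast=\psi$, as required. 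The only genuinely non-formal step is the reduction in the third paragraph, where both the type~${\rm III}_1$ conclusion of (1) (to trivialize the supports via equivalence to $1$) and the faithful-case strict homogeneity are simultaneously invoked; the remaining manipulations with $s,t,u$ are routine bookkeeping carried out in the order above.
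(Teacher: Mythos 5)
Your proof is correct and follows essentially the same route as the paper: part (1) via the homogeneity theorem (Theorem \ref{UW3.1.2}), and part (2) by using equivalence of projections in a $\sigma$-finite type III factor to reduce the assertion about non-faithful states to the strict homogeneity hypothesis for faithful ones. The only cosmetic difference is in part (2): the paper conjugates $\psi$ into the corner ${\rm{supp}}(\varphi)M\,{\rm{supp}}(\varphi)$ (which inherits strict homogeneity, being isomorphic to $M$) and applies the hypothesis there, whereas you amplify both states to faithful states on all of $M$ via partial isometries onto $1$ and apply the hypothesis in $M$ itself; the two constructions produce the same kind of intertwining partial isometry up to bookkeeping.
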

\begin{proof}
(1) We have to show that $M$ has state space diameter 0. But since $S_{\rm{nf}}(M)$ is norm-dense in $S_{\rm{n}}(M)$, this is the consequence of the strict homogeneity of $S_{\rm{nf}}(M)$.\\
(2)  By (1), $M$ is a type III factor. Hence there is a partial isometry $v\in M$ such that $v^*v=\text{supp}(\varphi),\ vv^*=\text{supp}(\psi)$ holds. Put $\psi':=v^*\psi v$. We see that $\text{supp}(\psi')=v^*\text{supp}(\psi)v=\text{supp}(\varphi)$. Since $M$ is of type III, $M_{\text{supp}(\varphi)}\cong M$ has strictly homogeneous state space. Therefore regarding $\varphi, \psi'\in S_{\rm{nf}}(M_{\text{supp}(\varphi)})$ we may find $w\in M_{\text{supp}(\varphi)}$ with $w^*w=ww^*=\text{supp}(\varphi)$ such that $\psi'=w\varphi w^*$. Then $u:=vw$ satisfies
\eqa{
u^*u&=w^*\text{supp}(\varphi)w=\text{supp}(\varphi),\\
uu^*&=v\ \text{supp}(\varphi)v^*=vv^*=\text{supp}(\psi),\\
u\varphi u^*&=vw\varphi w^*v^*=v\psi' v^*=\psi.
}  
\end{proof}

By the homogeneity, the Ocneanu ultraproduct of a type III$_1$ factor does not depend on the choice of a sequence of normal faithful states.
\begin{corollary}\label{UW3.1.3}
Let $M$ be a $\sigma$-finite factor of type {\rm III}$_1$ and $(\psi_n)_n\subset S_{\rm{nf}}(M)$. Then $(M,\psi_n)^{\omega}\cong M^{\omega}$. 
\end{corollary}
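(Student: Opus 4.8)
The plan is to exhibit an explicit $*$-isomorphism induced by conjugating with a suitable sequence of unitaries, exploiting the fact that the two Ocneanu ultraproducts live as quotients of the \emph{same} C$^*$-algebra $\ell^\infty(\mathbb{N},M)$ and differ only in the choice of ideal. Fix a single normal faithful state $\varphi$ on $M$, so that $M^\omega=(M,\varphi)^\omega=\mathcal{M}^\omega(M,\varphi)/\mathcal{I}_\omega(M,\varphi)$. Since $M$ is a type III$_1$ factor, Theorem \ref{UW3.1.2} tells us that $S_{\rm nf}(M)$ is homogeneous; hence for each $n\in\mathbb{N}$ we may choose $u_n\in\mathcal{U}(M)$ with $\|u_n\varphi u_n^*-\psi_n\|<1/n$. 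Because conjugation by a unitary is an isometry of $M_*$, this is the same as $\|\varphi-u_n^*\psi_n u_n\|<1/n$, so that $u_n^*\psi_n u_n\to\varphi$ in norm.

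Next I would consider the map $\Theta\colon\ell^\infty(\mathbb{N},M)\to\ell^\infty(\mathbb{N},M)$ given by $\Theta((x_n)_n):=(u_nx_nu_n^*)_n$, which is clearly a $*$-automorphism of the C$^*$-algebra $\ell^\infty(\mathbb{N},M)$, with inverse $(y_n)_n\mapsto(u_n^*y_nu_n)_n$. The heart of the argument is to show that $\Theta$ carries $\mathcal{I}_\omega(M,\varphi)$ \emph{onto} $\mathcal{I}_\omega(M,\psi_n)$. For this I compute, for $(x_n)_n\in\ell^\infty(\mathbb{N},M)$,
\[
\bigl(\|u_nx_nu_n^*\|_{\psi_n}^{\sharp}\bigr)^2=\psi_n\bigl(u_n(x_n^*x_n+x_nx_n^*)u_n^*\bigr)=(u_n^*\psi_nu_n)\bigl(x_n^*x_n+x_nx_n^*\bigr).
\]
Since the sequence is uniformly bounded, $\|x_n^*x_n+x_nx_n^*\|\le 2\sup_m\|x_m\|^2$, and since $\|u_n^*\psi_nu_n-\varphi\|\to0$, the right-hand side differs from $\varphi(x_n^*x_n+x_nx_n^*)=(\|x_n\|_\varphi^{\sharp})^2$ by a quantity tending to $0$. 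Taking the limit along $\omega$ gives $\lim_{n\to\omega}\|u_nx_nu_n^*\|_{\psi_n}^{\sharp}=\lim_{n\to\omega}\|x_n\|_\varphi^{\sharp}$, whence $(x_n)_n\in\mathcal{I}_\omega(M,\varphi)$ if and only if $\Theta((x_n)_n)\in\mathcal{I}_\omega(M,\psi_n)$. As $\Theta$ is bijective, $\Theta(\mathcal{I}_\omega(M,\varphi))=\mathcal{I}_\omega(M,\psi_n)$.

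Finally, since $\Theta$ is a $*$-automorphism of $\ell^\infty(\mathbb{N},M)$ mapping one ideal onto the other, it automatically maps the corresponding two-sided normalizers onto each other, i.e. $\Theta(\mathcal{M}^\omega(M,\varphi))=\mathcal{M}^\omega(M,\psi_n)$, because $\mathcal{M}^\omega$ is defined purely in terms of $\ell^\infty(\mathbb{N},M)$ and its ideal. Consequently $\Theta$ descends to a $*$-isomorphism of the quotients $M^\omega=(M,\varphi)^\omega\xrightarrow{\ \sim\ }(M,\psi_n)^\omega$, as desired. I expect the only genuine input to be the homogeneity of $S_{\rm nf}(M)$ supplied by Theorem \ref{UW3.1.2}; the main point to get right is the uniform-boundedness estimate in the $\sharp$-seminorm computation, which is what allows the norm convergence $u_n^*\psi_nu_n\to\varphi$ to be transferred to the level of the ideals, and everything else is formal.
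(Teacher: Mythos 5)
Your proof is correct. It uses the same essential input as the paper, namely Connes--St\o rmer transitivity (Theorem \ref{UW3.1.2}) to produce unitaries $u_n$ implementing an approximate equivalence of states, but it converts this into an isomorphism by a genuinely different mechanism. The paper identifies both Ocneanu ultrapowers with corners of the single Groh--Raynaud ultrapower $N=\prod^{\omega}M$: by Theorem \ref{UW2.1.3} and Proposition \ref{prop: normalizer iff commutes with p}, $(M,\psi)^{\omega}$ and $(M,\psi_n)^{\omega}$ are isomorphic to the corners of $N$ cut by the support projections of $\psi_{\omega}$ and $(\psi_n)_{\omega}$, viewed as normal functionals on $N$ via $(M_*)_{\omega}\cong N_*$ (Theorem \ref{UW2.3.3}); since $\|\psi-u_n\psi_nu_n^*\|\to 0$ gives $\psi_{\omega}=(u_n\psi_nu_n^*)_{\omega}=u(\psi_n)_{\omega}u^*$ with $u=(u_n)_{\omega}\in\mathcal{U}(N)$, these support projections are unitarily equivalent and the corners isomorphic. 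You, by contrast, never leave $\ell^{\infty}(\mathbb{N},M)$: the pointwise conjugation $\Theta$ is an automorphism of that C$^*$-algebra, your uniform-boundedness estimate (which is indeed the one point that needs care) shows $\Theta(\mathcal{I}_{\omega}(M,\varphi))=\mathcal{I}_{\omega}(M,\psi_n)$, and since the two-sided normalizers are defined purely in terms of the ideals, $\Theta$ carries $\mathcal{M}^{\omega}(M,\varphi)$ onto $\mathcal{M}^{\omega}(M,\psi_n)$ and descends to a $*$-isomorphism of the quotients. Your route is more elementary and self-contained, requiring neither the corner identification nor the identification of $(M_*)_{\omega}$ with $(\prod^{\omega}M)_*$, and it produces an explicit isomorphism at the level of representing sequences; the paper's route is shorter once its machinery is in place and makes transparent the geometric picture that the two ultrapowers sit inside the same ambient algebra as unitarily equivalent corners. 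Both arguments in fact establish the slightly more general statement that $(M,\varphi_n)^{\omega}\cong(M,\psi_n)^{\omega}$ whenever $\inf_{u\in\mathcal{U}(M)}\|u\varphi_nu^*-\psi_n\|\to 0$ along $\omega$.
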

\begin{proof}
Let $\psi\in S_{\text{nf}}(M)$ and choose (by Connes-St\o rmer transitivity, cf. Theorem \ref{UW3.1.2}) a sequence $(u_n)_n$ of unitaries in $M$ such that 
\[\|\psi-u_n\psi_nu_n^*\|\le \frac{1}{n},\ \ \ n\in \mathbb{N}.\]
Then $\psi_{\omega}=(u_n\psi_nu_n^*)_{\omega}$ and so $M^{\omega}=(M,\psi)^{\omega}\cong (M,\psi_n)^{\omega}$ by Theorem \ref{UW2.1.3} (cf. also the remark after Proposition \ref{UW1.2.2}).  
\end{proof}
\begin{theorem}\label{UW3.1.4}
Let $M$ be a $\sigma$-finite factor of type {\rm III}$_1$, let $M_n=M\ (n\in \mathbb{N})$, and let $N=\prod^{\omega}M_n$. Then $N$ is not $\sigma$-finite, but for any $\sigma$-finite projection $p\in N$, one has that $pNp$ has strictly homogeneous state space. In particular, $N$ and $M^{\omega}$ are factors of type {\rm III}$_1$.
\end{theorem}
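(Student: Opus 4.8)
The plan is to establish the three assertions in the order: strict homogeneity of the $\sigma$-finite corners $pNp$, then the failure of $\sigma$-finiteness of $N$, and finally to deduce factoriality and type from these. The engine of the whole argument is to convert the \emph{approximate} transitivity of the state space of a type III$_1$ factor (Connes--St\o rmer, Theorem~\ref{UW3.1.2}) into an \emph{exact} statement on $N$ by passing to the ultraproduct, in the spirit that relations holding to within $1/n$ hold exactly along $\omega$.

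For the central step I would fix a $\sigma$-finite projection $p\in N$ and $\varphi,\psi\in S_{\mathrm{nf}}(pNp)$, and lift them to normal states $\tilde\varphi(x):=\varphi(pxp)$, $\tilde\psi(x):=\psi(pxp)$ on $N$, which have support exactly $p$ since $\varphi,\psi$ are faithful on $pNp$. By Corollary~\ref{UW2.3.4} I write $\tilde\varphi=(\varphi_n)_\omega$ and $\tilde\psi=(\psi_n)_\omega$ with $\varphi_n,\psi_n\in S_{\mathrm{nf}}(M)$. As $M$ is type III$_1$, Theorem~\ref{UW3.1.2} yields for each $n$ a unitary $u_n\in M$ with $\|u_n\varphi_n u_n^*-\psi_n\|<1/n$, and I set $u:=(u_n)_\omega\in N$, a unitary. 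Using the pairing of Theorem~\ref{UW2.3.3}, for $x=\pi_\omega((x_n)_\omega)$ one has $(u\tilde\varphi u^*)(x)=\lim_{n\to\omega}(u_n\varphi_nu_n^*)(x_n)$ and $\tilde\psi(x)=\lim_{n\to\omega}\psi_n(x_n)$; these limits agree because $\|u_n\varphi_nu_n^*-\psi_n\|\to 0$ along $\omega$, and since both functionals are normal while $\pi_\omega((M)_\omega)$ is $\sigma$-strongly dense, $u\tilde\varphi u^*=\tilde\psi$ \emph{exactly}. Comparing supports gives $upu^*=\mathrm{supp}(\tilde\psi)=p$, so $u$ commutes with $p$, and $v:=up\in pNp$ is a unitary of the corner with $v\varphi v^*=\psi$; hence $pNp$ has strictly homogeneous state space. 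I expect this to be the main obstacle, the only delicate points being the approximate-to-exact upgrade and the support computation forcing $u$ to commute with $p$, both of which lean on the Groh--Raynaud identification of normal functionals with ultraproduct vectors.

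For non-$\sigma$-finiteness I would argue by contradiction: if $N$ admitted a faithful normal state $\Psi$, then by Corollary~\ref{UW2.3.4} $\Psi=(\psi_n)_\omega$ with $\psi_n\in S_{\mathrm{nf}}(M)$ and $\Psi=\omega_{\xi_\omega}$ for $\xi_\omega=(\xi_{\psi_n})_\omega$. Since a type III factor is diffuse, repeated halving produces for each $n$ a nonzero projection $e_n\in M$ with $\psi_n(e_n)<1/n$. Then $a:=(e_n)_\omega\in N$ is nonzero (its norm is $1$), yet $\Psi(a^*a)=\lim_{n\to\omega}\psi_n(e_n)=0$, i.e. $a\xi_\omega=0$; thus $\xi_\omega$ is not separating and $\Psi$ is not faithful, a contradiction. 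So $N$ is not $\sigma$-finite.

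For the final clause: strict homogeneity implies homogeneity, which forces factoriality (proof of Theorem~\ref{UW3.1.2}), so every $\sigma$-finite corner $pNp$ is a factor. From this $N$ is a factor, for a nontrivial central projection $z$ would yield orthogonal nonzero $\sigma$-finite projections $p_1\le z$ and $p_2\le 1-z$, making $p_1$ a nontrivial central projection of the factor $(p_1+p_2)N(p_1+p_2)$, which is absurd. Taking the $\sigma$-finite projection $q=pJ_\omega pJ_\omega$ with $M^\omega\cong qNq$ (Corollary~\ref{cor: Ocneanu UP is qNq}), the corner $qNq\cong M^\omega$ is a $\sigma$-finite factor not isomorphic to $\mathbb{C}$ with strictly homogeneous state space, so Lemma~\ref{lem: strict equality of states}(1) shows it is of type III$_1$; hence $M^\omega$ is a type III$_1$ factor. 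Lastly $N$ is of type III, since a nonzero finite projection would dominate a nonzero $\sigma$-finite finite projection whose corner is type III$_1$ and therefore cannot be finite; and because the $S$-invariant of a factor is unchanged under reduction by a nonzero projection, $S(N)=S(qNq)=[0,\infty)$, so $N$ is of type III$_1$ as well.
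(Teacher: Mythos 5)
Your proof is correct, and its engine is exactly the paper's: lift Connes--St\o rmer transitivity of $S_{\rm nf}(M)$ (Theorem \ref{UW3.1.2}) to an exact unitary equivalence of normal states on $N$ via Corollary \ref{UW2.3.4} and the pairing of Theorem \ref{UW2.3.3}, then cut down by the common support projection. The differences are in the secondary claims. The paper first proves that \emph{all} normal states of $N$ are unitarily equivalent and gets non-$\sigma$-finiteness in one line (a faithful normal state would force every normal state to be faithful, which is absurd), then runs the corner argument you give essentially verbatim; you instead prove non-$\sigma$-finiteness by a separate, self-contained argument (halving in the diffuse factor $M$ produces projections $e_n$ with $\psi_n(e_n)<1/n$, so $(e_n)_\omega\neq 0$ is annihilated by $(\psi_n)_\omega$), which is equally valid and independent of the homogeneity result. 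You also spell out the ``in particular'' clause, which the paper leaves largely implicit (it points to the diameter computation of Theorem \ref{thm: UP of type III nonzero factor is a factor} as an alternative route); your derivation of factoriality of $N$ from factoriality of all $\sigma$-finite corners is a correct and welcome addition.

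Two points should be tightened. First, $q=pJ_\omega pJ_\omega$ is \emph{not} a projection of $N$: it is the product of $p\in N$ with $J_\omega pJ_\omega\in N'$, so $qNq$ is not literally one of the corners covered by your main step. Either invoke Proposition \ref{prop: normalizer iff commutes with p}, which gives $M^\omega\cong pNp$ directly with $p={\rm supp}(\varphi_\omega)\in N$ a $\sigma$-finite projection, or note that $qNq\cong pNp$ by Lemma \ref{lem: reduced standard form}. Second, in your finite-projection argument, Lemma \ref{lem: strict equality of states}(1) requires the corner to be a factor not isomorphic to $\mathbb{C}$, so a minimal finite projection (i.e.\ $fNf=\mathbb{C}$) escapes the stated contradiction. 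This edge case is easy to kill: a factor with a minimal projection has a $\sigma$-finite corner isomorphic to $M_2(\mathbb{C})$, which cannot have strictly homogeneous state space. Alternatively you can delete that sentence altogether, since your final appeal to invariance of the $S$-invariant under reduction by a nonzero projection already yields $S(N)=S(M^\omega)=[0,\infty)$ for the factor $N$, which by definition means $N$ is of type III$_1$.
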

Another proof that $N$ is a type III$_1$ factor will be given in $\S$\ref{subsec: UP of III_lambda}.
\begin{proof}[Proof of Theorem \ref{UW3.1.4}]
Let $\varphi, \psi$ be normal states in $N$. By Corollary \ref{UW2.3.4}, there are sequences of normal states $(\varphi_n)_n, (\psi_n)_n\subset M_*$ such that $\varphi=(\varphi_n)_{\omega}$ and $\psi=(\psi_n)_{\omega}$. By Theorem \ref{UW3.1.2}, there is $(u_n)_n\subset \mathcal{U}(M)$ such that $\|u_n\varphi_nu_n^*-\psi_n\|<\frac{1}{n}$ for all $n\in \mathbb{N}$. Now, let $u:=(u_n)_{\omega}\in \mathcal{U}(N)$. Then $u\varphi u^*=\psi$. Hence all normal states of $N$ are unitarily equivalent; in particular, $N$ is not $\sigma$-finite (there can be no faithful normal states in this situation).

If $p\in N$ is a $\sigma$-finite projection, let $\varphi, \psi$ be normal faithful states on $pNp$. Then $\tilde{\varphi}:=p\varphi p$ and $\tilde{\psi}:=p\psi p$ define normal states on $N$ with support $p$. By the above, we may choose $u\in \mathcal{U}(N)$ such that $u\tilde{\varphi}u^*=\tilde{\psi}$. Then $upu^*=p$ and hence $v:=up\in \mathcal{U}(pNp)$. Also $v\varphi v^*=u\tilde{\varphi}u^*=\tilde{\psi}=\psi$ on elements of $pNp$. Hence $S_{\text{nf}}(pNp)$ is strictly homogeneous. 
\end{proof}
\begin{remark}
The above theorem implies that  for a $\sigma$-finite type III$_1$ factor $M$, every $\alpha \in \text{Aut}(M^{\omega})$ is pointwise inner in the sense of \cite{HS2}. 
\end{remark}

We remark that no von Neumann algebra with separable predual has strictly homogeneous state space:

\begin{proposition}\label{prop: no separable M with strictly homogeneous state space} Let $M$ be a $\sigma$-finite factor not isomorphic to $\mathbb{C}$ with strictly homogeneous state space. Then $M_*$ is not separable.
\end{proposition}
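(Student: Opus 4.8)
The plan is to reduce everything to modular theory and then isolate a unitary invariant of a normal faithful state that strict homogeneity forces to be constant across $S_{\mathrm{nf}}(M)$, while I can make it vary. By Lemma \ref{lem: strict equality of states}(1), $M$ is a type III$_1$ factor, so I assume this throughout and argue by contradiction, supposing $M_*$ separable. The invariant I would use is the spatial isomorphism class of the pair $(M_\varphi,\varphi|_{M_\varphi})$, the centralizer together with its canonical trace.

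The first step is to show that strict unitary equivalence transports centralizers. If $\psi=u\varphi u^*$ with $u\in\mathcal U(M)$, then $u\xi_\varphi$ is a cyclic and separating vector implementing $\psi$, and from $S_{\psi}(xu\xi_\varphi)=x^{*}u\xi_\varphi=uS_\varphi u^{*}(xu\xi_\varphi)$ one gets $S_\psi=uS_\varphi u^{*}$; taking polar decompositions yields $J_\psi=uJ_\varphi u^{*}$ and $\Delta_\psi=u\Delta_\varphi u^{*}$. Hence $\sigma^\psi_t=\mathrm{Ad}(u)\circ\sigma^\varphi_t\circ\mathrm{Ad}(u^{*})$, so $M_\psi=uM_\varphi u^{*}$ and $\mathrm{Ad}(u)$ carries $\varphi|_{M_\varphi}$ to $\psi|_{M_\psi}$. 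Thus strict homogeneity would force $(M_\varphi,\varphi|_{M_\varphi})$ to be independent of $\varphi\in S_{\mathrm{nf}}(M)$ up to trace-preserving isomorphism.

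The contradiction then comes from exhibiting two faithful normal states with non-isomorphic centralizers. For one of them I would invoke the known fact that a type III$_1$ factor with separable predual carries a faithful normal state $\varphi$ whose centralizer $M_\varphi$ is a type II$_1$ \emph{factor}. For the other I would build a state with a \emph{non}-factorial centralizer: fix a nontrivial projection $e\in M$, so that both $eMe$ and $(1-e)M(1-e)$ are again type III$_1$ factors, choose $\rho_1\in S_{\mathrm{nf}}(eMe)$ and $\rho_2\in S_{\mathrm{nf}}((1-e)M(1-e))$, and set $\psi(x):=\tfrac12\rho_1(exe)+\tfrac12\rho_2((1-e)x(1-e))$. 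A direct computation gives $\psi(ex)=\psi(xe)=\tfrac12\rho_1(exe)$, so $e\in M_\psi$, and the diagonal corners contribute $(eMe)_{\rho_1}$ and $((1-e)M(1-e))_{\rho_2}$ to $M_\psi$. Provided the off-diagonal corner $eM(1-e)$ contributes nothing to $M_\psi$, the projection $e$ is central in $M_\psi$ and $M_\psi\cong (eMe)_{\rho_1}\oplus((1-e)M(1-e))_{\rho_2}$ is not a factor, which is incompatible with the II$_1$-factor centralizer above.

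The main obstacle is precisely this last point: controlling the off-diagonal corner. An element $b\in eM(1-e)$ lies in $M_\psi$ exactly when it is fixed by $\sigma^\psi$, i.e.\ when $1$ is an eigenvalue of the relative modular operator attached to $\rho_1,\rho_2$ on $eH(1-e)$. Since $eMe$ and $(1-e)M(1-e)$ are type III$_1$, I would choose $\rho_1,\rho_2$ so that this relative modular operator has no eigenvalue at $1$ (a genericity argument in the spirit of Proposition \ref{prop: point spectrum is smaller than the spectrum for UP state}, exploiting that the two states can be picked with no common almost-periodic frequencies). With the off-diagonal part killed, the center of $M_\psi$ is spanned by $e$ and $1-e$, completing the contradiction. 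Separability of $M_*$ enters exactly here and in the existence of the II$_1$-factor-centralizer state; this is also what makes the argument genuinely fail for ultrapowers, where the point-spectrum behavior of the relative modular operator prevents the required eigenvalue avoidance.
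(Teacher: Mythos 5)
Your step~1 (exact unitary equivalence transports centralizers, $M_{u\varphi u^*}=uM_\varphi u^*$ with matching traces) is correct, but both pillars of the intended contradiction have genuine problems. First, the ``known fact'' you invoke --- that every type III$_1$ factor with separable predual carries a normal faithful state whose centralizer is a type II$_1$ \emph{factor} --- is not something you can cite: for general $M$ this is an open-problem-level assertion closely tied to Connes' bicentralizer problem (Haagerup's criterion yields states with factorial, indeed irreducible, centralizer only when the bicentralizer is trivial, and triviality is not known in general). What \emph{is} available is that strict homogeneity itself forces every centralizer $M_\varphi$ to be a II$_1$ factor; this is exactly Proposition \ref{prop: centralizer of a factor with strictly homogeneous state space}, whose proof is independent of the present statement, so this gap is repairable --- but only by using the hypothesis, not the literature.

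Second, and more seriously, the ``main obstacle'' you identify cannot be overcome in the way you propose. Under the standing hypothesis of strict homogeneity, \emph{no} choice of $\rho_1,\rho_2$ kills the off-diagonal corner: apply Lemma \ref{lem: strict equality of states}(2) to the two normal states $\varphi_1=\rho_1(e\,\cdot\,e)$ and $\varphi_2=\rho_2((1-e)\,\cdot\,(1-e))$, whose supports are $e$ and $1-e$, to obtain a partial isometry $u$ with $u^*u=e$, $uu^*=1-e$, $\varphi_2=u\varphi_1u^*$; since $eu=0=u(1-e)$ and $\varphi_2(ux)=\varphi_1(u^*uxu)=\varphi_1(xu)$, one checks $\psi(ux)=\psi(xu)$ for all $x\in M$, so $0\neq u\in M_\psi\cap(1-e)Me$. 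Thus your genericity/eigenvalue-avoidance argument must be run from separability \emph{alone} (as the contrapositive half of the contradiction), and the sketch you give (``no common almost-periodic frequencies'', with an appeal to Proposition \ref{prop: point spectrum is smaller than the spectrum for UP state}, which concerns ultrapower states on II$_1$ factors) does not accomplish this. It can be done: write $M\cong M\overline{\otimes}M_2(\mathbb{C})$ and take $\psi=\rho\otimes{\rm Tr}(\rho_\lambda\,\cdot\,)$ with $\lambda\notin\sigma_p(\Delta_\rho)$, which exists because $\sigma_p(\Delta_\rho)$ is countable when $L^2(M,\rho)$ is separable; then $1\otimes e_{11}$ is central in $M_\psi$. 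But at that point you have essentially rediscovered the paper's proof, which is shorter and direct: for every $\lambda\in(0,1)$, strict homogeneity produces a projection $p\in M_\varphi$ with $\varphi(p)=\lambda$ (Lemma \ref{lem: projection in centralizer}) and, via Lemma \ref{lem: strict equality of states}(2) applied to $\varphi$ and $\frac{1}{\lambda}p\varphi$, a partial isometry $v$ with $v\varphi=\frac{1}{\lambda}\varphi v$, hence $\sigma_t^\varphi(v)=\lambda^{it}v$ and $\lambda\in\sigma_p(\Delta_\varphi)$. So $\Delta_\varphi$ has uncountably many eigenvalues, its eigenvectors for distinct eigenvalues are orthogonal, and $L^2(M,\varphi)$ (hence $M_*$) cannot be separable.
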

\begin{lemma}\label{lem: projection in centralizer} Let $M$ be a $\sigma$-finite factor not isomorphic to $\mathbb{C}$ with strictly homogeneous state space, and let $\varphi \in S_{\rm{nf}}(M)$. Then for any $\lambda \in (0,1)$, there is a projection $p\in M_{\varphi}$ such that $\varphi(p)=\lambda$ holds. 
\end{lemma}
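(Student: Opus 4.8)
The plan is to exploit the freedom granted by strict homogeneity: since by hypothesis all normal faithful states on $M$ are mutually unitarily equivalent, it suffices to produce \emph{one} convenient normal faithful state $\psi$ together with a projection $p\in M_{\psi}$ satisfying $\psi(p)=\lambda$. Transporting by a unitary $u$ with $u\psi u^{*}=\varphi$ will then give a projection $upu^{*}\in M_{\varphi}$ with $\varphi(upu^{*})=\lambda$, which is what is required. So the whole problem reduces to constructing such a model state $\psi$.

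First I would record that, by Lemma \ref{lem: strict equality of states}(1), $M$ is a factor of type III$_{1}$, in particular a type III factor; hence every nonzero projection of $M$ is equivalent to $1$. Choosing any projection $e$ with $0\neq e\neq 1$ and a partial isometry $v$ implementing $e\sim e^{\perp}$, I obtain a system of $2\times 2$ matrix units $\{e_{ij}\}_{i,j=1,2}$ with $e_{11}=e$ and $e_{11}+e_{22}=1$. These yield an isomorphism $M\cong M_{2}(\mathbb{C})\overline{\otimes}N$, where $N:=e_{11}Me_{11}$ is a $\sigma$-finite von Neumann algebra and hence carries a normal faithful state $\chi$.

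Next, let $\omega_{D}$ be the faithful state on $M_{2}(\mathbb{C})$ with density matrix $D=\mathrm{diag}(\lambda,1-\lambda)$, and set $\psi:=\omega_{D}\otimes\chi$, a normal faithful state on $M$. Since the modular automorphism group of a tensor-product state factors as $\sigma_{t}^{\psi}=\sigma_{t}^{\omega_{D}}\otimes\sigma_{t}^{\chi}$, and $\sigma_{t}^{\omega_{D}}=\mathrm{Ad}(D^{it})$ fixes the diagonal matrix units, the projection $p:=e_{11}=e_{11}\otimes 1_{N}$ lies in the centralizer $M_{\psi}$, and $\psi(p)=\omega_{D}(e_{11})=\lambda$. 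Finally, applying strict homogeneity to the pair $\psi,\varphi$ I choose $u\in\mathcal{U}(M)$ with $u\psi u^{*}=\varphi$; since then $\sigma_{t}^{\varphi}=\mathrm{Ad}(u)\circ\sigma_{t}^{\psi}\circ\mathrm{Ad}(u^{*})$, one has $M_{\varphi}=uM_{\psi}u^{*}$, so $upu^{*}\in M_{\varphi}$ and $\varphi(upu^{*})=\psi(p)=\lambda$.

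I do not expect a serious obstacle here, as the argument is essentially a transport-of-structure once the type III$_{1}$ conclusion is in hand. The two points needing genuine care are (i) that the halving of $1$ into two equivalent projections is legitimate, which rests precisely on $M$ being a type III factor (so that $e\sim e^{\perp}\sim 1$), and (ii) the verification that $p$ really sits in the centralizer $M_{\psi}$, for which I would invoke the standard tensor-product formula for modular automorphism groups rather than compute $\sigma^{\psi}$ by hand. Everything else is routine.
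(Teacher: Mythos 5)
Your proof is correct and takes essentially the same route as the paper's: both produce a model state of the form ``$\mathrm{diag}(\lambda,1-\lambda)$-weighted state tensored across a copy of $M_2(\mathbb{C})$ inside $M$'', observe that the diagonal projection lies in its centralizer with value $\lambda$, and then transport to $\varphi$ by a unitary furnished by strict homogeneity. The only cosmetic difference is that the paper realizes the $M_2(\mathbb{C})$ factor externally, via a type III isomorphism $\Phi\colon M\otimes M_2(\mathbb{C})\to M$ applied to the state $\theta=\varphi\otimes\omega_D$, whereas you split $M$ internally by halving the identity into equivalent projections; both hinge on exactly the same ingredients (type III$_1$ from Lemma \ref{lem: strict equality of states}, the tensor formula for modular automorphism groups, and unitary covariance of centralizers).
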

\begin{proof}
Put $\widetilde{M}:=M\otimes M_2(\mathbb{C})$, $\theta:=\mattwo{\lambda \varphi}{0}{0}{(1-\lambda)\varphi}$, and $q:=\mattwo{1}{0}{0}{0}$.\\
It holds that $q\in \widetilde{M}_{\theta}$, and $\theta(q)=\lambda$. Since $M$ is of type III, there is an isomorphism $\Phi\colon \widetilde{M}\stackrel{\sim}{\to} M$. Then $\psi:=\theta \circ \Phi^{-1}, p':=\Phi(q)$ satisfies $p'\in M_{\psi}$ and $\psi(p')=\lambda$. Choose, by strict homogeneity of $S_{\rm{nf}}(M)$, $u\in \mathcal{U}(M)$ such that $u\psi u^*=\varphi$. Then $p:=up'u^*$ works. 
\end{proof}
\begin{proof}[Proof of Proposition \ref{prop: no separable M with strictly homogeneous state space}]
Choose $0<\lambda<1$. By Lemma \ref{lem: projection in centralizer}, there is a projection $p\in M_{\varphi}$ such that $\varphi (p)=\lambda$. 
Put $\psi:=\frac{1}{\lambda}p\varphi$. By Lemma \ref{lem: strict equality of states} (2), there is a partial isometry $v\in M$ such that $v^*v=\text{supp}(\varphi)=1$, $vv^*=\text{supp}(\psi)=p$, and $\psi=v\varphi v^*$. We see that
\eqa{
v\varphi &=(v\varphi v^*)v=\psi v=\psi (v\cdot )\\
&=\frac{1}{\lambda}p\varphi (v\cdot )=\frac{1}{\lambda}\varphi (v\cdot p)=\frac{1}{\lambda}\varphi (pv\cdot )\\
&=\frac{1}{\lambda}\varphi v.
}
Therefore by \cite[Lemma 1.6]{Takesaki3}, $\sigma_t^{\varphi}(v)=\lambda^{it}v$ holds for all $t\in \mathbb{R}$, which is equivalent to $\lambda \in \sigma_p(\Delta_{\varphi})$. 
Since $\lambda \in (0,1)$ is arbitrary, $\Delta_{\varphi}$ has uncountably many eigenvalues. This shows that $L^2(M,\varphi)$ is not separable, whence $M_*$ is not separable. 
\end{proof}
\begin{proposition}\label{prop: centralizer of a factor with strictly homogeneous state space}Let $M$ be a $\sigma$-finite factor not isomorphic to $\mathbb{C}$ with strictly homogeneous state space. Then for any $\varphi \in S_{\rm{nf}}(M)$, $M_{\varphi}$ is a factor of type {\rm{II}}$_1$.
\end{proposition}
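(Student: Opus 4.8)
The plan is to establish two facts about the finite von Neumann algebra $M_{\varphi}$, which carries the faithful normal trace $\tau=\varphi|_{M_{\varphi}}$: that it is a factor, and that it has no minimal projection. Together with finiteness these give that $M_{\varphi}$ is of type II$_1$. By Lemma \ref{lem: strict equality of states}, $M$ is of type III$_1$, so every reduction $eMe$ by a nonzero projection $e\in M_{\varphi}$ is again a $\sigma$-finite type III$_1$ factor with strictly homogeneous state space, and, since $e\in M_{\varphi}$ forces $\sigma^{\varphi_e}_t=\sigma^{\varphi}_t|_{eMe}$, its centralizer with respect to $\varphi_e=\frac{1}{\varphi(e)}\varphi|_{eMe}$ is exactly $eM_{\varphi}e$. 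This lets me invoke Lemma \ref{lem: projection in centralizer} inside corners.

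The heart of the argument is the claim that any two projections $p,q\in M_{\varphi}$ with $\varphi(p)=\varphi(q)$ are Murray--von Neumann equivalent inside $M_{\varphi}$. To prove it I would form the normal states $\psi_1=\varphi(p\,\cdot\,)/\varphi(p)$ and $\psi_2=\varphi(q\,\cdot\,)/\varphi(q)$, which (because $p,q\in M_{\varphi}$) are genuine states with supports $p,q$ and modular groups $\sigma^{\varphi}_t|_{pMp}$ and $\sigma^{\varphi}_t|_{qMq}$. By the partial isometry form of strict homogeneity, Lemma \ref{lem: strict equality of states}(2), there is a partial isometry $w$ with $w^*w=p$, $ww^*=q$, and $\psi_2=w\psi_1w^*$. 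Then $\operatorname{Ad}(w)\colon pMp\to qMq$ is a state-preserving isomorphism, hence intertwines the modular groups: $\sigma^{\varphi}_t(wxw^*)=w\sigma^{\varphi}_t(x)w^*$ for $x\in pMp$. Since $pMp$ is a factor, this forces $w^*\sigma^{\varphi}_t(w)$ to be a unitary scalar multiple of $p$, so $\sigma^{\varphi}_t(w)=\nu^{it}w$ for some $\nu>0$; the KMS relation then gives $\varphi(q)=\varphi(ww^*)=\nu\,\varphi(w^*w)=\nu\,\varphi(p)$, and as $\varphi(p)=\varphi(q)$ we get $\nu=1$, i.e.\ $w\in M_{\varphi}$. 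Thus $p\sim q$ in $M_{\varphi}$.

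Factoriality follows from this equivalence claim. If $z\in\mathcal{Z}(M_{\varphi})$ were a projection with $0<\varphi(z)=\mu<1$, I would apply Lemma \ref{lem: projection in centralizer} to the corners $zMz$ and $z^{\perp}Mz^{\perp}$ (each a type III$_1$ factor with strictly homogeneous state space, with centralizers $zM_{\varphi}z$ and $z^{\perp}M_{\varphi}z^{\perp}$) to produce $p_0\le z$ and $p_1\le z^{\perp}$ in $M_{\varphi}$ with $\varphi(p_0)=\mu-\delta$, $\varphi(p_1)=\delta$, for a small $\delta>0$ with $\delta<\min(\mu,1-\mu)$. Then $p:=p_0+p_1$ satisfies $\varphi(p)=\mu=\varphi(z)$ but $p\ne z$, since $pz^{\perp}=p_1\ne0$. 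By the claim, $p\sim z$ via some $w\in M_{\varphi}$; applying the center-valued trace $T$ and using that $z$ is central gives $T(p)=T(z)=z$, whence $T(pz^{\perp}p)=z^{\perp}T(p)=0$ forces $p\le z$ by faithfulness of $T$, and then $T(z-p)=0$ forces $p=z$ — a contradiction. Hence $\mathcal{Z}(M_{\varphi})=\mathbb{C}$.

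Finally, $M_{\varphi}$ is now a finite factor which, by Lemma \ref{lem: projection in centralizer}, contains projections of every trace $\lambda\in(0,1)$; a finite factor with a projection of non-dyadic (indeed arbitrary) trace has no minimal projection, so $M_{\varphi}$ is of type II$_1$. The main obstacle is the equivalence claim of the second paragraph, i.e.\ upgrading the abstract partial isometry furnished by strict homogeneity to one lying in the centralizer. The mechanism is that the modular-group intertwining together with factoriality of $pMp$ pins $w$ down to an eigenoperator of $\sigma^{\varphi}$, whose eigenvalue is then forced to be $1$ by equality of traces; this is precisely the point where I would invoke the relation between eigenoperators of $\sigma^{\varphi}$ and eigenvalues of $\Delta_{\varphi}$ from \cite{Takesaki3}, as already used in the proof of Proposition \ref{prop: no separable M with strictly homogeneous state space}.
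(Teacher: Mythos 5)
Your proof is correct, and its skeleton parallels the paper's: both arguments combine Lemma \ref{lem: strict equality of states} (2) with Lemma \ref{lem: projection in centralizer} applied in a corner $eMe\cong M$ (legitimate because $M$ is type III, so $eMe\cong M$ inherits strict homogeneity and $(eMe)_{\varphi_e}=eM_{\varphi}e$), the crux in both cases being that the partial isometry produced by strict homogeneity between two projections of $M_{\varphi}$ with equal $\varphi$-value actually lies in $M_{\varphi}$, and both finish with Lemma \ref{lem: projection in centralizer} to upgrade ``finite factor'' to type II$_1$. The genuine difference is how that crux is established, and which contradiction is drawn. The paper works with one specific pair --- a central projection $p\in\mathcal{Z}(M_{\varphi})$ with $s=\varphi(p)\le\frac{1}{2}$ and a $q\le 1-p$ with $\varphi(q)=s$ --- and checks $v\in M_{\varphi}$ by a few lines of direct manipulation of the functionals $\varphi v$ and $v\varphi$, using only $v(p\varphi)v^*=q\varphi$ and $p,q\in M_{\varphi}$; its contradiction is that equivalent projections of $M_{\varphi}$ have equal central supports, whereas $z_{M_{\varphi}}(q)\perp p$. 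You instead prove the stronger, reusable statement that \emph{any} two projections of $M_{\varphi}$ with the same $\varphi$-value are Murray--von Neumann equivalent in $M_{\varphi}$, via modular theory: $\mathrm{Ad}(w)$ intertwines the restricted modular flows, factoriality of $pMp$ forces $w^{*}\sigma_t^{\varphi}(w)\in\mathbb{C}p$, hence $\sigma_t^{\varphi}(w)=\nu^{it}w$, and the KMS condition together with $\varphi(p)=\varphi(q)$ pins $\nu=1$ --- the same eigenoperator mechanism from \cite{Takesaki3} that the paper itself invokes in Proposition \ref{prop: no separable M with strictly homogeneous state space}; your contradiction then runs through the center-valued trace rather than central supports. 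Both routes are sound: the paper's computation is shorter and avoids modular analysis entirely, while yours costs the intertwining and analytic-continuation machinery but isolates a cleaner general principle that would let the factoriality contradiction be run with any convenient pair of projections, not just an orthogonal one.
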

\begin{proof}It is clear that $M_{\varphi}$ is a finite von Neumann algebra. If $M_{\varphi}$ were not a factor, choose  a projection $p\in \mathcal{Z}(M_{\varphi})\setminus \{0,1\}$. We may assume that $0<s:=\varphi(p)\le \frac{1}{2}$. Then $\varphi(p^{\perp})=1-s\ge s=\varphi(p)$. Since $M$ is of type III, $(1-p)M(1-p)\cong M$. Hence by Lemma \ref{lem: projection in centralizer} applied to $\frac{1}{1-s}\varphi|_{M_{(1-p)}}$, there is a projection $q\in M_{\varphi}$ such that $q\le 1-p$ and $\varphi(q)=s$. Since $\frac{1}{s}p\varphi$ and $\frac{1}{s}q\varphi$ are normal states on $M$ with support $p$ and $q$, respectively. By Lemma \ref{lem: strict equality of states}, there is a partial isometry $v\in M$ such that $v^*v=p, vv^*=q$ and $v(p\varphi)v^*=q\varphi$.  Since $p,q\in M_{\varphi}$, we have
\eqa{
\varphi v&=\varphi (v\cdot )=\varphi (qv\cdot )=\varphi (v\cdot q)\\
&=q\varphi v=(vp\varphi v^*)v=vp\varphi p=vp\varphi\\
&=v\varphi,
}
whence $v\in M_{\varphi}$.  This shows that $p\sim q$ in $M_{\varphi}$.  However, as $q\le 1-p$, we know that $z_{M_{\varphi}}(q)\perp z_{M_{\varphi}}(p)=p$. Therefore $p\sim q$ in $M_{\varphi}$ cannot be the case. This shows that $M_{\varphi}$ is a factor. Then by Lemma \ref{lem: projection in centralizer}, $M_{\varphi}$ is a II$_1$ factor. 
\end{proof}
\subsection{Ultrapower of a Normal Faithful Semifinite Weight}\label{subsec: Ultarpower of a Normal Faithful Semifinite Weight}
Let $\mathcal{W}_{\text{nfs}}(M)$ be the set of all normal faithful semifinite weights on a $\sigma$-finite von Neumann algebra $M$, and let $E\colon M^{\omega}\ni (x_n)^{\omega}\mapsto \text{wot-}\lim_{n\to \omega}x_n\in M$ be the canonical normal faithful conditional expectation \cite[$\S$5]{Ocneanu}. 
\begin{definition}
We define $\varphi^{\omega}\in \mathcal{W}_{\text{nfs}}(M^{\omega})$ by 
\[\varphi^{\omega}:=\varphi\circ E,\ \ \ \varphi \in \mathcal{W}_{\text{nfs}}(M).\]
\end{definition}
Since both $\varphi$ and $E$ are normal and  faithful, and since $\varphi$ is semifinite, $\varphi^{\omega}\in \mathcal{W}_{\text{nfs}}(M^{\omega})$ holds. Note that this definition is in agreement with the definition of the ultrapower state $\varphi^{\omega}$ when $\varphi\in S_{\text{nf}}(M)$. We then have a following partial generalization of Theorem \ref{thm: ultrapower of modular automorphism}. 
\begin{lemma}\label{lem: modular automorphism of the ultrapower of weight}
Let $M$ be a $\sigma$-finite von Neumann algebra, and let $\varphi \in \mathcal{W}_{\rm nfs}(M)$. Then we have 
\[\sigma_t^{\varphi^{\omega}}((x_n)^{\omega})=(\sigma_t^{\varphi}(x_n))^{\omega},\ \ \ \ (x_n)^{\omega}\in M^{\omega},\ \ t\in \mathbb{R}.\]
\end{lemma}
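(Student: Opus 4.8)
The plan is to reduce the weight case to the already-established state case (Theorem~\ref{thm: ultrapower of modular automorphism}) by means of the Connes cocycle. Since $M$ is $\sigma$-finite, first fix a state $\psi\in S_{\rm nf}(M)$. For the state $\psi$ the two meanings of $\psi^{\omega}$ coincide, i.e. $\psi^{\omega}=\psi\circ E$ (by the remark following the definition of $\varphi^{\omega}$), so Theorem~\ref{thm: ultrapower of modular automorphism} applies and yields $\sigma_t^{\psi^{\omega}}((x_n)^{\omega})=(\sigma_t^{\psi}(x_n))^{\omega}$; taking constant sequences shows in particular that $\sigma_t^{\psi^{\omega}}$ globally preserves the diagonal copy of $M$ in $M^{\omega}$ and restricts there to $\sigma_t^{\psi}$. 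Next set $u_t:=(D\varphi:D\psi)_t\in\mathcal U(M)$, so that $\sigma_t^{\varphi}=\mathrm{Ad}(u_t)\circ\sigma_t^{\psi}$ on $M$ and $u_t$ is a $\sigma^{\psi}$-cocycle; viewed as a constant sequence $u_t$ is a unitary of $M^{\omega}$.

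The heart of the argument is the cocycle identity $(D\varphi^{\omega}:D\psi^{\omega})_t=u_t$ for all $t\in\mathbb R$, i.e. that the Connes cocycle between the two ultrapower weights is exactly the diagonal image of the cocycle downstairs. I would prove this with the balanced weight. On $M_2(M^{\omega})=M^{\omega}\otimes M_2(\mathbb C)$ consider $\theta:=\mathrm{diag}(\varphi^{\omega},\psi^{\omega})$ and on $M_2(M)$ the weight $\theta_0:=\mathrm{diag}(\varphi,\psi)$. The map $E_2:=E\otimes\mathrm{id}\colon M_2(M^{\omega})\to M_2(M)$ is a faithful normal conditional expectation and $\theta=\theta_0\circ E_2$. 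By Takesaki's theorem on conditional expectations compatible with the modular group, $\sigma_t^{\theta}$ leaves $M_2(M)$ globally invariant with $\sigma_t^{\theta}|_{M_2(M)}=\sigma_t^{\theta_0}$. The standard balanced-weight formula gives $\sigma_t^{\theta_0}(e_{12})=u_t\,e_{12}$ downstairs and, computed upstairs, $\sigma_t^{\theta}(e_{12})=(D\varphi^{\omega}:D\psi^{\omega})_t\,e_{12}$. Since $e_{12}\in M_2(M)$ these two expressions agree, and comparing the $(1,2)$-entries (equivalently, multiplying on the right by $e_{21}$) yields $(D\varphi^{\omega}:D\psi^{\omega})_t=u_t$.

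With the cocycle identified, the conclusion is a short computation. By Connes' cocycle theorem, $\sigma_t^{\varphi^{\omega}}=\mathrm{Ad}\big((D\varphi^{\omega}:D\psi^{\omega})_t\big)\circ\sigma_t^{\psi^{\omega}}=\mathrm{Ad}(u_t)\circ\sigma_t^{\psi^{\omega}}$, so that for $(x_n)^{\omega}\in M^{\omega}$,
\eqa{
\sigma_t^{\varphi^{\omega}}((x_n)^{\omega})&=u_t\,\sigma_t^{\psi^{\omega}}((x_n)^{\omega})\,u_t^{*}=u_t\,(\sigma_t^{\psi}(x_n))^{\omega}\,u_t^{*}\\
&=(u_t\sigma_t^{\psi}(x_n)u_t^{*})^{\omega}=(\sigma_t^{\varphi}(x_n))^{\omega},
}
where the third equality uses pointwise multiplication in the ultrapower and the last uses $\sigma_t^{\varphi}=\mathrm{Ad}(u_t)\circ\sigma_t^{\psi}$ on $M$.

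I expect the main obstacle to be the cocycle identity of the second paragraph. The temptation is to apply the desired statement directly to the balanced weight $\theta$, but $\theta$ is a weight and the weight case is precisely what is being proved, so this would be circular; the point is to route the comparison of modular groups entirely through the genuinely existing conditional expectation $E_2$ and Takesaki's theorem, invoking Theorem~\ref{thm: ultrapower of modular automorphism} only for the state $\psi$. A secondary point requiring care is the bookkeeping that $E_2=E\otimes\mathrm{id}$ is indeed the canonical faithful normal conditional expectation with $\theta=\theta_0\circ E_2$, and that the constant sequence $u_t$, lying in the diagonal $M\subset\mathcal M^{\omega}$, genuinely defines a unitary of $M^{\omega}$ (which is immediate).
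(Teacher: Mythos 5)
Your proposal is correct and takes essentially the same route as the paper: fix $\psi\in S_{\rm nf}(M)$, apply Theorem \ref{thm: ultrapower of modular automorphism} to the state $\psi$, and transfer to $\varphi$ via Connes cocycles, the crucial point being the identity $({\rm D}(\varphi\circ E):{\rm D}(\psi\circ E))_t=({\rm D}\varphi:{\rm D}\psi)_t$. The only difference is that the paper invokes this cocycle identity as a known fact, whereas you supply its standard proof via the balanced weight $\theta=\mathrm{diag}(\varphi^{\omega},\psi^{\omega})=\mathrm{diag}(\varphi,\psi)\circ(E\otimes\mathrm{id})$ together with Takesaki's theorem, which is a legitimate (indeed the usual) way to establish it.
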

\begin{proof}
Let $\psi\in S_{\text{nf}}(M)$, and let $u_t:=({\rm D}\varphi^{\omega}\colon {\rm D}\psi^{\omega})_t,\ \ (t\in \mathbb{R})$. Since $\varphi^{\omega}=\varphi\circ E$ and $\psi^{\omega}=\psi\circ E$, by Theorem \ref{thm: ultrapower of modular automorphism},  we have for $x=(x_n)^{\omega}\in M^{\omega}$ and $t\in \mathbb{R}$ that 
\eqa{
\sigma_t^{\varphi^{\omega}}((x_n)^{\omega})&=u_t\sigma_t^{\psi^{\omega}}((x_n)^{\omega})u_t^*\\
&=({\rm D}(\varphi\circ E):{\rm D}(\psi\circ E))_t(\sigma_t^{\psi}(x_n))^{\omega}({\rm D}(\varphi\circ E):{\rm D}(\psi\circ E))_t^*\\
&=(({\rm D}\varphi:{\rm D}\psi)_t\sigma_t^{\psi}(x_n)({\rm D}\varphi:{\rm D}\psi)_t^*)^{\omega}\\
&=(\sigma_t^{\varphi}(x_n))^{\omega}.
} 
This proves the lemma.
\end{proof}

 Recall that a normal faithful semifinite weight $\varphi$ on a von Neumann algebra $M$ is called {\it lacunary} if $1$ is isolated in $\sigma(\Delta_{\varphi})$. The next result will be important for the analysis of the Ocneanu ultraproduct of type III$_0$ factors. 
\begin{proposition}\label{prop: centralizer of lacunary weight}
Let $M$ be a $\sigma$-finite von Neumann algebra, and let $\varphi \in \mathcal{W}_{\rm nfs}(M)$ be lacunary. Then $(M^{\omega})_{\varphi^{\omega}}\cong (M_{\varphi})^{\omega}$ holds.  
\end{proposition}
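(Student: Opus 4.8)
The plan is to realise $(M_\varphi)^\omega$ inside $M^\omega$ as the fixed‑point algebra of $\sigma^{\varphi^\omega}$, via the obvious map $\Theta$ sending the class of a bounded sequence $(x_n)_n$ with $x_n\in M_\varphi$ to its class in $M^\omega$. By Lemma \ref{lem: modular automorphism of the ultrapower of weight} such a class is automatically $\sigma^{\varphi^\omega}$‑fixed, since each $x_n$ is $\sigma^\varphi$‑fixed; so the only real issues are (i) that bounded $M_\varphi$‑valued sequences actually belong to the normalizer $\mathcal{M}^\omega$ of $M$, so that $\Theta$ is defined; (ii) that $\Theta$ is isometric and injective; and (iii) that $\Theta$ is onto $(M^\omega)_{\varphi^\omega}$. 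Throughout I use that $\mathcal{I}_\omega$ and $\mathcal{M}^\omega$ depend only on $M$, not on the chosen state (the remark after Proposition \ref{UW1.2.2}).

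The key technical input is a weight version of Lemma \ref{lem: analiticity implies normalizer}: if $(x_n)_n$ is bounded with $x_n\in M(\sigma^\varphi,[-a,a])$ for a fixed $a\ge 0$, then $(x_n)_n\in\mathcal{M}^\omega$. The difficulty, and the main obstacle, is that $\mathcal{I}_\omega$ must be tested by a normal faithful state whereas the analyticity lives with the weight $\varphi$, which has no cyclic vector. I resolve this by producing one good state. Lacunarity gives a spectral gap $(-\delta,\delta)\cap\mathrm{Sp}(\sigma^\varphi)=\{0\}$, and hence (the standard consequence of the gap) a normal faithful $\varphi$‑preserving conditional expectation $E_\varphi\colon M\to M_\varphi$ commuting with $\sigma^\varphi$. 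Fixing a normal faithful state $\psi_0$ on the $\sigma$‑finite algebra $M_\varphi$ and setting $\psi:=\psi_0\circ E_\varphi\in S_{\mathrm{nf}}(M)$, one checks $\psi\circ\sigma_t^\varphi=\psi$, so the vector $\xi_\psi\in L^2(M,\varphi)$ representing $\psi$ in the natural cone satisfies $J_\varphi\xi_\psi=\xi_\psi$ and $\Delta_\varphi^{it}\xi_\psi=\xi_\psi$. For entire $x_n\in M(\sigma^\varphi,[-a,a])$ one has $\|\sigma_z^\varphi(x_n)\|\le C_{a,z}\|x_n\|$ and, using $\Delta_\varphi^{1/2}\xi_\psi=\xi_\psi$, the identity $x_n^*\xi_\psi=S_\varphi x_n\xi_\psi=J_\varphi\sigma^\varphi_{-i/2}(x_n)J_\varphi\,\xi_\psi$. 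This turns right‑multiplication by $x_n^*$ into left‑multiplication by the commutant element $J_\varphi\sigma^\varphi_{-i/2}(x_n)J_\varphi$; since $M(\sigma^\varphi,[-a,a])$ is $*$‑closed, applying it to $x_n$ and to $x_n^*$ shows that for $(b_n)_n\in\mathcal{I}_\omega$ all the $\sharp$‑norm conditions hold, e.g. $\|b_nx_n^*\xi_\psi\|\le C_{a,-i/2}\|x_n\|\,\|b_n\xi_\psi\|\to_\omega 0$, while the complementary conditions are immediate from $\|ab\|_\psi\le\|a\|\,\|b\|_\psi$. Hence $(x_n)_n\in\mathcal{M}^\omega$. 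Taking $a=0$ gives the special case that every bounded $M_\varphi$‑valued sequence lies in $\mathcal{M}^\omega$.

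With this in hand, since $\psi|_{M_\varphi}=\psi_0$ the $\sharp$‑seminorms of $\psi$ and $\psi_0$ agree on $M_\varphi$‑valued sequences, so $\mathcal{I}_\omega(M_\varphi)=\mathcal{I}_\omega(M)\cap\ell^\infty(\mathbb{N},M_\varphi)$ and $\mathcal{M}^\omega(M_\varphi)\subseteq\mathcal{M}^\omega(M)$. Therefore $\Theta\colon(M_\varphi)^\omega\to M^\omega$ is a well‑defined injective $*$‑homomorphism whose range lies in $(M^\omega)_{\varphi^\omega}$ by Lemma \ref{lem: modular automorphism of the ultrapower of weight}. For surjectivity, take $y\in(M^\omega)_{\varphi^\omega}$ with a representative $(z_n)_n\in\mathcal{M}^\omega(M)$, and choose $f\in L^1(\mathbb{R})$ with $\hat f(0)=1$ and $\mathrm{supp}\,\hat f\subseteq(-\delta,\delta)$. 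Because $y$ is $\sigma^{\varphi^\omega}$‑fixed, $\sigma_f^{\varphi^\omega}(y)=\hat f(0)\,y=y$; on the other hand the weight analogue of Lemma \ref{lem: integration does not change normalizer} (proved by the same Fej\'er‑kernel interchange, now using Lemma \ref{lem: modular automorphism of the ultrapower of weight} and the weight version of the analyticity lemma above) gives $\sigma_f^{\varphi^\omega}(y)=(\sigma_f^\varphi(z_n))^\omega$. By the gap, $\sigma_f^\varphi(z_n)\in M(\sigma^\varphi,(-\delta,\delta))=M_\varphi$, so $y=(\sigma_f^\varphi(z_n))^\omega$ lies in the range of $\Theta$. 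Thus $\Theta$ is a $*$‑isomorphism of $(M_\varphi)^\omega$ onto $(M^\omega)_{\varphi^\omega}$.

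To summarise the difficulty: the crux is the weight version of the analyticity lemma, i.e. controlling the $\sharp$‑norm estimates defining $\mathcal{M}^\omega$ for a weight that has no cyclic vector; the device of passing to the $\sigma^\varphi$‑invariant state $\psi=\psi_0\circ E_\varphi$ (furnished by lacunarity) and computing against its $\Delta_\varphi$‑fixed cone vector $\xi_\psi$ is exactly what makes the right‑to‑left multiplication trick of Lemma \ref{lem: analiticity implies normalizer} go through. A secondary point needing care is the interchange of the integral $\sigma_f^\varphi$ with the ultralimit in the surjectivity step, which I handle precisely as in Lemma \ref{lem: integration does not change normalizer}.
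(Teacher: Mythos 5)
Your argument stands or falls with the ``weight version'' of Lemma \ref{lem: analiticity implies normalizer}, and that lemma is false; the identity you use to prove it is incorrect. Take $M=\mathbb{B}(\ell^2)$ and $\varphi=\mathrm{Tr}$: this is a lacunary weight ($\Delta_\varphi=1$, so $\sigma(\Delta_\varphi)=\{1\}$) with $\sigma^\varphi=\mathrm{id}$, hence $M_\varphi=M$ and $M(\sigma^\varphi,[-a,a])=M$ for every $a\geq 0$. Your lemma (already its special case $a=0$) would then assert $\mathcal{M}^{\omega}=\ell^{\infty}(\mathbb{N},M)$, which fails: with matrix units $e_{ij}$ and the faithful normal state $\psi=\sum_k \lambda_k\omega_{e_k}$ (a legitimate choice of your $\psi=\psi_0\circ E_\varphi$, since here $E_\varphi=\mathrm{id}$), the bounded sequence $x_n=e_{1n}$ and $(b_n)=(e_{nn})\in\mathcal{I}_{\omega}$ satisfy $x_nb_n=e_{1n}$ and $(\|e_{1n}\|_{\psi}^{\sharp})^2=\psi(e_{nn}+e_{11})=\lambda_n+\lambda_1\not\to 0$, so $(x_n)\notin\mathcal{M}^{\omega}$. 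The same example refutes your displayed estimate $\|b_nx_n^*\xi_\psi\|\leq C_{a,-i/2}\|x_n\|\,\|b_n\xi_\psi\|$: the left side is $\|e_{n1}\xi_\psi\|=\psi(e_{11})^{1/2}=\lambda_1^{1/2}$, while the right side is $C\lambda_n^{1/2}\to 0$. The broken step is $x_n^*\xi_\psi=S_\varphi x_n\xi_\psi=J_\varphi\sigma^{\varphi}_{-i/2}(x_n)J_\varphi\xi_\psi$. The computation $\Delta_\varphi^{1/2}x_n\xi_\psi=\sigma^\varphi_{-i/2}(x_n)\xi_\psi$ is correct, but $J_\varphi\Delta_\varphi^{1/2}$ does \emph{not} implement $x\mapsto x^*$ at $\xi_\psi$: the involution at $\xi_\psi$ is $S_\psi=J_\psi\Delta_\psi^{1/2}$, and $\Delta_\psi\neq\Delta_\varphi$, so analyticity of $x_n$ for $\sigma^\varphi$ gives no control whatsoever over $\sigma^\psi$. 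In the tracial picture this is transparent: $\xi_\psi=\rho^{1/2}$ in the Hilbert--Schmidt realization, and $J_\varphi\Delta_\varphi^{1/2}(x\rho^{1/2})=(x\rho^{1/2})^*=\rho^{1/2}x^*$, which differs from $x^*\rho^{1/2}$ unless $x$ commutes with $\rho$. Since both your claim (i) and the weight analogue of Lemma \ref{lem: integration does not change normalizer} used in the surjectivity step rest on this lemma, the proof collapses.

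The failure is structural, not cosmetic: when $M_\varphi$ is semifinite but not finite, bounded $M_\varphi$-valued sequences simply do not all define elements of $M^{\omega}$ (or of $(M_\varphi)^{\omega}$), so no state-independent analyticity criterion can make your direct approach work. The paper avoids ever needing such a criterion by a two-step reduction. First it assumes $\varphi(1)<\infty$, so that $\varphi$ is a multiple of a state and $M_\varphi$ is a \emph{finite} von Neumann algebra; there your smoothing idea --- hitting a fixed point of $\sigma^{\varphi^{\omega}}$ with $\sigma_f^{\varphi}$, $\mathrm{supp}(\hat f)$ inside the spectral gap, $\hat f(0)=1$ --- is carried out on the GNS space via Corollary \ref{cor: some useful conseuences of main theorem} (2) and the gap estimate $\|\Delta_\varphi^{1/2}x_n\xi_\varphi-x_n\xi_\varphi\|\geq(1-\lambda^{1/2})\|p^{\perp}x_n\xi_\varphi\|$ with $p=1_{\{1\}}(\Delta_\varphi)$; finiteness of $M_\varphi$ is what guarantees the smoothed sequence defines an element of $(M_\varphi)^{\omega}$. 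The general lacunary case is then reduced to this one by compressing with an increasing net of projections $p_i\in M_\varphi$ with $\varphi(p_i)<\infty$, using $p_iM^{\omega}p_i\cong(p_iMp_i)^{\omega}$ and the fact that $\varphi_{p_i}$ is again lacunary. That compression argument is exactly the ingredient your proof is missing; with it, the skeleton of your surjectivity step (which is essentially the paper's Step 1) can be salvaged.
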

\begin{proof}
We first prove that $(M_{\varphi})^{\omega}\subset (M^{\omega})_{\varphi^{\omega}}$. Since $\varphi$ is lacunary, it is strictly semifinite \cite{HS2} and therefore there exists a normal faithful $\varphi$-preserving conditional expectation $E\colon M\to M_{\varphi}$. Therefore we may regard $(M_{\varphi})^{\omega}\subset M^{\omega}$. Let $x=(x_n)^{\omega}\in (M_{\varphi})^{\omega}$. Then by Lemma \ref{lem: modular automorphism of the ultrapower of weight}, we have $\sigma_t^{\varphi^{\omega}}(x)=(\sigma_t^{\varphi}(x_n))^{\omega}=x\ (t\in \mathbb{R})$, whence $x\in (M^{\omega})_{\varphi^{\omega}}$ holds. Let $0<\lambda<1$ be such that $\sigma(\Delta_{\varphi})\cap (\lambda, \lambda^{-1})=\{1\}$.\\ \\
\textbf{Step 1.} We next prove $(M^{\omega})_{\varphi^{\omega}}\subset (M_{\varphi})^{\omega}$ for the case where $\varphi(1)<\infty$.  
Let $x=(x_n)^{\omega}\in (M^{\omega})_{\varphi^{\omega}}$ with $\|x\xi_{\varphi^{\omega}}\|=1$. Then by Corollary \ref{cor: some useful conseuences of main theorem} (2), we have  
\[\Delta_{\varphi^{\omega}}^{\frac{1}{2}}x\xi_{\varphi^{\omega}}=x\xi_{\varphi^{\omega}}\Leftrightarrow \lim_{n\to \omega}\|\Delta_{\varphi}^{\frac{1}{2}}x_n\xi_{\varphi}-x_n\xi_{\varphi}\|=0.\]
Let $p:=1_{\{1\}}(\Delta_{\varphi})$ be the spectral projection of $\Delta_{\varphi}$ corresponding to the eigenvalue 1. Then by assumption, we have
\eqa{
\|\Delta_{\varphi}^{\frac{1}{2}}x_n\xi_{\varphi}-x_n\xi_{\varphi}\|&=\|\Delta_{\varphi}^{\frac{1}{2}}p^{\perp}x_n\xi_{\varphi}-p^{\perp}x_n\xi_{\varphi}\|\\
&\ge \text{min}(1-\lambda^{\frac{1}{2}}, \lambda^{-\frac{1}{2}}-1)\|p^{\perp}x_n\xi_{\varphi}\|\\
&=(1-\lambda^{\frac{1}{2}})\|p^{\perp}x_n\xi_{\varphi}\|.
}
Therefore we have
\[\lim_{n\to \omega}\|x_n\xi_{\varphi}-px_n\xi_{\varphi}\|=0.\]
Let $f\in L^1(\mathbb{R})_+$ be such that $\text{supp}(\hat{f})\subset (\log \lambda,-\log \lambda)$ and $\int_{\mathbb{R}}f(t)dt=1$. Let 
\[y_n:=\sigma_f^{\varphi}(x_n)=\int_{\mathbb{R}}f(t)\sigma_t^{\varphi}(x_n)dt,\ \ \ \ n\ge 1.\]
Since
\[\sigma_f^{\varphi}(x_n)\xi_{\varphi}=\hat{f}(\log \Delta_{\varphi})x_n\xi_{\varphi},\]
we have $\text{Sp}_{\sigma^{\varphi}}(y_n)\subset \text{Sp}_{\sigma^{\varphi}}(x_n)\cap (\log \lambda,-\log \lambda)=\{0\}$ and $y_n\in M_{\varphi}$. It is clear that $\sup_{n\ge 1}\|y_n\|\le \|f\|_1\sup_{n\ge 1}\|x_n\|<\infty$.
We have
\[px_n\xi_{\varphi}=\hat{f}(\log \Delta_{\varphi})x_n\xi_{\varphi}=y_n\xi_{\varphi},\ \ \ \ \ n\ge 1.\]
This implies that $\|x_n\xi_{\varphi}-y_n\xi_{\varphi}\|\to (n\to \omega)$. Since $\Delta_{\varphi^{\omega}}x^*\xi_{\varphi^{\omega}}=x^*\xi_{\varphi^{\omega}}$ also holds, we have also $\|x_n^*\xi_{\varphi}-y_n^*\xi_{\varphi}\|\to 0$. Since $M_{\varphi}$ is a finite von Neumann algebra, $(y_n)_n$ defines an element in $(M_{\varphi})^{\omega}$, and $x=(y_n)^{\omega}$ holds. Therefore $(M^{\omega})_{\varphi^{\omega}}\subset (M_{\varphi})^{\omega}$.\\ \\
\textbf{Step 2.} Finally, we prove $(M^{\omega})_{\varphi^{\omega}}\subset (M_{\varphi})^{\omega}$ for a general lacunary $\varphi \in \mathcal{W}_{\rm{nfs}}(M)$. Take $\lambda>0$ as in Step 1. 
Since the restriction of $\varphi$ to $M_{\varphi}$ is a semifinite trace, there exists an increasing net $\{p_i\}_{i\in I}$ of projections in $M_{\varphi}$ such that $\{p_i\}_{i\in I}$ converges strongly to 1, and $\varphi(p_i)<\infty$ for all $i\in I$. 
Let $x\in (M^{\omega})_{\varphi^{\omega}}$. Fix arbitrary $i\in I$. Identifying $p_iM^{\omega}p_i$ with $(p_iMp_i)^{\omega}$ \cite[Proposition 2.10]{MasudaTomatsu}, we may regard $p_ixp_i\in (p_iMp_i)^{\omega}$. Furthermore, as $p_i\in M_{\varphi}$ and $\varphi^{\omega}(p_i)=\varphi(p_i)<\infty$, the restriction $\varphi_{p_i}^{\omega}$ of $\varphi^{\omega}$ to $(p_iMp_i)^{\omega}$ is a normal faithful positive linear functional, and $p_iM^{\omega}p_i\cap (M^{\omega})_{\varphi^{\omega}}=((p_iMp_i)^{\omega})_{\varphi^{\omega}_{p_i}}$. It also holds that $\varphi_{p_i}^{\omega}$ is the ultrapower of $\varphi_{p_i}$. Since $\Delta_{\varphi_{p_i}}=\Delta_{\varphi}|_{p_iJ_{\varphi}p_iJ_{\varphi}L^2(M,\varphi)}$, we have $\sigma(\Delta_{\varphi_{p_i}})\cap (\lambda,\lambda^{-1})\subset \sigma(\Delta_{\varphi})\cap (\lambda,\lambda^{-1})=\{1\}$, and hence $\varphi_{p_i}$ is lacunary on $p_iMp_i$. Therefore by Step 1, we have $((p_iMp_i)^{\omega})_{\varphi_{p_i}^{\omega}}=((p_iMp_i)_{\varphi_{p_i}})^{\omega}$ holds. Therefore $p_ixp_i\in ((p_iMp_i)_{\varphi_{p_i}})^{\omega}\subset (M_{\varphi})^{\omega}$. Since $i\in I$ is arbitrary, and $p_ixp_i\to x$ strongly, we have that $x\in (M_{\varphi})^{\omega}$. Therefore $(M^{\omega})_{\varphi^{\omega}}\subset (M_{\varphi})^{\omega}$.     
\end{proof}
\begin{remark}
If $\varphi$ is not lacunary, then $(M_{\varphi})^{\omega}$ can be a proper subalgebra of $(M^{\omega})_{\varphi^{\omega}}$. 
In fact, let $\varphi$ be a normal faithful state on Araki-Woods type III$_1$ factor $R_{\infty}$ with $(R_{\infty})_{\varphi}=\mathbb{C}$. Existence of such $\varphi$ was shown by Herman and Takesaki \cite{HermanTakesaki}. By Theorem \ref{UW3.1.4}, $R_{\infty}^{\omega}$ has strictly homogeneous state space. By Proposition \ref{prop: centralizer of a factor with strictly homogeneous state space}, $(R_{\infty}^{\omega})_{\varphi^{\omega}}$ is a type II$_1$ factor. Therefore 
\[(R_{\infty}^{\omega})_{\varphi^{\omega}}\supsetneq ((R_{\infty})_{\varphi})^{\omega}=\mathbb{C}.\]
\end{remark} 

\subsection{The Golodets State $\dot{\varphi}_{\omega}$ and Tensorial Absorption of Powers Factors}\label{subsec: Golodets state}
In this section, we reinterpret the main result of Golodets' work \cite[Theorem 2.5.2]{Golodets} on the asymptotic algebra from our viewpoint. Following notations in $\S$\ref{subsec: Golodets' asymptotic algebra}, let $M$ be a factor with separable predual, and consider the asymptotic algebra $C_M^{\omega}$ induced by $\varphi \in S_{\rm{nf}}(M)$. $\varphi$ naturally induces a normal faithful state $\tilde{\varphi}=\overline{\varphi}|_{\mathscr{R}}$ on $\mathscr{R}=e_{\omega}\pi_{\rm{Gol}}(\ell^{\infty})''e_{\omega}$, hence a normal faithful state $\dot{\varphi}=\tilde{\varphi}|_{C_M^{\omega}}$. The main results of Golodets' work in \cite{Golodets} were 
\begin{itemize}
\item[(1)] to generalize the central sequence algebra $M'\cap M^{\omega}$ for type III factors and give a characterization of Araki's Property $L_{\lambda}'\ (0<\lambda<1)\colon M\cong M\overline{\otimes}R_{\lambda}$ \cite{Araki2} if and only if $\lambda$ is the eigenvalue of $\Delta_{\dot{\varphi}}$. 
\item[(2)] to show that the centralizer $(C_M^{\omega})_{\dot{\varphi}}$ plays the similar role as Connes' asymptotic centralizer $M_{\omega}$ (see Definition \ref{def: asymptotic centralizer} below), namely $M$ is McDuff if and only if $(C_M^{\omega})_{\dot{\varphi}}$ is noncommutative.   
\end{itemize}
Regarding (2), Golodets and Nessonov \cite[Lemma 1.1]{GoNe} later showed that $(C_M^{\omega})_{\dot{\varphi}}$ is indeed isomorphic to $M_{\omega}$ for a factor $M$ with separable predual.  

We show that these results can be naturally interpreted by the Ocneanu's setting. 
We start from the following observation: 
\begin{proposition}\label{prop: phi_dot does not depend on phi}
Let $M$ be a $\sigma$-finite von Neumann algebra, and let $\varphi, \psi \in S_{\rm{nf}}(M)$ such that $\varphi|_{\mathcal{Z}(M)}=\psi|_{\mathcal{Z}(M)}$. Then $\dot{\varphi}_{\omega}=\dot{\psi}_{\omega}$, where $\dot{\varphi}_{\omega}:=\varphi^{\omega}|_{M'\cap M^{\omega}}, \dot{\psi}_{\omega}:=\psi^{\omega}|_{M'\cap M^{\omega}}$. In particular, if $M$ is a $\sigma$-finite factor, then $\dot{\varphi}_{\omega}$ does not depend on $\varphi$. 
\end{proposition}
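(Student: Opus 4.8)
The plan is to reduce the statement to a single concrete assertion: for every $x=(x_n)^{\omega}\in M'\cap M^{\omega}$ one has $\lim_{n\to\omega}\varphi(x_n)=\lim_{n\to\omega}\psi(x_n)$. Indeed, since $M^{\omega}$ does not depend on the chosen faithful normal state (remark after Proposition \ref{UW1.2.2}), both $\varphi^{\omega}$ and $\psi^{\omega}$ are states on the same algebra, and by Proposition \ref{UW1.2.2} the two ultralimits above are exactly $\dot{\varphi}_{\omega}(x)$ and $\dot{\psi}_{\omega}(x)$. Setting $\chi:=\varphi-\psi\in M_*$, the hypothesis $\varphi|_{\mathcal{Z}(M)}=\psi|_{\mathcal{Z}(M)}$ says precisely that $\chi$ annihilates $\mathcal{Z}(M)$, so the goal becomes $\lim_{n\to\omega}\chi(x_n)=0$ for every representative $(x_n)_n$ of an element of $M'\cap M^{\omega}$; such a representative satisfies $\sup_n\|x_n\|<\infty$ and $\lim_{n\to\omega}\|x_na-ax_n\|_{\varphi}^{\sharp}=0$ for all $a\in M$.

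The key step is a duality argument. In $M_*$ consider the subspace $K:=\mathrm{span}\{a\cdot\omega-\omega\cdot a:\ a\in M,\ \omega\in M_*\}$, where $(a\cdot\omega)(y):=\omega(ya)$ and $(\omega\cdot a)(y):=\omega(ay)$; these are normal, as $M_*$ is an $M$-bimodule. For $x\in M$ one computes $(a\cdot\omega-\omega\cdot a)(x)=\omega(xa-ax)=\omega([x,a])$, which vanishes for all $a,\omega$ exactly when $x\in\mathcal{Z}(M)$. Hence the annihilator of $K$ in $M=(M_*)^*$ is $\mathcal{Z}(M)$, and the bipolar theorem gives that the norm closure $\overline{K}$ coincides with the pre-annihilator ${}^{\perp}\mathcal{Z}(M)=\{\eta\in M_*:\ \eta|_{\mathcal{Z}(M)}=0\}$. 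Since $\chi$ lies in this pre-annihilator, $\chi\in\overline{K}$: given $\varepsilon>0$ I can choose finitely many $a_i\in M$, $\omega_i\in M_*$ such that $\|\chi-\chi'\|<\varepsilon$ with $\chi':=\sum_i(a_i\cdot\omega_i-\omega_i\cdot a_i)$.

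It then remains to estimate $\chi(x_n)$. With $C:=\sup_n\|x_n\|$ one has $|\chi(x_n)-\chi'(x_n)|\le C\|\chi-\chi'\|<C\varepsilon$, while $\chi'(x_n)=\sum_i\omega_i([x_n,a_i])$. Centrality gives $\|[x_n,a_i]\|_{\varphi}^{\sharp}\to 0$ along $\omega$; since $\varphi$ is faithful and $M$ is $\sigma$-finite, on bounded sets the seminorm $\|\cdot\|_{\varphi}^{\sharp}$ induces the strong${}^*$ topology, for which each normal $\omega_i$ is continuous. Hence $\omega_i([x_n,a_i])\to 0$ along $\omega$, so $\lim_{n\to\omega}\chi'(x_n)=0$ and therefore $\limsup_{n\to\omega}|\chi(x_n)|\le C\varepsilon$. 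As $\varepsilon>0$ was arbitrary, $\lim_{n\to\omega}\chi(x_n)=0$, which is the equality $\dot{\varphi}_{\omega}=\dot{\psi}_{\omega}$. The factor case is immediate, since then any two faithful normal states agree on $\mathcal{Z}(M)=\mathbb{C}$.

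The main obstacle is the duality identification $\overline{K}={}^{\perp}\mathcal{Z}(M)$, i.e.\ the fact that every normal functional vanishing on the center is a norm-limit of finite sums of ``commutators'' $a\cdot\omega-\omega\cdot a$; once this is in hand, the remainder is the routine principle that normal functionals vanish asymptotically on (norm-bounded) asymptotically central sequences. A secondary technical point to treat with care is the passage from $\|\cdot\|_{\varphi}^{\sharp}$-convergence to $\omega_i$-convergence, which relies on the coincidence of $\|\cdot\|_{\varphi}^{\sharp}$ with the strong${}^*$ topology on bounded sets together with the normality of each $\omega_i$.
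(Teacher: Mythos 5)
Your proof is correct, but it follows a genuinely different route from the paper's. The paper's argument is structural and very short: it invokes Ocneanu's canonical normal faithful conditional expectation $E\colon M^{\omega}\to M$, $(x_n)^{\omega}\mapsto \text{wot-}\lim_{n\to\omega}x_n$, observes that $E$ maps $M'\cap M^{\omega}$ into $\mathcal{Z}(M)$ (by $M$-bimodularity of $E$), and uses the identities $\varphi^{\omega}=\varphi\circ E$, $\psi^{\omega}=\psi\circ E$ to conclude immediately that the two restrictions agree. You instead work entirely at the level of the predual: you reduce the claim to $\lim_{n\to\omega}\chi(x_n)=0$ for $\chi=\varphi-\psi$, prove via Hahn--Banach (the annihilator/pre-annihilator duality) that ${}^{\perp}\mathcal{Z}(M)$ is the norm-closed span of the commutator functionals $a\cdot\omega-\omega\cdot a$, and then exploit that asymptotic centrality in $\|\cdot\|_{\varphi}^{\sharp}$, which on bounded sets is the strong${}^*$ topology, kills each such commutator functional along $\omega$. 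Both arguments are sound; each step of yours checks out, including the identification $K^{\perp}=\mathcal{Z}(M)$ and the passage from $\|\cdot\|_{\varphi}^{\sharp}$-convergence to convergence of normal functionals on bounded sequences. What the paper's route buys is brevity and the structural insight that the Golodets state is literally $\varphi\circ E$ restricted to the relative commutant (which is reused elsewhere, e.g.\ in the definition of the ultrapower of a weight); what your route buys is independence from the existence of the conditional expectation $E$ — you need only Banach space duality and the standing fact about the $\sharp$-seminorm on the unit ball, so the argument would survive in settings where one does not want to develop or cite the expectation machinery first.
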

\begin{remark}
We thank Yoshimichi Ueda for the discussion which improved Proposition \ref{prop: phi_dot does not depend on phi} to the current form.
\end{remark}
\begin{proof}[Proof of Proposition \ref{prop: phi_dot does not depend on phi}]
Recall that $M^{\omega}\ni (x_n)^{\omega}\mapsto {\rm{wot-}}\lim_{n\to \omega}x_n\in M$ defines a normal faithful conditional expectation $E$. It is easy to see that $E((x_n)^{\omega})\in \mathcal{Z}(M)$ if $(x_n)^{\omega}\in M'\cap M^{\omega}$. Since $\varphi^{\omega}=\varphi\circ E$, $\psi^{\omega}=\psi \circ E$, and since $\varphi$ and $\psi$ agree on $\mathcal{Z}(M)$, we have
\[
\dot{\varphi}_{\omega}=\varphi \circ E|_{M'\cap M^{\omega}}=\psi \circ E|_{M'\cap M^{\omega}}=\dot{\psi}_{\omega}. 
\]
\end{proof}
\begin{definition}\label{def: Golodets state}
Let $M$ be a $\sigma$-finite von Neumann algebra, and let $\varphi\in S_{\rm{nf}}(M)$.  We call $\dot{\varphi}_{\omega}=\varphi^{\omega}|_{M'\cap M^{\omega}}$ the {\it Golodets state} associated with $\varphi$.
\end{definition}

Next theorem corresponds to Golodets' work (1) mentioned above. 
\begin{theorem}[Golodets]\label{thm: point spectrum and property L'}
Let $M$ be a $\sigma$-finite factor of type {\rm{III}}. Then $M\cong M\overline{\otimes}R_{\lambda}$ holds if and only if $\lambda \in \sigma_p(\Delta_{\dot{\varphi}_{\omega}})$ for some (hence any) $\varphi\in S_{{\rm{nf}}}(M)$.
\end{theorem}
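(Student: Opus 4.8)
The plan is to prove the two implications separately, using the dictionary assembled in the previous sections. By Theorem \ref{thm: identification of Golodets algebra} the central sequence algebra $M'\cap M^{\omega}$ is Golodets' asymptotic algebra $C_M^{\omega}$, and by Theorem \ref{thm: ultrapower of modular automorphism} the modular flow $\sigma_t^{\varphi^{\omega}}$ restricts on $M'\cap M^{\omega}$ to the modular flow of the Golodets state $\dot{\varphi}_{\omega}=\varphi^{\omega}|_{M'\cap M^{\omega}}$; thus the assertion is the translation into the Ocneanu picture of Golodets' \cite[Theorem 2.5.2]{Golodets}. Before the two implications I would settle the ``some (hence any)'' clause: since $M$ is a factor, Proposition \ref{prop: phi_dot does not depend on phi} gives that $\dot{\varphi}_{\omega}$ is independent of $\varphi\in S_{\mathrm{nf}}(M)$, so $\sigma_p(\Delta_{\dot{\varphi}_{\omega}})$ does not depend on $\varphi$ and may be computed with any convenient state. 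I would also note that $M'\cap M^{\omega}$ is globally invariant under $\sigma_t^{\varphi^{\omega}}$ (because $\sigma_{-t}^{\varphi}(M)=M$ and constant sequences commute with $M'\cap M^{\omega}$), so $\dot{\varphi}_{\omega}$ is a faithful normal state with modular group $\sigma_t^{\varphi^{\omega}}|_{M'\cap M^{\omega}}$; by Takesaki's criterion \cite{Takesaki3} this reduces $\lambda\in\sigma_p(\Delta_{\dot{\varphi}_{\omega}})$ to the existence of a nonzero $V\in M'\cap M^{\omega}$ with $\sigma_t^{\varphi^{\omega}}(V)=\lambda^{it}V$ for all $t$.

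For $M\cong M\overline{\otimes}R_{\lambda}\Rightarrow\lambda\in\sigma_p(\Delta_{\dot{\varphi}_{\omega}})$, I would work inside $M\overline{\otimes}R_{\lambda}$ with the product state $\varphi\otimes\varphi_{\lambda}$, where $R_{\lambda}=\bigotimes_{n}(M_2(\mathbb{C}),\tau_{\lambda})$; this is legitimate by the $\varphi$-independence above. Put $v_n:=1_M\otimes(1\otimes\cdots\otimes e_{12}^{(n)}\otimes\cdots)$, the $(1,2)$ matrix unit in the $n$-th tensor factor. Since $\sigma_t^{\tau_{\lambda}}(e_{12})=\lambda^{it}e_{12}$, the sequence $(v_n)_n$ lies in a fixed compact spectral subspace, hence in $\mathcal{M}^{\omega}$ by Lemma \ref{lem: analiticity implies normalizer}; as $v_n$ eventually commutes with any element of the algebraic tensor product, $V:=(v_n)^{\omega}\in(M\overline{\otimes}R_{\lambda})'\cap(M\overline{\otimes}R_{\lambda})^{\omega}$, and $\|V\xi\|^2=\tau_{\lambda}(e_{22})=\tfrac{1}{1+\lambda}\neq0$, so $V\neq0$. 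By Theorem \ref{thm: ultrapower of modular automorphism}, $\sigma_t^{(\varphi\otimes\varphi_{\lambda})^{\omega}}(V)=(\lambda^{it}v_n)^{\omega}=\lambda^{it}V$, and Takesaki's criterion then gives $\lambda\in\sigma_p(\Delta_{\dot{\varphi}_{\omega}})$.

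For the converse I would first extract, via Takesaki's criterion and a polar decomposition inside the $\sigma^{\varphi^{\omega}}$-invariant algebra $M'\cap M^{\omega}$, a nonzero partial isometry $V\in M'\cap M^{\omega}$ with $\sigma_t^{\dot{\varphi}_{\omega}}(V)=\lambda^{it}V$; the source and range projections $V^*V,VV^*$ then lie in the centralizer $(M'\cap M^{\omega})_{\dot{\varphi}_{\omega}}=M_{\omega}$ (Proposition \ref{prop: centralizer of ultrapower state}), with $\dot{\varphi}_{\omega}(VV^*)=\lambda\,\dot{\varphi}_{\omega}(V^*V)$ by the KMS condition. Using that $M_{\omega}$ is a finite von Neumann algebra, I would cut and pad $V$ into a $2\times2$ system of matrix units $(f_{ij})$ in $M'\cap M^{\omega}$ with $f_{11}+f_{22}=1$, $\sigma_t^{\dot{\varphi}_{\omega}}(f_{12})=\lambda^{it}f_{12}$, and $\dot{\varphi}_{\omega}(f_{11})=\tfrac{1}{1+\lambda}$, $\dot{\varphi}_{\omega}(f_{22})=\tfrac{\lambda}{1+\lambda}$, i.e.\ a copy of $(M_2(\mathbb{C}),\tau_{\lambda})$ in the central sequence algebra on which $\dot{\varphi}_{\omega}$ restricts to $\tau_{\lambda}$. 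The decisive step is then to promote this single matrix algebra to a whole copy of $R_{\lambda}$ inside $M'\cap M^{\omega}$, commuting with $M$ and carrying $\dot{\varphi}_{\omega}$ to the product state $\varphi_{\lambda}$, via the standard $\omega$-reindexation (diagonal) argument exploiting that a central sequence of central sequences is again central; finally I would convert this central embedding of $R_{\lambda}$ into the tensorial splitting $M\cong M\overline{\otimes}R_{\lambda}$.

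The main obstacle will be exactly this last step of the converse. In the tracial McDuff setting the passage from a copy of $R_{\lambda}$ in the central sequence algebra to a genuine isomorphism $M\cong M\overline{\otimes}R_{\lambda}$ is routine, but here the state must split as a tensor product, so one must retain precise control of the modular data throughout the reindexation and match $\dot{\varphi}_{\omega}$ to $\varphi_{\lambda}$ on the nose. This is precisely the content of Golodets' argument, and having established the identifications of Theorem \ref{thm: identification of Golodets algebra} together with the modular correspondence of Theorem \ref{thm: ultrapower of modular automorphism}, I would either reproduce his construction or invoke \cite[Theorem 2.5.2]{Golodets} directly.
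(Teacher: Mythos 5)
Your first implication (absorption $\Rightarrow$ eigenvalue) is correct and is essentially the paper's argument: the paper also takes the $(1,2)$ matrix units in successive tensor factors of $R_{\lambda}$, checks membership in $\mathcal{M}^{\omega}$ (by a slice-map computation where you invoke Lemma \ref{lem: analiticity implies normalizer}; both work), and applies Theorem \ref{thm: ultrapower of modular automorphism}. Your preliminaries (independence of $\varphi$ via Proposition \ref{prop: phi_dot does not depend on phi}, invariance of $M'\cap M^{\omega}$ under $\sigma^{\varphi^{\omega}}$, Takesaki's eigenvector criterion) also match the paper.

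The genuine gap is in the converse, and it is exactly the step you flag yourself. Your plan (matrix units in $M'\cap M^{\omega}$ $\to$ $\omega$-reindexation to a copy of $R_{\lambda}$ in $M'\cap M^{\omega}$ $\to$ tensorial splitting $M\cong M\overline{\otimes}R_{\lambda}$) is never carried out at its decisive point, and the proposed fallback of invoking \cite[Theorem 2.5.2]{Golodets} is not admissible: the statement being proved \emph{is} Golodets' Theorem 2.5.2 transported through the identification of Theorem \ref{thm: identification of Golodets algebra}, so citing it reduces the theorem to itself; moreover Golodets proved his result for factors with separable predual, whereas the present statement concerns arbitrary $\sigma$-finite type III factors, and the reindexation/diagonalization you describe likewise needs a separability hypothesis, so the fallback does not even cover the stated generality. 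The paper avoids constructing any copy of $R_{\lambda}$ altogether: it quotes Araki's characterization of property $L_{\lambda}'$ (Lemma \ref{lem:  ARaki's property L'}, from Takesaki's book), namely that $M\cong M\overline{\otimes}R_{\lambda}$ if and only if for every $\varepsilon>0$ and every finite family $\varphi_1,\dots,\varphi_n\in S_{\rm{nf}}(M)$ there is a nonzero $x\in M$ with $\|(\Delta_{\varphi_j}^{1/2}-\lambda^{1/2})x\xi_{\varphi_j}\|^2\le \varepsilon\sum_{j}\varphi_j(x^*x)$. Given an eigenvector $y\in M'\cap M^{\omega}$ of $\Delta_{\dot{\varphi}_{\omega}}$, the point is that by Proposition \ref{prop: phi_dot does not depend on phi} (applied with $\psi=\sum_j\varphi_j$) the \emph{same} $y$ is an eigenvector for each $\Delta_{\varphi_j^{\omega}}$ simultaneously; Corollary \ref{cor: some useful conseuences of main theorem} (2) then translates this into $\lim_{k\to\omega}\|\Delta_{\varphi_j}^{1/2}y_k\xi_{\varphi_j}-\lambda^{1/2}y_k\xi_{\varphi_j}\|=0$ for a representing sequence $(y_k)_k$, and a single well-chosen $y_k$ witnesses Araki's inequality for all $j$ at once. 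If you want to keep your architecture, you must actually prove the splitting step (in effect reproving Araki's theorem in the non-tracial setting, with the state matched to $\varphi_{\lambda}$), including the simultaneous control over finitely many arbitrary states, which your outline never addresses.
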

To prove the theorem we use the following characterization of the condition $M\cong M\overline{\otimes}R_{\lambda}$. 
\begin{lemma}{\rm{\cite[Theorem XVIII.4.1]{TakBook}}}\label{lem:  ARaki's property L'} Let $0<\lambda<1$ and let $M$ be a $\sigma$-finite factor of type {\rm{III}}. The following conditions are equivalent. 
\begin{list}{}{}
\item[{\rm{(1)}}] $M\cong M\overline{\otimes}R_{\lambda}$.
\item[{\rm{(2)}}] For any $n\in \mathbb{N}$, $\varepsilon>0$ and $\varphi_1,\cdots ,\varphi_n\in S_{{\rm{nf}}}(M)$, there exists nonzero $x\in M$ such that 
\[\|(\Delta_{\varphi_j}^{\frac{1}{2}}-\lambda^{\frac{1}{2}})x\xi_{\varphi_j}\|^2\le \varepsilon \sum_{j=1}^n\varphi_j(x^*x).\]
\end{list}
\end{lemma}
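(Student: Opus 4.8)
The plan is to prove the two implications separately, after recording the one structural fact that drives everything. In $R_\lambda=\bigotimes_{k\ge1}(M_2(\mathbb C),\tau_\lambda)$ the off‑diagonal matrix unit $v=e_{12}$ of the first factor satisfies $\sigma_t^{\psi}(v)=\lambda^{it}v$ for the product state $\psi$, hence $\Delta_\psi^{1/2}v\xi_\psi=\lambda^{1/2}v\xi_\psi$ \emph{exactly}; the KMS condition then forces $\psi(vv^*)=\lambda\,\psi(v^*v)$ with $vv^*\perp v^*v$, so $v$ spans a copy of $(M_2,\tau_\lambda)$ carrying an exact modular eigenvector of eigenvalue $\lambda^{1/2}$. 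I will also use the absorption $R_\lambda\cong R_\lambda\overline{\otimes}R_\lambda$. This gives the reduction that organizes the hard direction: for $(2)\Rightarrow(1)$ it suffices to produce \emph{any} splitting copy of $R_\lambda$ in $M$, i.e.\ an isomorphism $M\cong N\overline{\otimes}R_\lambda$, since then $M\overline{\otimes}R_\lambda\cong N\overline{\otimes}R_\lambda\overline{\otimes}R_\lambda\cong N\overline{\otimes}R_\lambda\cong M$.

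For $(1)\Rightarrow(2)$ I would realize $M\cong M\overline{\otimes}\bigotimes_{k\ge1}(R_\lambda)_k$ (using $R_\lambda\cong\bigotimes_{\mathbb N}R_\lambda$) and fix the product reference state $\chi=\omega_0\otimes\bigotimes_k\psi_0$, with $\psi_0$ the product state of $(R_\lambda)_k$. Let $v_k$ be the eigenvector $v$ placed in the first $M_2$ of the $k$‑th copy, so $\sigma_t^\chi(v_k)=\lambda^{it}v_k$ exactly. The two facts I would exploit are: (a) each $\varphi_j$, being normal, has marginals on far tensor factors converging to the product state (a standard feature of vectors in an infinite tensor product built on a reference sequence of unit vectors), so $\varphi_j(v_k^*v_k)\to\frac{1}{1+\lambda}>0$ and the Connes cocycles $u^{(j)}_t=(D\varphi_j:D\chi)_t$ become asymptotically trivial on the factors supporting $v_k$; and (b) $\sigma_t^{\varphi_j}(v_k)=u^{(j)}_t\,\lambda^{it}v_k\,(u^{(j)}_t)^*$. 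Combining (a) and (b), $\sigma_t^{\varphi_j}(v_k)\to\lambda^{it}v_k$ in $\|\cdot\|^\sharp$ uniformly on compact $t$‑sets; convolving against a summability kernel whose Fourier transform is supported near $\log\lambda$ concentrates the $\sigma^{\varphi_j}$‑spectrum of $v_k$ near $\log\lambda$ and, by a standard spectral argument using the Arveson–Connes theory of \S\ref{sec: General Backgrounds and Notations}, yields $\Delta_{\varphi_j}^{1/2}v_k\xi_{\varphi_j}\approx\lambda^{1/2}v_k\xi_{\varphi_j}$. With the lower bound from (a), taking $k$ large and $x=v_k$ verifies $(2)$.

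For $(2)\Rightarrow(1)$ I would run a McDuff‑type inductive construction. The single step: feeding a finite family of states into $(2)$ and applying polar decomposition and functional calculus to the resulting near‑eigenvector $x$ (whose $\sigma^{\varphi_j}$‑spectrum is forced to sit near $\log\lambda$) produces a partial isometry $v\in M$ with $\sigma_t^{\varphi_j}(v)\approx\lambda^{it}v$, with $vv^*$ and $v^*v$ approximately orthogonal, and with the ratio $\varphi_j(vv^*)\approx\lambda\,\varphi_j(v^*v)$ automatic from KMS; after a small cutdown this generates an approximate copy of $(M_2,\tau_\lambda)$. I would then iterate: having built an approximate $R_\lambda^{\otimes m}=:P_m\subset M$ in good modular position, I would enlarge the family of states fed into $(2)$ so as to force the next matrix‑unit system to almost commute with the finitely many generators of $P_m$ and to be almost free from them, obtaining $P_{m+1}\supset P_m$. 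A diagonalization over $m$, over $\varepsilon$, and over the enlarging state families yields a genuine subfactor $P=\overline{\bigcup_m P_m}\cong R_\lambda$.

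The hard part will be the final splitting: upgrading ``approximately free/central'' into the exact isomorphism $M\cong(P'\cap M)\overline{\otimes}P$. This is the type III analogue of the non‑trivial direction of McDuff's theorem and requires an intertwining‑by‑bounded‑sequences argument carried out with respect to the modular $\|\cdot\|^\sharp$ structure rather than a trace; the freedom to use arbitrarily many states in $(2)$ is precisely what makes the induction close and what controls the relative commutant so that $P$ splits off. Once the splitting is in hand, the absorption $R_\lambda\cong R_\lambda\overline{\otimes}R_\lambda$ recorded above promotes $M\cong(P'\cap M)\overline{\otimes}R_\lambda$ to $M\cong M\overline{\otimes}R_\lambda$, completing the equivalence.
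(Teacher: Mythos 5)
The paper offers no proof of this lemma at all: it is quoted verbatim from Takesaki \cite[Theorem XVIII.4.1]{TakBook} (the result goes back to Araki \cite{Araki2}), so your proposal can only be measured against that classical proof, and your plan is indeed a reconstruction of the Araki--Takesaki strategy. Your direction $(1)\Rightarrow(2)$ is essentially sound: the tail-marginal convergence you invoke in (a) is correct (every normal state on the standard form of $M\overline{\otimes}\bigotimes_k R_\lambda$ is a vector state, and vectors are norm-approximated by finite modifications of the reference product vector), and the passage from $\sigma_t^{\varphi_j}(v_k)\to\lambda^{it}v_k$ to the $\Delta_{\varphi_j}^{1/2}$-estimate via smoothing works -- though you should make explicit the identity $\|\Delta_{\varphi_j}^{1/2}z\xi_{\varphi_j}\|=\|z^*\xi_{\varphi_j}\|$, which is what lets you transfer the estimate from the smoothed element $\sigma_f^{\varphi_j}(v_k)$ back to $v_k$ itself despite $\Delta_{\varphi_j}^{1/2}$ being unbounded; without it, strong$^*$-closeness of the two elements controls nothing about $\Delta^{1/2}$ of their difference.

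In $(2)\Rightarrow(1)$, however, there are two genuine gaps. First, a concrete one: you assert that for the near-eigenvector $v$ the projections $vv^*$ and $v^*v$ are ``approximately orthogonal \ldots automatic from KMS.'' The ratio $\varphi(vv^*)\approx\lambda\varphi(v^*v)$ is automatic, but orthogonality is not, even for exact eigenvectors: in $M_3(\mathbb{C})$ with the state ${\rm diag}(p_1,p_2,p_3)$, $p_1/p_2=p_2/p_3=\lambda$, the element $v=e_{12}+e_{23}$ satisfies $\sigma_t(v)=\lambda^{it}v$ exactly, yet $v^*v=e_{22}+e_{33}$ and $vv^*=e_{11}+e_{22}$ overlap. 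One needs a genuine cutting lemma (present in Araki's and Takesaki's arguments) to extract from such a $v$ a partial isometry with orthogonal initial and final projections before any matrix units exist. Second, and more seriously, the step you yourself flag as ``the hard part'' -- upgrading the tower of approximately central, approximately $\tau_\lambda$-distributed matrix-unit systems to an actual isomorphism $M\cong M\overline{\otimes}R_\lambda$ -- is the entire substance of Takesaki's \S XVIII.4 and is only named, not executed; the $\|\cdot\|_\varphi^\sharp$ intertwining estimates and the infinite product of inner perturbations are where all the work lies. Note also that your framing via a relative-commutant splitting $M\cong(P'\cap M)\overline{\otimes}P$ is stronger than what the standard proof establishes or uses: as in McDuff's theorem, the isomorphism is built directly as a limit of unitarily intertwined embeddings, never by exhibiting $P$ as a tensor factor over its relative commutant, and for a type III subfactor in uncontrolled position such a splitting can simply fail. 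So the plan is the right one, but as a proof it is incomplete precisely at the theorem's core.
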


\begin{proof}[Proof of Theorem \ref{thm: point spectrum and property L'}]
(1) Assume $\lambda \in \sigma_p(\Delta_{\dot{\varphi}_{\omega}})$, and suppose $\varepsilon>0$, $n\in \mathbb{N}$ and $\varphi_1,\cdots \varphi_n\in S_{\text{nf}}(M)$ are given. Define $\psi:=\sum_{i=1}^n\varphi_i\in M_*^+$. 
 By assumption, there exists $y\in M'\cap M^{\omega}$ with $\|y\|_{\dot{\psi}}=1$ satisfying 
\[\sigma_t^{\dot{\varphi}_{\omega}}(y)=\lambda^{it}y,\ \ \ t\in \mathbb{R}.\]
Take a representative $(y_n)_n$ of $y$. By Proposition \ref{prop: phi_dot does not depend on phi}, $\dot{\varphi_j}_{\omega}=\dot{\psi}_{\omega}(=\dot{\varphi}_{\omega})$ holds for $j=1,2,\cdots ,n$. 
Note that since $\|[\sigma_t^{\varphi}(x),y]\|_{\varphi}^{\sharp}=\|[x,\sigma_{-t}^{\varphi}(y)]\|_{\varphi}^{\sharp}\ (x,y\in M)$, $M'\cap M^{\omega}$ is $\sigma_t^{\varphi^{\omega}}$-invariant thanks to Theorem \ref{thm: ultrapower of modular automorphism}. Therefore we have $\sigma_t^{\dot{\varphi_j}_{\omega}}=\sigma_t^{\varphi_j^{\omega}}|_{M'\cap M^{\omega}}\ (t\in \mathbb{R}, 1\le j\le n)$. 
This implies that 
\[\Delta_{\varphi_j^{\omega}}^{\frac{1}{2}}y\xi_{\varphi_j^{\omega}}=\lambda^{\frac{1}{2}}y\xi_{\varphi_j^{\omega}},\ \ \ \ (1\le j\le n.)\]
This means that (Corollary \ref{cor: some useful conseuences of main theorem} (2))
\[\lim_{k\to \omega}\|\Delta_{\varphi_j}^{\frac{1}{2}}y_k\xi_{\varphi_j}-\lambda^{\frac{1}{2}}y_k\xi_{\varphi_j}\|=0,\ \ \ \ (1\le j\le n).\]
Choose $k\in \mathbb{N}$ such that the following inequalities hold:
\eqa{
\|\Delta_{\varphi_j}^{\frac{1}{2}}y_k\xi_{\varphi_j}-\lambda^{\frac{1}{2}}y_k\xi_{\varphi_j}\|&\le \varepsilon (1-\varepsilon),\ \ \ \ (1\le j\le n)\\
|\|y_k\|_{\psi}^2-1|&<\varepsilon.
}
It follows that for each $1\le j\le n$ and for $x=y_k$, we have 
\eqa{
\|\Delta_{\varphi_j}^{\frac{1}{2}}x\xi_{\varphi_j}-\lambda^{\frac{1}{2}}x\xi_{\varphi_j}\|&\le \varepsilon \|x\|_{\psi}^2=\varepsilon \sum_{i=1}^n\varphi_i(x^*x).
}
By Lemma \ref{lem:  ARaki's property L'}, we have $M\cong M\overline{\otimes}R_{\lambda}$.\\  
Conversely, assume $M\cong M\overline{\otimes}R_{\lambda}$ holds. Fix $\psi \in S_{\rm{nf}}(M)$ and put $N:=M\overline{\otimes}R_{\lambda}$. Let $\varphi_{\lambda}=\bigotimes_{\mathbb{N}}\text{Tr}(\rho_{\lambda}\ \cdot\ )$ and let $x_n:=\pi^{-1}(1\otimes u_n)\in M$, where 
\[u_n:=1^{\otimes n}\otimes \begin{pmatrix}0 & 1\\ 0 & 0\end{pmatrix}\otimes 1\cdots \in R_{\lambda},\ \ \ \ n\in \mathbb{N},\]
and $\pi\colon M\stackrel{\cong}{\to}N$ is a *-isomorphism. 
Then it holds that $(x_n)_n\in \mathcal{M}^{\omega}(M)$.
 Indeed, it is clear that $\|x_n\|=1,\ n\ge 1$ and hence $(x_n)_n\in \ell^{\infty}(\mathbb{N},M)$ Let $L_{\psi}\colon M\overline{\otimes}R_{\lambda}\to R_{\lambda}$ be a left-slice map given as the extension of the map $L_{\psi}^0$ defined on the algebraic tensor product $M\odot R_{\lambda}$ by 
\[L_{\psi}^0\left (\sum_{i}a_i\otimes b_i\right ):=\sum_{i}\psi(a_i)b_i, \ \ \ \ a_i\in M,\ b_i\in R_{\lambda}.\]
$L_{\psi}$ is a normal conditional expectation (see \cite[III.2.2.6]{Blackadar}). Let $(b_n)_n\in \mathcal{I}_{\omega}(N)$. 
Using $u_n\varphi_{\lambda}=\lambda^{-1}\varphi_{\lambda}u_n$, we have
\eqa{
\|b_n\pi(x_n)\|_{\psi\otimes \varphi_{\lambda}}^2&=\psi\otimes \varphi_{\lambda}((1\otimes u_n^*)b_n^*b_n(1\otimes u_n))=\varphi_{\lambda}(L_{\psi}((1\otimes u_n^*)b_n^*b_n(1\otimes u_n)))\\
&=\varphi_{\lambda}(u_n^*L_{\psi}(b_n^*b_n)u_n)=\lambda^{-1} \varphi_{\lambda}(u_nu_n^*L_{\psi}(b_nb_n^*))\\
&\le \lambda^{-1}\|L_{\psi}(b_n^*b_n)\|_{\varphi_{\lambda}}\|\|u_nu_n^*\|_{\varphi_{\lambda}}\\
&\le \lambda^{-1}|\varphi_{\lambda} (L_{\psi}((b_n^*b_n)^2))|^{\frac{1}{2}}\ \ \ \ (L_{\psi}\text{ is a conditional expectation}),
}
and since $(b_n^*b_n)_{n=1}^{\infty}\in \mathcal{I}_{\omega}(N)$, we have 
\eqa{
\varphi_{\lambda}(L_{\psi}((b_n^*b_n)^2))&=\psi\otimes \varphi_{\lambda} ((b_n^*b_n)^2)=\|b_n^*b_n\|_{\psi\otimes \varphi_{\lambda}}^2\\
&\stackrel{n\to \omega}{\to}0.
}
Therefore $(b_n\pi(x_n))_{n=1}^{\infty}\in \mathcal{L}_{\omega}(N)$. Since $(b_n\pi(x_n))_{n=1}^{\infty}\in \mathcal{L}_{\omega}^*(N)$ automatically, we have $(b_n\pi(x_n))_{n=1}^{\infty}\in \mathcal{I}_{\omega}(N)$ Similarly, 
we have 
$(\pi(x_n)b_n)_n\in \mathcal{L}_{\omega}^*(N)$ and thus $(\pi(x_n)b_n)_n\in \mathcal{I}_{\omega}(N)$, which shows that $(\pi(x_n))_n\in \mathcal{M}^{\omega}(N)$, and hence $(x_n)_n\in \mathcal{M}^{\omega}(M)$. It is then easy to show that $(x_n)^{\omega}\in M'\cap M^{\omega}$.    
It also holds that $\sigma_t^{\psi\otimes \varphi_{\lambda}}(\pi(x_n))=\lambda^{it}\pi(x_n)$ for each $t\in \mathbb{R}$, where $\varphi_{\lambda}$ is the Powers state and $\psi$ is a normal faithful state on $M$. Therefore $\varphi:=(\psi \otimes \varphi_{\lambda})\circ \pi\in S_{{\rm{nf}}}(M)$ satisfies $\sigma_t^{\dot{\varphi}_{\omega}}((x_n)^{\omega})=\lambda^{it}(x_n)^{\omega}$.  
Therefore $\lambda \in \sigma_p (\Delta_{\dot{\varphi}_{\omega}})$ holds.\\
\end{proof}

Now recall the definition of Connes' asymptotic centralizer. 
\begin{definition}{\rm{(Connes \cite{Connes3})}}\label{def: asymptotic centralizer}
The {\it asymptotic centralizer} $M_{\omega}$ of $M$ is defined as the quotient C$^*$ algebra $\mathcal{M}_{\omega}(\mathbb{N},M)/\mathcal{I}_{\omega}(\mathbb{N},M)$, where
\[\mathcal{M}_{\omega}(\mathbb{N},M):=\left \{(x_n)_n\in \ell^{\infty}(\mathbb{N},M); \lim_{n\to \omega}\|x_n\psi-\psi x_n\|=0,\ \ \forall \psi \in M_*\right \}.\]
$M_{\omega}$ is a finite von Neumann algebra for any $M$. 
\end{definition}
Regarding Golodets' and Golodets-Nessonov's work (2) above, we prove next that $(C_M^{\omega})_{\dot{\varphi}_{\omega}}$ is nothing but $M_{\omega}$ when we identify $C_M^{\omega}$ with $M'\cap M^{\omega}$ (cf. $\S$\ref{subsec: Golodets' asymptotic algebra}, Theorem \ref{thm: identification of Golodets algebra}). 
Note that we do not need the factoriality of $M$ or the separability of the predual. 
This result will play a crucial role in answering Ueda's question in $\S$\ref{sec: Ueda's Question}.

\begin{proposition}\label{prop: centralizer of ultrapower state} Let $M$ be a $\sigma$-finite von Neumann algebra. Let $\varphi\in S_{\rm{nf}}(M)$. Then the centralizer of the Golodets state $\dot{\varphi}_{\omega}$ is $M_{\omega}$. 
\end{proposition}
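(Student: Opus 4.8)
The plan is to prove the identity at the level of representing sequences, reducing everything to a single modular estimate. The first step is the reduction of the centralizer. Since $\dot{\varphi}_{\omega}=\varphi^{\omega}|_{M'\cap M^{\omega}}$ and $M'\cap M^{\omega}$ is globally invariant under $\sigma^{\varphi^{\omega}}$ (this follows from Theorem \ref{thm: ultrapower of modular automorphism} together with $\|[\sigma_t^{\varphi}(x),y]\|_{\varphi}^{\sharp}=\|[x,\sigma_{-t}^{\varphi}(y)]\|_{\varphi}^{\sharp}$, exactly as invoked in the proof of Theorem \ref{thm: point spectrum and property L'}), Takesaki's theorem gives a $\varphi^{\omega}$-preserving conditional expectation onto $M'\cap M^{\omega}$ and hence $\sigma_t^{\dot{\varphi}_{\omega}}=\sigma_t^{\varphi^{\omega}}|_{M'\cap M^{\omega}}$. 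Thus the problem becomes the set-theoretic identity $(M'\cap M^{\omega})\cap (M^{\omega})_{\varphi^{\omega}}=M_{\omega}$. Throughout I take for granted the standard inclusion $M_{\omega}\subseteq M'\cap M^{\omega}$ (i.e.\ $\mathcal{M}_{\omega}\subseteq \mathcal{M}^{\omega}$ with central image), already recorded in the introduction.

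The inclusion $M_{\omega}\subseteq (M^{\omega})_{\varphi^{\omega}}$ is easy: given $(x_n)_n\in \mathcal{M}_{\omega}$, taking $\psi=\varphi$ in the defining condition yields $\lim_{n\to\omega}\|x_n\varphi-\varphi x_n\|=0$, and then for any $y=(y_n)^{\omega}\in M^{\omega}$ one has $|\varphi^{\omega}(xy)-\varphi^{\omega}(yx)|=|\lim_{n\to\omega}(x_n\varphi-\varphi x_n)(y_n)|\le \lim_{n\to\omega}\|x_n\varphi-\varphi x_n\|\,\|y_n\|=0$, so $x=(x_n)^{\omega}$ lies in the centralizer. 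The substance is the reverse inclusion, and its heart is the modular estimate
\[\|x\varphi-\varphi x\|\le 2\,\big\|(\Delta_{\varphi}^{1/2}-1)x\xi_{\varphi}\big\|,\qquad x\in M.\]
I would prove this by a direct computation in the GNS space: writing $\eta=x\xi_{\varphi}$ and $x^*\xi_{\varphi}=J_{\varphi}\Delta_{\varphi}^{1/2}\eta$, one evaluates $(x\varphi-\varphi x)(c)=\langle c\eta,\xi_{\varphi}\rangle-\langle c\xi_{\varphi},x^*\xi_{\varphi}\rangle$ for $\|c\|\le 1$, substitutes $\Delta_{\varphi}^{1/2}\eta=\eta+(\Delta_{\varphi}^{1/2}-1)\eta$, and uses $J_{\varphi}\xi_{\varphi}=\xi_{\varphi}$ and $J_{\varphi}MJ_{\varphi}=M'$ to check that the two leading terms cancel, leaving only inner products of the defect vector $(\Delta_{\varphi}^{1/2}-1)\eta$ against the unit-norm vectors $c\xi_{\varphi}$ and $c^*\xi_{\varphi}$.

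With this in hand, suppose $x=(x_n)^{\omega}\in (M'\cap M^{\omega})\cap (M^{\omega})_{\varphi^{\omega}}$ with representative $(x_n)_n\in \mathcal{M}^{\omega}$. The centralizer condition $\sigma_t^{\varphi^{\omega}}(x)=x$ forces $\Delta_{\varphi^{\omega}}^{1/2}x\xi_{\varphi^{\omega}}=x\xi_{\varphi^{\omega}}$, which by Corollary \ref{cor: some useful conseuences of main theorem}~(2) is precisely $\lim_{n\to\omega}\|(\Delta_{\varphi}^{1/2}-1)x_n\xi_{\varphi}\|=0$; the estimate then gives $\lim_{n\to\omega}\|x_n\varphi-\varphi x_n\|=0$. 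It remains to upgrade commutation with $\varphi$ to commutation with every $\psi\in M_*$, and this is where asymptotic centrality $x\in M'\cap M^{\omega}$ (i.e.\ $\lim_{n\to\omega}\|[x_n,a]\|_{\varphi}^{\sharp}=0$ for all $a\in M$) enters. For $\psi_a:=\omega_{a\xi_{\varphi}}$ and $\|c\|\le 1$ one writes $(x_n\psi_a-\psi_a x_n)(c)=\varphi\big(a^*(cx_n-x_nc)a\big)$ and moves $x_n$ past the fixed elements $a,a^*$ using $|\varphi(\,\cdot\,)|\le \|\cdot\|_{\varphi}\|\cdot\|_{\varphi}$; the errors are bounded by $\|[x_n,a]\|_{\varphi}$ and $\|[x_n,a^*]\|_{\varphi}$ uniformly in $c$, while the main term becomes $(x_n\varphi-\varphi x_n)(a^*ca)$. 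Hence $\lim_{n\to\omega}\|x_n\psi_a-\psi_a x_n\|\le \|a\|^2\lim_{n\to\omega}\|x_n\varphi-\varphi x_n\|=0$. Since the functionals $\psi_a$ span a norm-dense subspace of $M_*$ (as $M\xi_{\varphi}$ is dense) and $(x_n)_n$ is bounded, an $\varepsilon$-approximation extends this to all $\psi\in M_*$, so $(x_n)_n\in \mathcal{M}_{\omega}$ and $x\in M_{\omega}$.

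I expect the main obstacle to be the modular estimate itself, rather than the ultraproduct bookkeeping: one must track the antilinear $J_{\varphi}$ and the a priori unbounded $\Delta_{\varphi}^{1/2}$ on the non-tracial GNS space, verify the exact cancellation of the two leading terms, and confirm that every error is genuinely controlled by the single defect $\|(\Delta_{\varphi}^{1/2}-1)x\xi_{\varphi}\|$ (so that no control on $x^*\xi_{\varphi}$ is needed). By comparison, the passage from $\varphi$ to arbitrary $\psi$ via the density and uniform-boundedness argument, and the initial reduction of the centralizer, are routine.
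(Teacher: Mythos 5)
Your proof is correct, but it follows a different route than the paper's main argument at the decisive step. Both proofs share the same skeleton: reduce via $\sigma_t^{\dot{\varphi}_{\omega}}=\sigma_t^{\varphi^{\omega}}|_{M'\cap M^{\omega}}$, do the easy inclusion by testing against $y\in M^{\omega}$, then show the fixed-point condition forces $\lim_{n\to\omega}\|x_n\varphi-\varphi x_n\|=0$ and upgrade to all of $M_*$. Where you diverge is in how that sequence-level commutation is extracted. The paper invokes its Lemma \ref{lem: characterization of the centralizer1}, the exact norm identity $\|(x_n)^{\omega}\varphi^{\omega}-\varphi^{\omega}(y_n)^{\omega}\|=\lim_{n\to\omega}\|x_n\varphi-\varphi_n y_n\|$, whose proof rests on the structural decomposition $\ell^{\infty}=\mathcal{M}^{\omega}+\mathcal{L}_{\omega}+\mathcal{L}_{\omega}^*$ from the Groh--Raynaud corner analysis (Corollary \ref{cor: decomposition of ell^infty}); you instead use the one-algebra inequality $\|x\varphi-\varphi x\|\le 2\|(\Delta_{\varphi}^{1/2}-1)x\xi_{\varphi}\|$ together with Corollary \ref{cor: some useful conseuences of main theorem}~(2). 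Your inequality is correct and your cancellation sketch goes through: with $\delta=(\Delta_{\varphi}^{1/2}-1)x\xi_{\varphi}$ one gets $(x\varphi-\varphi x)(c)=-\langle\delta,c^*\xi_{\varphi}\rangle-\langle\delta,J_{\varphi}c\xi_{\varphi}\rangle$, so only the single defect vector is needed, as you anticipated. In fact this is exactly the alternative the authors flag in the Remark immediately following their proof, where the inequality is attributed to \cite[Lemma 2.8 (a)]{Haagerup3}; so you have independently reconstructed (and proved from scratch) the remarked variant rather than the printed proof. What each approach buys: the paper's lemma is a sharper, two-sided isometric statement that is reused elsewhere (notably in the proof of Theorem \ref{thm: solution to Ueda problem}), at the cost of depending on the ultraproduct decomposition machinery; your route is more elementary and localized -- a single GNS-space estimate -- and, since you also prove by hand the step the paper cites as \cite[Lemma XIV.3.4 (ii)]{TakBook} (via density of the functionals $\omega_{a\xi_{\varphi}}$ and uniform boundedness of $\psi\mapsto x_n\psi-\psi x_n$), your argument is essentially self-contained. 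One small imprecision: in the upgrade step, the error term $\varphi([x_n,a^*]ca)$ is controlled by $\|[x_n,a^*]^*\|_{\varphi}$ rather than $\|[x_n,a^*]\|_{\varphi}$, but this is harmless since asymptotic centrality gives convergence in the $\sharp$-norm, which dominates both.
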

 Note that Proposition \ref{prop: centralizer of ultrapower state} gives an alternative proof of the fact \cite[Theorem 2.9 (1)]{Connes3} that $M_{\omega}$ is a (finite) von Neumann algebra.
\begin{lemma}\label{lem: characterization of the centralizer1}
Let $(M_n,\varphi_n)_n$ be a sequence of pairs of $\sigma$-finite von Neumann algebras and normal faithful states. Let $(x_n)_n, (y_n)_n\in \mathcal{M}^{\omega}(M_n,\varphi_n)$. Then we have 
\[\|(x_n)^{\omega}(\varphi_n)^{\omega}-(\varphi_n)^{\omega}(y_n)^{\omega}\|=\lim_{n\to \omega}\|x_n\varphi_n-\varphi_ny_n\|\]
In particular, $(x_n)^{\omega}\in ((M_n,\varphi_n)^{\omega})_{\varphi^{\omega}}$ holds if and only if $\lim_{n\to \omega}\|x_n\varphi_n-\varphi_nx_n\|=0$ holds.
\end{lemma}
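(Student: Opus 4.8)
The plan is to compute the norm of the normal functional $\Omega := (x_n)^{\omega}\varphi^{\omega} - \varphi^{\omega}(y_n)^{\omega}$ on $M^{\omega}=(M_n,\varphi_n)^{\omega}$ by pairing it against elements of $M^{\omega}$ and transferring the supremum down to the level of the sequences $(x_n\varphi_n-\varphi_ny_n)_n$. First I would record the basic pairing formula: for $(b_n)_n\in\mathcal{M}^{\omega}(M_n,\varphi_n)$, using Proposition \ref{UW1.2.2} and the convention $a\psi(z)=\psi(za)$, $\psi a(z)=\psi(az)$,
\[\Omega\big((b_n)^{\omega}\big)=\lim_{n\to\omega}\big[\varphi_n(b_nx_n)-\varphi_n(y_nb_n)\big]=\lim_{n\to\omega}(x_n\varphi_n-\varphi_ny_n)(b_n).\]
A preliminary check, via the Cauchy--Schwarz estimates $|\varphi_n(i_nx_n)|\le\|x_n\xi_{\varphi_n}\|\,\|i_n^*\xi_{\varphi_n}\|$ and $|\varphi_n(y_ni_n)|\le\|y_n\|\,\|i_n\xi_{\varphi_n}\|$, shows that the right-hand side is unchanged if $(b_n)_n$ is perturbed by an element of $\mathcal{I}_{\omega}$; this makes the pairing depend only on the class $(b_n)^{\omega}$ and lets me choose convenient representatives later. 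The identity then splits into two inequalities.

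For the direction $\|\Omega\|\le\lim_{n\to\omega}\|x_n\varphi_n-\varphi_ny_n\|$: given $(b_n)^{\omega}\in M^{\omega}$ with $\|(b_n)^{\omega}\|\le 1$, I would use that the quotient norm on $M^{\omega}=\mathcal{M}^{\omega}/\mathcal{I}_{\omega}$ is the infimum over representatives to pick one with $\lim_{n\to\omega}\|b_n\|\le 1+\varepsilon$. Then the pairing formula together with $|(x_n\varphi_n-\varphi_ny_n)(b_n)|\le\|x_n\varphi_n-\varphi_ny_n\|\,\|b_n\|$ gives $|\Omega((b_n)^{\omega})|\le(1+\varepsilon)\lim_{n\to\omega}\|x_n\varphi_n-\varphi_ny_n\|$; taking the supremum over the unit ball of $M^{\omega}$ and letting $\varepsilon\to0$ yields this inequality.

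The harder direction is $\lim_{n\to\omega}\|x_n\varphi_n-\varphi_ny_n\|\le\|\Omega\|$, and this is where Corollary \ref{cor: decomposition of ell^infty} enters. For each $n$ choose $a_n\in M_n$ with $\|a_n\|\le1$ nearly attaining $\|x_n\varphi_n-\varphi_ny_n\|$, so that $\lim_{n\to\omega}(x_n\varphi_n-\varphi_ny_n)(a_n)=\lim_{n\to\omega}\|x_n\varphi_n-\varphi_ny_n\|$. By Corollary \ref{cor: decomposition of ell^infty} write $a_n=b_n+c_n+d_n$ with $(b_n)_n\in\mathcal{M}^{\omega}$, $(c_n)_n\in\mathcal{L}_{\omega}$, $(d_n)_n\in\mathcal{L}_{\omega}^*$ and $\|(b_n)^{\omega}\|\le\lim_{n\to\omega}\|a_n\|\le1$. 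The crux is that the $\mathcal{L}_{\omega}$- and $\mathcal{L}_{\omega}^*$-contributions to $(x_n\varphi_n-\varphi_ny_n)(\cdot)$ vanish along $\omega$. For $(c_n)_n\in\mathcal{L}_{\omega}$: by Corollary \ref{cor: M^{omega} cap (L+L^*)=I}(1) one has $(c_nx_n)_n\in\mathcal{L}_{\omega}$, so $\varphi_n(c_nx_n)=\langle c_nx_n\xi_{\varphi_n},\xi_{\varphi_n}\rangle\to0$, while $\varphi_n(y_nc_n)\to0$ directly from $\|c_n\xi_{\varphi_n}\|\to0$ and boundedness of $y_n$. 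Symmetrically, for $(d_n)_n\in\mathcal{L}_{\omega}^*$ I use the defining condition $(d_n^*)_n\in\mathcal{L}_{\omega}$ to get $\varphi_n(d_nx_n)=\langle x_n\xi_{\varphi_n},d_n^*\xi_{\varphi_n}\rangle\to0$, and $\mathcal{M}^{\omega}\mathcal{L}_{\omega}^*\subset\mathcal{L}_{\omega}^*$ to get $(d_n^*y_n^*)_n\in\mathcal{L}_{\omega}$, whence $\varphi_n(y_nd_n)\to0$. Thus $\lim_{n\to\omega}(x_n\varphi_n-\varphi_ny_n)(a_n)=\Omega((b_n)^{\omega})$ with $\|(b_n)^{\omega}\|\le1$, so the left side is $\le\|\Omega\|$. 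I expect the main obstacle to be precisely this four-fold vanishing check — correctly matching whether one needs $\mathcal{L}_{\omega}$ or $\mathcal{L}_{\omega}^*$ membership and invoking the appropriate absorption relation of Corollary \ref{cor: M^{omega} cap (L+L^*)=I}(1) — since it is exactly here that the normalizer hypotheses on $(x_n)_n$ and $(y_n)_n$ are used.

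Finally, the ``in particular'' statement is the special case $y_n=x_n$: since $(x_n)^{\omega}\in(M^{\omega})_{\varphi^{\omega}}$ means precisely $(x_n)^{\omega}\varphi^{\omega}=\varphi^{\omega}(x_n)^{\omega}$, i.e. $\Omega=0$, the norm identity immediately gives the equivalence with $\lim_{n\to\omega}\|x_n\varphi_n-\varphi_nx_n\|=0$.
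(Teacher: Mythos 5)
Your proposal is correct and follows essentially the same route as the paper's proof: the same pairing formula, the same lifting argument for the easy inequality (the paper uses exact unit-ball lifting for the C$^*$-quotient where you allow a $1+\varepsilon$ loss, an immaterial difference), and for the hard inequality the same decomposition $a_n=b_n+c_n+d_n$ from Corollary \ref{cor: decomposition of ell^infty} with the four vanishing terms handled exactly as in the paper via Corollary \ref{cor: M^{omega} cap (L+L^*)=I}~(1). No gaps.
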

\begin{proof}We use abbreviated notations as $\ell^{\infty},\mathcal{M}^{\omega},\mathcal{L}_{\omega},\mathcal{I}_{\omega}$ as in $\S$\ref{subsec: Relation ship between Ocneanu and Rayanud}. 
Put $C_1:=\|(x_n)^{\omega}(\varphi_n)^{\omega}-(\varphi_n)^{\omega}(y_n)^{\omega}\|$ and $C_2:=\lim_{n\to \omega}\|x_n\varphi_n-\varphi_ny_n\|$. 
Let $\varepsilon>0$, and choose $a\in {\rm{Ball}}((M_n,\varphi_n)^{\omega}))$ such that \[|\nai{a}{(x_n)^\omega (\varphi_n)^{\omega}-(\varphi_n)^{\omega}(y_n)^{\omega}}|>C_1-\varepsilon.\]
Since $(M_n,\varphi_n)^{\omega}$ is a quotient of $\mathcal{M}^{\omega}$, we may find $(a_n)_n\in \mathcal{M}^{\omega}$ with $a=(a_n)^{\omega}$, such that $\|(a_n)_n\|=\sup_{n\ge 1}\|a_n\|\le 1$. 

Therefore we have
\[C_1-\varepsilon<\lim_{n\to \omega}|\nai{a_n}{x_n\varphi_n-\varphi_ny_n}|\le \lim_{n\to \omega}\|x_n\varphi_n-\varphi_ny_n\|.\]
Since $\varepsilon>0$ is arbitrary, we have $C_1\le C_2$.

To prove $C_2\le C_1$, let $a_n\in {\rm{Ball}}(M_n)\ (n\in \mathbb{N})$ be such that 
\begin{equation}
|\nai{a_n}{x_n\varphi_n-\varphi_ny_n}|>\|x_n\varphi_n-\varphi_ny_n\|-\frac{1}{n}. \label{eq: a_n almost attains the norm} 
\end{equation}
By Proposition \ref{cor: decomposition of ell^infty}, there exists $(b_n)_n\in \mathcal{M}^{\omega}$, $(c_n)_n\in \mathcal{L}_{\omega}$, and $(d_n)_n\in \mathcal{L}_{\omega}^*$ such that $a_n=b_n+c_n+d_n (n\in \mathbb{N})$ and $\|(b_n)^{\omega}\|\le \lim_{n\to \omega}\|a_n\|\le 1$. 
It follows that
\begin{align}
\nai{a_n}{x_n\varphi_n-\varphi_ny_n}&=\nai{b_n}{x_n\varphi_n-\varphi_ny_n}+\nai{c_nx_n\xi_{\varphi_n}}{\xi_{\varphi_n}}-\nai{c_n\xi_{\varphi_n}}{y_n^*\xi_{\varphi_n}}\notag \\
&\ \ \ +\nai{x_n\xi_{\varphi_n}}{d_n^*\xi_{\varphi_n}}-\nai{\xi_{\varphi_n}}{d_n^*y_n^*\xi_{\varphi_n}}\label{eq: decompose by Momega and Lomega}.
\end{align}
Since $(c_n)_n\in \mathcal{L}_{\omega}$ and $(d_n)_n\in \mathcal{L}_{\omega}^*$, the third and the fourth term in the right hand side of Eq. (\ref{eq: decompose by Momega and Lomega}) will vanish as $n\to \omega$.  
Also, as $(x_n)_n\in \mathcal{M}^{\omega}$, Corollary \ref{cor: M^{omega} cap (L+L^*)=I} (1) implies that $(c_nx_n)_n\in \mathcal{L}_{\omega}$ and $(d_n^*y_n^*)_n\in \mathcal{L}_{\omega}$, whence the second and the fifth term will vanish as $n\to \omega$. Therefore we have
\begin{equation}
\lim_{n\to \omega}|\nai{a_n}{x_n\varphi_n-\varphi_ny_n}|=\lim_{n\to \omega}|\nai{b_n}{x_n\varphi_n-\varphi_ny_n}|. \label{eq: norm is also attained by b_n}
\end{equation}
Then by Eqs. (\ref{eq: a_n almost attains the norm}) and (\ref{eq: norm is also attained by b_n}), we have 
\eqa{
\lim_{n\to \omega}\|x_n\varphi_n-\varphi_ny_n\|&\le \lim_{n\to \omega}|\nai{b_n}{x_n\varphi_n-\varphi_ny_n}|. \label{eq: norm is also attained by b_n}\\
&=\nai{(b_n)^{\omega}}{(x_n)^{\omega}(\varphi_n)^{\omega}-(\varphi_n)^{\omega}(y_n)^{\omega}}\\
&\le \|(x_n)^{\omega}(\varphi_n)^{\omega}-(\varphi_n)^{\omega}(y_n)^{\omega}\|,
}
whence $C_2\le C_1$. This finishes the proof.   
\end{proof}

\begin{proof}[Proof of Proposition \ref{prop: centralizer of ultrapower state}]
For $M_{\omega}\subset (M'\cap M^{\omega})_{\dot{\varphi}_{\omega}}$, let $(x_n)^{\omega}\in M_{\omega}$. Then for $(y_n)^{\omega}\in M^{\omega}$, we have 
\eqa{
|\varphi(y_nx_n-x_ny_n)|&=|[x_n,\varphi](y_n)|\le \|y_n\|\cdot \|[x_n,\varphi]\|\\
&\stackrel{n\to \omega}{\to}0.
}
Hence $(x_n)^{\omega}\in (M^{\omega})_{\varphi^{\omega}}\cap (M'\cap M^{\omega})\subset (M'\cap M^{\omega})_{\dot{\varphi}_{\omega}}$ holds.\\
For $M_{\omega}\supset (M'\cap M^{\omega})_{\dot{\varphi}_{\omega}}$, let $(x_n)^{\omega}\in (M'\cap M^{\omega})_{\dot{\varphi}_{\omega}}$. Since $\sigma_t^{\dot{\varphi}_{\omega}}=\sigma_t^{\varphi^{\omega}}|_{M'\cap M^{\omega}}\ (t\in \mathbb{R})$ (see the proof of Theorem \ref{thm: point spectrum and property L'}), we have 
$\sigma_t^{\varphi^{\omega}}((x_n)^{\omega})=(x_n)^{\omega}\ (t\in \mathbb{R})$. Therefore by 
By Lemma \ref{lem: characterization of the centralizer1}, we have 
\[(x_n)^{\omega}\varphi^{\omega}=\varphi^{\omega}(x_n)^{\omega}\Leftrightarrow \lim_{n\to \omega}\|x_n\varphi-\varphi x_n\|=0.\]

Then by \cite[Lemma XIV.3.4 (ii)]{TakBook},  $(x_n)^{\omega}\in M_{\omega}$ holds.  
\end{proof}
\begin{remark}
The equivalence $(x_n)^{\omega}\varphi^{\omega}=\varphi^{\omega}(x_n)^{\omega}\Leftrightarrow \lim_{n\to \omega}\|x_n\varphi-\varphi x_n\|=0$ can be seen using Corollary \ref{cor: some useful conseuences of main theorem} (2) and  \cite[Lemma 2.8 (a)]{Haagerup3} instead. 
\end{remark}
\section{Ueda's Question: $M_{\omega}=\mathbb{C}\stackrel{?}{\Rightarrow }M'\cap M^{\omega}=\mathbb{C}$}\label{sec: Ueda's Question}
Let $M$ be a $\sigma$-finite von Neumann algebra. Connes \cite{Connes3} defined the asymptotic centralizer $M_{\omega}$ (see Definition \ref{def: asymptotic centralizer}) as a generalization of $M'\cap M^{\omega}$ for the case of type II$_1$ factor. 
It is known that if $M$ is $\sigma$-finite, and if $(x_n)^{\omega}\in M'\cap M^{\omega}$ satisfies $\lim_{n\to \omega}\|x_n\varphi-\varphi x_n\|=0$ for one $\varphi \in S_{\rm{nf}}(M)$, then $(x_n)^{\omega}\in M_{\omega}$ (see \cite[Lemma XIV.3.4 (ii)]{TakBook}). Therefore the existence of a normal faithful tracial state shows that $M'\cap M^{\omega}=M_{\omega}$ for a finite von Neumann algebra. The same is true for type II$_{\infty}$ factors. However, for type III factors, it is often the case that $M_{\omega}\subsetneq M'\cap M^{\omega}$. 
\begin{example}\label{ex: Takesaki's example?}The following example has been known to experts. We add it for the reader's convenience. Let $(R_{\lambda}, \varphi)=\bigotimes_{n\in \mathbb{N}}(M_2(\mathbb{C}), \text{Tr}(\rho_{\lambda}\cdot ))$ be the Powers factor of type III$_{\lambda} (0<\lambda<1)$, where $\rho_{\lambda}=\text{diag}(\frac{\lambda}{1+\lambda},\frac{1}{1+\lambda})$. Let 
\[u_n:=1^{\otimes n}\otimes \mattwo{0}{1}{0}{0}\otimes 1\otimes \cdots\in R_{\lambda},\ \ \ \ \ \ n\ge 1.\]
Then $(u_n)_n\in \mathcal{M}^{\omega}(R_{\lambda})$ and $(u_n)^{\omega}\in R_{\lambda}'\cap R_{\lambda}^{\omega}$. On the other hand, we have 
\[\varphi u_n=\lambda u_n\varphi,\ \ \ \ \ \ \ n\in \mathbb{N}.\]
Therefore $\|u_n\varphi-\varphi u_n\|=(1-\lambda)\neq 0$ ($n\in \mathbb{N}$), and hence $(u_n)^{\omega}\notin (R_{\lambda})_{\omega}$. Moreover, $R_{\lambda}'\cap R_{\lambda}^{\omega}$ is a type III$_{\lambda}$ factor. To see this, by \cite[Proposition 1]{Takesaki4}, $(R_{\lambda})_{\omega}$ is a type II$_1$ factor. Therefore by Proposition \ref{prop: centralizer of ultrapower state}, the centralizer of the Golodets state $\dot{\varphi}_{\omega}=\varphi^{\omega}|_{R_{\lambda}'\cap (R_{\lambda})^{\omega}}$ is a factor, whence by Corollary \ref{cor: some useful conseuences of main theorem} (3), we have
\eqa{
\Gamma(\sigma^{\dot{\varphi}_{\omega}})&=\text{Sp}(\sigma^{\dot{\varphi}_{\omega}})=\log (\sigma(\Delta_{\dot{\varphi}_{\omega}})\setminus \{0\})\\
&\subset \log (\sigma(\Delta_{\varphi^{\omega}})\setminus \{0\})=\log (\sigma(\Delta_{\varphi})\setminus \{0\})\\
&=(\log \lambda)\mathbb{Z}.
}
On the other hand, we have $(\log \lambda)\mathbb{Z}\subset \text{Sp}(\sigma^{\dot{\varphi}_{\omega}})$. Therefore as $\Gamma(\sigma^{\dot{\varphi}_{\omega}})=\log (S(R_{\lambda}'\cap R_{\lambda}^{\omega})\setminus \{0\})$, we have
\[S(R_{\lambda}'\cap R_{\lambda}^{\omega})=\{\lambda^n;n\in \mathbb{Z}\}\cup \{0\}.\]
This proves that $R_{\lambda}'\cap R_{\lambda}^{\omega}$ is a type III$_{\lambda}$ factor.
\end{example}
In spite of the above example, in \cite[$\S$5.2]{Ueda1}, Ueda asked whether $M_{\omega}=\mathbb{C}$ implies $M'\cap M^{\omega}=\mathbb{C}$. We prove that the answer to his question is affirmative when $M$ has separable predual. 

\begin{theorem}\label{thm: solution to Ueda problem}Let $M$ be a von Neumann algebra with a separable predual for which $M_{\omega}=\mathbb{C}$ holds. Then $M'\cap M^{\omega}=\mathbb{C}$ holds.
\end{theorem}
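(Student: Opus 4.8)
The plan is to argue by contradiction after reducing to the type ${\rm III}$ case. First, $M_{\omega}=\mathbb{C}$ forces $M$ to be a factor: any non-scalar $z\in\mathcal{Z}(M)$ gives the constant central sequence $(z)_n\in\mathcal{M}_{\omega}(\mathbb{N},M)$, representing a non-scalar element of $M_{\omega}$. If $M$ is semifinite, let $\tau$ be a normal faithful (semifinite) trace; since $\tau$ is central, $\lim_{n\to\omega}\|x_n\tau-\tau x_n\|=0$ for every $(x_n)_n$, so $M'\cap M^{\omega}=M_{\omega}=\mathbb{C}$ by \cite[Lemma XIV.3.4 (ii)]{TakBook} and we are done. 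Hence assume $M$ is of type ${\rm III}$, put $P:=M'\cap M^{\omega}$ and $\phi:=\dot{\varphi}_{\omega}$, and recall from Proposition \ref{prop: centralizer of ultrapower state} that $P_{\phi}=M_{\omega}=\mathbb{C}$. Suppose, toward a contradiction, that $P\neq\mathbb{C}$.

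I claim $P$ is then a type ${\rm III}$ factor. Indeed $\mathcal{Z}(P)\subset P_{\phi}=\mathbb{C}$, so $P$ is a factor; and a semifinite factor $\neq\mathbb{C}$ carrying a normal faithful state $\tau(h\,\cdot\,)$ always has non-trivial centralizer, since any nontrivial spectral projection of the density $h$ lies in $P_{\phi}=\{h\}'\cap P$. Thus $P$ is of type ${\rm III}$. Because $P_{\phi}$ is a factor, $\Gamma(\sigma^{\phi})={\rm Sp}(\sigma^{\phi})=\log(\sigma(\Delta_{\phi})\setminus\{0\})$ and $S(P)=\exp(\Gamma(\sigma^{\phi}))\cup\{0\}$. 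Moreover, since $M'\cap M^{\omega}$ is $\sigma_t^{\varphi^{\omega}}$-invariant with $\sigma_t^{\phi}=\sigma_t^{\varphi^{\omega}}|_{P}$ (Theorem \ref{thm: ultrapower of modular automorphism}), the operator $\Delta_{\phi}$ is the restriction of $\Delta_{\varphi^{\omega}}$ to $L^2(P,\phi)$, whence $\sigma(\Delta_{\phi})\subset\sigma(\Delta_{\varphi^{\omega}})=\sigma(\Delta_{\varphi})$ by Corollary \ref{cor: some useful conseuences of main theorem}(3).

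I would next eliminate the types ${\rm III}_0$ and ${\rm III}_{\lambda}$ ($0<\lambda<1$) for $P$. If $P$ were type ${\rm III}_0$, then $S(P)=\{0,1\}$ gives $\sigma(\Delta_{\phi})\setminus\{0\}=\{1\}$; as $\Delta_{\phi}$ is injective this forces $\Delta_{\phi}=1$, so $\phi$ is tracial and $P$ semifinite, a contradiction. (Alternatively, when $M$ itself is type ${\rm III}_0$ one may invoke Proposition \ref{prop: no difference between M_omega and M'cap Momega} directly.) If $P$ were type ${\rm III}_{\lambda}$, then $\sigma(\Delta_{\phi})=\{\lambda^n:n\in\mathbb{Z}\}\cup\{0\}$ is discrete, so $\lambda$ is an isolated point of the spectrum of the self-adjoint operator $\Delta_{\phi}$, hence an eigenvalue: $\lambda\in\sigma_p(\Delta_{\phi})=\sigma_p(\Delta_{\dot{\varphi}_{\omega}})$ with $0<\lambda<1$. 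By Theorem \ref{thm: point spectrum and property L'}, $M\cong M\overline{\otimes}R_{\lambda}$. But $(R_{\lambda})_{\omega}$ is a type ${\rm II}_1$ factor (Example \ref{ex: Takesaki's example?}), and $a\mapsto 1\otimes a$ embeds it into $(M\overline{\otimes}R_{\lambda})_{\omega}$ (asymptotic centrality is checked on product functionals and extended by density), so $M_{\omega}\neq\mathbb{C}$, contradicting the hypothesis.

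The remaining case, $P$ of type ${\rm III}_1$, is the main obstacle: here $\sigma(\Delta_{\phi})=[0,\infty)$ is purely continuous, and the point-spectrum/tensorial-absorption machinery yields nothing, since by Proposition \ref{prop: point spectrum is smaller than the spectrum for UP state} and the remark following it, for $\lambda\in\sigma(\Delta_{\dot{\varphi}_{\omega}})\setminus\sigma_p(\Delta_{\dot{\varphi}_{\omega}})$ there is no \emph{bounded} central sequence approximating an eigenvector, so no $R_{\lambda}$ is detected. This is exactly where separability of $M_*$ must enter. The plan is to pass to the continuous core $\widetilde{M}=M\rtimes_{\sigma^{\varphi}}\mathbb{R}$ — a type ${\rm II}_{\infty}$ factor, as $M$ is forced to be ${\rm III}_1$ by $\sigma(\Delta_{\varphi})\supset\sigma(\Delta_{\phi})=[0,\infty)$ — equipped with its trace-scaling dual flow $\theta$. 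On $\widetilde{M}$ the semifinite trace gives $\widetilde{M}'\cap\widetilde{M}^{\omega}=\widetilde{M}_{\omega}$, which the fullness of $M$ together with separability should control through a spectral-gap argument; identifying $P$ with the $\theta$-fixed points of the central sequence algebra of $\widetilde{M}$ would then contradict the ergodicity of $\sigma^{\phi}$ on the nontrivial ${\rm III}_1$ factor $P$ (i.e.\ $P_{\phi}=\mathbb{C}$). The genuinely hard step is the diagonal/reindexing argument that converts this ergodic action with continuous spectrum into an honest non-scalar element of $M_{\omega}$; separability of the Hilbert space on which $M$ acts is precisely what permits countably many approximation conditions to be arranged simultaneously. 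Completing this step finishes the proof.
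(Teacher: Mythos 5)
Your reduction steps are correct, and your eliminations of the non-III$_1$ types for $P=M'\cap M^{\omega}$ are valid, though partly different from the paper's: the paper disposes of all types other than $\mathbb{C}$ and III$_1$ in one stroke via Lemma \ref{lem: centralizer cannot be trivial unless C or III_1} (quoting Connes' theorem that for a III$_{\lambda}$ factor, $\lambda\neq 1$, the centralizer of a normal faithful state contains a masa of the factor), whereas you use $\sigma(\Delta_{\dot{\varphi}_{\omega}})\setminus\{0\}=S(P)\setminus\{0\}$ together with Theorem \ref{thm: point spectrum and property L'} and a McDuff-type embedding of $(R_{\lambda})_{\omega}$ into $M_{\omega}$. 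But the theorem is not proved: the case where $P$ is a type III$_1$ factor, which is the only genuinely hard case and is the entire content of the paper's argument, is left in your proposal as a speculative plan (continuous core, trace-scaling flow, an unproved identification of $P$ with the $\theta$-fixed points of the core's central sequence algebra, an unspecified spectral-gap argument), and you yourself flag that the crucial reindexing step is missing. A sketch that ends with ``completing this step finishes the proof'' is a gap, not a proof, and it is not clear the core route would work as described.

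For comparison, the paper's III$_1$ argument stays entirely inside $N=M'\cap M^{\omega}$. Since $N$ is III$_1$, Connes--St{\o}rmer transitivity (Theorem \ref{UW3.1.2}) applied to $\dot{\varphi}_{\omega}$ and to $\psi=[\dot{\varphi}_{\omega}\otimes \mathrm{Tr}(\rho_{\lambda}\,\cdot\,)]\circ\alpha$ yields, for every $\varepsilon>0$, a $2\times 2$ matrix unit $(f_{ij})$ in $N$ with $\|\dot{\varphi}_{\omega}f_{ii}-f_{ii}\dot{\varphi}_{\omega}\|\le 2\varepsilon$, $\|\dot{\varphi}_{\omega}f_{12}-\lambda f_{12}\dot{\varphi}_{\omega}\|\le (1+\lambda)\varepsilon$ and $\|\dot{\varphi}_{\omega}f_{21}-\lambda^{-1}f_{21}\dot{\varphi}_{\omega}\|\le (1+\lambda^{-1})\varepsilon$. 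Then comes the step your plan lacks: using a $\|\cdot\|_{\varphi}^{\sharp}$-dense sequence $(a_m)$ in the unit ball of $M$ (this is precisely where separability of $M_*$ enters), the conditional expectation $E\colon M^{\omega}\to M'\cap M^{\omega}$ from \cite{Takesaki2}, and Lemma \ref{lem: characterization of the centralizer1} to transfer the functional estimates to representing sequences, one runs a diagonal argument (Claims 1 and 2 of the paper's proof) producing sequences $(f_{ij}^{(n)})_n\in\mathcal{M}^{\omega}$ whose images $g_{ij}=(f_{ij}^{(n)})^{\omega}$ form an honest matrix unit in $M'\cap M^{\omega}$ satisfying $g_{ii}\dot{\varphi}_{\omega}=\dot{\varphi}_{\omega}g_{ii}$ and $\dot{\varphi}_{\omega}g_{12}=\lambda g_{12}\dot{\varphi}_{\omega}$. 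This forces $\dot{\varphi}_{\omega}(g_{11})=\frac{\lambda}{1+\lambda}\notin\{0,1\}$, so $g_{11}$ is a nontrivial projection in $N_{\dot{\varphi}_{\omega}}=M_{\omega}$, contradicting $M_{\omega}=\mathbb{C}$. Without an analogue of this reindexing step, your proposal does not establish the theorem.
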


The following Lemma is well-known.
\begin{lemma}\label{lem: centralizer cannot be trivial unless C or III_1}
Let $M$ be a von Neumann algebra, $\varphi$ be a normal faithful state on $M$ with $M_{\varphi}=\mathbb{C}$. Then $M$ is either $\mathbb{C}$ or a factor of type {\rm{III}}$_1$. 
\end{lemma}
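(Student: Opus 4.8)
The plan is to run everything through Connes' spectral theory as recalled in \S\ref{sec: General Backgrounds and Notations}, reducing the statement to a trichotomy for the Arveson spectrum of $\sigma^{\varphi}$. First I would observe that $\mathcal{Z}(M)\subset M_{\varphi}$ always holds, since a central element $z$ satisfies $\varphi(zy)=\varphi(yz)$ for all $y\in M$; hence $\mathcal{Z}(M)\subset M_{\varphi}=\mathbb{C}$, so $M$ is a factor, and it is $\sigma$-finite because it carries the normal faithful state $\varphi$. Since $M_{\varphi}=\mathbb{C}$ is in particular a factor, the identities $\Gamma(\sigma^{\varphi})=\mathrm{Sp}(\sigma^{\varphi})=\log(\sigma(\Delta_{\varphi})\setminus\{0\})$ hold, and as $\Gamma(\sigma^{\varphi})=\log(S(M)\setminus\{0\})$ is a closed subgroup of $\mathbb{R}$, it must be one of $\{0\}$, $a\mathbb{Z}$ (some $a>0$), or $\mathbb{R}$. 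The proof then splits into these three cases.

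If $\mathrm{Sp}(\sigma^{\varphi})=\{0\}$, then $\sigma(\Delta_{\varphi})\setminus\{0\}=\{1\}$, so $\sigma(\Delta_{\varphi})\subset\{0,1\}$; injectivity of $\Delta_{\varphi}$ forces $\Delta_{\varphi}=1$, whence $\sigma^{\varphi}_t=\mathrm{id}$ and $M=M_{\varphi}=\mathbb{C}$. (Note this single case disposes of all semifinite factors as well as type III$_0$.) If $\mathrm{Sp}(\sigma^{\varphi})=\mathbb{R}$, then $S(M)\setminus\{0\}=(0,\infty)$, i.e. $S(M)=[0,\infty)$, so $M$ is by definition a type III$_1$ factor. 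Both of these are immediate.

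The main work — and the step I expect to be the real obstacle — is to exclude $\mathrm{Sp}(\sigma^{\varphi})=a\mathbb{Z}$ with $a>0$ (the type III$_{\lambda}$ range, $\lambda=e^{-a}$). Here $\sigma(\Delta_{\varphi})\subset\{0\}\cup\{e^{na}:n\in\mathbb{Z}\}$, so $\Delta_{\varphi}^{2\pi i/a}=1$ and the modular flow $\sigma^{\varphi}$ is periodic of period $2\pi/a$, hence an action of the compact group $\mathbb{T}=\mathbb{R}/(2\pi/a)\mathbb{Z}$. Since $a$ is isolated in $a\mathbb{Z}$, a small closed interval $E$ around $a$ meets $\mathrm{Sp}(\sigma^{\varphi})$ only in $a$, so $M(\sigma^{\varphi},E)=M(\sigma^{\varphi},\{a\})$, and property (iii) gives $M(\sigma^{\varphi},\{a\})\neq\{0\}$. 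Picking $0\neq x$ there, property (ii) yields $x^{*}x\in M(\sigma^{\varphi},\{-a\})M(\sigma^{\varphi},\{a\})\subset M(\sigma^{\varphi},\{0\})=M_{\varphi}=\mathbb{C}$ and likewise $xx^{*}\in\mathbb{C}$; after normalizing, $x$ becomes a unitary $u$ with $\sigma^{\varphi}_t(u)=e^{iat}u$. The same bimodule computation shows $M(\sigma^{\varphi},\{na\})=\mathbb{C}u^{n}$ for every $n\in\mathbb{Z}$: indeed $u^{n}\in M(\sigma^{\varphi},\{na\})$ by (ii), and any $y\in M(\sigma^{\varphi},\{na\})$ satisfies $u^{*n}y\in M(\sigma^{\varphi},\{0\})=\mathbb{C}$.

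Finally, since $\sigma^{\varphi}$ is a periodic (compact-group) action, the algebraic span $\sum_{n\in\mathbb{Z}}M(\sigma^{\varphi},\{na\})$ is $\sigma$-strongly dense in $M$: the Fej\'er means of the Fourier decomposition $x\sim\sum_{n}E_n(x)$, where $E_n(x)=\frac{a}{2\pi}\int_{0}^{2\pi/a}e^{-inat}\sigma^{\varphi}_t(x)\,dt\in M(\sigma^{\varphi},\{na\})$, converge to $x$. Combined with $M(\sigma^{\varphi},\{na\})=\mathbb{C}u^{n}$, this forces $M=\{u\}''$, which is abelian; but an abelian factor is $\mathbb{C}$, giving $\Delta_{\varphi}=1$ and $\mathrm{Sp}(\sigma^{\varphi})=\{0\}$, contradicting $\mathrm{Sp}(\sigma^{\varphi})=a\mathbb{Z}\neq\{0\}$. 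This excludes the discrete case and leaves exactly $M=\mathbb{C}$ or $M$ of type III$_1$. The two points needing the most care are the nonvanishing $M(\sigma^{\varphi},\{a\})\neq\{0\}$ at the isolated spectral value and the weak density of the span of spectral subspaces; both are standard consequences of the periodicity of $\sigma^{\varphi}$, so once that periodicity is recorded the argument should close cleanly.
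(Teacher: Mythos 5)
Your trichotomy approach is genuinely different from the paper's proof, and most of it is sound. The paper instead splits into a semifinite case (handled by writing $\varphi=\tau(h\,\cdot\,)$ for a trace $\tau$ and observing that the spectral projections of $h$ lie in $M_{\varphi}=\mathbb{C}$, so $h$ is scalar and $\varphi$ is a trace) and the type III$_{\lambda}$, $\lambda\neq 1$, case (handled by citing Connes' Th\'eor\`emes 4.2.1 and 5.2.1, which produce a maximal abelian subalgebra of $M_{\varphi}$ that is maximal abelian in $M$). Your exclusion of the case $\mathrm{Sp}(\sigma^{\varphi})=a\mathbb{Z}$ is correct and is a nice self-contained replacement for that citation: periodicity of the modular flow, the eigenvector unitary $u$ obtained from $M(\sigma^{\varphi},\{a\})\neq\{0\}$, the identification $M(\sigma^{\varphi},\{na\})=\mathbb{C}u^{n}$, and Fej\'er-mean density together force $M=\{u\}''$ to be abelian, hence $\mathbb{C}$, contradicting $a\neq 0$.

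The gap is in the case $\mathrm{Sp}(\sigma^{\varphi})=\mathbb{R}$, together with the parenthetical remark you lean on to cover semifinite factors. From $S(M)=[0,\infty)$ you conclude that $M$ is ``by definition'' of type III$_{1}$, but the paper's definition of type III$_{1}$ applies only to factors already known to be of type III; and with the state-based $S$-invariant used in the paper, semifiniteness is not excluded by the value of $S(M)$ alone without an argument, since infinite semifinite factors also pick up $0$ in their invariant (for infinite-dimensional $H$ one has $S(\mathbb{B}(H))=\{0,1\}$, because every normal faithful state has $0$ in the spectrum of its modular operator). Your parenthetical claim that the case $\mathrm{Sp}(\sigma^{\varphi})=\{0\}$ ``disposes of all semifinite factors'' is exactly the missing statement: it asserts that a semifinite factor with $M_{\varphi}=\mathbb{C}$ has trivial modular flow, which is proved nowhere in your argument and is precisely the content of the paper's semifinite step. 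The fix is short and is the paper's own: if $M$ is semifinite with trace $\tau$, write $\varphi=\tau(h\,\cdot\,)$; then $\sigma_{t}^{\varphi}=\mathrm{Ad}(h^{it})$, so all spectral projections of $h$ lie in $M_{\varphi}=\mathbb{C}$, forcing $h$ to be scalar, $\varphi$ to be tracial, and $\sigma^{\varphi}$ to be trivial. Hence every semifinite $M$ indeed falls in your case $\{0\}$, so in the cases $a\mathbb{Z}$ and $\mathbb{R}$ the factor $M$ is of type III, and with that insertion your proof closes.
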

\begin{proof}
Let $H$ be a Hilbert space on which $M$ acts. Since $\mathcal{Z}(M)\subset M_{\varphi}=\mathbb{C}$, $M$ is a factor. Suppose $M$ is semifinite with a normal faithful semifinite trace $\tau$. Then there exists a positive self-adjoint operator $h\in L^1(M,\tau)$ with $\tau(h)=1$ such that $\varphi=\tau(h\cdot )$ holds. It is well known that this implies   $\sigma_t^{\varphi}(x)=h^{it}xh^{-it}$ for every $x\in M$ and $t\in \mathbb{R}$. Let $A$ be the abelian von Neumann algebra generated by all spectral projections of $h$. Then for $x\in M$, $x\in M_{\varphi}$ holds if and only if $x$ commutes with $h^{it}$ for all $t\in \mathbb{R}$, which is equivalent to the condition $x\in A'$, hence $M_{\varphi}=A'\cap M=\mathbb{C}$. Since $A\subset A'\cap M=\mathbb{C}$, $h$ must be a multiple of 1 and $\tau$ is a tracial state. This implies that $\varphi=\tau$, and \[M_{\varphi}=M_{\tau}=M=\mathbb{C}.\] 
Suppose next that $M$ is of type III$_{\lambda}\ (\lambda \neq 1)$. Then by Th\'eor\`eme 4.2.1 ($0<\lambda<1$ case) and Th\'eor\`eme 5.2.1 ($\lambda=0$ case) of Connes \cite{Connes1}, there exists a maximal abelian subalgebra $A$ of $M_{\varphi}$ which is maximal abelian in $M$. This in particular means that $M_{\varphi}$ cannot be $\mathbb{C}$. This finishes the proof.    
\end{proof}
We are now ready to prove Theorem \ref{thm: solution to Ueda problem}.
\begin{proof}[Proof of Theorem \ref{thm: solution to Ueda problem}]
Put $N:=M'\cap M^{\omega}$. Take an arbitrary $\varphi \in S_{\text{nf}}(M)$. Since $\mathcal{Z}(M)\subset M_{\omega}=\mathbb{C}$, $M$ is a factor. By Proposition \ref{prop: phi_dot does not depend on phi}, the Golodets state $\dot{\varphi}_{\omega}:=\varphi^{\omega}|_N\in S_{\text{nf}}(N)$ does not depend on the choice of $\varphi$. By Proposition \ref{prop: centralizer of ultrapower state}, $N_{\dot{\varphi}_{\omega}}=\mathbb{C}$. Then by Lemma \ref{lem: centralizer cannot be trivial unless C or III_1}, $N$ is either $\mathbb{C}$ or a factor of type III$_1$. Suppose $N$ is a type III$_1$ factor and we shall get a contradiction. Fix $0<\lambda<1$. Since $N$ is of type III, there exists an automorphism $\alpha: N\to N\otimes M_2(\mathbb{C})$. Define $\psi\in S_{\text{nf}}(N)$ by 
\[\psi:=\left [\dot{\varphi}_{\omega}\otimes \text{Tr}(\rho_{\lambda}\cdot )\right ]\circ \alpha,\]
where $\rho_{\lambda}:=\text{diag}(\frac{\lambda}{1+\lambda},\frac{1}{1+\lambda})$. 
Let $\varepsilon>0$ be given. By Connes-St\o rmer transitivity \cite{CS} (note that the transitivity holds without any assumption on the predual thanks to \cite{HS}), there exists $u\in \mathcal{U}(N)$ such that 
\begin{equation}
\|\dot{\varphi}_{\omega}-u\psi u^*\|<\varepsilon. \label{eq: psi and psi_tilde}
\end{equation}
Define a $2\times 2$ matrix unit $\{f_{i,j}\}_{i,j=1}^2$ in $N$ by 
\[f_{ij}:=u^*\alpha^{-1}(1\otimes e_{i,j})u,\ \ \ \ 1\le ij\le 2,\]
where $\{e_{i,j}\}_{i,j=1}^2$ is the standard matrix unit in $M_2(\mathbb{C})$. 
For $x\in N$, write $\alpha(x)=\mattwo{x_{11}}{x_{12}}{x_{21}}{x_{22}}$, where $x_{ij}\in N$. By a straightforward computation, we have 
\eqa{
\left [\psi \alpha^{-1}(1\otimes e_{12})\right ](x)&=\psi(\alpha^{-1}(1\otimes e_{12})x)\\
&=[\dot{\varphi}_{\omega}\otimes \text{Tr}(\rho_{\lambda}\cdot )]\mattwo{x_{21}}{x_{22}}{0}{0}\\
&=\frac{\lambda}{1+\lambda}\dot{\varphi}_{\omega}(x_{21})\\
\left [\alpha^{-1}(1\otimes e_{12})\psi \right ](x)&=[\dot{\varphi}_{\omega}\otimes \text{Tr}(\rho_{\lambda}\cdot )]\mattwo{0}{x_{11}}{0}{x_{21}}\\
&=\frac{1}{1+\lambda}\dot{\varphi}_{\omega}(x_{21}).
}
Doing similar computations, we have the following equalities:
\begin{align}
\psi\alpha^{-1}(1\otimes e_{ii})&=\alpha^{-1}(1\otimes e_{ii})\psi,\ \ \ \ \ \ \ \ \ \ (i=1,2) \label{eq: twobytwo matrix(1)}\\
\psi\alpha^{-1}(1\otimes e_{12})&=\lambda\alpha^{-1}(1\otimes e_{12})\psi,\label{eq: twobytwo matrix(2)}\\
\psi\alpha^{-1}(1\otimes e_{21})&=\lambda^{-1}\alpha^{-1}(1\otimes e_{21})\psi.\label{eq: twobytwo matrix(3)}
\end{align}
Using Eq. (\ref{eq: psi and psi_tilde}) and Eqs. (\ref{eq: twobytwo matrix(1)})-(\ref{eq: twobytwo matrix(3)}), it follows that 
\eqa{
\|\dot{\varphi}_{\omega} f_{12}-\lambda f_{12}\dot{\varphi}_{\omega}\|&=\|\dot{\varphi}_{\omega} u^*\alpha^{-1}(1\otimes e_{12})u-\lambda u^*\alpha^{-1}(1\otimes e_{12})u\dot{\varphi}_{\omega}\|\\
&=\|u\dot{\varphi}_{\omega} u^*\alpha^{-1}(1\otimes e_{12})-\lambda \alpha^{-1}(1\otimes e_{12})u\dot{\varphi}_{\omega} u^*\|\\
&\le \|(u\dot{\varphi}_{\omega} u^*-\psi)\alpha^{-1}(1\otimes e_{12})\|+\|\lambda \alpha^{-1}(1\otimes e_{12})(\psi-u\dot{\varphi}_{\omega} u^*)\|\\
&\le (1+\lambda)\varepsilon.
}
Doing similar computations, we obtain
\begin{align}
\|\dot{\varphi}_{\omega} f_{ii}-f_{ii}\dot{\varphi}_{\omega} \|&\le 2\varepsilon, \ \ \ \ \ \ \ \ \ \ \ \ \ \ \ \ \ \ \ \ \ \ \ (i=1,2) \label{eq: twobytwo matrix perturbed(1)}\\
\|\dot{\varphi}_{\omega} f_{12}-\lambda f_{12}\dot{\varphi}_{\omega} \|&\le (1+\lambda)\varepsilon, \label{eq: twobytwo matrix perturbed(2)}\\
\|\dot{\varphi}_{\omega} f_{21}-\lambda^{-1}f_{21}\dot{\varphi}_{\omega} \|&\le (1+\lambda^{-1})\varepsilon. \label{eq: twobytwo matrix perturbed(3)}
\end{align}
Let $\{a_n\}_{n=1}^{\infty}$ be a $\|\cdot \|_{\varphi}^{\sharp}$-dense sequence of the unit ball of $M$.\\ \\
\textbf{Claim 1}. For each $n\in \mathbb{N}$ there exists $f_{ij}^{(n)}\in M\ (i,j=1,2)$ satisfying the following conditions:
\begin{itemize}
\item[(i)] $\|f_{ij}^{(n)}\|\le 1\ \ (i,j=1,2)$.
\item[(ii)] $\|\varphi f_{ii}^{(n)}-f_{ii}^{(n)}\varphi\|\le \frac{1}{n}\ \ (i=1,2)$. 
\item[(iii)] $\|\varphi f_{12}^{(n)}-\lambda f_{12}^{(n)}\varphi\|\le \frac{1}{n}$.
\item[(iv)] $\|\varphi f_{21}^{(n)}-\lambda^{-1}f_{21}^{(n)}\varphi \|\le \frac{1}{n}$.
\item[(v)] $\|(f_{ij}^{(n)})^*-f_{ji}^{(n)}\|_{\varphi}^{\sharp}\le \frac{1}{n}\ \ (i,j=1,2)$.
\item[(vi)] $\|f_{ij}^{(n)}a_m-a_mf_{ij}^{(n)}\|_{\varphi}^{\sharp}\le \frac{1}{n}\ \ (1\le m\le n,\ i,j=1,2)$.
\item[(vii)] $\|f_{ij}^{(n)}f_{kl}^{(n)}-\delta_{jk}f_{il}^{(n)}\|_{\varphi}^{\sharp}\le \frac{1}{n}\ \ (i,j,k,l=1,2)$
\item[(viii)] $\|f_{11}^{(n)}+f_{22}^{(n)}-1\|_{\varphi}^{\sharp}\le \frac{1}{n}$.
\end{itemize}
By Eqs (\ref{eq: twobytwo matrix perturbed(1)})-(\ref{eq: twobytwo matrix perturbed(3)}), there exists a matrix unit $(f_{ij})_{i,j=1}^{2}\in M'\cap M^{\omega}$ satisfying the following conditions.
\eqa{
\|\dot{\varphi}_{\omega} f_{ii}-f_{ii}\dot{\varphi}_{\omega} \|&\le \frac{1}{2n}.\ \ \ \ (i=1,2)\\
\|\dot{\varphi}_{\omega} f_{12}-\lambda f_{12}\dot{\varphi}_{\omega} \|&\le \frac{1}{2n}.\\
\|\dot{\varphi}_{\omega} f_{21}-\lambda^{-1}f_{21}\dot{\varphi}_{\omega} \|&\le \frac{1}{2n}.
}
Since $M'\cap M^{\omega}$ is $\sigma_t^{\varphi^{\omega}}$-invariant, by Takesaki's Theorem \cite{Takesaki2}, there exists a normal faithful conditional expectation $E\colon M^{\omega}\to M'\cap M^{\omega}$ with $\varphi^{\omega}=\dot{\varphi}_{\omega} \circ E$. Since $f_{ij}\in M'\cap M^{\omega}$, we have $E(f_{ij})=f_{ij}$. Therefore for every $a\in M^{\omega}$, we have
\eqa{
(\varphi^{\omega}f_{ii}-f_{ii}\varphi^{\omega})(a)&=\varphi^{\omega}(f_{ii}a-af_{ii})=\dot{\varphi}_{\omega} \circ E(f_{ii}a-af_{ii})\\
&=\dot{\varphi}_{\omega}(f_{ii}E(a)-E(a)f_{ii})=(\dot{\varphi}_{\omega} f_{ii}-f_{ii}\dot{\varphi}_{\omega})(E(a)),
}
and hence $\|\varphi^{\omega}f_{ii}-f_{ii}\varphi^{\omega}\|\le \|\dot{\varphi}_{\omega} f_{ii}-f_{ii}\dot{\varphi}_{\omega}\|$. Since $\|\varphi^{\omega}f_{ii}-f_{ii}\varphi^{\omega}\|\ge \|\dot{\varphi}_{\omega} f_{ii}-f_{ii}\dot{\varphi}_{\omega}\|$, we have 
\[\|\varphi^{\omega}f_{ii}-f_{ii}\varphi^{\omega}\|=\|\dot{\varphi}_{\omega} f_{ii}-f_{ii}\dot{\varphi}_{\omega}\|\le \frac{1}{2n}\hspace{1.0cm} (i=1,2).\]
Similarly, we have
\eqa{
\|\varphi^{\omega}f_{12}-\lambda f_{12}\varphi^{\omega}\|\le \frac{1}{2n},\\
\|\varphi^{\omega}f_{21}-\lambda^{-1}f_{21}\varphi^{\omega}\|\le \frac{1}{2n}.
}
Choose $(f_{ij}^{(k)})_k\in \mathcal{M}^{\omega}\ (i,j=1,2)$ such that $f_{ij}=(f_{ij}^{(k)})^{\omega}$. They can be chosen to satisfy $\|f_{ij}^{(k)}\|\le 1$. By the definition of $f_{ij}$ and the matrix unit property, together with Lemma \ref{lem: characterization of the centralizer1}, we have 
\begin{list}{}{}
\item[(ii)$^*$] $\displaystyle \lim_{k\to \omega}\|\varphi f_{ii}^{(k)}-f_{ii}^{(k)}\varphi \|\le \frac{1}{2n}\ (i=1,2)$.
\item[(iii)$^*$] $\displaystyle \lim_{k\to \omega}\|\varphi f_{12}^{(k)}-\lambda f_{12}^{(k)}\varphi\|\le \frac{1}{2n}$.
\item[(iv)$^*$] $\displaystyle \lim_{k\to \omega}\|\varphi f_{21}^{(k)}-\lambda^{-1} f_{21}^{(k)}\varphi\|\le \frac{1}{2n}$.
\item[(v)$^*$] $\displaystyle \lim_{k\to \omega}\|(f_{ij}^{(k)})^*-f_{ji}^{(k)}\|_{\varphi}^{\sharp}=0\ (i,j=1,2)$.
\item[(vi)$^*$] $\displaystyle \lim_{k\to \omega}\|f_{ij}^{(k)}a_m-a_mf_{ij}^{(k)}\|_{\varphi}^{\sharp}=0\ (m\ge 1, i,j=1,2)$.
\item[(vii)$^*$] $\displaystyle \lim_{k\to \omega}\|f_{ij}^{(k)}f_{lm}^{(k)}-\delta_{jl}f_{im}^{(k)}\|_{\varphi}^{\sharp}=0\ (i,j,l,m=1,2)$.
\item[(viii)$^*$] $\displaystyle \lim_{k\to \omega}\|f_{11}^{(k)}+f_{22}^{(k)}-1\|_{\varphi}^{\sharp}=0$. 
\end{list}  
For fixed $n$, there are only finitely many conditions. Therefore there exists $k=k(n)\in \mathbb{N}$ such that $f_{ij}^{(k(n))}$ satisfies all the conditions (i)-(viii) in the claim.\\ \\
\textbf{Claim 2}. If $(f_{ij}^{(n)})_{i,j=1}^2\in M$ satisfies conditions (i)-(viii) in Claim 1 for all $n\ge 1$, then $(f_{ij}^{(n)})_n\in \mathcal{M}^{\omega}$ holds for $i,j=1,2$.\\
By (i), $(f_{ij}^{(n)})_n\in \ell^{\infty}(\mathbb{N},M)$ holds.  
Let $(b_n)_{n=1}^{\infty}\in \mathcal{I}_{\omega}$ with $\sup_{n\ge 1}\|b_n\|\le 1$. Then $(f_{ij}^{(n)}b_n) \in \mathcal{L}_{\omega}$ holds automatically. On the other hand we have
\eqa{
\varphi(f_{ij}^{(n)}b_n(f_{ij}^{(n)}b_n)^*)&\le |\varphi (f_{ij}^{(n)}b_nb_n^*\{(f_{ij}^{(n)})^*-f_{ji}^{(n)}\})|+|\varphi(f_{ij}^{(n)}b_nb_n^*f_{ji}^{(n)})|\\
&\le \|b_nb_n^*(f_{ij}^{(n)})^*\|_{\varphi}\|(f_{ij}^{(n)})^*-f_{ji}^{(n)}\|_{\varphi}+|(\varphi f_{ij}^{(n)}-c(i,j)f_{ij}^{(n)}\varphi)(b_nb_n^*f_{ji}^{(n)})|\\
&\hspace{2.0cm}+c(i,j)|\varphi(b_nb_n^*f_{ji}^{n}f_{ij}^{(n)})|\\
&\le \|b_n\|^2\cdot \frac{1}{n}+\frac{1}{2n}\cdot \|b_n\|^2+c(i,j)\|b_nb_n^*\|_{\varphi}\|f_{ji}^{(n)}f_{ij}^{(n)}\|_{\varphi}\\
&\le \frac{3}{2n}+c(i,j)\|b_nb_n^*\|_{\varphi}\stackrel{n\to \omega}{\to}0,
}
where
\[c(i,j):=\begin{cases}
1 & (i=j)\\
\lambda & (i=1,j=2)\\
\lambda^{-1} & (i=2,j=1).
\end{cases}\]
This shows that $(f_{ij}^{(n)}b_n)_n\in \mathcal{L}_{\omega}^*$, and hence $(f_{ij}^{(n)}b_n)\in \mathcal{I}^{\omega}=\mathcal{L}_{\omega}\cap \mathcal{L}_{\omega}^*$. Similarly, we have $(b_nf_{ij}^{(n)})_n\in \mathcal{I}^{\omega}$. This proves that $(f_{ij}^{(n)})_n\in \mathcal{M}^{\omega}$ for $i,j=1,2$.\\ \\
Therefore by Claim 1 and Claim 2, we see that $(g_{ij})_{i,j=1}^2$, where $g_{ij}:=(f_{ij}^{(n)})^{\omega}$ is a well-defined matrix unit in $M'\cap M^{\omega}$, and using conditions (i)-(viii) in Claim 1, we have 
\eqa{
g_{ii}\dot{\varphi}_{\omega}&=\dot{\varphi}_{\omega} g_{ii},\ \ \ \ \ \ \ \ \ \ \ \ \ \ \ (i=1,2)\\
\dot{\varphi}_{\omega} g_{12}&=\lambda g_{12}\dot{\varphi}_{\omega},\\
\dot{\varphi}_{\omega} g_{21}&=\lambda^{-1} g_{21}\dot{\varphi}_{\omega}.
}
In particular, $g_{ii}\in N_{\dot{\varphi}_{\omega}}\ (i=1,2)$ holds and $\lambda$ is in the point spectrum of $\Delta_{\dot{\varphi}_{\omega}}$. Then we have 
\eqa{
\dot{\varphi}_{\omega}(g_{11})&=\dot{\varphi}_{\omega}(g_{12}g_{21})=(g_{21}\dot{\varphi}_{\omega})(g_{12})\\
&=\lambda (\dot{\varphi}_{\omega} g_{21})(g_{12})=\lambda \dot{\varphi}_{\omega}(g_{21}g_{12})\\
&=\lambda \dot{\varphi}_{\omega}(g_{22}),
}
and hence $\dot{\varphi}_{\omega}(g_{11})=\frac{\lambda}{1+\lambda}$, which is neither $0$ nor $1$. Therefore $g_{11}\in N_{\dot{\varphi}_{\omega}}$ is a nontrivial projection. This implies $\dim(N_{\dot{\varphi}_{\omega}})\ge 2$, a contradiction. Hence $N$ must be $\mathbb{C}$. 
\end{proof}

Finally, we remark that there is no difference between $M_{\omega}$ and $M'\cap M^{\omega}$ when $M$ is of type III$_0$. 
\begin{proposition}\label{prop: no difference between M_omega and M'cap Momega}
If $M$ is a $\sigma$-finite type {\rm III}$_0$ factor, then $M'\cap M^{\omega}$ is a finite von Neumann algebra and $M'\cap M^{\omega}=M_{\omega}$ holds. 
\end{proposition}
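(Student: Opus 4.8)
The plan is to reduce everything to a single assertion: that the Golodets state $\dot{\varphi}_{\omega}=\varphi^{\omega}|_{M'\cap M^{\omega}}$ is a trace. Writing $N=M'\cap M^{\omega}$, this immediately yields both claims. Indeed, a faithful normal tracial state forces $N$ to be a finite von Neumann algebra, and by Proposition \ref{prop: centralizer of ultrapower state} we have $M_{\omega}=N_{\dot{\varphi}_{\omega}}$, which coincides with all of $N$ precisely when $\dot{\varphi}_{\omega}$ is a trace (i.e. $\sigma^{\dot{\varphi}_{\omega}}$ is trivial). So the whole problem is to show $\Delta_{\dot{\varphi}_{\omega}}=1$.

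First I would record the two structural facts already available. Since $M$ is a type III$_0$ factor, Proposition \ref{prop: phi_dot does not depend on phi} gives that $\dot{\varphi}_{\omega}$, and hence the flow $\sigma^{\dot{\varphi}_{\omega}}$, is independent of the choice of $\varphi\in S_{\mathrm{nf}}(M)$. On the other hand, as shown in the proof of Theorem \ref{thm: point spectrum and property L'}, $N$ is globally invariant under $\sigma^{\varphi^{\omega}}$ and $\sigma_t^{\dot{\varphi}_{\omega}}=\sigma_t^{\varphi^{\omega}}|_{N}$ for every $\varphi$.

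The heart of the argument is a spectral estimate. Because $N$ is a $\sigma^{\varphi^{\omega}}$-invariant von Neumann subalgebra, its spectral subspaces satisfy $N(\sigma^{\dot{\varphi}_{\omega}},E)=N\cap M^{\omega}(\sigma^{\varphi^{\omega}},E)$, so the Arveson spectrum of the restricted flow is contained in that of the ambient flow:
\[
\mathrm{Sp}(\sigma^{\dot{\varphi}_{\omega}})\subset \mathrm{Sp}(\sigma^{\varphi^{\omega}})=\log\bigl(\sigma(\Delta_{\varphi^{\omega}})\setminus\{0\}\bigr)=\log\bigl(\sigma(\Delta_{\varphi})\setminus\{0\}\bigr),
\]
the last equality being Corollary \ref{cor: some useful conseuences of main theorem} (3). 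Since the left-hand side is the \emph{same} set for every $\varphi$, I may intersect the right-hand side over all $\varphi\in S_{\mathrm{nf}}(M)$; because $\log$ is a strictly monotone bijection of $(0,\infty)$ onto $\mathbb{R}$ (so intersection distributes through it) and each $\sigma(\Delta_{\varphi})$ contains $1$,
\[
\mathrm{Sp}(\sigma^{\dot{\varphi}_{\omega}})\subset \bigcap_{\varphi}\log\bigl(\sigma(\Delta_{\varphi})\setminus\{0\}\bigr)=\log\bigl(S(M)\setminus\{0\}\bigr)=\log\{1\}=\{0\},
\]
using $S(M)=\{0,1\}$ for a type III$_0$ factor. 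As $0$ always lies in $\mathrm{Sp}(\sigma^{\dot{\varphi}_{\omega}})$ (the identity $1\in N$ is fixed, equivalently $1\in\sigma(\Delta_{\dot{\varphi}_{\omega}})$), this forces $\mathrm{Sp}(\sigma^{\dot{\varphi}_{\omega}})=\{0\}$, i.e. $\sigma(\Delta_{\dot{\varphi}_{\omega}})\setminus\{0\}=\{1\}$. Since $\dot{\varphi}_{\omega}$ is faithful, $\Delta_{\dot{\varphi}_{\omega}}$ is nonsingular, so its spectral projection at $0$ vanishes and $\Delta_{\dot{\varphi}_{\omega}}=1$; equivalently $\sigma_t^{\dot{\varphi}_{\omega}}=\mathrm{id}$ and $\dot{\varphi}_{\omega}$ is a trace, completing the proof via the reduction above.

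The main obstacle, and the only genuinely nontrivial input, is the $\varphi$-free spectral inequality: one must combine the $\varphi$-independence of $\dot{\varphi}_{\omega}$ (Proposition \ref{prop: phi_dot does not depend on phi}) with the identity $\sigma(\Delta_{\varphi^{\omega}})=\sigma(\Delta_{\varphi})$ (Corollary \ref{cor: some useful conseuences of main theorem} (3)) to pass from a single faithful state to the intersection defining $S(M)$. I would take care to verify that the intersection genuinely computes $\log(S(M)\setminus\{0\})$ rather than something larger (immediate from the distribution of intersection through $\log$ and the fact that $1$ lies in every $\sigma(\Delta_{\varphi})$), and that the inclusion $\mathrm{Sp}(\sigma^{\dot{\varphi}_{\omega}})\subset\mathrm{Sp}(\sigma^{\varphi^{\omega}})$ is legitimate for the restriction of the flow to the invariant subalgebra $N$.
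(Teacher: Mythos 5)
Your proof is correct and follows essentially the same route as the paper: both arguments use the $\varphi$-independence of the Golodets state (Proposition \ref{prop: phi_dot does not depend on phi}), the spectral inclusion for the restricted flow together with $\sigma(\Delta_{\varphi^{\omega}})=\sigma(\Delta_{\varphi})$ (Corollary \ref{cor: some useful conseuences of main theorem} (3)), and the intersection over all states giving $S(M)=\{0,1\}$ to conclude that $\dot{\varphi}_{\omega}$ is a trace, whence $M'\cap M^{\omega}=(M'\cap M^{\omega})_{\dot{\varphi}_{\omega}}=M_{\omega}$ by Proposition \ref{prop: centralizer of ultrapower state}. The only cosmetic difference is that you phrase the spectral estimate via the Arveson spectrum $\mathrm{Sp}(\sigma^{\dot{\varphi}_{\omega}})$ while the paper works directly with $\sigma(\Delta_{\dot{\varphi}_{\omega}})$.
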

\begin{proof}
Let $\varphi \in S_{\text{nf}}(M)$. By Proposition \ref{prop: phi_dot does not depend on phi}, the Golodets state $\dot{\varphi}_{\omega}=\varphi^{\omega}|_{M'\cap M^{\omega}}$ does not depend on $\varphi$. Hence by Corollary \ref{cor: some useful conseuences of main theorem} (3), we have
\eqa{
\sigma(\Delta_{\dot{\varphi}_{\omega}})&=\bigcap_{\psi \in S_{\text{nf}}(M)}\sigma(\Delta_{\dot{\psi}_{\omega}})\subset \bigcap_{\psi \in S_{\text{nf}}(M)}\sigma(\Delta_{\psi^{\omega}})\\
&= \bigcap_{\psi \in S_{\text{nf}}(M)}\sigma(\Delta_{\psi})=S(M)=\{0,1\},
}
whence $\sigma (\Delta_{\dot{\varphi}_{\omega}})=\{1\}$ because $0\notin \sigma_p(\Delta_{\dot{\varphi}_{\omega}})$. This shows that $\dot{\varphi}_{\omega}$ is a normal faithful trace on $M'\cap M^{\omega}$. Since $M_{\omega}$ is the centralizer of $\dot{\varphi}_{\omega}$ by Proposition \ref{prop: centralizer of ultrapower state}, we see that $M'\cap M^{\omega}=M_{\omega}$ holds.
\end{proof}

\section{Factoriality and Type of Ultraproducts}\label{sec: Factoriality and Type of Ultraproducts}
In this section, we study the factoriality and Murray-von Neumann-Connes type of the ultraproduct of factors. 
\subsection{Ultraproduct of Semifinite Factors}
The answers to factoriality/type questions for the Ocneanu ultrapower $M^{\omega}$ of a semifinite factor $M$ has been known. In fact, it has been known to experts that for a von Neumann algebra $M$ with separable predual, $(M\overline{\otimes}\mathbb{B}(H))^{\omega}\cong M^{\omega}\overline{\otimes}\mathbb{B}(H)$ and $(M\overline{\otimes}\mathbb{B}(H))_{\omega}\cong M_{\omega}\otimes \mathbb{C}$ holds, where $H$ is a separable Hilbert space. The proof can be found e.g., in \cite[Lemma 2.8]{MasudaTomatsu}. 
On the other hand, it is well-known that $M^{\omega}$ is a type II$_1$ factor if so is $M$. This shows the following folklore result: 
\begin{proposition}\label{prop: Masuda-Tomatsu for semifinite factor} Let $M$ be a semifinite factor with separable predual. Then $M^{\omega}$ is a factor. If $M$ is of type {\rm{I}}$_n\ (n\in \mathbb{N}\cup \{\infty\})$, {\rm{II}}$_1$ or {\rm{II}}$_{\infty}$, so is $M^{\omega}$.
\end{proposition}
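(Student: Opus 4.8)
The plan is to reduce the factoriality and type questions for $M^{\omega}$ to the type II$_1$ case, which is classically known. The starting observation is that every semifinite factor $M$ with separable predual is of the form $M\cong N\overline{\otimes}\mathbb{B}(H)$ for a suitable choice: if $M$ is of type I$_n$ then $M\cong \mathbb{C}\overline{\otimes}M_n(\mathbb{C})$ (or $\mathbb{B}(H)$ for $n=\infty$); if $M$ is of type II$_1$ there is nothing to reduce; and if $M$ is of type II$_{\infty}$ then $M\cong N\overline{\otimes}\mathbb{B}(H)$ with $N$ a type II$_1$ factor and $H$ separable infinite-dimensional. So first I would split the proof into these cases according to the type of $M$.

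Next I would invoke the folklore tensor-splitting identity quoted just above the statement, namely $(N\overline{\otimes}\mathbb{B}(H))^{\omega}\cong N^{\omega}\overline{\otimes}\mathbb{B}(H)$ for $N$ with separable predual and $H$ separable, whose proof is in \cite[Lemma 2.8]{MasudaTomatsu}. Applying this with the decomposition $M\cong N\overline{\otimes}\mathbb{B}(H)$ gives
\[
M^{\omega}\cong N^{\omega}\overline{\otimes}\mathbb{B}(H).
\]
For the type I case, $N=\mathbb{C}$ and $N^{\omega}=\mathbb{C}$, so $M^{\omega}\cong \mathbb{B}(H)$, a type I$_n$ (resp.\ I$_\infty$) factor. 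For the type II$_\infty$ case, $N$ is a type II$_1$ factor, and the key remaining input is that $N^{\omega}$ is again a type II$_1$ factor; granting this, $N^{\omega}\overline{\otimes}\mathbb{B}(H)$ is a type II$_\infty$ factor, since a tensor product of factors is a factor and tensoring a type II$_1$ factor with $\mathbb{B}(H)$ for infinite-dimensional separable $H$ yields type II$_\infty$. The type II$_1$ case is then precisely the statement that $N^{\omega}$ is a type II$_1$ factor.

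The main obstacle — and the only genuinely nontrivial input — is therefore the classical fact that the tracial ultrapower $N^{\omega}$ of a type II$_1$ factor $N$ is itself a type II$_1$ factor. This is the foundational result going back to the work of Sakai \cite{Sakai} and McDuff \cite{McDuff2} discussed in the introduction: the trace $\tau^{\omega}=(\tau)^{\omega}$ is a normal faithful tracial state on $N^{\omega}$ (so $N^{\omega}$ is finite), and factoriality follows from the fact that any central element is represented by a sequence that is asymptotically central and hence, by the unique-trace argument for the II$_1$ factor $N$, asymptotically scalar. Since in our setting $N^{\omega}$ is by construction the Ocneanu ultrapower, which for the tracial case coincides with the tracial ultrapower (as the normalizer condition is automatic when $\varphi=\tau$ is tracial, because $\mathcal{I}_\omega$ is then a two-sided ideal), this reduces to the well-documented II$_1$ statement and no new argument is needed. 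Thus the proof consists of the case split, the application of the tensor-splitting lemma, and the citation of the II$_1$ ultrapower result.
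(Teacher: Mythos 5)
Your proposal is correct and follows essentially the same route as the paper: the paper also derives this proposition from the tensor-splitting identity $(N\overline{\otimes}\mathbb{B}(H))^{\omega}\cong N^{\omega}\overline{\otimes}\mathbb{B}(H)$ of \cite[Lemma 2.8]{MasudaTomatsu} combined with the classical fact that the (tracial) ultrapower of a type II$_1$ factor is again a type II$_1$ factor. Your explicit case split by type and the remark that the Ocneanu and tracial ultrapowers coincide in the finite case are exactly the implicit content of the paper's argument.
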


On the other hand, the situation for the factoriality of the Groh-Raynaud ultraproduct is very different. Based on the local reflexivity principle for Banach spaces and the fact that $\mathbb{B}(H)^{**}$ is not semifinite, Raynaud \cite{Raynaud} showed that $\prod^{\mathcal{U}}\mathbb{B}(H)$ is not semifinite (for a free ultrafilter $\mathcal{U}$ on a suitable index set $I$ and infinite-dimensional $H$). We prove that $\prod^{\omega}R$ is not semifinite, where $R$ is the hyperfinite type II$_1$ factor. 
For a fixed $\lambda \in (0,1)$, put $\rho_{\lambda}={\rm{diag}}(\frac{\lambda}{1+\lambda},\frac{1}{1+\lambda})\in M_2(\mathbb{C})_+$, and let $R_{\lambda}=\bigotimes_{\mathbb{N}}(M_2(\mathbb{C}),\text{Tr}(\rho_{\lambda}\ \cdot ))$ be the Powers factor of type III$_{\lambda}$. Define $\varphi_n\in S_{\rm{nf}}(R)$ by 
\[\varphi_n:=\bigotimes_{k=1}^n\text{Tr}(\rho_{\lambda}\cdot )\otimes \bigotimes_{k=n+1}^{\infty}\frac{1}{2}\text{Tr},\ \ \ \ n\ge 1.\]
\begin{proposition}\label{prop: hiro embed Powers}
There exists a normal injective *-homomorphism $\pi\colon R_{\lambda}\to (R,\varphi_n)^{\omega}$ whose range is a normal faithful conditional expectation $\varepsilon\colon (R,\varphi_n)^{\omega}\to \pi(R_{\lambda})$.  
\end{proposition}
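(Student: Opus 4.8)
The plan is to build $\pi$ from the common dense $*$-subalgebra of finite tensors and then produce $\varepsilon$ via Takesaki's theorem. Write $\varphi_{\lambda}=\bigotimes_{k}\mathrm{Tr}(\rho_{\lambda}\,\cdot\,)$ for the Powers state on $R_{\lambda}$, and let $\mathcal{A}_{0}=\bigcup_{m\ge 1}\bigotimes_{k=1}^{m}M_{2}(\mathbb{C})$ be the algebraic infinite tensor product, viewed simultaneously as a $\sigma$-strong$^{*}$ dense $*$-subalgebra of $R_{\lambda}$ (via $\varphi_{\lambda}$) and as a subalgebra of $R$ occupying the first coordinates. For $a\in\mathcal{A}_{0}$ supported on the first $m$ coordinates I would set $\pi_{0}(a):=(a)^{\omega}$, the class of the constant sequence. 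The first task is to check $\pi_{0}(a)\in M^{\omega}:=(R,\varphi_{n})^{\omega}$. Since $\varphi_{n}=\tau(h_{n}\,\cdot\,)$ with $h_{n}=\bigotimes_{k=1}^{n}2\rho_{\lambda}$ and $\tau=\bigotimes_{k}\tfrac12\mathrm{Tr}$, one computes $\sigma_{t}^{\varphi_{n}}(a)=h_{n}^{it}ah_{n}^{-it}=\sigma_{t}^{\varphi_{\lambda}}(a)$ for all $n\ge m$, so $a$ lies in the fixed compact spectral subspace $R(\sigma^{\varphi_{n}},[-m|\log\lambda|,m|\log\lambda|])$ for every $n$; Lemma~\ref{lem: analiticity implies normalizer} then gives $(a)_{n}\in\mathcal{M}^{\omega}(R,\varphi_{n})$. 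Thus $\pi_{0}\colon\mathcal{A}_{0}\to M^{\omega}$ is a well-defined $*$-homomorphism, and it is state preserving because $\varphi^{\omega}(\pi_{0}(a))=\lim_{n\to\omega}\varphi_{n}(a)=\varphi_{\lambda}(a)$ (the latter equality holding for all $n\ge m$).

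Next I would record that $\pi_{0}$ intertwines the modular flows. For $a\in\mathcal{A}_{0}$ one has $\sigma_{t}^{\varphi_{\lambda}}(a)\in\mathcal{A}_{0}$, and by Theorem~\ref{thm: ultrapower of modular automorphism} together with the identity $\sigma_{t}^{\varphi_{n}}(a)=\sigma_{t}^{\varphi_{\lambda}}(a)$ ($n\ge m$) from above, $\sigma_{t}^{\varphi^{\omega}}(\pi_{0}(a))=(\sigma_{t}^{\varphi_{n}}(a))^{\omega}=\pi_{0}(\sigma_{t}^{\varphi_{\lambda}}(a))$. I would then extend $\pi_{0}$ to a normal injective $*$-homomorphism $\pi\colon R_{\lambda}\to M^{\omega}$. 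The key observation is that a state-preserving $*$-homomorphism between these dense $*$-algebras is automatically $\|\cdot\|_{\varphi_{\lambda}}^{\sharp}$-to-$\|\cdot\|_{\varphi^{\omega}}^{\sharp}$ isometric; hence for $x$ in the unit ball of $R_{\lambda}$ a Kaplansky-dense net $a_{\alpha}\in\mathcal{A}_{0}$ with $\|a_{\alpha}\|\le 1$ and $a_{\alpha}\to x$ $\sigma$-strongly$^{*}$ yields a bounded net $\pi_{0}(a_{\alpha})$ for which $\pi_{0}(a_{\alpha})\xi_{\varphi^{\omega}}$ and $\pi_{0}(a_{\alpha})^{*}\xi_{\varphi^{\omega}}$ both converge in $L^{2}(M^{\omega},\varphi^{\omega})$. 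Any $\sigma$-weak cluster point of $\pi_{0}(a_{\alpha})$ in the unit ball defines $\pi(x)$, which is independent of the net because $\xi_{\varphi^{\omega}}$ is separating; the usual verification shows $\pi$ is a normal $*$-homomorphism with $\varphi^{\omega}\circ\pi=\varphi_{\lambda}$, and injectivity is immediate from faithfulness of $\varphi^{\omega}$.

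Finally, since $\pi$ is normal and each $\sigma_{t}^{\varphi^{\omega}}$ is $\sigma$-weakly continuous, the intertwining relation of the previous paragraph together with $\sigma_{t}^{\varphi_{\lambda}}(\mathcal{A}_{0})=\mathcal{A}_{0}$ propagate to the $\sigma$-weak closure, giving $\sigma_{t}^{\varphi^{\omega}}(\pi(R_{\lambda}))=\pi(R_{\lambda})$ for all $t\in\mathbb{R}$. By Takesaki's theorem \cite{Takesaki2} there is then a unique normal faithful conditional expectation $\varepsilon\colon M^{\omega}\to\pi(R_{\lambda})$ satisfying $\varphi^{\omega}=(\varphi^{\omega}|_{\pi(R_{\lambda})})\circ\varepsilon$, which is exactly the expectation asserted in the statement; as $\pi$ is state preserving it is an isomorphism onto the type III$_{\lambda}$ subfactor $\pi(R_{\lambda})\cong R_{\lambda}$.

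I expect the main obstacle to be the normal extension step: the construction of $\pi_{0}$ on $\mathcal{A}_{0}$ and its modular intertwining are routine given Lemma~\ref{lem: analiticity implies normalizer} and Theorem~\ref{thm: ultrapower of modular automorphism}, but one must argue carefully that $\pi_{0}$ extends to a genuinely \emph{normal} $*$-homomorphism whose image is a von Neumann subalgebra and that this image is $\sigma^{\varphi^{\omega}}$-globally invariant, since only then does Takesaki's theorem apply. Everything downstream of a well-defined normal state-preserving $\pi$ with modular-invariant range is then automatic.
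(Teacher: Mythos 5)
Your proposal is correct and follows essentially the same route as the paper's proof: embed the finite tensor subalgebra by constant sequences, use the spectral subspace bound together with Lemma~\ref{lem: analiticity implies normalizer} to land in $\mathcal{M}^{\omega}(R,\varphi_n)$, verify state preservation and the modular intertwining via Theorem~\ref{thm: ultrapower of modular automorphism}, extend to a normal $*$-monomorphism on $R_{\lambda}$, and invoke Takesaki's theorem \cite{Takesaki2} for the expectation. The only difference is one of exposition: you spell out the normal-extension step (the $\|\cdot\|^{\sharp}$-isometry and Kaplansky argument) that the paper compresses into a single sentence.
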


This shows that
\begin{proposition}\label{prop: (R,varphi_n)^{omega} not semifinite}
$(R,\varphi_n)^{\omega}$ is not semifinite.
\end{proposition}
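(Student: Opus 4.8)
The plan is to deduce non-semifiniteness directly from Proposition \ref{prop: hiro embed Powers}. Write $\mathcal{M}:=(R,\varphi_n)^{\omega}$. By that proposition there is a normal faithful conditional expectation $\varepsilon\colon \mathcal{M}\to \pi(R_{\lambda})$ onto a subalgebra $\pi(R_{\lambda})\cong R_{\lambda}$, which is a factor of type {\rm III}$_{\lambda}$ and hence carries no nonzero normal semifinite trace. Thus it suffices to establish the general principle that the range of a normal faithful conditional expectation from a semifinite von Neumann algebra is again semifinite; applying this with $\mathcal{M}$ as the ambient algebra would force $\pi(R_{\lambda})$ to be semifinite, a contradiction, so $\mathcal{M}$ cannot be semifinite.

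To prove the principle I would argue through modular theory. Fix $\psi\in S_{\mathrm{nf}}(\pi(R_{\lambda}))$ (e.g.\ the image of the Powers state, noting $R_{\lambda}$ is $\sigma$-finite) and set $\Psi:=\psi\circ\varepsilon\in S_{\mathrm{nf}}(\mathcal{M})$. By Takesaki's theorem on conditional expectations, $\pi(R_{\lambda})$ is globally invariant under $\sigma^{\Psi}$, the expectation $\varepsilon$ commutes with $\sigma^{\Psi}$, and $\sigma_t^{\Psi}|_{\pi(R_{\lambda})}=\sigma_t^{\psi}$. If $\mathcal{M}$ were semifinite, then with a normal faithful semifinite trace $\tau$ on $\mathcal{M}$ the Connes cocycle $u_t:=(D\Psi\colon D\tau)_t$ is a strongly continuous one-parameter unitary group implementing $\sigma_t^{\Psi}=\operatorname{Ad}(u_t)$. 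Setting $a_t:=\varepsilon(u_t)\in\pi(R_{\lambda})$ and using the $\pi(R_{\lambda})$-bimodularity of $\varepsilon$ together with $u_t y=\sigma_t^{\Psi}(y)u_t$ for $y\in\pi(R_{\lambda})$, one obtains
\[
a_t\,y=\sigma_t^{\psi}(y)\,a_t,\qquad y\in\pi(R_{\lambda}),\ t\in\mathbb{R}.
\]
A standard polar-decomposition argument then upgrades the intertwiners $a_t$ to unitaries in $\pi(R_{\lambda})$ implementing $\sigma^{\psi}$, so that $\sigma^{\psi}$ is inner in $\pi(R_{\lambda})$. Since a factor is semifinite precisely when the modular automorphism group of one (equivalently any) normal faithful state is inner, this shows $\pi(R_{\lambda})$ is semifinite, completing the principle.

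The main obstacle is this last modular step, namely passing from innerness of $\sigma^{\Psi}$ in the ambient algebra $\mathcal{M}$ to innerness of $\sigma^{\psi}$ in the subalgebra $\pi(R_{\lambda})$: the elements $a_t=\varepsilon(u_t)$ a priori only intertwine $\sigma^{\psi}$, and one must verify that they (or their polar parts) are genuine unitaries forming a cocycle, which is where the commutation of $\varepsilon$ with $\sigma^{\Psi}$ and the factoriality of $R_{\lambda}$ enter. Once the principle is in hand the conclusion is immediate, since $\pi(R_{\lambda})\cong R_{\lambda}$ is of type {\rm III}$_{\lambda}$ and hence not semifinite. Alternatively, as this principle is well known (see \cite{TakBook}), one may simply quote it and bypass the modular computation entirely, reducing the whole proof to the observation that $\mathcal{M}$ admits a normal faithful conditional expectation onto a type {\rm III} factor.
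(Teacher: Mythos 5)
Your reduction is exactly the paper's: the paper proves this proposition in one line by combining Proposition \ref{prop: hiro embed Powers} with Tomiyama's theorem \cite[Theorem 3]{Tomiyama}, which is precisely your ``general principle'' that the range of a normal conditional expectation from a semifinite von Neumann algebra is again semifinite. So your fallback option --- quote the principle and stop --- is literally the proof given in the paper, and it is correct.

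Your attempted modular-theoretic proof of the principle, however, has a genuine gap at the step you call ``a standard polar-decomposition argument''. The intertwining relation $a_t y=\sigma_t^{\psi}(y)a_t$ together with factoriality of $\pi(R_{\lambda})$ gives $a_t^*a_t=a_ta_t^*=c_t1$ with $c_t\ge 0$, so $a_t$ is a scalar multiple of a unitary \emph{when} $c_t>0$; but nothing in your argument excludes $c_t=\|\varepsilon(u_t)\|^2=0$, and if $a_t=0$ the polar decomposition produces nothing --- this is exactly the obstacle you name but do not resolve. It can be closed as follows: since $t\mapsto u_t$ is strongly continuous and $\varepsilon$ is normal (hence strongly continuous on bounded sets), $t\mapsto a_t$ is strongly continuous, so $t\mapsto c_t$ is continuous; as $c_0=1$, one gets $c_t>0$, hence $\sigma_t^{\psi}$ inner in $\pi(R_{\lambda})$, for all small $|t|$. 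The set of $t$ for which $\sigma_t^{\psi}$ is inner is a subgroup of $\mathbb{R}$ (Connes' $T$-invariant $T(R_{\lambda})$), so it would be all of $\mathbb{R}$; but $T(R_{\lambda})=(2\pi/\log\lambda)\mathbb{Z}\neq \mathbb{R}$ by Connes' computation \cite{Connes1}, a contradiction. Note that this fix also lets you bypass your final step, the equivalence ``modular group inner $\Leftrightarrow$ semifinite'': as stated you would only have \emph{pointwise} innerness of $\sigma^{\psi}$, and upgrading that to innerness as a continuous one-parameter group (Takesaki's characterization of semifiniteness) is itself a nontrivial theorem, whereas the contradiction with $T(R_{\lambda})$ is immediate.
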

\begin{proof}
By Proposition \ref{prop: hiro embed Powers}, the type III$_{\lambda}$ factor $\pi(R_{\lambda})$ is the range of a normal faithful conditional expectation from $(R,\varphi_n)^{\omega}$, hence \cite[Theorem 3]{Tomiyama} shows that $(R,\varphi_n)^{\omega}$ is not semifinite. 
\end{proof}
Therefore, we have
\begin{theorem}\label{thm: prod^{omega}R is not a factor}
$\prod^{\omega}R$ is not semifinite, and not a factor.
\end{theorem}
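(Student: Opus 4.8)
The plan is to realize the difficulty as a clash of types among corners of one and the same algebra. By Theorem~\ref{UW2.1.3} together with Proposition~\ref{prop: normalizer iff commutes with p}, for \emph{any} sequence of normal faithful states $(\psi_n)_n$ on $R$ the Ocneanu ultraproduct $(R,\psi_n)^{\omega}$ is isomorphic to the corner $p_{\psi}Np_{\psi}$ of $N=\prod^{\omega}R$, where $p_{\psi}=\mathrm{supp}((\psi_n)_{\omega})\in N$. The point I want to stress is that $N$ itself is insensitive to the states: it is built from the standard representation, and all the standard Hilbert spaces $L^2(R,\psi_n)$ are canonically identified with $L^2(R)$, so $\prod^{\omega}(R,L^2(R,\psi_n))=\prod^{\omega}R$ regardless of $(\psi_n)_n$. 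Consequently two different state sequences give two corners of the \emph{same} $N$.

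First I would exhibit two such corners with incompatible types. Taking the constant trace $\psi_n=\tau$, the support projection $p_0=\mathrm{supp}((\tau)_{\omega})$ yields $p_0Np_0\cong R^{\omega}$, which is a type II$_1$ factor by Proposition~\ref{prop: Masuda-Tomatsu for semifinite factor}, hence semifinite. Taking the drifting sequence $(\varphi_n)_n$ from the preceding propositions, the support projection $p=\mathrm{supp}((\varphi_n)_{\omega})$ yields $pNp\cong(R,\varphi_n)^{\omega}$, which is \emph{not} semifinite by Proposition~\ref{prop: (R,varphi_n)^{omega} not semifinite}.

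From here the two conclusions follow by hereditary permanence. For non-semifiniteness: were $N$ semifinite with a normal faithful semifinite trace, its restriction to any reduction $pNp$ would again be a normal faithful semifinite trace, so every corner would be semifinite; this contradicts $pNp\cong(R,\varphi_n)^{\omega}$. For non-factoriality: $N$ is now known to be non-semifinite, so a factor $N$ would have to be of type III; but a reduction of a type III algebra is again type III, which is incompatible with $p_0Np_0\cong R^{\omega}$ being type II$_1$. Hence $N$ is neither semifinite nor a factor.

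The computations are all routine once the two corner identifications are set up, so there is no serious obstacle here; the essential idea to articulate is simply the state-independence of the Groh--Raynaud ultraproduct, which permits two corners of one algebra to carry genuinely different types and thereby forbids factoriality.
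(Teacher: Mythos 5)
Your proposal is correct and takes essentially the same route as the paper: both arguments identify the non-semifinite $(R,\varphi_n)^{\omega}$ and the type II$_1$ factor $R^{\omega}$ as corners of the same algebra $\prod^{\omega}R$ via Proposition \ref{prop: normalizer iff commutes with p}, then deduce non-semifiniteness and non-factoriality from the hereditary behavior of type under reduction. The paper's proof is merely terser, phrasing the second conclusion as $\prod^{\omega}R$ having a nontrivial center, while you spell out the semifinite/type III dichotomy explicitly.
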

\begin{proof}
By Proposition \ref{prop: normalizer iff commutes with p}, the first claim is a corollary to Proposition \ref{prop: (R,varphi_n)^{omega} not semifinite}. Also by Proposition \ref{prop: normalizer iff commutes with p}, the type II$_1$ factor $R^{\omega}$ is a corner of $\prod^{\omega}R$. Therefore $\prod^{\omega}R$ has a nontrivial center. 
\end{proof}
Now we proceed to prove Proposition \ref{prop: hiro embed Powers}.
\begin{proof}[Proof of Proposition  \ref{prop: hiro embed Powers}]
Put $A_m:=\bigotimes_{k=1}^mM_2(\mathbb{C})\otimes \mathbb{C}\otimes \mathbb{C}\otimes \cdots$ considered as a subalgebra of $R_{\lambda}$, and let $\widehat{A}_m$ be the same algebra now considered as a subalgebra of $R$. Moreover, put
\[A:=\bigcup_{m=1}^{\infty}A_m\subset R_{\lambda},\ \ \ \ \widehat{A}:=\bigcup_{m=1}^{\infty}\widehat{A}_m\subset R.\]
For $x\in A$, let $\hat{x}$ denote the corresponding element in $\widehat{A}$. Define now a *-monomorphism $\pi_0\colon A\to \ell^{\infty}(\mathbb{N},R)$ by 
$\pi_0(x)=(\hat{x})_n,\ x\in A$ (constant sequence). Note that for $x\in A_m\ (m\in \mathbb{N}{\rm{\ fixed}})$, we have 
\[\varphi_n(\hat{x})=\varphi_{\lambda}(x),\ \ \ \sigma_t^{\varphi_n}(\hat{x})=\widehat{\sigma_t^{\varphi_{\lambda}}(x)},\ \ \ \ \ n\ge m.\]
Since $\sigma(\Delta_{\text{Tr}(\rho_{\lambda}\cdot )})=\sigma(\rho_{\lambda})\cdot \sigma(\rho_{\lambda}^{-1})=\{\lambda,1,\lambda^{-1}\}$, we have 
\[\sigma\left (\bigotimes_{k=1}^{m}\Delta_{\text{Tr}(\rho_{\lambda}\cdot )}\right )=\{\lambda^m,\lambda^{m-1},\cdots,\lambda^{-m}\}.\]
Therefore it holds that
\[\widehat{A}_m\subset R(\sigma^{\varphi_n},[m\log \lambda,-m\log \lambda])\]
for all $n\ge m$. Thus by Lemma \ref{lem: analiticity implies normalizer}, $\pi_0(A_m)\subset \mathcal{M}^{\omega}(R,\varphi_n)$ holds for all $m\in \mathbb{N}$ and hence also $\pi_0(A)\subset \mathcal{M}^{\omega}(R,\varphi_n)$ holds. Let $\pi_1\colon A\to M^{\omega}:=(R,\varphi_n)^{\omega}$ be $\pi_0$ composed with the quotient map from $\mathcal{M}^{\omega}(R,\varphi_n)$ onto $M^{\omega}=\mathcal{M}^{\omega}(R,\varphi_n)/\mathcal{I}_{\omega}(R,\varphi_n)$. Then it is elementary to check that
\[\varphi^{\omega}(\pi_1(x))=\varphi_{\lambda}(x),\ \ \ \ x\in A,\]
where $\varphi^{\omega}:=(\varphi_n)^{\omega}$. Using Theorem \ref{thm: ultrapower of modular automorphism}, we also have
\[\sigma_t^{\varphi^{\omega}}(\pi_1(x))=\pi_1 (\sigma_t^{\varphi_{\lambda}}(x)),\ \ \ x\in A,\ t\in \mathbb{R}.\]
Therefore $\pi_1$ extends to a normal *-monomorphism $\pi$ of $R_{\lambda}=\overline{A}^{\text{sot}}$ onto a von Neumann subalgebra $\pi(R_{\lambda})$ of $M^{\omega}$, which is invariant under $\sigma_t^{\varphi^{\omega}}\ (t\in \mathbb{R})$, whence by \cite{Takesaki2}, there is a normal faithful conditional $\varphi^{\omega}$-preserving expectation of $M^{\omega}$ onto $\pi(R_{\lambda})$.  
\end{proof}

\subsection{Ultraproduct of Type III$_{\lambda}$ ($\lambda \neq 0$) Factors}\label{subsec: UP of III_lambda}
Let $M$ be a $\sigma$-finite type III$_{\lambda}\ (0<\lambda \le 1)$ factor. We show that $\prod^{\omega}M$, as well as $(M,\varphi_n)^{\omega}$, is again a type III$_{\lambda}$ factor, and the isomorphism class of $(M,\varphi_n)^{\omega}$ does not depend on the choice of $(\varphi_n)_n\subset S_{\rm{nf}}(M)$. To do this, we first recall the state space diameter of factors. 
Let $M$ be a von Neumann algebra. Then an equivalence relation $\sim$ on $S_{\rm{n}}(M)$ is defined by $\varphi \sim \psi$ if they are approximately unitarily equivalent, i.e., there is a sequence of unitaries $(u_n)_n\subset \mathcal{U}(M)$ such that $\lim_{n\to \infty}\|\varphi -u_n\psi u_n^*\|=0$. Denote by $[\varphi]$ the equivalence class in $S_{\rm{n}}(M)$ represented by $\varphi \in S_{\rm{n}}(M)$. Then $S_{\rm{n}}(M)/\sim$ is a metric space by 
\[d([\varphi],[\psi]):=\inf_{u\in \mathcal{U}(M)}\|\varphi -u\psi u^*\|,\ \ \ [\varphi],[\psi]\in S_{\rm{n}}(M)/\sim.\]  
\begin{definition}
The {\it state space diameter} of $M$, denoted as $d(M)$ is defined by 
\[d(M):=\sup_{\varphi, \psi \in S_{\rm{n}}(M)}d([\varphi],[\psi]).\]
\end{definition}
It holds that $d(M)\le 2$, and $d(M)=2$ if $M$ is not a factor. By the result of Connes-St\o rmer \cite{CS}, Connes-Haagerup-St\o rmer \cite{CHS}, and Haagerup-St\o rmer \cite{HS}, the explicit form of $d(M)$ is given as follows. 
\begin{theorem}\label{thm: state space diameter theorem}
Let $M$ be a factor. Then the $d(M)$ is 
\begin{itemize}
\item[{\rm{(1)}}] $2(1-\frac{1}{n})$ if $M$ is of type {\rm{I}}$_n,\ (n\in \mathbb{N}\cup \{\infty\})$.
\item[{\rm{(2)}}] $2$ if $M$ is of type {\rm{II}}.
\item[{\rm{(3)}}] $2\frac{1-\lambda^{\frac{1}{2}}}{1+\lambda^{\frac{1}{2}}}$ if $M$ is of type {\rm{III}}$_{\lambda}\ (0\le \lambda\le 1)$. 
\end{itemize}
\end{theorem}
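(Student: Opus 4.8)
The plan is to treat the three types separately, in each case reducing the computation of $d(M)=\sup_{\varphi,\psi}\inf_{u\in\mathcal{U}(M)}\|\varphi-u\psi u^*\|$ to a question about the spectral data of the states, and to invoke the deep transitivity theorems of Connes--St\o rmer and Connes--Haagerup--St\o rmer for the cases where no elementary computation is available. Note first that the upper bound $d(M)\le 2$ is automatic, since $\|\varphi-u\psi u^*\|\le\|\varphi\|+\|\psi\|=2$, and that $d(M)$ should depend on $M$ only through its type and, in the type III case, through its $S$-invariant: for a factor the approximate unitary orbit of a normal state is governed by the ``eigenvalue list'' of its density data together with the modular structure.

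Type I$_n$: I would identify normal states on $M\cong\mathbb{B}(H)$, $\dim H=n$, with density operators $\rho$, using that $\|\varphi_\rho-\varphi_\sigma\|=\|\rho-\sigma\|_1$. Unitary conjugation sends $\rho\mapsto u\rho u^*$, leaving the decreasingly ordered eigenvalue sequence $\lambda(\rho)$ invariant while allowing an arbitrary common eigenbasis. A rearrangement argument of Ky Fan/Lidskii type then gives $\inf_{u}\|\rho-u\sigma u^*\|_1=\|\lambda(\rho)-\lambda(\sigma)\|_{\ell^1}$. Maximizing the latter over length-$n$ probability vectors, the extremal pair is a pure state $\lambda=(1,0,\dots,0)$ against the tracial state $\lambda=(1/n,\dots,1/n)$, giving $\|\lambda(\rho)-\lambda(\sigma)\|_{\ell^1}=(n-1)/n+(n-1)/n=2(1-\tfrac1n)$; for $n=\infty$ the supremum over finite $n$ equals $2$.

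Type II: the upper bound being automatic, I only need $d(M)\ge 2$. Since $M$ is type II it contains, for every $n$, a unital copy of $M_n(\mathbb{C})$ admitting a normal conditional expectation (via a system of matrix units with equal traces $1/n$). Transporting the type I$_n$ extremal pair through this subalgebra and extending by the expectation produces normal states on $M$ whose approximate-unitary distance is at least $2(1-\tfrac1n)$; letting $n\to\infty$ forces $d(M)\ge\sup_n 2(1-\tfrac1n)=2$, hence $d(M)=2$.

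Type III is the main obstacle and the only case requiring genuinely hard input. The target value $2\frac{1-\lambda^{1/2}}{1+\lambda^{1/2}}$ vanishes at $\lambda=1$, equals $2$ at $\lambda=0$, interpolates in between, and depends on $M$ only through $S(M)\setminus\{0\}$. For III$_1$ ($\lambda=1$), $d(M)=0$ is exactly the homogeneity statement already recorded in Theorem \ref{UW3.1.2} (the Connes--St\o rmer transitivity theorem \cite{CS}). For $0\le\lambda<1$ the exact constant is not accessible by the rearrangement argument used in type I, because unitary conjugation of states in a type III factor is constrained by the modular automorphism group and the Connes cocycle rather than by a finite eigenvalue list; establishing both the sharp lower bound and the matching upper bound is precisely the content of \cite{CS} and \cite{CHS}, with \cite{HS} removing the separability hypothesis on $M_*$. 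I would therefore invoke these results directly: achievability of the stated diameter from \cite{CS}, its sharpness (that states realizing the supremum cannot be brought closer) from \cite{CHS}, and the reduction to the non-separable case from \cite{HS}.
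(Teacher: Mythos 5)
The paper does not actually prove this theorem: it is stated as a quotation of known results, with the preceding sentence attributing it to Connes--St{\o}rmer \cite{CS}, Connes--Haagerup--St{\o}rmer \cite{CHS} and Haagerup--St{\o}rmer \cite{HS}, and Remark \ref{rem: no separability assumption is necessary} noting that the separable-predual hypothesis in \cite{CS}, \cite{CHS} is removed by the martingale arguments of \cite{HS}. For the only genuinely hard case, type III, your proposal does exactly the same thing (cites the same three papers for the same purposes), so there you and the paper coincide. What you add is an elementary derivation of cases (1) and (2). The type I argument is correct, granted the Lidskii--Mirsky rearrangement inequality $\min_U\|A-UBU^*\|_1=\|\lambda(A)-\lambda(B)\|_{\ell^1}$ for self-adjoint trace-class operators (a nontrivial but standard fact you should cite) and the observation that among decreasingly ordered probability vectors the extremal pair is a point mass against the uniform vector.

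There is, however, a genuine gap in your type II argument: as written it only covers II$_1$, and even there for a reason you did not make explicit. Transporting the extremal pair through a unital copy of $M_n(\mathbb{C})\subset M$ with normal expectation $E$ yields states $\varphi_0\circ E$ and $\mathrm{tr}_n\circ E$, but a lower bound for $\inf_{u\in\mathcal{U}(M)}\|\varphi_0\circ E-u(\mathrm{tr}_n\circ E)u^*\|$ does not follow by restricting to the subalgebra: a unitary $u$ of $M$ need not normalize $M_n(\mathbb{C})$ nor commute with $E$, so $(u\psi u^*)|_{M_n(\mathbb{C})}$ is not a unitary conjugate of $\mathrm{tr}_n$ and the type I$_n$ computation cannot be invoked. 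In II$_1$ the argument survives because $\mathrm{tr}_n\circ E=\tau$ is the tracial state, which is fixed by unitary conjugation, so the infimum collapses to the single distance $\|\varphi_0\circ E-\tau\|=2(1-\frac1n)$. In II$_\infty$ there is no tracial state (and your parenthetical ``matrix units with equal traces $1/n$'' has no meaning there), so the claimed lower bound is unjustified as stated. A uniform fix for II$_1$, II$_\infty$ (and incidentally I$_\infty$): let $\tau$ be a normal faithful semifinite trace, choose projections $p,q$ with $0<\tau(p)<\infty$ and $\tau(q)=n\tau(p)<\infty$, and set $\varphi:=\tau(p\,\cdot\,)/\tau(p)$, $\psi:=\tau(q\,\cdot\,)/\tau(q)$. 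Evaluating at the self-adjoint unitary $2p-1$ gives, for every $u\in\mathcal{U}(M)$,
\[
(\varphi-u\psi u^*)(2p-1)=2\left(1-\frac{\tau(uqu^*p)}{\tau(q)}\right)\ge 2\left(1-\frac{\tau(p)}{\tau(q)}\right)=2\left(1-\frac{1}{n}\right),
\]
since $\tau(uqu^*p)=\tau(p\,uqu^*\,p)\le\|uqu^*\|\,\tau(p)=\tau(p)$. Hence $\|\varphi-u\psi u^*\|\ge 2(1-\frac1n)$ for all $u$, and letting $n\to\infty$ gives $d(M)=2$ in both type II cases without any rearrangement theory.
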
 
\begin{remark}\label{rem: no separability assumption is necessary}
Although the above theorem in \cite{CS}, \cite{CHS} were stated for the separable predual case, the conclusion holds in full generality thanks to Martingale arguments given in \cite{HS}.  
\end{remark}
Let $(M_n,H_n)_n$ be a sequence of standard von Neumann algebras, and define the Groh-Raynaud ultraproduct $N=\prod^{\omega}M_n$. We will show the diameter formula $d(N)=\lim_{n\to \omega}d(M_n)$. 
\begin{lemma}\label{UW3.3.1} Let $(\varphi_n)_n, (\psi_n)_n\in \prod_{n\in \mathbb{N}}S_{\text{n}}(M_n)$ and let $\varphi=(\varphi_n)_{\omega}$ and $\psi=(\psi_n)_{\omega}$ be the corresponding normal states on $N$ (cf. Theorem \ref{UW2.3.3}). Then 
\[d([\varphi],[\psi])=\lim_{n\to \omega}d([\varphi_n],[\psi_n]).\]
\end{lemma}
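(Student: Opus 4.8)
The plan is to prove the two inequalities $d([\varphi],[\psi]) \le \lim_{n\to\omega} d([\varphi_n],[\psi_n])$ and $d([\varphi],[\psi]) \ge \lim_{n\to\omega} d([\varphi_n],[\psi_n])$ separately. For the easy direction ($\le$), I would start from unitaries $u_n \in \mathcal{U}(M_n)$ achieving approximately the infimum defining each $d([\varphi_n],[\psi_n])$, say $\|\varphi_n - u_n \psi_n u_n^*\| \le d([\varphi_n],[\psi_n]) + \tfrac1n$. Setting $u := (u_n)_\omega$, one checks $u \in \mathcal{U}(N)$ (the Banach space ultraproduct of unitaries is unitary in $N$, since $\pi_\omega$ is a $*$-homomorphism) and that $u \psi u^*$ corresponds to the sequence $(u_n \psi_n u_n^*)_\omega$ under the identification of Theorem \ref{UW2.3.3}. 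Since the norm of a normal functional represented by an ultraproduct sequence equals $\lim_{n\to\omega}$ of the norms (this is exactly the content of the isometry $\Phi$ in Theorem \ref{UW2.3.3}), we get
\[
\|\varphi - u\psi u^*\| = \lim_{n\to\omega} \|\varphi_n - u_n \psi_n u_n^*\| \le \lim_{n\to\omega} d([\varphi_n],[\psi_n]),
\]
and taking the infimum over unitaries in $N$ on the left yields $d([\varphi],[\psi]) \le \lim_{n\to\omega} d([\varphi_n],[\psi_n])$.

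The reverse inequality ($\ge$) is the main obstacle, because an arbitrary unitary $u \in \mathcal{U}(N) = \mathcal{U}(\prod^\omega M_n)$ need \emph{not} be representable as an ultraproduct $(u_n)_\omega$ of unitaries from the $M_n$: the Groh-Raynaud ultraproduct is the strong closure of $\pi_\omega((M_n)_\omega)$, which is strictly larger. So given $u \in \mathcal{U}(N)$ realizing $\|\varphi - u\psi u^*\|$ close to $d([\varphi],[\psi])$, I would approximate $u$ by elements of $\pi_\omega((M_n)_\omega)$. The natural tool is Kaplansky density together with the fact (used already in the proof of Lemma \ref{UW2.1.5}) that elements of $\ell^\infty$ map to a strongly dense subset of the unit ball; one then invokes a standard lifting of a near-unitary to an honest unitary (as in \cite[Proposition 2.1 (4)]{GeHa}) to replace the approximating sequence $(a_n)_n$ by a sequence of unitaries $(u_n)_n \subset \mathcal{U}(M_n)$ with $v := (u_n)_\omega$ strongly close to $u$. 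The key point is that the functionals $\varphi, \psi$ are \emph{normal}, so perturbing $u$ in the strong$^*$ topology (within the unit ball) perturbs $\|\varphi - u\psi u^*\|$ by an arbitrarily small amount; this is where normality of $\varphi,\psi$ and boundedness are essential.

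Once $u$ is replaced by such a $v = (u_n)_\omega$ with $\|\varphi - v\psi v^*\| \le \|\varphi - u\psi u^*\| + \varepsilon$, I again use Theorem \ref{UW2.3.3} to compute
\[
\|\varphi - v\psi v^*\| = \lim_{n\to\omega} \|\varphi_n - u_n \psi_n u_n^*\| \ge \lim_{n\to\omega} d([\varphi_n],[\psi_n]),
\]
since each term is bounded below by the corresponding diameter-distance. Letting $\varepsilon \to 0$ and taking the infimum over $u$ gives $d([\varphi],[\psi]) \ge \lim_{n\to\omega} d([\varphi_n],[\psi_n])$, completing the proof. The only subtlety I would be careful to spell out is that the strong-topology approximation of $u$ must be done while preserving (approximately) unitarity of the approximants, so that they descend to genuine unitaries in each $M_n$; this is precisely the lifting step, and it is where I expect the bulk of the technical care to go.
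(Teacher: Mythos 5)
Your skeleton matches the paper's proof: the inequality $d([\varphi],[\psi])\le\lim_{n\to\omega}d([\varphi_n],[\psi_n])$ is proved exactly as you describe, and the reverse inequality is likewise obtained by approximating a unitary of $N$ in the strong$^*$ topology by unitaries of the form $(u_n)_{\omega}$ with $u_n\in\mathcal{U}(M_n)$, using that $u\mapsto\|\varphi-u\psi u^*\|$ is strong$^*$-continuous on the unit ball because $\varphi,\psi$ are normal (a point you correctly isolate). The gap is in how you manufacture those unitaries. Applying Kaplansky density to general elements gives contractions $a=(a_n)_{\omega}\in\pi_{\omega}((M_n)_{\omega})$ that are strong$^*$-close to $u$, and you then want to convert $(a_n)_n$ into unitaries by a lifting of the type in \cite{GeHa}. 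But such liftings apply to elements that are genuine unitaries of the ultraproduct C$^*$-algebra $(M_n)_{\omega}$, or at least near-unitaries in \emph{norm} (i.e. $\|a^*a-1\|$ and $\|aa^*-1\|$ small); strong$^*$ proximity to a unitary of $N$ gives nothing of the sort. Concretely, in $\mathbb{B}(H)$ a sequence of projections $p\neq 1$ can converge strong$^*$ to the unitary $1$, while every projection $p\neq 1$ is at norm distance at least $1$ from every unitary (if $\|p-v\|<1$ with $v$ unitary, then $p=v(v^*p)$ is invertible, forcing $p=1$). So the step ``replace the strong$^*$ approximants by nearby honest unitaries'' fails as stated.

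The repair is what the paper actually invokes: the unitary version of Kaplansky's density theorem. Since $\pi_{\omega}((M_n)_{\omega})$ is a unital C$^*$-subalgebra of $N$ which is strongly dense, its unitary group is already strong$^*$-dense in $\mathcal{U}(N)$ (write $u=e^{ih}$ with $h=h^*\in N$, approximate $h$ strongly by self-adjoint elements of the subalgebra with the same norm bound, and exponentiate; multiplication being jointly strongly continuous on bounded sets). Once the approximant is an exact unitary of $(M_n)_{\omega}$, lifting it to a sequence $(u_n)_n$ of unitaries of the $M_n$ is legitimate and elementary: take any representing sequence of contractions and correct it by polar decomposition on a set of the ultrafilter. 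That is where a lifting argument belongs, not after a merely strong$^*$ approximation. With this substitution your argument closes and coincides with the paper's proof.
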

\begin{remark}
The above limit clearly exists since the distances are bounded by 2. 
\end{remark}
\begin{proof}
For each $n\in \mathbb{N}$, choose a unitary $u_n\in M_n$ such that 
\[\|\varphi_n-u_n\psi_nu_n^*\|\le d([\varphi_n],[\psi_n])+\frac{1}{n},\]
then with $u:=(u_n)_{\omega}\in N$ we have
\[\|\varphi-u\psi u^*\|=\lim_{n\to \omega}\|\varphi_n-u_n\psi_n u_n^*\|\le \lim_{n\to \omega}d([\varphi_n],[\psi_n]).\]
Hence $d([\varphi],[\psi])\le \lim_{n\to \omega}d([\varphi_n],[\psi_n]).$

For the converse inequality, we use that the unitary group of $\pi_{\omega}((M_n)_{\omega})$ is strongly$^*$-dense in the unitary group of $N$ by Kaplansky density Theorem (cf. Definition \ref{UW2.1.2}). Hence given $\varepsilon>0$, we may choose a unitary $u_n\in M_n$ for each $n\in \mathbb{N}$, such that with $u:=(u_n)_{\omega}$, we have 
\[\|\varphi-u\psi u^*\|\le d([\varphi],[\psi])+\varepsilon.\]
But then
\[\lim_{n\to \omega}d([\varphi_n],[\psi_n])\le \lim_{n\to \omega}\|\varphi_n-u_n\psi_n u_n^*\|=\|\varphi-u\psi u^*\|\le d([\varphi]],[\psi])+\varepsilon.\]
Since $\varepsilon>0$ was arbitrary, we obtain $d([\varphi],[\psi])\ge \lim_{n\to \omega}d([\varphi_n],[\psi_n])$.
\end{proof}
\begin{lemma}\label{UW3.3.2} With the above notations, $\displaystyle d(N)=\lim_{n\to \omega}d(M_n)$.
\end{lemma}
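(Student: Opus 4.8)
The plan is to derive the diameter formula directly from the pointwise statement just proved in Lemma \ref{UW3.3.1}, together with the fact (Corollary \ref{UW2.3.4}) that every normal state on the Groh-Raynaud ultraproduct $N=\prod^{\omega}M_n$ is of ultraproduct form. The argument splits into the two inequalities $d(N)\le \lim_{n\to\omega}d(M_n)$ and $d(N)\ge \lim_{n\to\omega}d(M_n)$, and essentially nothing beyond Lemma \ref{UW3.3.1} is needed; the only real content is matching the suprema in the definitions of $d(N)$ and $d(M_n)$ to the pointwise distances.

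For the upper bound, I would take arbitrary normal states $\varphi,\psi\in S_{\mathrm{n}}(N)$. By Corollary \ref{UW2.3.4} there are sequences $(\varphi_n)_n,(\psi_n)_n\in \prod_{n}S_{\mathrm{n}}(M_n)$ with $\varphi=(\varphi_n)_{\omega}$ and $\psi=(\psi_n)_{\omega}$. Since $d([\varphi_n],[\psi_n])\le d(M_n)$ for every $n$ by the very definition of $d(M_n)$ as a supremum, Lemma \ref{UW3.3.1} gives
\[
d([\varphi],[\psi])=\lim_{n\to\omega}d([\varphi_n],[\psi_n])\le \lim_{n\to\omega}d(M_n).
\]
Taking the supremum over all $\varphi,\psi\in S_{\mathrm{n}}(N)$ yields $d(N)\le \lim_{n\to\omega}d(M_n)$.

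For the lower bound I would argue by approximation: fix $\varepsilon>0$ and, for each $n$, choose $\varphi_n,\psi_n\in S_{\mathrm{n}}(M_n)$ with $d([\varphi_n],[\psi_n])\ge d(M_n)-\varepsilon$, which is possible since $d(M_n)$ is a supremum. Setting $\varphi=(\varphi_n)_{\omega}$ and $\psi=(\psi_n)_{\omega}$, Lemma \ref{UW3.3.1} gives $d([\varphi],[\psi])=\lim_{n\to\omega}d([\varphi_n],[\psi_n])\ge \lim_{n\to\omega}d(M_n)-\varepsilon$, so that $d(N)\ge \lim_{n\to\omega}d(M_n)-\varepsilon$. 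Letting $\varepsilon\to 0$ gives the reverse inequality, and combining the two completes the proof.

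Since the diameter values are uniformly bounded by $2$, the ultralimit $\lim_{n\to\omega}d(M_n)$ always exists, so there is no convergence issue to settle. The one point that genuinely requires the theory developed earlier — and the closest thing to an obstacle — is the upper bound's appeal to Corollary \ref{UW2.3.4}: it is essential that an \emph{arbitrary} normal state on $N$, not merely one visibly built from a sequence, can be represented as $(\varphi_n)_{\omega}$; this is exactly what guarantees that the supremum defining $d(N)$ is controlled by pointwise distances. Everything else is a routine supremum/$\varepsilon$ manipulation.
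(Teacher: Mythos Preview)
Your proof is correct and follows essentially the same approach as the paper: the upper bound via Corollary \ref{UW2.3.4} and Lemma \ref{UW3.3.1}, and the lower bound by choosing near-extremal pairs of states in each $M_n$. The only cosmetic difference is that the paper takes $\varphi_n,\psi_n$ with $d([\varphi_n],[\psi_n])\ge d(M_n)-\tfrac{1}{n}$ (so the $\varepsilon$ is absorbed into the ultralimit directly) rather than fixing an $\varepsilon$ and letting $\varepsilon\to 0$ at the end.
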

\begin{proof}
For all $\varphi, \psi\in S_{\text{n}}(N)$ we may, by Corollary \ref{UW2.3.4}, choose normal states $(\varphi_n), (\psi_n)\in \prod_{n\in \mathbb{N}}S_{\text{n}}(M_n)$ such that $\varphi=(\varphi_n)_{\omega}$ and $\psi=(\psi_n)_{\omega}$. By Lemma \ref{UW3.3.1}, 
\[d([\varphi],[\psi])=\lim_{n\to \omega}d([\varphi_n],[\psi_n])\le \lim_{n\to \omega}d(M_n).\]
Hence $d(N)\le \lim_{n\to \omega}d(M_n)$.

Conversely, we may for each $n\in \mathbb{N}$ chose $\varphi_n, \psi_n\in S_{\text{n}}(M_n)$ such that 
\[d([\varphi_n],[\psi_n])\ge d(M_n)-\frac{1}{n}, \ \ \ n\in \mathbb{N}.\]
Let $\varphi:=(\varphi_n)_{\omega}$ and $\psi:=(\psi_n)_{\omega}$. By Lemma \ref{UW3.3.1}, we get (taking the limit of the inequalities above): $d([\varphi],[\psi])\ge \lim_{n\to \omega}d(M_n)$. Hence $d(N)\ge \lim_{n\to \omega}d(M_n)$.   
\end{proof}
\begin{theorem}\label{thm: UP of type III nonzero factor is a factor}
Let $M$ be a $\sigma$-finite factor of type {\rm{III}}$_{\lambda} (\lambda\neq 0)$. Then $\prod^{\omega}M$ is a type {\rm{III}}$_{\lambda}$ factor. Moreover, for any sequence $(\varphi_n)_n\subset S_{\rm{nf}}(M)$, $(M,\varphi_n)^{\omega}\cong M^{\omega}$ is also a factor of type {\rm{III}}$_{\lambda}$. 
\end{theorem}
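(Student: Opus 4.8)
The plan is to handle the Groh--Raynaud ultrapower $N=\prod^{\omega}M$ first, where the state--space diameter serves as a sharp fingerprint of the type, and then to descend to the Ocneanu ultrapower and to arbitrary state sequences through the corner realization $M^{\omega}\cong qNq$. First I would compute $d(N)$: by Lemma~\ref{UW3.3.2} applied to the constant sequence $M_n=M$ one has $d(N)=d(M)$, and by Theorem~\ref{thm: state space diameter theorem} $d(M)=2(1-\lambda^{1/2})/(1+\lambda^{1/2})$, which is strictly below $2$ since $\lambda>0$. Recalling that $d=2$ for every non-factor, this already forces $N$ to be a factor.

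Next I would read off the type. The factor $N$ is infinite dimensional, since it contains the diagonal copy of the type~III factor $M$, and this excludes the finite type~I factors. The inequality $d(N)<2$ simultaneously excludes type~I$_{\infty}$, type~II and type~III$_{0}$, all of which have diameter $2$. Hence $N$ is of type~III$_{\mu}$ for some $0<\mu\le 1$, and a second application of Theorem~\ref{thm: state space diameter theorem}, now to the factor $N$, yields $d(N)=2(1-\mu^{1/2})/(1+\mu^{1/2})$. As $\mu\mapsto 2(1-\mu^{1/2})/(1+\mu^{1/2})$ is strictly decreasing and $d(N)=d(M)$, I conclude $\mu=\lambda$. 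This reasoning is uniform in $0<\lambda\le 1$: for $\lambda=1$ one has $d(N)=0$, and the only factors of diameter $0$ are $\mathbb{C}$ and the type~III$_{1}$ factors, so infinite dimensionality again selects type~III$_{1}$.

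To pass to the Ocneanu ultrapower, I would invoke Proposition~\ref{prop: normalizer iff commutes with p} and Corollary~\ref{cor: Ocneanu UP is qNq}, giving $M^{\omega}\cong qNq$ with $q=pJ_{\omega}pJ_{\omega}$, a nonzero projection by Lemma~\ref{lem: reduced standard form} (as $pNp\cong qNq$ and $p\neq 0$). Reduction of a type~III$_{\lambda}$ factor by a nonzero projection is again type~III$_{\lambda}$, so $M^{\omega}$ is type~III$_{\lambda}$. For an arbitrary $(\varphi_n)_n\subset S_{\mathrm{nf}}(M)$ the same construction gives $(M,\varphi_n)^{\omega}\cong q'Nq'$ with $q'=p'J_{\omega}p'J_{\omega}\neq 0$ and $p'=\mathrm{supp}((\varphi_n)_{\omega})$. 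Because all nonzero projections in the type~III factor $N$ are Murray--von Neumann equivalent, a partial isometry $v\in N$ with $v^{*}v=q$ and $vv^{*}=q'$ implements an isomorphism $qNq\cong q'Nq'$ by $x\mapsto vxv^{*}$, whence $(M,\varphi_n)^{\omega}\cong M^{\omega}$, independently of $(\varphi_n)_n$.

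The main difficulty is conceptual rather than computational: the diameter value alone does not determine the type, since $2(1-\lambda^{1/2})/(1+\lambda^{1/2})$ equals $2(1-1/n)$ precisely when $\lambda=(2n-1)^{-2}$, and equals $0$ both for $\mathbb{C}$ and for type~III$_{1}$. The care therefore lies in ordering the deductions correctly --- using $d(N)<2$ first for factoriality and for ruling out types~I$_{\infty}$, II and III$_{0}$, then infinite dimensionality to discard the finite type~I factors, and only afterwards reading off $\mu=\lambda$ --- and in checking that $q,q'$ are nonzero and that reduction by a projection preserves the type~III$_{\lambda}$ label.
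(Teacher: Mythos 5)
Your proposal follows essentially the same route as the paper's proof: the diameter formula of Lemma \ref{UW3.3.2} for the constant sequence, combined with Theorem \ref{thm: state space diameter theorem}, identifies $N=\prod^{\omega}M$ as a type III$_{\lambda}$ factor, and then the corner realization of the Ocneanu ultrapower together with equivalence of projections in $N$ yields the statement about $(M,\varphi_n)^{\omega}$. In fact your unpacking of the step ``diameter $\Rightarrow$ type'' is more careful than the paper's terse ``Hence $N$ is a type III$_{\lambda}$ factor'': you correctly note that the value $2(1-\lambda^{1/2})/(1+\lambda^{1/2})$ coincides with the type I$_n$ diameter $2(1-1/n)$ exactly when $\lambda=(2n-1)^{-2}$ (and with $0$ for both $\mathbb{C}$ and type III$_1$), so one must first use $d(N)<2$ and infinite-dimensionality before reading off the type by monotonicity. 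Whether you cut by $p$ (as the paper does, via Proposition \ref{prop: normalizer iff commutes with p}) or by $q=pJ_{\omega}pJ_{\omega}$ (via Corollary \ref{cor: Ocneanu UP is qNq}) is immaterial, since $pNp\cong qNq$.

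There is, however, one incorrect assertion in your final step: it is \emph{not} true that all nonzero projections in a type III factor are Murray--von Neumann equivalent. This holds only for nonzero \emph{$\sigma$-finite} (countably decomposable) projections; a $\sigma$-finite projection can never be equivalent to a non-$\sigma$-finite one. The distinction is not vacuous in the present situation, because $N=\prod^{\omega}M$ is genuinely not $\sigma$-finite (see Theorem \ref{UW3.1.4} and the remarks in the introduction), so for instance $1_N$ is a nonzero projection that is not equivalent to $q$. The paper is careful to invoke only the equivalence of $\sigma$-finite projections. Your argument is easily repaired: $q$ and $q'$ are $\sigma$-finite projections of $N$, since $qNq\cong M^{\omega}$ and $q'Nq'\cong (M,\varphi_n)^{\omega}$ carry the faithful normal states $\varphi^{\omega}$ and $(\varphi_n)^{\omega}$ respectively, and any two nonzero $\sigma$-finite projections in a type III factor are equivalent; the partial isometry $v$ with $v^{*}v=q$, $vv^{*}=q'$ then implements $qNq\cong q'Nq'$ as you intended. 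With this one correction, your proof is complete and agrees with the paper's.
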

\begin{proof}
Let $p:=\text{supp}(\varphi_{\omega})$, where $\varphi_{\omega}=(\varphi_n)_{\omega}\in (M_*)_{\omega}$. Then by Proposition \ref{prop: normalizer iff commutes with p}, we have 
\[(M,\varphi_n)^{\omega}\cong pNp,\ \ N:=\prod^{\omega}M.\]
By Theorem \ref{thm: state space diameter theorem}, the state space diameter of $N$ is 
\[d(N)=\lim_{n\to \omega}d(M)=2\frac{1-\lambda^{\frac{1}{2}}}{1+\lambda^{\frac{1}{2}}}.\]
Hence $N$ is a type III$_{\lambda}$ factor, so is its corner $pNp$. Since all $\sigma$-finite projections in a type III factor are equivalent, all $(M,\varphi_n)^{\omega}$'s are mutually isomorphic. 
\end{proof}
\begin{remark}\label{rem: another proof of factoriality}
Let $M$ be a $\sigma$-finite factor of type {\rm{III}}$_{\lambda}$\ ($0<\lambda<1$). Then the factoriality of $M^{\omega}$ can be shown using Theorem \ref{thm: ultrapower of modular automorphism}. 
\end{remark}
 
\begin{proof}[Proof of Remark \ref{rem: another proof of factoriality}]
Let $x\in \mathcal{Z}(M^{\omega})$. Let $\varphi \in S_{\text{nf}}(M)$ be such that $\sigma_T^{\varphi}=\text{id}$, where $T=-2\pi/\log \lambda$. By Proposition \ref{prop: centralizer of lacunary weight}, we have 
\[x\in \mathcal{Z}(M^{\omega})\subset (M^{\omega})_{\varphi^{\omega}}=(M_{\varphi})^{\omega}.\]
Then by Takesaki's Theorem for periodic state \cite{Takesaki3}, $M_{\varphi}$ is a type II$_1$ factor and $\sigma(\Delta_{\varphi})=\{\lambda^n;\ n\in \mathbb{Z}\}\cup \{0\}$, whence $(M_{\varphi})^{\omega}$ is also a type II$_1$ factor by a standard argument. This shows that 
\[x\in (M_{\varphi})^{\omega}\cap (M^{\omega})'\subset \mathcal{Z}((M_{\varphi})^{\omega})=\mathbb{C}.\]
Therefore $M^{\omega}$ is a factor, and since $(M^{\omega})_{\varphi^{\omega}}$ is a factor, we have (cf. Corollary \ref{cor: some useful conseuences of main theorem} (3))
\[S(M^{\omega})=\sigma (\Delta_{\varphi^{\omega}})=\sigma(\Delta_{\varphi})=\{\lambda^n;\ n\in \mathbb{Z}\}\cup \{0\}.\]
This shows that $M^{\omega}$ is a type III$_{\lambda}$ factor.    
\end{proof}

\subsection{Ultraproduct of Type III$_0$ Factors}
As we have seen, in the case of type III$_{\lambda}\ (\lambda \neq 0)$ factor, the Ocneanu ultraproduct $(M,\varphi_n)^{\omega}$ does not depend on the choice of $(\varphi_n)_n$. In this section we see that the situation is different for the case of type III$_0$ factors. Moreover, we will show that $M^{\omega}$ is not a factor. 

\begin{theorem}\label{thm: UP of III_0 can be of finite type}
Let $M$ be a $\sigma$-finite type {\rm{III}}$_0$ factor. Then there exists a sequence $\{\varphi_n\}_{n=1}^{\infty}$ of normal faithful states on $M$ such that $(M,\varphi_n)^{\omega}$ is isomorphic to the finite von Neumann algebra $(M_{\varphi_n},\tau_n)^{\omega}$ where $\tau_n:=\varphi_n|_{M_{\varphi_n}}$.
\end{theorem}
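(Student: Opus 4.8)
The plan is to exploit the aperiodicity of the flow of weights of a type III$_0$ factor in order to manufacture a sequence of normal faithful states whose modular spectra have lacunae growing to the whole of $(0,\infty)$, and then to show that along $\omega$ this forces the ultrapower state to be tracial and the ultraproduct to collapse onto the ultraproduct of the centralizers. Concretely, I aim to produce $\varphi_n \in S_{\rm nf}(M)$ which is lacunary with $\sigma(\Delta_{\varphi_n}) \cap (\lambda_n,\lambda_n^{-1}) = \{1\}$ and $\lambda_n \to 0$ as $n \to \infty$.

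First, the construction of the states. Because $M$ is of type III$_0$ we have $S(M) = \{0,1\}$, and the structure theory of such factors (aperiodicity of the associated flow of weights) provides, for every $\lambda \in (0,1)$, a lacunary normal faithful semifinite weight $\psi$ with $\sigma(\Delta_\psi) \cap (\lambda,\lambda^{-1}) = \{1\}$. To pass from a weight to a state I would choose a projection $e \in M_\psi$ with $\psi(e) = 1$ (possible since $\psi|_{M_\psi}$ is a semifinite trace on the type II$_\infty$ algebra $M_\psi$) and set $\varphi := \psi|_{eMe}$; since $\Delta_{\psi_e} = \Delta_\psi|_{eJ_\psi e J_\psi L^2(M,\psi)}$ one has $\sigma(\Delta_{\psi_e}) \subset \sigma(\Delta_\psi)$, so $\varphi$ stays lacunary with lacuna at least as large, and the isomorphism $eMe \cong M$ transports it to the desired state on $M$ (whose centralizer, being the corner $e M_\psi e$ by a finite-trace projection, is finite). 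I expect this existence step — lacunae of unbounded size — to be the main obstacle, and it is exactly here that type III$_0$ is indispensable: for III$_\lambda$ with $\lambda>0$ the spectrum always contains $\lambda^{\mathbb{Z}}$, so the lacuna is bounded by $\lambda$.

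Next I would show that $\varphi^{\omega} = (\varphi_n)^{\omega}$ is a trace on $M^{\omega} = (M,\varphi_n)^{\omega}$, i.e. $\Delta_{\varphi^{\omega}} = 1$. By Proposition \ref{prop: characterization of the normalizer} it suffices to treat $x = (x_n)^{\omega}$ with $x_n \in M_n(\sigma^{\varphi_n},[-a,a])$ for a fixed $a>0$, as such elements are $\|\cdot\|^{\sharp}_{\varphi^{\omega}}$-dense. For these, $x_n \xi_{\varphi_n}$ lies in the spectral subspace of $\log\Delta_{\varphi_n}$ for $[-a,a]$, hence in the $\Delta_{\varphi_n}$-spectral subspace for $[e^{-a},e^a]$. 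Since $\lambda_n \to 0$, the set $\{n : \lambda_n < e^{-a}\}$ is cofinite and so lies in $\omega$; for these $n$ the interval $[e^{-a},e^a] \subset (\lambda_n,\lambda_n^{-1})$ meets $\sigma(\Delta_{\varphi_n})$ only at $1$, whence $\Delta_{\varphi_n}^{1/2} x_n \xi_{\varphi_n} = x_n \xi_{\varphi_n}$. By Corollary \ref{cor: some useful conseuences of main theorem} (2) this gives $\Delta_{\varphi^{\omega}}^{1/2} x \xi_{\varphi^{\omega}} = (\Delta_{\varphi_n}^{1/2} x_n \xi_{\varphi_n})_\omega = x\,\xi_{\varphi^{\omega}}$, and density yields $\Delta_{\varphi^{\omega}} = 1$. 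In particular $M^{\omega}$ is finite and equals $(M^{\omega})_{\varphi^{\omega}}$.

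Finally I would identify $M^{\omega}$ with $(M_{\varphi_n},\tau_n)^{\omega}$, a varying-sequence analogue of Proposition \ref{prop: centralizer of lacunary weight}. Since $\|\cdot\|_{\tau_n} = \|\cdot\|_{\varphi_n}$ and $\|\cdot\|^{\sharp}_{\tau_n} = \|\cdot\|^{\sharp}_{\varphi_n}$ on $M_{\varphi_n}$, and $M_{\varphi_n} \subset M_n(\sigma^{\varphi_n},\{0\}) \subset \mathcal{M}^\omega$ by Lemma \ref{lem: analiticity implies normalizer}, the constant inclusions descend to a trace-preserving embedding $(M_{\varphi_n},\tau_n)^{\omega} \hookrightarrow M^{\omega}$. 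For surjectivity I reuse the reduction above: given $x=(x_n)^\omega$ with $x_n \in M_n(\sigma^{\varphi_n},[-a,a])$, set $y_n := \sigma^{\varphi_n}_{f_n}(x_n)$ with $f_n \in L^1(\mathbb{R})_+$, $\int f_n = 1$ and $\mathrm{supp}(\widehat{f_n}) \subset (\log\lambda_n,-\log\lambda_n)$; then $\mathrm{Sp}_{\sigma^{\varphi_n}}(y_n) \subset \{0\}$, so $y_n \in M_{\varphi_n}$, while $y_n \xi_{\varphi_n} = \widehat{f_n}(\log\Delta_{\varphi_n}) x_n \xi_{\varphi_n} = x_n\xi_{\varphi_n}$ for the $\omega$-large set of $n$ with $\lambda_n < e^{-a}$ (and similarly for adjoints). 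With $\|y_n\| \le \|x_n\|$ bounded, $(y_n)$ defines an element of $(M_{\varphi_n},\tau_n)^{\omega}$ equal to $x$; thus the embedding has norm-bounded, $\sigma$-strong$^*$-dense range, and Kaplansky density makes it onto. This establishes $(M,\varphi_n)^{\omega} \cong (M_{\varphi_n},\tau_n)^{\omega}$, a finite von Neumann algebra.
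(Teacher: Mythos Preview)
Your strategy is the paper's: produce states $\varphi_n$ with spectral gaps $(-\log n,\log n)$ around $0$, then use Proposition~\ref{prop: characterization of the normalizer} to reduce to sequences $(y_n)_n$ with $y_n\in M(\sigma^{\varphi_n},[-a,a])$, which for $n$ large lie in $M_{\varphi_n}$.  The paper's proof is exactly this, stated more tersely.

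The one point where you diverge is the construction of the $\varphi_n$.  You invoke ``aperiodicity of the flow of weights'' to produce lacunary weights with arbitrarily large gap and then cut down to states.  This is not a black-box consequence of the structure theory: the standard discrete decomposition gives \emph{one} lacunary weight with \emph{some} fixed gap $(\lambda_0,\lambda_0^{-1})$, not a family with gaps tending to $(0,\infty)$.  The paper fills exactly this gap as Lemma~\ref{lem: states with large spectral gaps}, using two results from Connes: the directed-set/compactness lemma (Lemma~\ref{lem: intersection is Gamma}) to show that if every state had spectrum meeting $[\log n,2\log n]\cup[-2\log n,-\log n]$ then so would $\Gamma(\sigma^\psi)=\log(S(M)\setminus\{0\})=\{0\}$, and then the spectral-gap perturbation lemma (Lemma~\ref{lem: spectral gap lemma}) to upgrade a gap on $[\log n,2\log n]$ to a full gap $(-\log n,\log n)$ by an inner perturbation.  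Your weight-then-corner route is fine in principle, but it does not bypass this argument; you would still need the Connes compactness lemma to get the unbounded lacunae.

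Your second and third paragraphs are correct but slightly over-engineered.  Once you know $y_n\in M(\sigma^{\varphi_n},[-a,a])\subset M_{\varphi_n}$ for all $n$ in a set in $\omega$, you have $(y_n)^\omega\in (M_{\varphi_n},\tau_n)^\omega$ directly; the separate traciality proof via Corollary~\ref{cor: some useful conseuences of main theorem}(2) and the smoothing by $\sigma^{\varphi_n}_{f_n}$ are unnecessary (though not wrong).  The paper simply observes the inclusion and is done.
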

We need lemmata.
\begin{lemma}{\rm{\cite[Lemma 2.3.4]{Connes2}}}\label{lem: intersection is Gamma} Let $\alpha$ be a continuous action of a locally compact abelian group $G$ on a factor $M$. Denote by $\widehat{G}$ the Pontrjagin dual of $G$. Then the family $\mathcal{F}$ of subsets of $\widehat{G}$ of the form $\{{\rm{Sp}}(\alpha^e)+K\}$, where $e$ is a non-zero projection in $M^{\alpha}$ and $K$ is a compact neighborhood of 0 in $\widehat{G}$, forms a directed set with intersection $\Gamma (\alpha)$.  
\end{lemma}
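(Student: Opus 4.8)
The plan is to separate the two assertions: that $\bigcap \mathcal{F} = \Gamma(\alpha)$, which is essentially formal, and that $\mathcal{F}$ is downward directed under inclusion, which carries all the content and is where the factoriality of $M$ enters. For the intersection I would first record the basic identification of reduced spectral subspaces: for a nonzero projection $e \in M^{\alpha}$ and any open $U \subseteq \widehat{G}$ one has $(eMe)(\alpha^e, U) = e\, M(\alpha, U)\, e$, because $\alpha^e_t = \alpha_t|_{eMe}$ fixes $e$ and the global spectral subspaces are $M^{\alpha}$-bimodules. Consequently $\lambda \in \mathrm{Sp}(\alpha^e)$ iff $e\, M(\alpha, U)\, e \neq 0$ for every neighbourhood $U$ of $\lambda$. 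Since each $\mathrm{Sp}(\alpha^e)$ is closed and the compact neighbourhoods $K$ of $0$ shrink to $\{0\}$, one gets $\bigcap_K (\mathrm{Sp}(\alpha^e) + K) = \mathrm{Sp}(\alpha^e)$; intersecting further over $e$ then yields $\bigcap \mathcal{F} = \bigcap_{0 \neq e} \mathrm{Sp}(\alpha^e) = \Gamma(\alpha)$ by definition of the Connes spectrum.

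The same identification yields monotonicity: if $0 \neq e \le f$ in $M^{\alpha}$, then $e\,M(\alpha, U)\,e = e\,(f\,M(\alpha, U)\,f)\,e$, so $\mathrm{Sp}(\alpha^e) \subseteq \mathrm{Sp}(\alpha^f)$, i.e. reduction shrinks the spectrum. Using this, directedness reduces to the following comparison claim: for nonzero projections $e_1, e_2 \in M^{\alpha}$ and a compact neighbourhood $V$ of $0$, there is a nonzero projection $e \le e_1$ in $M^{\alpha}$ with $\mathrm{Sp}(\alpha^e) \subseteq \mathrm{Sp}(\alpha^{e_2}) + V$. Granting this, given two members $\mathrm{Sp}(\alpha^{e_i}) + K_i$ $(i=1,2)$ of $\mathcal{F}$ I would pick compact neighbourhoods $K, V$ of $0$ with $K \subseteq K_1$ and $V + K \subseteq K_2$, produce $e$ as in the claim, and then by monotonicity together with the claim obtain $\mathrm{Sp}(\alpha^e) + K \subseteq (\mathrm{Sp}(\alpha^{e_1}) + K_1) \cap (\mathrm{Sp}(\alpha^{e_2}) + K_2)$, so $(e,K)$ is the required common lower bound.

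To prove the comparison claim I would exploit factoriality: since $M$ is a factor and $e_1, e_2 \neq 0$, there is a nonzero partial isometry $u \in e_1 M e_2$. Decomposing $u$ through the Arveson spectral subspaces, I choose a point $\mu_0 \in \widehat{G}$ and an arbitrarily small symmetric compact neighbourhood $V_0$ of $0$ for which the spectral component $u_0 = \alpha_g(u) \in M(\alpha, \mu_0 + V_0)$ is nonzero, which is possible since $\bigcup_U M(\alpha, U)$ is $\sigma$-weakly dense. The subspace product and adjoint rules then give, for $y \in e_2 M e_2$, the shift estimate $\mathrm{Sp}_\alpha(u_0 y u_0^*) \subseteq \overline{\mathrm{Sp}_\alpha(y) + V_0 - V_0}$, so that conjugation by $u_0$ carries $e_2$-spectral data into $e_1$-spectral data displaced only by the width of the window $V_0$.

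Extracting from this an honest nonzero projection $e \le e_1$ lying in $M^{\alpha}$ — rather than merely an approximately invariant positive element such as $u_0 u_0^* \in e_1 M e_1$, whose functional calculus need not keep its spectrum small — is the main obstacle, and it is precisely here that the fattening by $V$ becomes unavoidable, since the window width $V_0 - V_0$ cannot be suppressed. I expect to handle it by taking $V_0$ small enough that $V_0 - V_0 \subseteq V$ and then invoking Connes' structural analysis of the reduced spectra $\mathrm{Sp}(\alpha^e)$ in a factor to convert the near-invariant element $u_0 u_0^*$ into a genuine fixed-point subprojection $e \le e_1$ whose reduced spectrum is controlled by $\mathrm{Sp}(\alpha^{e_2}) + V$. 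Combining the intersection computation, monotonicity, and this comparison claim then gives both halves of the lemma.
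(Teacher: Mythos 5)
The paper itself offers no proof of this statement: it is quoted as a black box from Connes' classification paper, so your attempt has to be measured against Connes' original argument rather than anything in the text. Your reductions are all sound and match the standard skeleton: the identification $(eMe)(\alpha^e,E)=eM(\alpha,E)e$, the computation $\bigcap\mathcal{F}=\bigcap_{e}\mathrm{Sp}(\alpha^e)=\Gamma(\alpha)$ from closedness of the reduced spectra, the monotonicity $\mathrm{Sp}(\alpha^e)\subseteq\mathrm{Sp}(\alpha^f)$ for $e\le f$, and the derivation of directedness from your comparison claim are all correct.

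The gap is that the comparison claim is the entire content of the lemma, and your proof of it stops exactly at the hard point. After producing the nonzero spectral component $u_0\in e_1Me_2\cap M(\alpha,\mu_0+V_0)$ (which, incidentally, should be $\alpha_f(u)$ for a suitable $f\in L^1(G)$ with $\widehat{f}$ supported near $\mu_0$, not $\alpha_g(u)$ for a group element, which does not localize the spectrum), you must manufacture a nonzero projection $e\le e_1$ in $M^{\alpha}$ with $\mathrm{Sp}(\alpha^e)\subseteq\mathrm{Sp}(\alpha^{e_2})+V$. You correctly diagnose that functional calculus on the near-invariant element $u_0u_0^*$ cannot do this, but your substitute, ``invoking Connes' structural analysis of the reduced spectra in a factor,'' is circular: that analysis \emph{is} this lemma. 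The actual construction is different in kind. One takes $e:=\bigvee_{g\in G}\alpha_g(s)$, where $s\le e_1$ is the left support of $u_0$; this is automatically a nonzero $\alpha$-invariant projection below $e_1$, with no spectral control needed to define it. The control comes afterwards, from a translation argument: if $0\neq x\in eM(\alpha,W)e$, then $\alpha_g(s)\,x\,\alpha_h(s)\neq 0$ for some $g,h\in G$, hence $y:=\alpha_g(u_0)^*\,x\,\alpha_h(u_0)\neq 0$; since spectral subspaces are globally $\alpha$-invariant, $y\in e_2Me_2\cap M\bigl(\alpha,\,W+V_0-V_0\bigr)$, so $\mathrm{Sp}(\alpha^{e_2})$ meets $W+V_0-V_0$, and shrinking $W$ to a point gives $\mathrm{Sp}(\alpha^e)\subseteq\mathrm{Sp}(\alpha^{e_2})+V_0-V_0\subseteq\mathrm{Sp}(\alpha^{e_2})+V$ once $V_0-V_0\subseteq V$. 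Without this (or an equivalent) construction, your proposal establishes the intersection statement but not the directedness, which is where the factoriality of $M$ actually enters.
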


\begin{lemma}{\rm{\cite[Lemma 5.2.3]{Connes2}}}\label{lem: spectral gap lemma} Let $\sigma$ be a continuous action of $\mathbb{R}$ on a factor $M$, and assume there is $c>0$ such that 
\[{\rm{Sp}}(\sigma)\cap \{[-2c,-c]\cup [c,2c]\}=\emptyset,\]
where we identify $\widehat{\mathbb{R}}=\mathbb{R}$. Then there exists $h\in M^{\sigma}$, $-c/2\le h\le c/2$ such that the action $\sigma'$ of $\mathbb{R}$ defined by 
\[\sigma'_t(x):=e^{-ith}\sigma_t(x)e^{ith},\ \ \ \ x\in M, t\in \mathbb{R},\]
satisfies ${\rm{Sp}}(\sigma')\cap (-c,c)=\{0\}$. 
\end{lemma}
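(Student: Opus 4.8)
The plan is to treat $\sigma'$ as a cocycle perturbation of $\sigma$ by the one-parameter unitary group $u_t := e^{-ith}$, exploiting that, since $h$ is $\sigma$-fixed, $u_t$ is a genuine unitary representation whose inner action $\beta_t := \mathrm{Ad}(u_t)$ \emph{commutes} with $\sigma$. Indeed $\sigma_s(u_t)=u_t$ gives $\sigma_s\beta_t\sigma_{-s}=\beta_t$, so $(s,t)\mapsto\beta_s\sigma_t$ is an $\mathbb{R}^2$-action whose restriction to the diagonal is $\sigma'$. In this picture the Arveson spectrum of $\beta$ is contained in $[-2\|h\|,2\|h\|]$, so $\|h\|\le c/2$ forces $\mathrm{Sp}(\beta)\subset[-c,c]$. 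First I would record the structure of $K:=\mathrm{Sp}(\sigma)$: it is closed, symmetric ($K=-K$, since the $\sigma_t$ are $*$-automorphisms), contains $0$ (as $1\in M^{\sigma}$), and by hypothesis misses $\pm[c,2c]$, whence $K\cap[0,\infty)\subset[0,c)\cup(2c,\infty)$.

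The heart of the matter is the construction of $h\in M^{\sigma}$ with $\|h\|\le c/2$ whose adjoint action is tuned to the $\sigma$-grading, rather than arbitrary: a crude perturbation estimate only gives $\mathrm{Sp}(\sigma')\subset K+\mathrm{Sp}(\beta)\subset K+[-c,c]$, which does \emph{not} produce the gap. The mechanism I would aim for is to have $\mathrm{ad}(h)$ cancel the rotation of $\sigma$ on the low band, i.e. re-grade the frequencies lying in $(0,c)$ down to $0$ (an admissible shift, since such frequencies have modulus $<c$) while leaving the part with $|\lambda|\ge 2c$ at modulus $\ge c$. Concretely I would pass to the standard form $L^2(M)$, on which $\sigma$ is implemented by a unitary group $e^{itA}$ fixing the self-dual cone; using the gap $[c,2c]$, fix a smooth real $\theta$ with $|\theta|\le c/2$ that agrees with the ``local inverse of the generator'' on the low band and is constant across the gap, form the bounded self-adjoint spectral truncation $\theta(A)$, and then descend it into $M^{\sigma}$ by averaging over $\sigma$ with an invariant mean on the amenable group $\mathbb{R}$. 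The width of the gap is exactly what bounds $\|h\|$ by $c/2$ and forces the truncation to act nontrivially only on $M(\sigma,(-c,c))$.

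Finally I would verify $\mathrm{Sp}(\sigma')\cap(-c,c)=\{0\}$ using the spectral-subspace calculus recalled in $\S$\ref{sec: General Backgrounds and Notations}: the relations $M(\sigma',E)^{*}=M(\sigma',-E)$, the product rule $M(\sigma',E)M(\sigma',F)\subset M(\sigma',\overline{E+F})$, and the characterization that $\lambda\in\mathrm{Sp}(\sigma')$ iff $M(\sigma',E)\neq\{0\}$ for every closed neighbourhood $E$ of $\lambda$. By construction the re-graded low band carries $\sigma'$-frequency $0$, so $M(\sigma',E)=\{0\}$ for compact $E\subset(0,c)$; factoriality of $M$, together with the coherence of reduced spectra expressed through $\Gamma$ in Lemma \ref{lem: intersection is Gamma}, upgrades this to the stated gap on the whole of $(-c,c)\setminus\{0\}$.

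I expect the main obstacle to be precisely this construction step: producing $h$ \emph{inside} $M^{\sigma}$ (not merely on the Hilbert space) with simultaneous control of its norm by $c/2$ and of the exact spectral effect of $\mathrm{Ad}(e^{-ith})$ on each $\sigma$-band. Showing that the averaged truncation still realizes the \emph{exact} frequency shift emptying $(0,c)$ — rather than merely translating spectrum within an interval of length $c$ — is where the gap hypothesis must enter in an essential, qualitative way, and it is the part of the argument most resistant to a purely quantitative treatment.
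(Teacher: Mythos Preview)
The paper does not prove this lemma; it is quoted from \cite[Lemma 5.2.3]{Connes2} without argument, so there is nothing in the paper to compare against. Your proposal can still be assessed on its own, and the construction of $h$ has a genuine gap. You form $\theta(A)$ by spectral calculus on the generator $A$ implementing $\sigma$ on $L^2(M)$ and then propose to ``descend it into $M^{\sigma}$ by averaging over $\sigma$ with an invariant mean.'' But $\theta(A)$ is a Hilbert-space operator, not an element of $M$; a bounded Borel function of the implementing generator typically lies in neither $M$ nor $M'$, and averaging against $\mathrm{Ad}(e^{itA})$ simply returns $\theta(A)$ since it already commutes with $A$. No step here produces an element of $M$, let alone of $M^{\sigma}$, and you yourself flag this as ``the main obstacle'' without closing it. The final appeal to Lemma~\ref{lem: intersection is Gamma} is also misplaced: that lemma computes the Connes spectrum $\Gamma(\sigma)$ as an intersection over reductions and plays no role in verifying the gap for a single perturbed action $\sigma'$.

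Connes' actual argument is algebraic and bypasses the standard form. The gap hypothesis makes $N:=M(\sigma,(-c,c))$ a $\sigma$-invariant von Neumann \emph{subalgebra}: the product rule gives $N\cdot N\subset M(\sigma,(-2c,2c))$, and since $\mathrm{Sp}(\sigma)\cap(-2c,2c)\subset(-c,c)$ this lands back in $N$ (closure under $*$ is immediate from $M(\sigma,E)^*=M(\sigma,-E)$). The restriction $\sigma|_N$ then has bounded Arveson spectrum, so its generator is a bounded $*$-derivation of $N$; by the Kadison--Sakai theorem it is inner, $\delta=i\,\mathrm{ad}(h)$ with $h=h^*\in N$, and after translating $h$ by a scalar one arranges $-c/2\le h\le c/2$. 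From $\sigma_t(h)=e^{ith}he^{-ith}=h$ one gets $h\in M^{\sigma}$ for free. Then $\sigma'|_N=\mathrm{id}$, and for compact $E\subset(0,c)$ any $x\in M(\sigma',E)$ satisfies $\mathrm{Sp}_{\sigma}(x)\subset E+[-c,c]\subset(-c,2c)$, hence $x\in N\subset M^{\sigma'}$, which forces $x=0$. The spectral gap does its qualitative work not via a perturbation bound but by making the low band a genuine subalgebra on which $\sigma$ is inner --- this is the idea missing from your sketch.
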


\begin{lemma}\label{lem: states with large spectral gaps}
Let $M$ be a $\sigma$-finite factor of type {\rm{III}}$_0$. Then for each $n\in \mathbb{N}$, there exists $\varphi_n\in S_{\rm{nf}}(M)$ such that ${\rm{Sp}}(\sigma^{\varphi_n})\cap (-\log n,\log n)=\{0\}$.
\end{lemma}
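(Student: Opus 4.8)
The plan is to start from an arbitrary $\varphi_0\in S_{\mathrm{nf}}(M)$ and exploit that $M$ is of type $\mathrm{III}_0$, so that the modular flow has the smallest possible Connes spectrum: $\Gamma(\sigma^{\varphi_0})=\log(S(M)\setminus\{0\})=\log\{1\}=\{0\}$. The guiding idea is that a trivial Connes spectrum, being an \emph{intersection} of Arveson spectra taken over all reductions, must force some single reduction to already have a large gap away from $0$; a cocycle perturbation in the spirit of Lemma \ref{lem: spectral gap lemma} will then convert this into an honest spectral gap for the modular flow of a perturbed state, and finally the type $\mathrm{III}$ property will let me transport everything back to $M$. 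Since $0\in\mathrm{Sp}(\sigma^{\varphi})$ always (the eigenvalue $1$ of $\Delta_\varphi$), the content for a given $n$ is the absence of \emph{nonzero} Arveson spectrum in $(-\log n,\log n)$; for $n=1$ this is vacuous, so I fix $n\ge 2$.

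First I would set $K_n:=[\log n,2\log n]\cup[-2\log n,-\log n]$, a compact subset of $\widehat{\mathbb{R}}=\mathbb{R}$ disjoint from $\{0\}$. Applying Lemma \ref{lem: intersection is Gamma} to the continuous $\mathbb{R}$-action $\alpha=\sigma^{\varphi_0}$ on the factor $M$, the family $\mathcal{F}$ of sets $\mathrm{Sp}(\sigma^{\varphi_{0,e}})+L$ (with $0\neq e\in\mathrm{Proj}(M_{\varphi_0})$ and $L$ a compact neighborhood of $0$) is directed and has intersection $\Gamma(\sigma^{\varphi_0})=\{0\}$. I would then run a finite‑intersection‑property argument: the sets $\{F\cap K_n:F\in\mathcal{F}\}$ are compact with $\bigcap_{F\in\mathcal{F}}(F\cap K_n)=\{0\}\cap K_n=\emptyset$, so by compactness a finite subfamily already has empty intersection, and by directedness there is a single $F^{*}=\mathrm{Sp}(\sigma^{\varphi_{0,e}})+L\in\mathcal{F}$ with $F^{*}\cap K_n=\emptyset$. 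As $0\in L$ we have $\mathrm{Sp}(\sigma^{\varphi_{0,e}})\subseteq F^{*}$, hence
\[
\mathrm{Sp}(\sigma^{\varphi_{0,e}})\cap K_n=\emptyset .
\]

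Next I would invoke Lemma \ref{lem: spectral gap lemma} on the reduced factor $eMe$ (a factor since $M$ is one, on which $\sigma^{\varphi_{0,e}}$ is the modular flow of $\varphi_{0,e}=\varphi_0|_{eMe}$) with $c=\log n$: the hypothesis $\mathrm{Sp}(\sigma^{\varphi_{0,e}})\cap([-2c,-c]\cup[c,2c])=\emptyset$ is precisely $\mathrm{Sp}(\sigma^{\varphi_{0,e}})\cap K_n=\emptyset$, so there is a self-adjoint $h\in(eMe)_{\varphi_{0,e}}$ with $-\tfrac{c}{2}\le h\le\tfrac{c}{2}$ for which $\sigma'_t(x):=e^{-ith}\sigma^{\varphi_{0,e}}_t(x)e^{ith}$ satisfies $\mathrm{Sp}(\sigma')\cap(-\log n,\log n)=\{0\}$. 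The crucial point is that, since $h$ lies in the centralizer $(eMe)_{\varphi_{0,e}}$, this perturbed flow is again a modular flow: the normal faithful state $\psi:=\varphi_{0,e}(e^{-h}\,\cdot\,)/\varphi_{0,e}(e^{-h})$ on $eMe$ has Connes cocycle $(D\psi:D\varphi_{0,e})_t=e^{-ith}$, so that $\sigma^{\psi}_t=\mathrm{Ad}(e^{-ith})\circ\sigma^{\varphi_{0,e}}_t=\sigma'_t$ and therefore $\mathrm{Sp}(\sigma^{\psi})\cap(-\log n,\log n)=\{0\}$. Finally, as $M$ is of type $\mathrm{III}$ there is an isomorphism $\Phi\colon M\xrightarrow{\sim}eMe$, and $\varphi_n:=\psi\circ\Phi\in S_{\mathrm{nf}}(M)$ has $\sigma^{\varphi_n}=\Phi^{-1}\circ\sigma^{\psi}\circ\Phi$, a flow conjugate to $\sigma^{\psi}$ and hence with the same Arveson spectrum, giving $\mathrm{Sp}(\sigma^{\varphi_n})\cap(-\log n,\log n)=\{0\}$.

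The main obstacle I anticipate is the first, essentially topological step: extracting from the bare equality $\Gamma(\sigma^{\varphi_0})=\{0\}$ a single reduction whose Arveson spectrum misses the annulus $K_n$. This is where the directedness in Lemma \ref{lem: intersection is Gamma} together with the compactness of $K_n$ does the real work, and it is the place where a careless argument (e.g.\ trying to intersect the spectra directly rather than using the directed family) would break down. The remaining points requiring care are routine but must be checked: that $\sigma^{\varphi_{0,e}}$ really is the restriction $\sigma^{\varphi_0}|_{eMe}$ and coincides with the modular flow of $\varphi_0|_{eMe}$, and that the cocycle/state bookkeeping in the perturbation step carries the correct sign, so that $\sigma'$ is genuinely $\sigma^{\psi}$ for the state $\psi$ exhibited above.
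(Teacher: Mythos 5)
Your proposal is correct and takes essentially the same route as the paper's own proof: both combine Lemma \ref{lem: intersection is Gamma} (directedness) with compactness of the annulus $[-2\log n,-\log n]\cup[\log n,2\log n]$ to produce a reduction by a centralizer projection whose Arveson spectrum misses that annulus, and then apply Lemma \ref{lem: spectral gap lemma} together with a Radon--Nikodym perturbation by a positive invertible element of the centralizer to realize the gapped flow as a modular flow. The only (immaterial) differences are that the paper argues by contradiction, invoking $M\cong eMe$ at the outset so that the gapped state lives directly on $M$, whereas you run the finite-intersection argument directly and use the type III isomorphism only at the end to transport the state from $eMe$ back to $M$; your explicit cocycle bookkeeping $(D\psi:D\varphi_{0,e})_t=e^{-ith}$ is correct.
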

\begin{proof}
For $n\in \mathbb{N}$ and $\varepsilon>0$, define $I_n:=[-2\log n,-\log n]\cup [\log n,2\log n]$ and $K_{\varepsilon}:=[-\varepsilon,\varepsilon]$. 
Assume that there is $n\ge 2$ such that $\text{Sp}(\sigma^{\psi})\cap I_n\neq \emptyset$ for every $\psi \in S_{\rm{nf}}(M)$. Fix $\psi \in S_{\rm{nf}}(M)$. Let $e\in \text{Proj}(M_{\psi})\setminus \{0\}$. Since $M\cong eMe$, the assumption implies that $\text{Sp}(\sigma^{\psi_e})\cap I_n\neq \emptyset$. Now given finitely many $e_1,\cdots, e_N\in \text{Proj}(M_{\psi})\setminus \{0\}$ and $\varepsilon_1,\cdots ,\varepsilon_N>0$. By Lemma \ref{lem: intersection is Gamma}, there is $e\in \text{Proj}(M_{\psi})\setminus \{0\}$ and $\varepsilon>0$ such that 
\[\emptyset \neq I_n\cap \{\text{Sp}(\sigma^{\psi_e})+K_{\varepsilon}\}\subset I_n\cap \bigcap_{i=1}^N\{\text{Sp}(\sigma^{\psi_{e_i}})+K_{\varepsilon_i}\}.\]
Therefore by the compactness of $I_n$ and by Lemma \ref{lem: intersection is Gamma}, we have 
\eqa{
\emptyset &\neq I_n\cap \bigcap_{0\neq e\in M_{\psi}, \varepsilon>0}\{\text{Sp}(\sigma^{\psi_e})+K_{\varepsilon}\}\\
&=I_n\cap \Gamma(\sigma^{\psi})=I_n\cap \log (S(M)\setminus \{0\})\\
&=I_n\cap \{0\}=\emptyset,
}
which is a contradiction. Therefore for each $n\in \mathbb{N}$, there is $\psi_n\in S_{\rm{nf}}(M)$ such that $\text{Sp}(\sigma^{\psi_n})\cap I_n=\emptyset$. Then choose $h_n\in M_{\psi_n},\ -\frac{1}{2}\log n\le h_n\le \frac{1}{2}\log n$ as in Lemma \ref{lem: spectral gap lemma} for $\psi_n$. Then set $\varphi_n:=\psi_n(h'_n\cdot )$, $h'_n:=(h_n+\frac{1}{2}\log n+1)^{-1}$. Then we have $\varphi_n\in S_{\rm{nf}}(M)$ and $\text{Sp}(\sigma^{\varphi_n})\cap (-\log n,\log n)=\emptyset$.     
\end{proof}
\begin{proof}[Proof of Theorem \ref{thm: UP of III_0 can be of finite type}]
By Lemma \ref{lem: states with large spectral gaps}, for each $n\in \mathbb{N}$ there exists $\varphi_n\in S_{\text{nf}}(M)$ such that $\text{Sp}(\sigma^{\varphi_n})\cap (-\log n,\log n)=\{0\}$. Let $x=(x_n)^{\omega}\in (M,\varphi_n)^{\omega}$. By Proposition \ref{prop: characterization of the normalizer}, $x$ can be approximated strongly by elements of the form $(y_n)^{\omega}$, where $(y_n)_n$ satisfies $y_n\in M(\sigma^{\varphi_n},[-a,a])$ for each $n$ for a fixed $a>0$. Fix one such $(y_n)$ and $a>0$. Let $n_0\in \mathbb{N}$ be such that $a\le \log n_0$. Then by $\text{Sp}(\sigma^{\varphi_n})\cap (-\log n,\log n)=\{0\}$, for $n>n_0$ we have
\[M(\sigma^{\varphi_n},[-a,a])\subset M(\sigma^{\varphi_n},[-\log n_0,\log n_0])=M_{\varphi_n},\]
whence $(y_n)^{\omega}\in (M_{\varphi_n},\tau_n)^{\omega}$, where $\tau_n:=\varphi_n|_{(M_n)_{\varphi_n}}$. Since $(x_n)^{\omega}$ is approximated by these elements, $(x_n)^{\omega}\in (M_{\varphi_n},\tau_n)^{\omega}$ holds too. This finishes the proof. 
\end{proof}
\begin{remark}\label{rem: nonfactoriality of Raynaud UP of III_0}
Together with Proposition \ref{prop: normalizer iff commutes with p}, Theorem \ref{thm: UP of III_0 can be of finite type} shows that the Groh-Raynaud ultraproduct $\prod^{\omega}M$ of a type III$_0$ factor $M$ has a finite projection, and hence it is not a factor. Note that $M$ does not embed into the above $(M,\varphi_n)^{\omega}$. 

\end{remark}
%\textbf{Question}: Does $(M,\varphi_n)^{\omega}$ depend on the choice of $\{\varphi_n\}$? Is it a factor?
Next we show that the Ocneanu ultraproduct of a type III$_0$ factor is not a factor. 
\begin{theorem}\label{thm: non-factoriality of UP(III_0)}
Let $M$ be a $\sigma$-finite factor of type {\rm III}$_0$. Then $M^{\omega}$ is not a factor. 
\end{theorem}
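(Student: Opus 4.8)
The plan is to produce a nontrivial element of $\mathcal{Z}(M^{\omega})$ by exploiting the aperiodicity of the flow of weights of a type III$_0$ factor. First I would invoke the discrete (lacunary) decomposition: since $M$ is a $\sigma$-finite type III$_0$ factor, there is a lacunary normal faithful semifinite weight $\varphi\in\mathcal{W}_{\mathrm{nfs}}(M)$ whose centralizer $M_{\varphi}$ is of type II$_{\infty}$, together with a generating unitary $u\in M(\sigma^{\varphi},\{\log\lambda\})$ (where $\sigma(\Delta_{\varphi})\subset\{\lambda^k:k\in\mathbb{Z}\}\cup\{0\}$, $0<\lambda<1$) and an automorphism $\theta=\mathrm{Ad}(u)|_{M_{\varphi}}$, so that $M$ is generated by $M_{\varphi}$ and $u$ with $M(\sigma^{\varphi},\{\log\lambda^{k}\})=M_{\varphi}u^{k}$. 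The crucial structural point is that $A:=\mathcal{Z}(M_{\varphi})$ is diffuse and that $\theta|_A$ is an \emph{ergodic} transformation (because $M$ is a factor) which is \emph{aperiodic} (because $M$ is type III$_0$, not III$_{\lambda}$ for $\lambda\neq0$). Ergodicity means there is no nonscalar $\theta$-invariant element of $A$, which is exactly why the constant-sequence trick fails and $M^{\omega}$ is a factor in the III$_{\lambda}$ case; aperiodicity is what will rescue us.

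Next I would set up the ultrapower side. By Lemma \ref{lem: modular automorphism of the ultrapower of weight}, $\sigma_t^{\varphi^{\omega}}=(\sigma_t^{\varphi})^{\omega}$, so $\varphi^{\omega}$ is again lacunary and each spectral subspace satisfies $M^{\omega}(\sigma^{\varphi^{\omega}},\{\log\lambda^{k}\})=(M_{\varphi})^{\omega}u^{k}$ (if $w$ lies in this subspace then $wu^{-k}$ is fixed by $\sigma^{\varphi^{\omega}}$, hence lies in the centralizer). By Proposition \ref{prop: centralizer of lacunary weight} the centralizer is $(M^{\omega})_{\varphi^{\omega}}\cong (M_{\varphi})^{\omega}$, and since $A$ is central in $M_{\varphi}$ the pointwise identity $[a_n,m_n]=0$ gives $A^{\omega}\subseteq\mathcal{Z}((M_{\varphi})^{\omega})$. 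Now take any sequence $(a_n)_n$ in the unit ball of $A$ and set $z=(a_n)^{\omega}$, viewed inside $M^{\omega}$ via the above identification. Then $z$ automatically commutes with all of $(M^{\omega})_{\varphi^{\omega}}$. I claim $z$ is central in $M^{\omega}$ as soon as $(a_n)_n$ is \emph{asymptotically $\theta$-invariant}, i.e. $\lim_{n\to\omega}\|\theta(a_n)-a_n\|_{\varphi}=0$: indeed that condition forces $u^{*}zu=z$, so $z$ commutes with every $(M_{\varphi})^{\omega}u^{k}$, and by Proposition \ref{prop: characterization of the normalizer} together with the discreteness of $\sigma(\Delta_{\varphi})$, every element of $M^{\omega}$ is $\|\cdot\|_{\varphi}^{\sharp}$-approximated by finite sums $\sum_{k}(m_{n,k})^{\omega}u^{k}$ with $m_{n,k}\in M_{\varphi}$; hence $z$ commutes with a dense subset and therefore with all of $M^{\omega}$, giving $z\in\mathcal{Z}(M^{\omega})$.

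It then remains to produce such a sequence that is \emph{not} asymptotically scalar, and this is the heart of the matter. Here I would use aperiodicity of $\theta|_A$ through a Rokhlin-tower construction to obtain projections $a_n=\mathbf{1}_{E_n}\in A$ with $\mu(E_n\,\triangle\,\theta^{-1}E_n)\to0$ (so $\|\theta(a_n)-a_n\|_{\varphi}\to0$) while keeping $E_n$ of intermediate trace inside a fixed finite-trace reference projection of $A$, so that the resulting projection $z=(a_n)^{\omega}$ satisfies $0\neq z\neq1$. Such $z$ is then a nontrivial projection in $\mathcal{Z}(M^{\omega})$, which proves that $M^{\omega}$ is not a factor.

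The main obstacle I anticipate is precisely this ergodic-theoretic construction, complicated by the fact that $\theta$ \emph{scales} the trace $\varphi|_{M_{\varphi}}$ by $\lambda^{\pm1}$ rather than preserving it, so that the naive Rokhlin lemma for measure-preserving maps does not apply verbatim; I would handle this by passing to the induced transformation on a fixed finite-trace projection of $A$ (or by invoking Rokhlin's lemma in the aperiodic nonsingular setting), absorbing the trace-scaling into the normalization of the almost-invariant sets. A secondary point to verify is that the discrete decomposition, and in particular the diffuseness of $A$ and the aperiodicity of $\theta|_A$, is available for a general $\sigma$-finite (not necessarily separably acting) type III$_0$ factor; this follows from the Connes--Takesaki structure theory, and any lack of separability can be absorbed by restricting attention to a suitable $\sigma$-finite, separably generated subalgebra on which the relevant sequences live.
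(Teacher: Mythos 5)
Your high-level plan (discrete decomposition, pass to the ultrapower of the core, produce a central element from asymptotically $\theta$-invariant projections in the diffuse center) is the same as the paper's, and your ergodic-theoretic half --- nonsingular Rokhlin towers producing nontrivial almost-invariant projections, with the non-separability caveat --- matches the paper's Lemmas \ref{lem: modified Rokhlin}, \ref{lem: almost invariant set} and \ref{lem: UP does not preserve ergodicity}. But your structural premise contains a genuine error, and it sits under the one step your argument cannot do without. You assume a lacunary weight $\varphi$ with $\sigma(\Delta_{\varphi})\subset\{\lambda^{k}:k\in\mathbb{Z}\}\cup\{0\}$ and a generating \emph{unitary} $u\in M(\sigma^{\varphi},\{\log\lambda\})$, giving the grading $M(\sigma^{\varphi},\{k\log\lambda\})=M_{\varphi}u^{k}$. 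That is the discrete decomposition of a type III$_{\lambda}$ factor, and it \emph{cannot} hold for a III$_{0}$ factor: a unitary eigenvector at the single spectral point $\log\lambda$ scales the trace $\tau=\varphi|_{M_{\varphi}}$ exactly by $\lambda^{\pm1}$, so if $\theta=\mathrm{Ad}(u)|_{M_{\varphi}}$ were in addition centrally ergodic (as you require, and need), then $M\cong M_{\varphi}\rtimes_{\theta}\mathbb{Z}$ would be of type III$_{\lambda}$. (Already the containment $\sigma(\Delta_{\varphi})\subset\{\lambda^{k}\}\cup\{0\}$ makes $\sigma^{\varphi}$ periodic, which fails for III$_{0}$ factors whose flow of weights has no eigenvalues.) What Connes' theorem actually gives for III$_{0}$ --- and what the paper uses --- is much weaker: $U\in M(\sigma^{\varphi},(-\infty,\log\lambda_{0}])$ with $\tau\circ\theta\le\lambda_{0}\tau$ only as an inequality; here $\sigma_{t}^{\varphi}(U)$ is $U$ times a non-scalar Radon--Nikodym factor affiliated with $\mathcal{Z}(M_{\varphi})$, and $\sigma(\Delta_{\varphi})$ is in general not discrete.

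This matters because your centrality argument needs three inputs: (i) $z$ commutes with $(M_{\varphi})^{\omega}$, (ii) $z$ commutes with $u$, and (iii) $(M_{\varphi})^{\omega}$ and $u$ generate $M^{\omega}$. Inputs (i) and (ii) survive in the correct III$_{0}$ setting, but your justification of (iii) --- Proposition \ref{prop: characterization of the normalizer} ``together with the discreteness of $\sigma(\Delta_{\varphi})$'' giving density of finite sums $\sum_{k}(m_{n,k})^{\omega}u^{k}$ --- evaporates once the spectrum is not discrete and $u$ is not a modular eigenvector; and without (iii), commuting with $(M_{\varphi})^{\omega}$ and $u$ does not place $z$ in $\mathcal{Z}(M^{\omega})$. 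This generation statement is precisely the hard content of the paper's Proposition \ref{prop: discrete decompositon is preserved}, which establishes $M^{\omega}\cong(M_{\varphi})^{\omega}\rtimes_{\theta^{\omega}}\mathbb{Z}$ by cutting down by $\theta$-subinvariant finite-trace projections, approximating an arbitrary element by one whose modular spectrum lies in a compact set $[-a,a]$ (Proposition \ref{prop: characterization of the normalizer}, together with Proposition \ref{prop: centralizer of lacunary weight}), and then showing that all Fourier coefficients of index $|k|\ge K$ vanish, where $K$ depends only on $a$ and $\log\lambda_{0}$ --- an argument that uses only the half-line condition $U\in M(\sigma^{\varphi},(-\infty,\log\lambda_{0}])$ and therefore survives in the III$_{0}$ setting. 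Until step (iii) is redone along these lines, your proof is incomplete.
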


\begin{lemma}\label{lem: UP does not preserve ergodicity}
Let $A=L^{\infty}(X,\mu)$ be a (possibly non-separable) diffuse abelian von Neumann algebra, where $(X,\mu)$ is a probability space without atoms. Let $T$ be an ergodic transformation on $(X,\mu)$. Let $\alpha(f)(\omega):=f(T^{-1}\omega)$ be the corresponding automorphism of $A$. Then $\alpha^{\omega}\in \text{Aut}(A^{\omega})$ is not ergodic. 
\end{lemma}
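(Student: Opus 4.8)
The plan is to exhibit a non-scalar element of $A^{\omega}$ fixed by $\alpha^{\omega}$, so that the fixed-point algebra of $\alpha^{\omega}$ strictly contains $\mathbb{C}1$. Since $A$ is abelian, $\ell^{\infty}(\mathbb{N},A)$ is abelian and $\mathcal{I}_{\omega}$ is automatically a two-sided ideal, so $\mathcal{M}^{\omega}(A,\varphi)=\ell^{\infty}(\mathbb{N},A)$ and $A^{\omega}=\ell^{\infty}(\mathbb{N},A)/\mathcal{I}_{\omega}$; taking $\varphi=\int(\cdot)\,d\mu\in S_{\mathrm{nf}}(A)$, every bounded sequence of indicator projections $(\mathbf 1_{E_n})_n$ defines $(\mathbf 1_{E_n})^{\omega}\in A^{\omega}$ with $\alpha^{\omega}((\mathbf 1_{E_n})^{\omega})=(\mathbf 1_{TE_n})^{\omega}$. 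Hence it suffices to construct measurable sets $E_n\subset X$ with
\[\mu(E_n)\xrightarrow[n\to\infty]{}\tfrac12,\qquad \mu(TE_n\triangle E_n)\xrightarrow[n\to\infty]{}0 .\]
Indeed, the first condition gives $\varphi^{\omega}((\mathbf 1_{E_n})^{\omega})=\lim_{n\to\omega}\mu(E_n)=\tfrac12$, while the second gives $(\mathbf 1_{TE_n}-\mathbf 1_{E_n})_n\in\mathcal{I}_{\omega}$, since for self-adjoint elements of an abelian algebra $\|\mathbf 1_{TE_n}-\mathbf 1_{E_n}\|_{\varphi}^{\sharp}=\sqrt 2\,\mu(TE_n\triangle E_n)^{1/2}$. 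Thus $(\mathbf 1_{E_n})^{\omega}$ is an $\alpha^{\omega}$-fixed projection with $\varphi^{\omega}$-value $\tfrac12$, neither $0$ nor $1$, and $\alpha^{\omega}$ is not ergodic.

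These almost invariant sets come from Rokhlin towers. First, ergodicity of $T$ on the non-atomic space forces aperiodicity: for each $n\ge1$ the invariant set $\{x:T^nx=x\}$ is null or conull, and conullity would decompose $X$ into finitely many period-$k$ pieces ($k\mid n$), which ergodicity collapses to a single finite periodic orbit, contradicting diffuseness. Treating $T$ as the standard measure-preserving ergodic transformation, the Rokhlin lemma gives, for every even $N$ and $\varepsilon>0$, a base $B$ with $B,TB,\dots,T^{N-1}B$ pairwise disjoint and $\mu\big(\bigcup_{j=0}^{N-1}T^jB\big)>1-\varepsilon$. I would then set
\[E:=\bigcup_{0\le j<N/2}T^jB,\]
so that $\mu(E)=\tfrac12\mu\big(\bigcup_{j=0}^{N-1}T^jB\big)\in\big(\tfrac{1-\varepsilon}{2},\tfrac12\big]$, whereas $TE\triangle E=B\cup T^{N/2}B$ gives $\mu(TE\triangle E)=2\mu(B)\le 2/N$. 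Letting $N\to\infty$ and $\varepsilon\to0$ along a sequence yields the required $(E_n)_n$.

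It remains to handle the possible non-separability of $A$, whereas the Rokhlin lemma is classical only for standard Lebesgue spaces; I expect this reduction to be the main technical point. The idea is to choose a separable $T$-invariant subalgebra $A_0=L^{\infty}(X_0,\mu_0)\subset A$ on which the induced transformation $T_0$ is still aperiodic — starting from a countable generating family, adjoining for each $n$ a set witnessing $\mu(\{T^n=\mathrm{id}\})=0$, and closing under $T^{\pm1}$ and the Boolean operations — and then to run the construction above inside $(X_0,\mu_0,T_0)$, so that $E_n\in A_0\subseteq A$ and all the quantities are computed in the separable model. A secondary point concerns the reading of ``ergodic transformation'': if $T$ is genuinely non-singular the equal-measure estimate $\mu(T^jB)=\mu(B)$ fails, and one must either invoke that a single conservative ergodic transformation generates a hyperfinite (amenable) equivalence relation and so is never strongly ergodic — whence a non-trivial asymptotically invariant sequence exists — or replace the sharp sets by the ramp $g_N=\tfrac1N\sum_{j=0}^{N-1}j\,\mathbf 1_{T^jB}$, for which $\|\alpha(g_N)-g_N\|_{\varphi}^2\le N^{-2}+\mu(T^NB)$ controls almost invariance uniformly in the level weights. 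In the type III$_0$ application the relevant $T$ preserves the canonical trace on the centre, so the measure-preserving argument already suffices.
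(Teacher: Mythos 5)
Your overall skeleton is the same as the paper's: produce asymptotically invariant sets $B_n$ of measure bounded away from $0$ and $1$, observe that for abelian $A$ one has $\mathcal{M}^{\omega}=\ell^{\infty}(\mathbb{N},A)$, and conclude that $p=(\mathbf{1}_{B_n})^{\omega}$ is an $\alpha^{\omega}$-fixed projection with $\varphi^{\omega}(p)=\tfrac12$, hence nontrivial; your computations of $\|\cdot\|_{\varphi}^{\sharp}$ and of $\alpha^{\omega}((\mathbf{1}_{E_n})^{\omega})=(\mathbf{1}_{TE_n})^{\omega}$ are correct, and your separable-reduction idea is sound (a $T$-invariant separable subalgebra inherits ergodicity automatically, since any fixed projection in $A_0$ is a fixed projection in $A$). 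The genuine gap is your treatment of the non-measure-preserving case, and in particular your final sentence is false. In the discrete decomposition of a type III$_0$ factor one has $\tau\circ\theta\le\lambda_0\tau<\tau$, so $\theta$ \emph{scales} rather than preserves the canonical trace, and the probability measure used to write $\mathcal{Z}(M_{\varphi})=L^{\infty}(X,\mu)$ is merely quasi-invariant. One cannot even pass to a better measure in the same class: if the time-$(-\log\lambda_0)$ map of the flow of weights preserved an equivalent probability measure, then averaging that measure over the time interval $[0,-\log\lambda_0]$ would produce an equivalent finite invariant measure for the flow of weights itself, and there are type III$_0$ factors whose flow of weights admits no such measure. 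So the non-singular case is not an optional reading of ``ergodic transformation''; it is precisely the case needed in the application, and this is why the paper proves a Rokhlin-type lemma (after Kawahigashi--Sutherland--Takesaki) valid for non-singular ergodic transformations: re-base the tower at a level of measure $\le 1/n$ (images of disjoint sets under the invertible $T^k$ stay disjoint), control images of the remainder by absolute continuity of $\sum_j\mu\circ T^j$ with respect to $\mu$, and then use an intermediate-value argument on $B(t)=\bigcup_{j=0}^{n-1}T^jG(t)$, $\mu(G(t))=t\mu(E)$, to hit measure exactly $\tfrac12$, with $TB\triangle B\subset E\cup T^nE\subset E\cup(X\setminus\bigcup_jT^jE)$ giving almost invariance.

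Of your two fallbacks for the non-singular case, the ramp-function one does not work as stated. First, $\mu(T^NB)$ is not controlled by the tower alone: you need the disjointness observation $T^NB\cap T^jB=\emptyset$ for $1\le j\le N-1$, whence $T^NB\subset B\cup(X\setminus\bigcup_{j=0}^{N-1}T^jB)$, \emph{and} you need the base itself to have small measure, which in the non-singular setting requires the re-basing trick above, since tower levels can have wildly unequal measures. Second, and more seriously, almost invariance of $g_N$ does not give non-triviality: if nearly all the mass sits on a single level, $g_N$ is nearly constant and $(g_N)^{\omega}$ can be a scalar, so an intermediate-value normalization of the kind the paper uses is needed anyway. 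Your first fallback, by contrast, is sound and genuinely different from the paper: after your separable reduction, $T_0$ is a non-singular ergodic transformation of a standard non-atomic probability space, and Schmidt's result (equivalently: the orbit equivalence relation of a single transformation is hyperfinite, and a properly ergodic hyperfinite relation is never strongly ergodic) supplies the non-trivial asymptotically invariant sets. This is exactly the route the paper acknowledges and then re-proves by hand because Schmidt's argument is only documented for standard spaces; your reduction lemma is the missing bridge, and ``separable reduction $+$ Schmidt'' is a legitimate, arguably shorter, alternative to the paper's direct non-separable Rokhlin arguments. To repair the proposal: keep the reduction, delete the measure-preserving shortcut and the ramp, and run either Schmidt/hyperfiniteness or the paper's re-based Rokhlin tower with the intermediate-value argument for non-singular $T$.
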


\begin{remark}
Schmidt showed \cite[Proposition 2.2]{Schmidt} that if $(X,\mu)$ is a standard non-atomic probability space, there exists a measurable sets $\{B_n\}_{n=1}^{\infty}\subset X$ which are {\it non-trivial asymptotically $T$-invariant sets}. That is, it satisfies 
\[\lim_{n\to \infty}\mu(TB_n\bigtriangleup B_n)=0,\ \ \ \liminf_{n\to \infty}\mu(B_n)\mu(1-B_n)>0.\]
Therefore $p:=(1_{B_n})^{\omega}$ is a non-trivial projection in $(A^{\omega})^{\alpha^{\omega}}$, and Lemma \ref{lem: UP does not preserve ergodicity} follows. Since we could not check if his proof works for non-separable space $(X,\mu)$, we add a proof of Schmidt's result for non-separable space below (Lemma \ref{lem: almost invariant set}).  
\end{remark}

We need a slight modification of Rokhlin's Theorem from \cite{TakBook} due to Kawahigashi-Sutherland-Takesaki. We include a proof for reader's convenience. 
\begin{lemma}{\rm{\cite[Lemma 10]{KST}}}\label{lem: modified Rokhlin} Let $(\Omega,\mu)$ be a non-atomic probability space, $T\colon X\to X$ be a non-singular ergodic transformation. Then for each $n\in \mathbb{N}$ and $\varepsilon>0$ there exists a measurable subset $E\subset X$ such that 
\begin{itemize}
\item[{\rm{(1)}}] $E,T(E),\cdots, T^{n-1}(E)$ are mutually disjoint.
\item[{\rm{(2)}}] $\mu \left (X-\bigcup_{j=0}^{n-1}T^jE\right )<\varepsilon$.
\item[{\rm{(3)}}] $\mu(E)\le \frac{1}{n}$.
\end{itemize}  
\end{lemma}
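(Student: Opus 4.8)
The plan is to realize $E$ as the base of a Kakutani--Rokhlin skyscraper built over a carefully chosen sweep-out set, and to obtain the bound (3) by a pigeonhole over the residue classes of the tower. First I would record two standing reductions. Since $T$ is non-singular and $(\Omega,\mu)$ is non-atomic, ergodicity forces aperiodicity: each $P_k=\{x:T^kx=x\}$ is $T$-invariant, hence null or conull, and conullity of some $P_k$ would confine the ergodic system to finitely many orbits, contradicting non-atomicity. Moreover an ergodic non-singular transformation of a finite measure space is conservative (Hopf), so a.e.\ point of any set of positive measure returns to it. I take $T$ invertible, as is standard for Rokhlin towers and for the $\mathbb{Z}$-actions of \cite{KST}.

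Next I would fix the base. Put $\rho:=\sum_{|j|\le n-1}\mu\circ T^{j}$, a finite measure with $\rho\ll\mu$; using non-atomicity (restrict to a level set of $d\rho/d\mu$, then shrink) I can choose measurable $A$ with $\mu(A)>0$ and $\rho(A)<\varepsilon$, i.e.\ $\mu\bigl(\bigcup_{|j|\le n-1}T^{j}A\bigr)<\varepsilon$. By conservativity the first-return time $r$ to $A$ is finite a.e.; writing $A_k=\{x\in A:r(x)=k\}$, the columns $\{T^{j}A_k:0\le j<k\}$ partition $\Omega$ modulo null sets. In each column of height $k$ I cut off $\lfloor k/n\rfloor$ consecutive blocks of height $n$ from the bottom and let $E_0=\bigcup_k\bigcup_{b=0}^{\lfloor k/n\rfloor-1}T^{bn}A_k$ be the union of the block-bottoms. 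A direct check shows $E_0,TE_0,\dots,T^{n-1}E_0$ are exactly the $n$ residue classes $\pmod n$ of the \emph{full part} $B_{\mathrm{full}}$ of the skyscraper, so they are disjoint and tile $B_{\mathrm{full}}$; the complement $\Omega\setminus B_{\mathrm{full}}$ consists of the short columns ($k<n$) and the top fringe of each tall column, all lying in $\bigcup_{|j|\le n-1}T^{j}A$, hence of measure $<\varepsilon$.

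Finally I would extract the light base. Since $\sum_{r=0}^{n-1}\mu(T^{r}E_0)=\mu(B_{\mathrm{full}})\le 1$, pigeonhole yields $r^{*}$ with $\mu(T^{r^{*}}E_0)\le 1/n$; I set $E:=T^{r^{*}}E_0$. Disjointness of $E,TE,\dots,T^{n-1}E$ is immediate, as this family is $T^{r^{*}}$ applied to the disjoint family $\{T^{j}E_0\}_{j=0}^{n-1}$ and $T^{r^{*}}$ is injective, giving (1); and $\mu(E)\le 1/n$ gives (3). For (2) I would recompute the covered set $\bigcup_{j=0}^{n-1}T^{j}E=\bigcup_{i=r^{*}}^{r^{*}+n-1}T^{i}E_0$: within each column this shift loses only the bottom $r^{*}$ levels and, when it overshoots the column, a top fringe, both again contained in $\bigcup_{|j|\le n-1}T^{j}A$, so the uncovered set still has measure $<\varepsilon$.

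The main obstacle is precisely that $T$ is merely non-singular — the relevant case here, $M$ being of type III$_0$ — so the tower levels do not have equal measure and the automatic identity $\mu(E)=\tfrac1n\,\mu(\text{tower})$ of the measure-preserving Rokhlin lemma, which would hand us (3) for free, is unavailable. Reconciling the small-base requirement (3) with near-full coverage (2) is therefore the heart of the matter; my resolution is to force every boundary term — short columns, top fringes, and the bottom $r^{*}$ levels created by the pigeonhole shift — into the single small set $\bigcup_{|j|\le n-1}T^{j}A$, whose measure is controlled from the outset by choosing $A$ small with respect to the combined measure $\rho$.
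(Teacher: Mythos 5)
Your proposal is correct, and it reaches the statement by a more self-contained route than the paper, though the decisive trick is the same in both: since the levels of a non-singular tower need not have equal measure, one obtains (3) by the pigeonhole bound $\sum_{r=0}^{n-1}\mu(T^{r}E_0)\le 1$ and then replaces the base by a shifted level $T^{r^*}E_0$, checking that disjointness and near-coverage survive the shift (disjointness because $T^{r^*}$ is injective, exactly as in the paper, where $E:=T^kF$). The difference lies in how the tower and the error control are produced. The paper quotes the non-singular Rokhlin tower theorem as a black box (Takesaki, Lemma XVIII.3.2) and controls the shifted error set by uniform absolute continuity: with $\nu_n=\sum_{j=0}^{n-1}\mu\circ T^{j}\ll\mu$ one fixes $\delta=\delta(n,\varepsilon)$ so that $\mu(G)<\delta$ forces $\mu(T^{j}G)<\varepsilon$, applies Rokhlin with error $<\delta$, and concludes that the shifted error $T^{k}G$ still has measure $<\varepsilon$. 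You instead rebuild the tower from scratch as a Kakutani skyscraper over a base $A$ chosen small for the two-sided sum $\rho=\sum_{|j|\le n-1}\mu\circ T^{j}$, cut the columns into $n$-blocks, and arrange that every error term --- short columns, top fringes, and the bottom levels lost to the pigeonhole shift --- lies in the single set $\bigcup_{|j|\le n-1}T^{j}A$ of measure $<\varepsilon$. Your version buys independence from the cited Rokhlin lemma and makes transparent why a two-sided sum is needed (the fringes sit in $T^{-i}A$ via $T^{k}A_k\subset A$); the paper's version is much shorter precisely because the tower is quoted rather than built. Two small points of care: your claim that an ergodic non-singular transformation of a finite measure space is automatically conservative is false for atomic spaces (the shift on a weighted copy of $\mathbb{Z}$ is ergodic and dissipative), so conservativity genuinely rests on the non-atomicity hypothesis, which you do have; and invertibility of $T$ must indeed be assumed, as it is implicitly in the paper when forming $\mu\circ T^{j}$ and $T^{k}G$.
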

\begin{proof}
Let $\nu_n:=\sum_{j=0}^{n-1}\mu\circ T^j$. Then $\nu$ is absolutely continuous with respect to $\mu$. Therefore given $\varepsilon>0$, there is $\delta=\delta(n,\varepsilon)>0$ such that 
\[\mu(F)<\delta\Rightarrow \nu_n(F)<\varepsilon\]
holds. This implies that $\mu(T^jF)<\varepsilon,\ 0\le j\le n-1$. By Rokhlin tower Theorem (see e.g., \cite[Lemma XVIII.3.2]{TakBook}), there exists a measurable set $F\subset X$ such that $F,TF,\cdots ,T^{n-1}F$ are mutually disjoint, and $G:=X-\bigcup_{j=0}^{n-1}T^jF$ has $\mu(G)<\delta(n,\varepsilon)$. In particular, we have $\mu(T^jG)<\varepsilon, 0\le j\le n-1$. Since $\sum_{j=0}^{n-1}\mu(T^jF)\le 1$, we may choose $k\in \{0,1,\cdots ,n-1\}$ with $\mu(T^kF)\le \frac{1}{n}$. Put $E:=T^kF$. Then $E,TE,\cdots, T^{n-1}E$ are mutually disjoint, and 
\[X-\bigcup_{j=0}^{n-1}T^jE=T^k\left (X-\bigcup_{j=0}^{n-1}T^jF\right )=T^kG,\]
whence $\mu(X-\bigcup_{j=0}^{n-1}T^jE)<\varepsilon$, and $\mu(E)\le \frac{1}{n}$.
\end{proof}
\begin{lemma}\label{lem: almost invariant set}
 Let $(X,\mu)$ be a non-atomic probability space, $T\colon X\to X$ be a non-singular ergodic transformation. For each $n\ge 2$, there exists a measurable set $B_n\subset X$ with $\mu(B_n)=\frac{1}{2}$ such that $\mu(TB_n\bigtriangleup B_n)\le \frac{2}{n}$ holds.
\end{lemma}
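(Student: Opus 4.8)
The plan is to exploit the fact that the height of the Rokhlin tower supplied by Lemma \ref{lem: modified Rokhlin} is entirely at our disposal. It suffices to produce, for an arbitrarily large integer $N$ and arbitrarily small $\varepsilon>0$, a set $B$ with $\mu(B)=\tfrac12$ and $\mu(TB\bigtriangleup B)\le C/N+C\varepsilon$ for an absolute constant $C$; choosing $N$ large and $\varepsilon$ small then forces $\mu(TB_n\bigtriangleup B_n)\le 2/n$. So I would fix such $N,\varepsilon$ and apply Lemma \ref{lem: modified Rokhlin} (with height $N$) to obtain a base $E$ whose iterates $E_j:=T^jE$ ($0\le j\le N-1$) are mutually disjoint, with $\mu(E)\le 1/N$ and $\mu(X\setminus\widetilde X)<\varepsilon$, where $\widetilde X:=\bigcup_{j=0}^{N-1}E_j$. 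Since $T$ carries each $E_j$ onto $E_{j+1}$, I would disintegrate $\mu|_{\widetilde X}$ over the base $E$ via the Radon--Nikodym cocycle, writing $w_j(y)\ge 0$ for the mass density of the $j$-th level over the base point $y$, so that $\mu(E_j)=\int_E w_j\,d(\mu|_E)$ and $\sum_{j=0}^{N-1}\mu(E_j)=\mu(\widetilde X)$.

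The construction is a ``staircase'': over each base point $y$ I would include in $B$ the bottom segment $\{y,Ty,\dots,T^{c(y)-1}y\}$ of its column, for a measurable cut height $c(y)\in\{0,1,\dots,N-1\}$ to be chosen. The decisive structural feature is that, whatever the cuts are, $B\setminus TB\subseteq E_0$, so $\mu(B\setminus TB)\le\mu(E)\le 1/N$ automatically; thus the lower interface costs nothing. The whole difficulty is concentrated in the upper interface $TB\setminus B$, which meets each column exactly once, at level $c(y)$, and has mass $\int_E w_{c(y)}(y)\,d(\mu|_E)$. Here the non-singularity of $T$ is the genuine obstacle: the level masses $\mu(E_j)$ are wildly uneven, so a naive ``bottom half of the levels'' controls neither $\mu(B)$ nor the interface, because a single heavy level could carry almost all the mass.

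To defeat this I would restrict the cuts to \emph{light} levels. In each column, by a Markov/pigeonhole estimate at most $N/2$ of the $N$ levels can carry more than $2/N$ of that column's total mass $W(y)=\sum_j w_j(y)$, so more than half the levels are light, meaning $w_{c(y)}(y)\le (2/N)W(y)$ whenever $c(y)$ is chosen light. For any such choice the upper interface obeys $\mu(TB\setminus B)\le (2/N)\int_E W\,d(\mu|_E)=(2/N)\mu(\widetilde X)\le 2/N$. To meet the exact constraint $\mu(B)=\tfrac12$, I would let the cut heights rise monotonically through the light levels along a water-filling parameter $s\in[0,1]$; then $s\mapsto\mu(B_s)$ is nondecreasing, starts at $0$ and ends near $\mu(\widetilde X)>\tfrac12$, and using that $\mu|_E$ is non-atomic I would fill the final light level only partially to hit $\mu(B)=\tfrac12$ precisely, the partial fill contributing interface inside a single light level, hence again at most $2/N$.

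Summing the contributions, $\mu(TB\bigtriangleup B)\le \mu(E)+2/N+2/N$ together with the error from $X\setminus\widetilde X$ and from the top-level wraparound $T E_{N-1}$, all of which are absorbed into an $O(\varepsilon)$ term once cuts are kept in the interior light levels and $\varepsilon$ is small. Choosing $N$ large and $\varepsilon$ small yields the claimed $B_n$ with $\mu(B_n)=\tfrac12$ and $\mu(TB_n\bigtriangleup B_n)\le 2/n$. I expect the main obstacle to be exactly the one flagged above: reconciling the exact measure $\tfrac12$ with a small upper interface in the non-singular setting, where heavy levels must be avoided; the light-level pigeonhole combined with the intermediate-value/partial-fill argument (and the freedom to take the tower height large) is what makes this possible, and the top-level wraparound of $T$ must be handled separately by keeping cuts strictly inside the tower.
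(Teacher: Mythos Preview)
Your horizontal-slicing strategy contains a genuine gap. You assert that as the cut height $c(y)$ rises through the light levels, $\mu(B_s)$ sweeps from $0$ up to a value near $\mu(\widetilde X)>\tfrac12$, but the upper endpoint need not be anywhere near $\tfrac12$. The maximum of $\mu(B)$ over light-level cuts is $\int_E P_{\ell^*(y)}(y)\,d\mu$ with $\ell^*(y)$ the largest light level, and this can be arbitrarily small. For a concrete obstruction, suppose the top level carries almost all the mass of the tower---nothing in Lemma~\ref{lem: modified Rokhlin} prevents $\mu(T^{N-1}E)$ from being close to $1$ while $\mu(T^jE)\approx 0$ for $j\le N-2$. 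Then every level $j\le N-2$ is light, the single heavy level is $N-1$, and cutting at any light $c\le N-2$ yields $P_c\approx 0$. Worse, your restriction $c(y)\le N-1$ excludes level $N-1$ from $B$ altogether, so $\mu(B)\le\sum_{j\le N-2}\mu(T^jE)\approx 0$ for \emph{every} admissible choice of cuts, and $\mu(B)=\tfrac12$ is simply unreachable. The light-level pigeonhole bounds the interface once a cut is fixed, but does not ensure that the achievable range of $\mu(B)$ contains $\tfrac12$.

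The paper avoids all of this by slicing vertically rather than horizontally. Instead of varying the cut height, one takes the full column $B(t)=\bigcup_{j=0}^{n-1}T^jG(t)$ over a continuously increasing sub-base $G(t)\subset E$ with $G(0)=\emptyset$, $G(1)=E$, $\mu(G(t))=t\mu(E)$ (non-atomicity of $\mu$ supplies such a family). Then $B(t)\bigtriangleup TB(t)\subset G(t)\cup T^nG(t)\subset E\cup T^nE\subset E\cup G$ modulo null sets, the last inclusion because $T^nE$ is disjoint from $T^jE$ for $1\le j\le n-1$; hence $\mu(B(t)\bigtriangleup TB(t))\le\mu(E)+\mu(G)\le 1/n+1/n$ once one takes $\varepsilon=1/n$ in the Rokhlin lemma. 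The Radon--Nikodym cocycle never enters: whatever the level masses are, the symmetric difference is confined to base and roof. Continuity of $t\mapsto\mu(B(t))$ (from non-singularity of each $T^j$) and the intermediate value theorem then give $\mu(B(t_0))=\tfrac12$. The non-singularity you flag as ``the genuine obstacle'' disappears entirely with this construction.
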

\begin{proof}
Put $\varepsilon=\frac{1}{n}$ and choose $E\subset X$ as in Lemma \ref{lem: modified Rokhlin}. Since $\mu$ has no atoms, there exists a family $\{G(t)\}_{t\in [0,1]}$ of measurable subsets of $X$ with the following properties:
\begin{itemize}
\item[(i)] $G(0)=\emptyset, G(1)=E$.
\item[(ii)] $0\le t\le s\le 1\Rightarrow G(t)\subset G(s)$.
\item[(iii)] $\mu(G(t))=t\mu(E),\ \ 0\le t\le 1$. 
\end{itemize}
Put 
\[B(t):=\bigcup_{j=0}^{n-1}T^jG(t),\ \ 0\le t\le 1.\]
We see that $B(0)=\emptyset, B(1)=\bigcup_{j=0}^{n-1}T^jE$, so that $\mu(B(1))>1-\frac{1}{n}\ge \frac{1}{2}$. Since $t\mapsto \mu(B(t))=\sum_{j=0}^{n-1}\mu(T^jG(t))$ is continuous by the choice of $\{G(t)\}_{t\in [0,1]}$, we can find $t_0\in [0,1]$ with $\mu(B(t_0))=\frac{1}{2}$. Then put $B_n:=B(t_0)$. Since $E,TE,\cdots, T^{n-1}E$ are disjoint, we see that 
\[B_n\bigtriangleup TB_n\subset T^nE\cup E\subset G\cup E,\]
where $G:=X-\bigcup_{j=0}^{n-1}T^jE$ (the last inclusion is true modulo null sets, since $\mu(T^nE\cap T^jE)=0$ for $1\le j\le n-1$). Therefore we have 
\[\mu(B_n\bigtriangleup TB_n)\le \mu(G)+\mu(E)\le \frac{2}{n}.\] 
\end{proof}

\begin{proof}[Proof of Lemma \ref{lem: UP does not preserve ergodicity}]
By Lemma \ref{lem: almost invariant set}, we can find measurable sets $B_n\subset X\ (n\in \mathbb{N})$ with $\mu (TB_n\triangle B_n)\le \frac{2}{n}$. Then put $p:=(1_{B_n})^{\omega}\in A^{\omega}$. By assumption, $p$ is a $\alpha^{\omega}$-invariant projection in $A^{\omega}\setminus \{0,1\}$. Hence $\alpha^{\omega}$ is not ergodic.
\end{proof}
We next show that for type {\rm{III}}$_0$ factors, discrete decomposition is preserved under the Ocneanu ultrapower.

Let $M$ be a type III$_0$ factor. There is a normal faithful lacunary weight $\varphi$ on $M$ such that $M_{\varphi}$ is of type II$_{\infty}$ with diffuse center, and let $\tau:=\varphi|_{M_{\varphi}}$. There is $0<\lambda_0<1$ and $U\in M(\sigma^{\varphi},(-\infty,\log \lambda_0])$ (the spectral condition follows from the proof of \cite[Th\'eor\`eme 5.3.1]{Connes1}) such that $\theta=\text{Ad}(U)|_{M_{\varphi}}\in \text{Aut}(M_{\varphi})$ is a centrally ergodic automorphism satisfying $\tau\circ \theta\le \lambda_0 \tau$. In this setting, we have $M\cong M_{\varphi}\rtimes_{\theta}\mathbb{Z}$ and $\varphi=\hat{\tau}$ (dual weight of $\tau$) under this isomorphism. We call this a discrete decomposition of $M$. Similar decompositions are possible for type III$_{\lambda} (0<\lambda<1)$ factors, in which case we have $\tau \circ \theta=\lambda \tau$ and $U\in M(\sigma^{\varphi},\{\log \lambda\})$ (see \cite{Connes1}). 

\begin{proposition}\label{prop: discrete decompositon is preserved}Let $M$ be a $\sigma$-finite factor of type {\rm{III}}$_0$ with discrete decomposition $M=M_{\varphi}\rtimes_{\theta}\mathbb{Z}$ ($\varphi$ is chosen as above). Then $M^{\omega}\cong (M_{\varphi})^{\omega}\rtimes_{\theta^{\omega}}\mathbb{Z}$. 
\end{proposition}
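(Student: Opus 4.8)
The plan is to realize $M^\omega$ as a crossed product by exhibiting the implementing unitary $U$ of the discrete decomposition as a sequence, and then identifying the fixed point data. Since $M \cong M_\varphi \rtimes_\theta \mathbb{Z}$ with $\varphi = \hat\tau$ and $\theta = \mathrm{Ad}(U)|_{M_\varphi}$ for some $U \in M(\sigma^\varphi, (-\infty, \log\lambda_0])$, the unitary $U$ lies in a compact spectral subspace for $\sigma^\varphi$ (more precisely it is in the analytic domain), so the constant sequence $(U)_n$ lies in $\mathcal{M}^\omega$ by Lemma \ref{lem: analiticity implies normalizer}. Let $\widetilde U = (U)^\omega \in M^\omega$ denote its image. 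First I would note that by Proposition \ref{prop: centralizer of lacunary weight}, since $\varphi$ is lacunary, we have $(M^\omega)_{\varphi^\omega} \cong (M_\varphi)^\omega$; this is exactly the neutral fiber that should serve as the base algebra of the crossed product. The goal is then to show $M^\omega$ is generated by $(M_\varphi)^\omega$ and $\widetilde U$, with $\widetilde U$ implementing $\theta^\omega$ on $(M_\varphi)^\omega$ and satisfying the correct spectral relation with $\varphi^\omega$.

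The key steps in order are as follows. First, verify that $\mathrm{Ad}(\widetilde U)$ restricts to an automorphism of $(M_\varphi)^\omega = (M^\omega)_{\varphi^\omega}$: since $U$ normalizes $M_\varphi$ and implements $\theta$, the sequence $(\theta(x_n))_n = (Ux_nU^*)_n$ represents $\widetilde U (x_n)^\omega \widetilde U^*$, and because $\theta$ is $\tau$-scaling it preserves the relevant ideals, so $\mathrm{Ad}(\widetilde U)$ on $(M_\varphi)^\omega$ coincides with $\theta^\omega$. Second, check the covariance/spectral relation: from $U \in M(\sigma^\varphi, (-\infty,\log\lambda_0])$ one derives, via Corollary \ref{cor: some useful conseuences of main theorem} and Theorem \ref{thm: ultrapower of modular automorphism}, that $\sigma_t^{\varphi^\omega}(\widetilde U) = (\sigma_t^\varphi(U))^\omega$, giving $\widetilde U$ the same modular behavior relative to $\varphi^\omega$ that $U$ has relative to $\varphi$. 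Third, establish that $(M_\varphi)^\omega$ and $\widetilde U$ generate $M^\omega$: this is where Proposition \ref{prop: characterization of the normalizer} enters, since any element of $M^\omega$ is approximated by sequences living in compact spectral subspaces $M(\sigma^{\varphi},[-a,a])$, and such spectral subspaces in $M = M_\varphi \rtimes_\theta \mathbb{Z}$ are spanned by $M_\varphi U^k$ for finitely many $k$; passing to the ultrapower, every element is a limit of finite sums $\sum_k (y_n^{(k)})^\omega \widetilde U^k$ with $(y_n^{(k)})^\omega \in (M_\varphi)^\omega$.

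Once generation, the implementation of $\theta^\omega$, and the spectral relation are in hand, I would invoke the uniqueness of the discrete (crossed product) decomposition: the pair $\big((M_\varphi)^\omega, \theta^\omega\big)$ together with the unitary $\widetilde U$ satisfies precisely the defining relations of $(M_\varphi)^\omega \rtimes_{\theta^\omega} \mathbb{Z}$ with dual weight $\varphi^\omega = \widehat{\tau^\omega}$. The canonical map sending $\sum_k y^{(k)} \lambda^k$ (in the abstract crossed product) to $\sum_k y^{(k)} \widetilde U^k \in M^\omega$ is then the desired isomorphism; faithfulness follows from the fact that $\varphi^\omega \circ E_{(M_\varphi)^\omega}$ matches the canonical conditional expectation of the crossed product onto its base, which I can read off from $E\colon M^\omega \to M$ composed with the decomposition data.

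The main obstacle I anticipate is the generation step, specifically controlling that the spectral decomposition of $M = M_\varphi \rtimes_\theta \mathbb{Z}$ ultraproducts correctly. The subtlety is that in the ultrapower one must keep the spectral support uniformly bounded across $n$ while approximating, and show that the finite-band elements genuinely decompose as $M_\varphi U^k$-sums at the level of sequences rather than merely in the limit. The lacunarity of $\varphi$ is what makes the integer grid of spectral bands separated, so that Proposition \ref{prop: centralizer of lacunary weight} cleanly identifies the zero-band as $(M_\varphi)^\omega$ and the $k$-th band as $(M_\varphi)^\omega \widetilde U^k$; without that gap the bands would smear and the crossed-product structure would fail to survive the ultraproduct. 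I would therefore spend most of the care ensuring the band-by-band identification is compatible with the approximation in Proposition \ref{prop: characterization of the normalizer}, using the spectral gap $\mathrm{Sp}(\sigma^\varphi) \cap (\lambda,\lambda^{-1}) = \{1\}$ to separate $\widetilde U^k$-components.
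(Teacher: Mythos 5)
Your outline follows the paper's skeleton (identify the base algebra via Proposition \ref{prop: centralizer of lacunary weight}, use spectral approximation to get generation by $(M_{\varphi})^{\omega}$ and $U$), but the final identification step has a genuine gap. You propose to conclude by ``uniqueness of the discrete decomposition'' together with a canonical map from the abstract crossed product. Neither works as stated. Uniqueness of discrete decompositions is a statement about type III$_0$ \emph{factors}, and $M^{\omega}$ is not a factor here --- indeed the whole point of Theorem \ref{thm: non-factoriality of UP(III_0)}, which rests on this proposition, is that $\mathcal{Z}(M^{\omega})$ is nontrivial. Moreover, a covariant pair $\bigl((M_{\varphi})^{\omega},\widetilde{U}\bigr)$ does \emph{not} automatically integrate to a normal $*$-isomorphism from the von Neumann crossed product: for instance when $\theta$ is trivial, the pair $(\mathrm{id},1)$ satisfies all covariance relations but the induced map $N\overline{\otimes}L(\mathbb{Z})\to N$ is not normal and not injective. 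What actually pins down the crossed product is Connes' algebraic criterion (the paper's Lemma \ref{lem: isomorphic to crossed product}), and its two hypotheses that you never verify are exactly the hard ones: the proper outerness $p((\theta^{\omega})^k)=0$ for $k\neq 0$ (the paper gets this from $\tau^{\omega}\circ\theta^{\omega}\le\lambda_0\tau^{\omega}$ via Connes' argument), and the relative commutant condition $((M_{\varphi})^{\omega})'\cap M^{\omega}\subset (M_{\varphi})^{\omega}$ (which the paper deduces from the Fourier expansion plus outerness). Without these, ``generation + covariance'' does not give the isomorphism.

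There are also gaps in your generation step. Proposition \ref{prop: characterization of the normalizer} is proved for normal faithful \emph{states}, whereas $\varphi$ here is a semifinite weight; the paper circumvents this by choosing finite projections $p_i\in M_{\varphi}$ with $\theta(p_i)\le p_i$, $p_i\nearrow 1$, and running the whole argument in the corners $p_iM^{\omega}p_i$, where $\varphi_{p_i}$ is a bounded functional and $V=Up_i$ still satisfies $V^k=U^kp_i$. Your ``integer grid of spectral bands'' picture, with the $k$-th band equal to $(M_{\varphi})^{\omega}\widetilde{U}^k$, is a type III$_{\lambda}$ picture and is false for III$_0$: here $U\in M(\sigma^{\varphi},(-\infty,\log\lambda_0])$ has half-line (noncompact) spectral support, so in particular your claim that $U$ lies in a compact spectral subspace is wrong (the constant sequence $(U)_n$ lies in $\mathcal{M}^{\omega}$ simply because all constant sequences do, not via Lemma \ref{lem: analiticity implies normalizer}). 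What replaces the band picture is the paper's exact Fourier-coefficient argument: for $z_n\in M_p(\sigma^{\varphi_p},[-a,a])$ the coefficients $z_n(k)=E(z_n(V^*)^k)$ vanish identically for $|k|\ge K=[a/(-\log\lambda_0)]+1$, because $z_n(V^*)^k$ then has spectrum bounded away from $0$ and $E$ kills it; this yields an exact finite sum at the level of sequences, and finiteness of $(M_{\varphi})_p$ ensures the coefficient sequences lie in $\mathcal{M}^{\omega}$. Your proposal correctly anticipates this as the main obstacle but does not supply the mechanism that resolves it.
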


\begin{remark}
The discrete decomposition for type III$_{\lambda} (0<\lambda<1)$ factor is also preserved under the Ocneanu ultraproduct. The proof is exactly the same.  
\end{remark}
Recall that for a von Neumann algebra $M$ and $\theta\in \text{Aut}(M)$, $p(\theta)$ is the greatest projection $e\in M^{\theta}$ for which $\theta|_{M_e}$ is an inner automorphism. 
\begin{lemma}\label{lem: isomorphic to crossed product}
Let $N$ be a von Neumann subalgebra of $M$, and let $\theta\in \text{Aut}(N)$ be such that $p(\theta^k)=0$ for all $k\neq 0$. Suppose $N$ satisfies 
\begin{itemize}
\item[{\rm{(a)}}] $N'\cap M\subset N$.
\item[{\rm{(b)}}] There is a normal faithful conditional expectation $E$ from $M$ onto $N$.
\item[{\rm{(c)}}] There is $U\in \mathcal{U}(M)$ such that $UxU^*=\theta (x)$ for all $x\in N$.
\item[{\rm{(d)}}] $M$ is generated by $\{U\}\cup N$ as a von Neumann algebra.
\end{itemize}
Then there is a *-isomorphism $\Phi\colon M\to N\rtimes_{\theta}\mathbb{Z}$ sending $U$ (resp. $N$) to the canonical implementing unitary of $\theta$ (resp. the canonical image of $N$) in the crossed product.
\end{lemma}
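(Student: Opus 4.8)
The plan is to construct a normal $*$-isomorphism $\Psi\colon N\rtimes_\theta\mathbb{Z}\to M$ that sends the canonical copy of $N$ onto $N\subset M$ and the canonical implementing unitary $\lambda$ to $U$; then $\Phi:=\Psi^{-1}$ is the desired map. Write $A:=N\rtimes_\theta\mathbb{Z}$, let $\lambda$ be the implementing unitary, let $A_0:=\{\sum_{\text{finite}}x_k\lambda^k;\ x_k\in N\}$ be the weakly dense $*$-subalgebra of finite sums, and let $E_0\colon A\to N$ be the canonical normal faithful conditional expectation, so that $E_0(\sum_k x_k\lambda^k)=x_0$. The two crucial inputs are the freeness hypothesis $p(\theta^k)=0$ $(k\neq 0)$, used to compute the ``Fourier coefficients'' of $U$, and the conditional expectation $E$ of (b), used to promote the obvious algebraic map to a normal isomorphism through a GNS argument.

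First I would exploit freeness to show that $E(U^k)=0$ for all $k\in\mathbb{Z}\setminus\{0\}$. Indeed, put $b:=E(U^k)\in N$. Since $E$ is an $N$-bimodule map and $U^kx=\theta^k(x)U^k$ for $x\in N$ by (c), one has $bx=E(U^kx)=E(\theta^k(x)U^k)=\theta^k(x)b$ for every $x\in N$; that is, $b$ intertwines $\mathrm{id}$ and $\theta^k$. A standard fact about properly outer automorphisms says that $p(\theta^k)=0$ is equivalent to the nonexistence of a nonzero $b\in N$ with $bx=\theta^k(x)b$ for all $x\in N$ (were such $b\neq0$ to exist, its polar decomposition would produce a partial isometry implementing $\theta^k$ on a nonzero corner of $N$, contradicting $p(\theta^k)=0$). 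Hence $b=0$, and consequently $E(aU^k)=aE(U^k)=0$ for $a\in N$, $k\neq0$.

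Next I would set up the comparison map. Define $\Psi_0\colon A_0\to M$ by $\Psi_0(\sum_k x_k\lambda^k):=\sum_k x_kU^k$. This is well defined (the coefficients $x_k=E_0(y\lambda^{-k})$ are uniquely determined) and is a $*$-homomorphism, since the defining relations $\lambda x\lambda^{-1}=\theta(x)$, $\lambda^*=\lambda^{-1}$ are matched in $M$ by $UxU^{-1}=\theta(x)$, $U^*=U^{-1}$. Fix a normal faithful semifinite weight $\psi_0$ on $N$ and set $\psi:=\psi_0\circ E_0$ on $A$ and $\phi:=\psi_0\circ E$ on $M$; both are normal faithful semifinite weights because $E_0$ and $E$ are faithful. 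The key computation is that $\Psi_0$ is weight preserving: for $y=\sum_k x_k\lambda^k\in A_0$,
\[ \phi(\Psi_0(y))=\psi_0\!\Big(\sum_k x_kE(U^k)\Big)=\psi_0(x_0)=\psi_0(E_0(y))=\psi(y), \]
using $E(U^k)=0$ for $k\neq0$ and $E(1)=1$. Therefore $\Psi_0$ preserves the $L^2$-inner products:
\[ \langle\Lambda_\psi(y),\Lambda_\psi(z)\rangle=\psi(z^*y)=\phi(\Psi_0(z^*y))=\phi(\Psi_0(z)^*\Psi_0(y))=\langle\Lambda_\phi(\Psi_0 y),\Lambda_\phi(\Psi_0 z)\rangle, \]
so $\Lambda_\psi(y)\mapsto\Lambda_\phi(\Psi_0(y))$ extends to an isometry $W\colon L^2(A,\psi)\to L^2(M,\phi)$. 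Standard GNS machinery then yields a normal $*$-homomorphism $\Psi\colon A\to M$ extending $\Psi_0$, which is injective because $W$ is isometric and $\psi$ is faithful. Finally $\Psi(A)$ is a von Neumann subalgebra of $M$ (the range of a normal $*$-homomorphism is $\sigma$-weakly closed) containing $N=\Psi(N)$ and $U=\Psi(\lambda)$, so by (d) it equals $M$; thus $\Psi$ is onto. Hence $\Psi$ is a normal $*$-isomorphism with the stated properties and $\Phi:=\Psi^{-1}$ completes the proof.

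I expect the main obstacle to be the passage from the algebraic map $\Psi_0$ to the normal isomorphism $\Psi$: one must check that $A_0\cap\mathfrak{n}_\psi$ is a core, so that $W$ is densely defined and extends to all of $L^2(A,\psi)$, and that the resulting spatial isometry genuinely intertwines the left actions to produce a normal $*$-homomorphism rather than a mere bimodule map. Since $N$ need not be $\sigma$-finite, this is most safely carried out with Haagerup's theory of operator-valued weights and standard forms rather than with states. The relative commutant hypothesis (a) is not logically required for this particular route, which rests on freeness together with (b)--(d); its role in the statement is to express the minimality of the extension, and one could alternatively invoke it to prove injectivity via uniqueness of Fourier coefficients, but the weight-preserving GNS argument above bypasses it.
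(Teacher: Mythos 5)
Your argument is correct in substance, but it takes a genuinely different route from the paper: the paper's entire proof of this lemma is the citation ``see \cite[Proposition 4.1.2]{Connes1}'', plus the remark that Connes' central-ergodicity hypothesis is needed only for factoriality of $M$; you instead reconstruct the argument behind that citation. Your two key steps are sound: $E(U^k)=0$ for $k\neq 0$ does follow from $p(\theta^k)=0$ exactly as you say (the equivalence of proper outerness with the absence of nonzero intertwiners is \cite[Remarques 1.5.3 (a)]{Connes1}, the same fact the paper invokes at the end of the proof of Proposition \ref{prop: discrete decompositon is preserved}), and this gives $E\circ\Psi_0=E_0$ on $A_0$, which is what drives the GNS comparison. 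Three refinements are worth recording. (1) Since weights are defined only on positive elements, your weight-preservation display should be read as $\phi\bigl(\Psi_0(y)^*\Psi_0(y)\bigr)=\psi(y^*y)$ in $[0,\infty]$ for $y\in A_0$, which follows from $E(\Psi_0(y^*y))=E_0(y^*y)$ and is exactly what the isometry $W$ requires. (2) The density issue you flag at the end is not a side technicality but the crux of surjectivity: one checks that the range of $W$ is invariant under $\pi_\phi(\Psi_0(A_0))$, so $WW^*\in\pi_\phi(M)'$ and $a\mapsto W\pi_\psi(a)W^*$ is automatically a normal $*$-homomorphism; what genuinely needs proof is $WW^*=1$, i.e.\ that $\Lambda_\phi\bigl(\mathrm{span}\{NU^k\}\cap\mathfrak{n}_\phi\bigr)$ is dense in $L^2(M,\phi)$ --- otherwise you have only embedded $N\rtimes_\theta\mathbb{Z}$ onto the cut-down algebra $\pi_\phi(M)WW^*$, and hypothesis (d) alone does not close the gap. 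This density is provable by the standard smoothing/approximate-unit technique for weights (Kaplansky density combined with right multiplication by a net $e_i\in\mathfrak{n}_{\psi_0}$, $0\le e_i\le 1$, $e_i\nearrow 1$, using the $N$-bimodularity of $E$), and it trivializes when $N$ is $\sigma$-finite, where $\psi_0$ may be taken to be a faithful normal state and Kaplansky density suffices; note that this already covers the paper's only application, Proposition \ref{prop: discrete decompositon is preserved}, where $N=(M_{\varphi})^{\omega}$ sits inside the $\sigma$-finite algebra $M^{\omega}$. (3) Your remark that hypothesis (a) is not logically needed is correct and is a nice by-product of your route: since (b), (c), (d) and freeness already force $M\cong N\rtimes_{\theta}\mathbb{Z}$, and in that crossed product one has $N'\cap(N\rtimes_{\theta}\mathbb{Z})\subset N$ by the Fourier-coefficient argument, (a) is implied by the remaining hypotheses; the paper carries (a) along because it quotes Connes' statement verbatim and must verify (a) in its application in any case.
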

\begin{proof} See \cite[Proposition 4.1.2]{Connes1}. Note that the central ergodicity assumption for $\theta$ in \cite{Connes1} is necessary only for the factoriality of $M$. 
\end{proof}

\begin{proof}[Proof of Proposition \ref{prop: discrete decompositon is preserved}]
We have to verify (a)-(d) in Lemma \ref{lem: isomorphic to crossed product} for $(M_{\varphi})^{\omega}\subset M^{\omega}$ and $\theta^{\omega}$. 
Let $U$ be the implementing unitary of $\theta$ in the discrete decomposition. Then we have $U\in M(\sigma^{\varphi},(-\infty,\log \lambda_0])$ for some $0<\lambda_0<1$, and $\varphi=\hat{\tau} \in \mathcal{W}_{\rm{nfs}}(M)$ is lacunary.  Also, $\tau \circ \theta \le \lambda_0 \tau$.

Since $\varphi$ is strictly semifinite, there is a normal faithful conditional expectation $E\colon M\to M_{\varphi}$. By Proposition \ref{prop: centralizer of lacunary weight}, we have $(M^{\omega})_{\varphi^{\omega}}=(M_{\varphi})^{\omega}$, so that the normal faithful $\varphi^{\omega}$-preserving conditional expectation coincides with $E^{\omega}\colon M^{\omega}\to (M_{\varphi})^{\omega}$. So (b), (c) is clearly satisfied. 
Regarding $p((\theta^{\omega})^k)$, note that 
$\tau^{\omega}:=\varphi^{\omega}|_{(M_{\varphi})^{\omega}}$ is a normal faithful semifinite trace satisfying $\tau^{\omega}\circ \theta^{\omega}\le \lambda_0\tau^{\omega}$. This implies, by the proof of \cite[Proposition 5.1.1]{Connes1}, that $p((\theta^{\omega})^k)=0$ for all $k\neq 0$.
 
Next, we show (d): $M^{\omega}$ is generated by $(M_{\varphi})^{\omega}$ and $U$ (canonical image of $U$ in $M^{\omega}$).
 Let $\{p'_i\}_{i\in I}$ be a net of projections in $M_{\varphi}$ such that $\tau(p'_i)<\infty\ (i\in I)$ and $p'_i\nearrow 1$ strongly. Then put $p_i:=\bigvee_{n=1}^{\infty}\theta^n(p'_i)$. Then it holds that
\[\tau(p_i)\le \sum_{n=1}^{\infty}\tau(\theta^n(p'_i))\le \sum_{n=1}\lambda^n\tau(p'_i)<\infty,\]
 and $\theta(p_i)\le p_i$, $p_i\nearrow 1$ strongly. 
Now fix one of such finite projection $p=p_i$ in $M_{\varphi}$, and we prove that $p(M^{\omega})p$ is generated by $pQp$, where  
 \[Q:=\text{span}\left ((M_{\varphi})^{\omega}\cup \bigcup_{k=1}^{\infty}(M_{\varphi})^{\omega}
U^k \cup \bigcup_{k=1}^{\infty}(U^*)^{k}(M_{\varphi})^{\omega} \right ).\]
By construction, each $x\in M=M_{\varphi}\rtimes_{\theta}\mathbb{Z}$ has a formal expansion 
\[x\sim x(0)+\sum_{k=1}^{\infty}\{x(k)U^k+(U^*)^kx(-k)\},\]
where $x(k)\in M_{\varphi}\ (k\in \mathbb{Z})$ is uniquely determined by 
\[x(k)=E(x(U^*)^k),\ \ \  x(-k)=E(U^kx)\ \ \ \ \ \  (k\ge 0)\] (the order of $U^{k}$ and $x(k)$ is a matter of convention). 
Let $x\in M^{\omega}$, and put $y:=pxp\in (M^{\omega})_p$ (here we used $p(M^{\omega})p=(pMp)^{\omega}$. See \cite[Proposition 2.10]{MasudaTomatsu}).  
Since $p\in (M^{\omega})_{\varphi^{\omega}}=(M_{\varphi})^{\omega}$, we may consider $\varphi^{\omega}_p$ as a faithful normal positive functional on $(M^{\omega})_p$. Let $\varepsilon>0$ be given. By Proposition \ref{prop: characterization of the normalizer}, we may find $a>0$ and $z=(z_n)^{\omega}\in (M^{\omega})_p$ with $z_n\in M_p(\sigma^{\varphi_p},[-a,a])\ (n\in \mathbb{N})$ such that $\|y-z\|_{\varphi^{\omega}_p}<\varepsilon$, and $\|z\|\le \|y\|$.
Consider the expansion of $z_n$ (in $M$):
\[z_n\sim z_n(0)+\sum_{k=1}^{\infty}\{z_n(k)U^{k}+(U^*)^kz_n(-k)\},\ \ \ \ \ \ \ n\in \mathbb{N}.\]
Let $V:=Up$. Then by $UpU^*=\theta(p)\le p$, we have
\eqa{
V^2&=UpUp=Up(UpU^*)U=U\theta (p)U\\
&=U^2p.
}
Similar computations show that 
\[V^k=U^kp,\ \ \ \ (V^*)^k=p(U^*)^k,\ \ \ \ k\ge 1.\]

We see that for $k\ge 0$,
\eqa{
z_n(k)&=E(z_n(U^*)^k)=E(z_np(U^*)^k)=E(z_n(V^*)^k),\\
z_n(-k)&=E(U^kpz_n)=E(V^kz_n).
}
In particular, we have $z_n(k)\in pM_{\varphi}\theta^k(p), z_n(-k)\in \theta^k(p)M_{\varphi}p$ and 
\[z_n(k)U^k=z_n(k)V^k,\ \ \ \  (U^*)^kz_n(-k)=(V^*)^kz_n(-k).\]
Therefore the expansion of $z_n$ can be rewritten as 
\[z_n\sim z_n(0)+\sum_{k=1}^{\infty}\{z_n(k)V^{k}+(V^*)^kz_n(-k)\},\ \ \ \ \ \ \ n\in \mathbb{N}.\] 
Since $U\in M(\sigma^{\varphi},(-\infty,\log \lambda_0])$,  we have 
\[V^kz_n\in M_p(\sigma^{\varphi_p},(-\infty,k\log \lambda_0+a]),\ \ \ z_n(V^*)^k\in M_p(\sigma^{\varphi_p},[-k\log \lambda_0-a,\infty))\]
for each $k\ge 1$. Let $K:=[a/(-\log \lambda_0)]+1\in \mathbb{N}$, and consider the GNS representation of $(\varphi_p, (M_{\varphi})_p)$. 
$E$ induces $\varphi_p$-preserving conditional expectation $E_p\colon M_p\to (M_{\varphi})_p$. Then for $k\ge K$, we have
\[k\log \lambda_0+a<0<-k\log \lambda_0-a\]
Hence 
\eqa{
z_n(k)\xi_{\varphi_p}&=E_p(z_n(V^*)^k)\xi_{\varphi_p}=1_{\{1\}}(\Delta_{\varphi_p})(z_n(V^*)^k\xi_{\varphi_p})=0,\\
z_n(-k)\xi_{\varphi_p}&=E_p(V^kz_n)\xi_{\varphi_p}=1_{\{1\}}(\Delta_{\varphi_p})(V^kz_n\xi_{\varphi_p})=0.
}
Since $\xi_{\varphi_p}$ is separating for $M_p$, we have $z_n(k)=0,\ |k|\ge K$.  
Therefore we have  
\[z_n=z_n(0)+\sum_{k=1}^{K-1}\{z_n(k)V^k+(V^*)^kz_n(-k)\},\ \ \ \ \ \ \ n\in \mathbb{N}.\]
Now, since $(M_{\varphi})_p$ is a finite von Neumann algebra, each $(z_n(k))_n\ (|k|\le K-1)$ is in $\mathcal{M}^{\omega}(\mathbb{N},M_{\varphi_p})$, and we have
\eqa{z=(z_n)^{\omega}&=(z_n(0))^{\omega}+\sum_{k=1}^{K-1}\{(z_n(k))^{\omega}U^k+(U^*)^{k}(z_n(-k))^{\omega}\}\\
&\in pQp.
}
Since $\varepsilon>0$ is arbitrary, $y=pxp$ can be approximated strongly by elements from $pQp$. Hence $pM^{\omega}p=\overline{pQp}^{\text{sot}}$. 
Since $i$ is arbitrary (recall that $p=p_i$), this implies that $x$ is in $\overline{Q}^{\text{sot}}$ as well. This proves the claim. 
 
Finally, by the proof of \cite[Proposition 4.1.1]{Connes1}, (a) is proved. Namely, let $x\in ((M_{\varphi})^{\omega})'\cap M^{\omega}$. Then by the above, $x$ has a formal expansion by $x\sim x(0)+\sum_{k=1}^{\infty}(x(k)U^k+(U^*)^kx(-k)\ (x(k)\in (M_{\varphi})^{\omega})$. Since $ax=xa$ for $a\in (M_{\varphi})^{\omega}$, this implies that $ax(k)=x(k)\theta^k(a)$, $\theta^k(a)x(-k)=x(-k)a$ for all $a\in (M_{\varphi})^{\omega}$ and $k\ge 0$. Then by $p((\theta^{\omega})^k)=0\ (k\neq 0)$ and by \cite[Remarques 1.5.3 (a)]{Connes1}, we have $x(k)=0, k\neq 0$ and hence $x=x(0)\in (M_{\varphi})^{\omega}$. This proves that $((M_{\varphi})^{\omega})'\cap M^{\omega}\subset (M_{\varphi})^{\omega}$   
\end{proof}

Now we are ready to prove the non-factoriality of the Ocneanu ultrapower for type III$_0$ factors.

\begin{proof}[Proof of Theorem \ref{thm: non-factoriality of UP(III_0)}]

Now we show that $M^{\omega}$ is not a factor. By Claim 1, $M^{\omega}$ is generated by $(M_{\varphi})^{\omega}$ and $U^{\omega}$, which implements $\theta^{\omega}$. Representing the center of $M_{\varphi}$ as $L^{\infty}(X,\mu)$ where $(X,\mu)$ is a diffuse probability space, \cite[Lemma 2.8]{MasudaTomatsu}, together with \cite[Corollary 4.2]{FHS1} implies that the center of $(M_{\varphi})^{\omega}$ is $L^{\infty}(X,\mu)^{\omega}$. By Lemma \ref{lem: UP does not preserve ergodicity}, $\theta^{\omega}$ is not centrally ergodic. This implies that there is a nontrivial element $x\in (L^{\infty}(X,\mu)^{\omega})^{\theta^{\omega}}$, whence a nontrivial element in $\mathcal{Z}(M^{\omega})$. Therefore $M^{\omega}$ is not a factor. 
\end{proof}

\begin{remark}The above result reproduces Connes's result \cite{Connes3} that $\sigma$-finite type III$_0$ factor $M$ cannot be full. For if such $M$ satisfies $M_{\omega}=\mathbb{C}$, then $M'\cap M^{\omega}=\mathbb{C}$ holds by Proposition \ref{prop: no difference between M_omega and M'cap Momega}. Since $\mathcal{Z}(M^{\omega})\subset M'\cap M^{\omega}$, this shows that $M^{\omega}$ is a factor, a contradiction. Therefore $M_{\omega}\neq \mathbb{C}$.
\end{remark}

\section*{Acknowledgement}
The authors thank Yoshimichi Ueda for discussions regrading his problem on full factors, Toshihiko Masuda and Reiji Tomatsu for their useful comments on the paper. HA is supported by EPDI/JSPS/IH\'ES Fellowship. Most of the work was done during his visits to University of Copenhagen since August 2010. He thanks for their hospitality. He also thanks to Kyoto University GCOE program for financial support during his first visit to Copenhagen. UH is supported by ERC Advanced Grant No. OAFPG 27731 and the Danish National Research Foundation through the Center for Symmetry and Deformation. Last but not least, the authors would like to thank anonymous referee for numerous suggestions which improved the presentation of the paper.\\ \\
{\it Notes added in Proof} After the paper has been submitted, the authors were informed from Eberhard Kirchberg of his ultraproduct \cite{Kirchberg} for a given sequence $(A_n,\varphi_n)_n$ of pairs of C$^*$-algebras and faithful states, which are (after some identification) in fact a generalization of Ocneanu's construction to C$^*$-algebras. In particular, he proved a result \cite[Proposition 2.1 (iv)]{Kirchberg} which is essentially the same as Theorem 4.1 in the case of von Neumann algebras. Our proof is different from his method. 

Hiroshi Ando\\
Institut des Hautes \'Etudes Scientifiques,\\
Le Bois-Marie 35, route de Chartres,\\
 91440 Bures-sur-Yvette, France\\
ando@ihes.fr 
\\ \\
Uffe Haagerup\\
Department of Mathematical Sciences,\\
University of Copenhagen\\
Universitetsparken 5,\\
2100 K\o benhavn \O, Denmark\\
haagerup@math.ku.dk

\begin{thebibliography}{99}
\bibitem[AHW]{AndoHaagerupWinslow} H. Ando, U. Haagerup and C. Winsl\o w, Ultraproducts, QWEP von Neumann Algebras, and the Effros-Mar\'echal Topology, to appear in {\it J. reine. angew. Math.}
\bibitem[Ara]{Araki} H. Araki, Some properties of modular conjugation operator of von Neumann algebras and non-commutative Radon-Nikodym theorem with a chain rule, {\it Pac. J. Math.} vol. \textbf{50} no. 2(1974), 309--354.
\bibitem[Ara2]{Araki2} H. Araki, Asymptotic ratio set and property $L_{\lambda}'$, {\it Publ. RIMS} vol. \textbf{6} (1970/71), 443--460.
\bibitem[Bla]{Blackadar} B. Blackadar, Operator Algebra, Encyclopedia of Mathematical Sciences vol. \textbf{122}, Springer (2006). 
\bibitem[Con1]{Connes1} A. Connes, Une classification des facteurs de type III, {\it Ann. Sci. Ec. Norm. Sup.} tome \textbf{6} n$^o$. 2 (1973), 133-252.
\bibitem[Con2]{Connes2} A. Connes, Classification of injective factors, {\it Ann. Math.} vol. \textbf{104} no. 1 (1976), 73--115.
\bibitem[Con3]{Connes3} A. Connes, Almost periodic states and factors of type III$_1$, {\it J. Funct. Anal.} vol. \textbf{16} issue 4 (1974), 415--445.
\bibitem[CHS]{CHS} A. Connes, U. Haagerup, E. St\o rmer, Diameters of state spaces of type III factors, Lecture Notes in Mathematics vol. \textbf{1132},  Springer (1985), 91--116.
\bibitem[CS]{CS} A. Connes, E. St\o rmer, Homogeneity of the state space of factors of type III$_1$, {\it J. Funct. Anal.} vol. \textbf{28} issue 2 (1978), 187--196.
\bibitem[FGL]{FGL} J. Fang, L. Ge and W. Li, Central sequence algebras of von Neumann algebras, {\it Taiwanese J. Math.} vo. \textbf{10} no. 1, (2006), 187--200.
\bibitem[FHS1]{FHS1} I. Farah, B. Hart and D. Sherman, Model theory of operator algebras I: Stability (arXiv 0908:2790).
\bibitem[FHS2]{FHS2} I. Farah, B. Hart and D. Sherman, Model theory of operator algebras II: Model theory (arXiv 1004:0701).
\bibitem[FHS3]{FHS3} I. Farah, B. Hart and D. Sherman, Model theory of operator algebras III: Elementary equivalence and II$_1$ factors (arXiv 1111.0998).
\bibitem[GeHa]{GeHa} L. Ge and D. Hadwin, Ultraproducts of C$^*$-algebras, in Operator theory: advances and applications, vol. \textbf{127} (2001), 305--326. 
\bibitem[Gol1]{Golodets} V. J. Golodets, Spectral properties of modular operators and the asymptotic ratio set (in Russian) {\it Izv. Akad. Nauk SSSR Math.} 
Ser. Mat. Tom \textbf{39} no. 3 (1975),  English translation in {\it Math. USSR Izv.} vol. \textbf{9} no. 3 (1975), 599--619.
\bibitem[Gol2]{Golodets2} V. J. Golodets, Modular operators and asymptotic commutativity in von Neumann algebras, Uspehi Mat. Nauk vol. \textbf{33}
no. 1 (1978), 43-94. English translation in Russian Math. Surveys vol. \textbf{33} no. 1 (1978), 47--106.
\bibitem[GoNe]{GoNe} V. J. Golodets and N. I. Nessonov, Asymptotic algebra and outer conjugay classes of automorphisms of factors (in Russian), 
{\it Izv. Akad. Nauk SSSR Math.} Tom \textbf{44} no. 3 (1980). English translation in {\it Math. USSR Izv.} vol. \textbf{16} no. 3 (1981), 457--477.
\bibitem[Gro]{Groh} U. Groh, Uniform ergodic theorems for identity preserving Schwarz maps on W$^*$-algebras, {\it J. Operator Theory.} vol. \textbf{11} no. 2 (1984), 395--404.
\bibitem[Haa1]{Haagerup1} U. Haagerup, The standard form of von Neumann algebras, {\it Math. Scand.} vol. \textbf{37} no. 2 (1975), 271--283.
\bibitem[Haa2]{Haagerup2} U. Haagerup, Operator valued weights in von Neumann algberas I, {\it J. Funct. Anal.} vol. \textbf{32}  no. 2 (1979), 175--206.
\bibitem[Haa3]{Haagerup3} U. Haagerup, Connes' bicentralizer problem and uniqueness of the injective factor of type III$_1$, {\it Acta Math.} vol. \textbf{69} no. 1-2 (1986), 95--148.
\bibitem[Haa4]{Haagerup4} U. Haagerup, $L^p$ spaces associated with an arbitrary von Neumann algebra, {\it Colloques Internationaux du CNRS} vol. \textbf{274} (1979), 175--184.
\bibitem[Hei]{Heinrich} S. Heinrich, Ultraproducts in Banach space theory, {\it J. Reine. Angew. Math.} vol. \textbf{313} issue 3 (1980), 72--104.
\bibitem[HerTak]{HermanTakesaki} R. Herman and M. Takesaki, States and automorphism groups of operator algebras, {\it Commun. math. Phys} vol. \textbf{19} issue 2 (1970), 142--160.
\bibitem[HO]{OzawaHinokuma} T. Hinokuma and M. Ozawa, Conversion from nonstandard matrix algebras to standard factors of type II$_1$, {\it Illinois J. Math.} vol. \textbf{37}, no. 1 (1993), 1--13. 
\bibitem[HS1]{HS} U. Haagerup, E. St\o rmer, Equivalence of normal states on von Neumann algebras and the flow of weights, {\it Adv. Math.} vol. \textbf{83} no. 2 (1990), 180--262.
\bibitem[HS2]{HS2} U. Haagerup, E. St\o rmer, Pointwise inner automorphisms of von Neumann algebras, {\it J. Funct. Anal.} vol \textbf{92} Issue 1 (1990), 177--201.
\bibitem[HW1]{HW1} U. Haagerup, Carl Winsl\o w, Effros-Mar\'echal Topology in the space of von Neumann algebras I, 
{\it Amer. Journ. Math.} vol. \textbf{120} no. 3 (1998), 567--617.
\bibitem[Jun]{Junge} M. Junge, Fubini theorem for ultraproducts of noncommutative $L_p$-spaces, {\it Canad. J. Math.} \textbf{56} no. 5 (2004), 983--1021.
\bibitem[Kat]{Katznelson} Y. Katznelson, An introduction to harmonic analysis, Cambridge University Press, 3rd edition (2004).
\bibitem[Kir]{Kirchberg} E. Kirchberg, Commutants of unitaries in UHF algebras and functorial properties of exactness, {\it J. reine. angew. Math.} \textbf{452} (1994), 39--77. 
\bibitem[KST]{KST} Y. Kawahigashi, C. E. Sutherland and M. Takesaki, The structure of the automorphism group of an injective factor and the cocycle conjugacy of discrete abelian group actions, {\it Acta. Math.} vol. \textbf{169} no. 1-2 (1992), 105--130. 
\bibitem[Kir]{Kirchberg} E. Kirchberg, On non-semisplit extensions, tensor products and exactness of group C$^*$-algebras, {\it Invent. Math.} \textbf{112} no. 3 (1993), 449--489.
\bibitem[KR]{KadisonRingrose} R. V. Kadison and J. R. Ringrose, Fundamentals of the theory of operator algebras, vol II, Academic Press (1986)
\bibitem[McD]{McDuff} D. McDuff, Uncountably many II$_1$ factors, {\it Ann. of Math.} (2) \textbf{90} (1969), 372--377. 
\bibitem[McD2]{McDuff2} D. McDuff, Central sequences and hyperfinite factor, {\it Proc. London. Math. Soc.} (3) \textbf{21} (1970), 443-461.
\bibitem[MaTo]{MasudaTomatsu} T. Masuda and R. Tomatsu, Rohlin flows on von Neumann algebras, preprint arXiv:1206.0955v2.
\bibitem[MvN]{MurrayVonNeumann} F. J. Murray and J. von Neumann, On rings of operators IV, {\it Ann. Math.} vol. \textbf{44} no. 4 (1943), 716--808.
\bibitem[Mu]{Murphy} Gerard J. Murphy, C$^*$-algebras and operator theory, Academic Press (1990). 
\bibitem[Oc]{Ocneanu} A. Ocneanu, Actions of discrete amenable groups on von Neumann algebras, Lecture Notes in Mathematics vol. \textbf{1138} Springer (1985).
\bibitem[Puk]{Pukanszky} L. Pukanszky, Some examples of factors. {\it Publ. Math. Debrecen} vol. \textbf{4} (1956), 135--156. 
\bibitem[Ray]{Raynaud} Y. Raynaud, On ultrapowers of non commutative $L_p$-spaces,  {\it J. Operator Theory}, vol. \textbf{48} no. 1 (2002), 41-68.
\bibitem[RX]{RaynaudXu} Y. Raynaud, Q. Xu, On subspaces of non-commutative $L_p$-spaces, {\it J. Funct. Anal.} vol. \textbf{203} no. 1 (2003), 149--196.
\bibitem[Rud]{Rudin} W. Rudin, Functional Analysis, McGraw-Hill (1973).
\bibitem[Sak]{Sakai} S. Sakai, The theory of W$^*$-algebras, lecture notes, Yale University (1962).
\bibitem[Sak2]{Sakai2} S. Sakai, An uncountable number of II$_1$ and II$_{\infty}$ factors. {\it J. Funct. Anal.} vol. \textbf{5} issue 2 (1970) 236--246.
\bibitem[Sch]{Schmidt} K. Schmidt, Asymptotically invariant sequences and an action of $SL(2,\mathbb{Z})$ on the 2-sphere, 
{\it Israel J. Math.} vol \textbf{37} no. 3 (1980), 193--208.
\bibitem[Tak1]{Takesaki1} M. Takesaki, Tomita's theory of modular Hilbert algebras and its applications, Lecture Notes in Mathematics vol. \textbf{128}, Springer (1970).
\bibitem[Tak2]{Takesaki2} M. Takesaki, Conditional expectations in von Neumann algebras, {\it J. Funct. Anal.} vol. \textbf{9} issue 3 (1972), 306--321.
\bibitem[Tak3]{Takesaki3} M. Takesaki, The structure of von Neumann algebras with a homogeneous periodic state, {\it Acta. Math.} vol. \textbf{131} issue 1 (1973), 79--121.
\bibitem[Tak4]{Takesaki4} M. Takesaki, The structure of the automorphism group of an AFD factor. in Progr. Math., vol. \textbf{92}, Birkhauser (1990), 19--44.
\bibitem[Tak5]{Takesaki5} M. Takesaki, A short proof of the commutation theorem $(M_1\overline{\otimes}M_n)'=M_1'\overline{\otimes}M_2'$, in Lectures on operator algebras, Lecture Notes in Mathematics vol. \textbf{247}, Springer (1972), 780--786.
\bibitem[TakBook]{TakBook} M. Takesaki, Theory of operator algebras {\rm{III}}, Encyclopedia of Mathematical Sciences \textbf{127}, Springer (2003).
\bibitem[Tom]{Tomiyama} J. Tomiyama, On the projection of norm one in W$^*$-algebras, III, {\it Tohoku Math. J.}, vol. \textbf{10} no. 2 (1958), 204--209.
\bibitem[Ueda1]{Ueda1} Y. Ueda, Factoriality, type classification and fullness for free product von Neumann algebras, {\it Adv. Math.} vol. \textbf{228} no. 5 (2011), 2647--2671.
\bibitem[Ueda2]{Ueda2} Y. Ueda, On type III$_1$ factors arising as free products, {\it Math. Res. Lett.} vol. \textbf{18} no. 5 (2011), 909--920.
\bibitem[Ueda3]{Ueda3} Y. Ueda, Fullness, Connes' $\chi$-groups, and ultraproducts of amalgamated free products over Cartan subalgebras, {\it Trans. Amer. Math. Soc.}, vol. \textbf{355} no. 1 (2003), 349--371.
\bibitem[Ueda4]{Ueda4} Y. Ueda, Some analysis on amalgamated free products of von Neumann algebras in non-tracial setting, arXiv:1203.1806
\bibitem[Wri]{Wright} F. B. Wright, A reduction for algebras of finite type, {\it Ann. of Math.} (2) vol. \textbf{60} (1954), 560--570.  
\end{thebibliography}
\end{document}